\tikzset{cross/.style={cross out, draw=black, minimum size=2*(#1-\pgflinewidth), inner sep=0pt, outer sep=0pt}, cross/.default={1pt}}
\numberwithin{equation}{section}
\theoremstyle{plain}
\newtheorem{thm}{Theorem}[section]
\newtheorem{lem}[thm]{Lemma}
\newtheorem{prop}[thm]{Proposition}
\newtheorem{cor}[thm]{Corollary}
\newtheorem*{prop*}{Proposition}
\theoremstyle{definition}
\newtheorem{defn}[thm]{Definition}
\theoremstyle{remark}
\newtheorem{rem}[thm]{Remark}
\newtheorem{exmp}[thm]{Example}
\newtheorem*{rem*}{Remark}
\newtheorem*{ack}{Acknowledgements}
\newtheorem*{exmp*}{Example}
\newcommand{\Wedge}[1]{\bigwedge\nolimits^{\!#1}}
\newcommand{\be}{\begin{equation}}    
\newcommand{\ee}{\end{equation}}    
\newcommand{\beu}{\begin{equation*}}    
\newcommand{\eeu}{\end{equation*}}    
\newcommand{\bea}{\begin{eqnarray}}    
\newcommand{\eea}{\end{eqnarray}}    
\newcommand{\beaa}{\begin{eqnarray*}}    
\newcommand{\eeaa}{\end{eqnarray*}}    
\newcommand{\bmx}{\begin{pmatrix}}    
\newcommand{\emx}{\end{pmatrix}}
\newcommand{\del}{\partial}    
\newcommand{\g}{{\mathfrak g}}
\newcommand{\no}{\n}
\newcommand{\sib}{{\tau}}
\newcommand{\nos}{\n^\sio}
\newcommand{\sio}{\sigma}
\newcommand{\siod}{\sigma}
\newcommand{\n}{{\mathfrak n}}
\newcommand{\ns}{\n^\sigma}
\newcommand{\nb}{\mf a}
\newcommand{\nbs}{\bar\nb}
\newcommand{\Mb}{\mathcal M_{0,N}}
\newcommand{\Mbnz}{\mathcal M_N}
\newcommand{\cca}{\cc_{0,N;m}}
\newcommand{\Mo}{\mathsf M_{0,N}}
\newcommand{\ccI}[1]{\cc_{0,N;#1}}
\newcommand{\cco}[1]{\cc_{#1}}
\newcommand{\ccoo}[1]{\cc_{0,#1}}
\newcommand{\clc}[2]{#1_{#2}}
\newcommand{\f}[2]{\,{}^{#1\!} f_{#2}}
\newcommand{\e}[2]{\,{}^{#1\!} e_{#2}}
\newcommand{\hh}[2]{\,{}^{#1\!} h_{#2}}
\newcommand{\h}{{\mathfrak h}}
\newcommand{\mf}{\mathfrak}
\newcommand{\mc}{\mathcal}
\newcommand{\gs}{\g^{\sio}}    
\newcommand{\hs}{\h^{\si}}    
\newcommand{\gt}{{\widetilde \g}}
\newcommand{\gts}{\gt^\si}
\newcommand{\nt}{{\widetilde \n}}
\newcommand{\nts}{\nt^\si}
\newcommand{\half}{\frac{1}{2}}
\newcommand{\nn}{\nonumber}
\newcommand{\eps}{\epsilon}
\newcommand{\ad}{{\rm ad}}
\newcommand{\Stab}[1]{\mathrm{Stab}({#1})}
\newcommand{\Z}{{\mathbb Z}}
\newcommand{\C}{{\mathbb C}}
\renewcommand{\P}{{\mathcal P}}
\newcommand{\id}{{\mathrm{id}}}    
\newcommand{\bl}{{\bullet}}    
\newcommand{\wh}{{\circ}}
\newcommand{\on}{.}    
\newcommand{\Cx}{\mathbb C^\times}
\newcommand{\btp}{\begin{tikzpicture}[baseline=0pt,scale=0.9,line width=0.25pt]}    
\newcommand{\etp}{\end{tikzpicture}}
\newcommand{\atp}[1]{}
\newcommand{\at}[2]{\underset{\substack{$ $ \\ \tikz{\draw[->] (0,0) -- (0,.13); } \\[-.2mm] {#1}}}{\smash{#2}}}
\newcommand{\path}{\longrightarrow}
\DeclareMathOperator{\wt}{wt}
\DeclareMathOperator{\Span}{span}
\newcommand{\W}{\C^m}
\newcommand{\Ht}[3]{{}^{#3} \!H_{#1,#2}}
\newcommand{\Hz}[3]{{}^{#3} \!H_{#1}^{#2}}
\newcommand{\Ho}[1]{H^0_{#1}}
\newcommand{\kv}{{}^{\bm k}\!}
\newcommand{\kvp}{{}^{\bm k'}\!}
\newcommand{\K}{\mathsf k}
\newcommand{\F}{\mc F}
\newcommand{\lsigma}{\sigma}
\newcommand{\ie}{\textit{i.e. }}
\DeclareMathOperator{\codim}{codim}
\DeclareMathOperator{\Flag}{Flag}
\newcommand{\Fl}{\mathscr{F}}
\newcommand{\A}{\mathscr{A}}
\newcommand{\cc}{\mathcal C}
\newcommand{\cmfn}{\Phi}
\newcommand{\ZT}{\Z_T}
\let\eps\varepsilon
\let\phi\varphi
\let\si\sigma
\newcommand{\Geom}{G}
\newcommand{\Ghom}{\mathcal G}
\newcommand{\mones}{{[1^m]}}
\newcommand{\ru}{R}
\newcommand{\rf}{r}
\newcommand{\sym}{\mathsf s}
\newcommand{\cai}{\varphi}
\newcommand{\psis}{\psi}
\newcommand{\oo}{\mathcal O}
\newcommand{\into}{\hookrightarrow}
\newcommand{\td}{\check}
\newcommand{\Pin}{\overline}
\newcommand{\wtI}[1]{[#1]}
\newcommand{\gu}{\mf b}
\newcommand{\gus}{\bar{\mf b}}
\newcommand{\Sa}{S}
\newcommand{\nz}{{\neq 0}}
\newcommand{\Co}{\Omega}
\newcommand{\Gm}{\Geom_0}
\newcommand{\Se}{\Sa}
\newcommand{\Cou}{\Co_{\leq 1}}
\newcommand{\Ip}{{\{1,\dots,m\}\setminus I}}
\newcommand{\Kill}{K}
\newcommand{\gse}{\dot\g^\si}
\newcommand{\Sg}{S_\g}
\newcommand{\Jp}{{\{1,\dots,m\} \setminus J}}
\newcommand{\I}{{\{1,\dots,\ru\}}}
\newcommand{\Is}{{\{1,\dots,\rf\}}}
\newcommand{\Shom}{{\mathcal S}}
\newcommand{\Sw}{\Sigma\!\Z}
\newcommand{\Swl}{{\Sigma\!\Z_\lambda}}
\newcommand{\bs}{b^\si}
\newcommand{\Bs}{B^\si}
\newcommand{\Hess}{\mathrm{Hess}}
\newcommand{\HH}{\Ht {\clc in}{\clc jp}k}
\let\nc\newcommand
\nc{\bean}{\begin{eqnarray}}
\nc{\eean}{\end{eqnarray}}
\nc{\Ref}[1]{{\rm(\ref{#1})}}
\let\Hat\widehat
\let\ox\otimes
\let\wx\wedge
\let\bra\langle
\nc\ket{\rangle\,}
\author{Alexander Varchenko}
\address{
AV: Department of Mathematics, University of North Carolina at Chapel Hill, Chapel Hill, NC 27599-3250, USA.}
\email{anv@email.unc.edu}  
\author{Charles Young}
\address{
CY: School of Physics, Astronomy and Mathematics, University of Hertfordshire, College Lane, Hatfield AL10 9AB, UK.}
\email{charlesyoung@cantab.net}
\def\Ddots{\mathinner{\mkern1mu\raise\p@
\vbox{\kern7\p@\hbox{.}}\mkern2mu
\raise4\p@\hbox{.}\mkern2mu\raise7\p@\hbox{.}\mkern1mu}}
\begin{document} 
\title[Cyclotomic discriminantal arrangements]{Cyclotomic discriminantal arrangements\\ and diagram automorphisms of Lie algebras}

\begin{abstract} 
Recently a new class of quantum integrable models, the \emph{cyclotomic Gaudin models}, were described in \cite{VY1,VY2}. Motivated by these, we identify a class of affine hyperplane arrangements  that we call \emph{cyclotomic discriminantal arrangements}. We establish correspondences between the flag and Aomoto complexes of such arrangements and chain complexes for nilpotent subalgebras of Kac-Moody type Lie algebras with diagram automorphisms.

As a byproduct, we show that the Bethe vectors of cyclotomic Gaudin models associated to diagram automorphisms are nonzero.

\end{abstract}
\maketitle
\setcounter{tocdepth}{1}
\tableofcontents

\section{Introduction and overview}
It is known that the theory of Knizhnik-Zamolodchikov (KZ) equations and Gaudin models  is closely related with the theory of arrangements of hyperplanes.  For example, KZ equations were solved in multidimensional hypergeometric integrals associated with a family of discriminantal arrangements \cite{SV};  the Kohno-Drinfeld theorem \cite{Kohno,D2,D1}, describing the monodromy of KZ equations in terms of quantum groups, was given a geometric proof in \cite{VarchenkoBook}, where the homology groups of the complements to the discriminantal arrangements were described in terms of quantum groups;  the Bethe vectors in the Gaudin models were constructed in terms of the combinatorics of discriminantal arrangements and were labeled by critical points of master functions associated with these arrangements \cite{BabujianFlume, RV, SV}.

The foundations for these relations were laid in  \cite{SV}. In that paper, discriminantal arrangements of hyperplanes were defined, and the geometric objects of those arrangements were related to homology of nilpotent subalgebras of Kac-Moody type Lie bialgebras. 

Recently a class of \emph{cyclotomic} Gaudin models was introduced in \cite{VY1,VY2}. We expect that the cyclotomic models will also have close relations with the theory of arrangements of hyperplanes. In this paper we take the first step in this direction. We identify a new class of arrangements of hyperplanes that we call \emph{cyclotomic discriminantal arrangements}, and establish correspondences between the flag and Aomoto complexes of such arrangements and chain complexes for nilpotent subalgebras of Kac-Moody type Lie algebras \emph{with diagram automorphisms}.
Let us describe our results in detail.

\subsection{Cyclotomic discriminantal arrangements}
Let $\omega\in\Cx$ be a primitive root of unity of order $T\in \Z_{\geq 1}$. 
Fix coordinates $t_1,\dots,t_m$ on $\C^m$ and let $\cca$ denote the arrangement (\ie collection of affine hyperplanes) in $\C^m$ consisting of:
\begin{alignat}{2}
\Ht ijk &: t_i - \omega^k t_j = 0 ,\quad &&1\leq i<j \leq m,\quad k\in \ZT, \nn\\
\Hz ijk &: t_i - \omega^k z_j = 0, \quad &&1\leq i\leq m,\quad 1\leq j \leq N,\quad k\in \ZT,\nn\\
 \Ho i &: t_i = 0 , \quad &&1\leq i \leq m, \nn\end{alignat}
where $z_1,\dots,z_N\in \Cx$ are $N\in \Z_{\geq 0}$ nonzero points whose orbits under the action of the cyclic group $\omega^\Z\cong \ZT := \Z/T\Z$ are pairwise disjoint. 
In the special case $T=1$ such hyperplane arrangements are called \emph{discriminantal arrangements}. We shall  call $\cca$ a \emph{cyclotomic discriminantal arrangement}. 

Given any hyperplane arrangement $\cc$ in $\C^m$, one has the \emph{Orlik-Solomon algebra} $\A^\bl(\cc)= \bigoplus_{p=0}^m \A^p(\cc)$: it can be defined as the $\C$-algebra of differential forms generated by $1$ and the one-forms $d\log l_H= d l_H /l_H$, where, for each hyperplane $H\in \cc$, $l_H=0$ is an affine equation for $H$. One has also the \emph{flag space} $\Fl^p(\cc)$, for $p=0,1,\dots,m$. It is a certain quotient of the $\C$-span of all those flags $L^0 \supset \dots \supset L^p$ in which each $L^i$ is an edge (\ie non-empty intersection of hyperplanes) of $\cc$ of codimension $i$. 
There are canonical isomorphisms  $\Fl^{p}(\cc)^* \cong \A^{p}(\cc)$ for each $p$. (See \S{\ref{sec: arrangements}.)

The Poincare polynomial which encodes the dimensions of the flag spaces/Orlik-Solomon spaces for the cyclotomic discriminantal arrangement $\cca$ is given by
\be P(x) :=  \sum_{p=0}^m  x^{p} \dim(\A^p(\cca)) 
= \prod_{p=0}^{m-1} \Big(1+\big(1+(p+N)T\big)x\Big).\nn\ee
See Theorem \ref{Pointhm}. When $N=0$ one recovers the arrangement of a complex reflection group, whose Poincare polynomial is known from  \cite[\S4]{OS}.

There is a canonical differential $d: \Fl^p(\cc) \to \Fl^{p+1}(\cc)$, given by extending flags in all possible ways. This gives the \emph{flag complex}, $(\Fl^\bl(\cc), d)$. 
Our first result gives an algebraic description of the flag complex $(\Fl^\bl(\cca),d)$ of the cyclotomic discriminantal arrangement. 
Let $\nb$ denote the free Lie algebra in generators $\f ki$, $i=1,\dots,m$, $k\in \ZT$, and let $\nbs$ denote the fixed-point subalgebra under the automorphism $\nb\to\nb$ defined by $\f ki \mapsto \f{(k+1\!\!\!\mod T)}i$. 
For any $\nbs$-module $M$ we have the standard chain complex $(C_\bl(\nbs,M),d)$ whose spaces are $C_p(\nbs,M) := \Wedge p \nbs \ox M$. Consider taking 
$M$ to be the module 
\be\Mb := U(\nbs) \ox U(\nb)^{\ox N},\nn\ee 
where $U$ denotes universal envelope. 
Let $(\Wedge p \nbs \ox \Mb)_{\mones}$ denote the subspace spanned by terms in which each $\f *i$ appears exactly once, for $i=1,\dots,m$. We show the following; see Theorem \ref{p1}. 
\begin{thm}
There are linear isomorphisms 
\be \psis_p : \left(\Wedge p \nbs \ox \Mb\right)_{\mones} \to \Fl^{m-p}(\cca), \qquad 0\leq p\leq m,\nn\ee 
and they define an isomorphism of complexes 
\be \psis_\bl : (C_\bl(\nbs ,\Mb )_{\mones},d)\to (\Fl^{m-\bl}(\cca),d).\nn\ee
\end{thm}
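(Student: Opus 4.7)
The plan is to generalize the classical SV correspondence between the flag complex of a discriminantal arrangement and the Chevalley--Eilenberg complex of a free nilpotent Lie algebra, adapting it to the cyclotomic setting by tracking the $\omega$-shift labels on both sides and using the fixed-point subalgebra $\nbs$ to encode the $\ZT$-symmetry of $\cca$.

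First, I would describe the edges of $\cca$ combinatorially: a codimension-$(m-p)$ edge of $\cca$ that involves every coordinate corresponds to a set partition of $\{1,\ldots,m\}$ into exactly $p$ \emph{free} blocks (coordinates identified up to relative $\omega^k$-shifts but with unconstrained common value), together with some \emph{zero} blocks (coordinates forced to $0$) and some \emph{$z_j$-blocks} (coordinates forced onto the orbit $\omega^\Z z_j$ with prescribed relative shifts), each block equipped with $\ZT$-shift data that specify which hyperplanes cut out the edge. Correspondingly, a pure wedge--tensor monomial in $(\Wedge p \nbs \otimes \Mb)_{[1^m]}$ decomposes $\{1,\ldots,m\}$ identically, with one Lie word per block distributed over the $p$ wedge slots (giving free blocks), the $U(\nbs)$ factor (zero blocks), and the $N$ factors of $U(\nb)$ ($z_j$-blocks).

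Next, define $\psi_p$ on such monomials by sending each Lie word $[\f{k_1}{i_1},[\f{k_2}{i_2},\ldots[\f{k_{s-1}}{i_{s-1}},\f{k_s}{i_s}]\ldots]]$ to the chain of flag refinements \`a la SV in which each successive bracket adjoins one more coordinate to the growing cluster, with the $\omega^k$-labels pinning down which hyperplane ($\Ht{i}{j}{k}$ for pairwise identifications inside a free or $z_j$-block, $\Hz{i}{j}{k}$ for the initial attachment of a $z_j$-block, $\Ho{i}$ for a zero block) realizes each refinement. The full flag is then obtained by concatenating these refinement sequences block by block, with the wedge/tensor ordering determining the order and sign. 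Cyclotomic symmetrization is built in automatically: a $\tau$-invariant combination of Lie words in $\nbs$ (or $U(\nbs)$) maps to a $\ZT$-symmetric combination of flags, matching the $\omega$-action on $\cca$, while Lie words in $U(\nb)$ attached to position $j$ pick out specific representatives in the orbit $\omega^\Z z_j$.

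The essential checks are then: (i) $\psi_p$ is well-defined, that is, sends the Lie algebra relations (antisymmetry and Jacobi in Lie words, antisymmetry of $\wedge$, associativity in $U$, and $\tau$-invariance of $\nbs$) to the defining relations of $\Fl^{m-p}(\cca)$; (ii) each $\psi_p$ is a linear isomorphism, which I would establish by exhibiting a combinatorial inverse assigning canonical Lie-word data to each flag and cross-checking dimensions against Theorem \ref{Pointhm}; (iii) $\psi_\bullet$ is a chain map, since the Chevalley--Eilenberg differential either brackets two wedge factors or moves one into the module action on $\Mb$, and both operations correspond under $\psi$ to extending a flag by one additional codimension-$1$ refinement. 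The main technical obstacle is step (i), specifically the reconciliation of $\tau$-invariance defining $\nbs$ with the $\ZT$-symmetric Orlik--Solomon relations of $\cca$. A natural route is to first establish the non-cyclotomic analogue at the level of the full (non-invariant) free Lie algebra $\nb$, where SV's original argument adapts directly to the $mT$-generator setup, and then to descend to the $\tau$-invariant subspace on both sides, using that $\nbs=\nb^\tau$ and that the $\ZT$-symmetric flag data in the larger complex corresponds precisely to $\Fl^{m-p}(\cca)$.
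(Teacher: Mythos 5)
Your overall skeleton --- a blockwise/recursive definition of $\psis_p$ on monomials, an explicit combinatorial inverse, and the observation that both terms of the Chevalley--Eilenberg differential correspond under $\psis$ to one-step extensions of flags --- is the same as the paper's. The genuine gap is in your step (i), exactly at the point you yourself flag as the main obstacle. Your proposed route (first prove the non-cyclotomic SV statement for the full free Lie algebra $\nb$ on the $mT$ generators $\f ki$, then descend to $\sib$-invariants on both sides) does not work, because there is no larger flag complex whose $\ZT$-invariant part is $\Fl^{m-\bl}(\cca)$. The arrangement $\cca$ lives in $\C^m$, not in $\C^{mT}$: an SV-type discriminantal arrangement attached to $mT$ generators would have one coordinate per generator, contains no hyperplanes $t_i=\omega^k t_j$, and its three-term flag relations do not symmetrize into the relations of $\cca$. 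The decisive new relations in $\Fl^\bl(\cca)$ are those of type (vi) in \S\ref{sec: flagrels}, with $T+2$ terms, e.g.
\be (\C^2 \supset \Ho 1 \supset \Ho 1\cap\Ho 2)+(\C^2 \supset \Ho 2 \supset \Ho 1\cap\Ho 2)+\sum_{k\in\ZT}(\C^2 \supset \Ht 12k \supset \Ho 1\cap\Ho 2)=0,\nn\ee
and well-definedness of $\psis$ hinges on matching precisely these against the identity $(\Pin g_1)(\Pin g_2)-(\Pin g_2)(\Pin g_1)=[\Pin g_1,\Pin g_2]=\sum_{k\in\ZT}\Pin{[g_1,\sib^k g_2]}$ in the $U(\nbs)$-factor of $\Mb$ (together with its analogues for the wedge factors and the Jacobi identity). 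This is an intrinsically cyclotomic verification: in the $T=1$ case the corresponding relation has only three terms, so no invariant-theoretic descent from the non-cyclotomic statement can produce it, and the paper proves it by direct computation against the flag relations.

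A related misconception in your setup: $\psis$ does \emph{not} send $\sib$-invariant elements to $\ZT$-symmetric combinations of flags. A projected commutator $\Pin{[g_1,\sib^k g_2]}$ goes to a single flag (up to sign), because the edge $L_g$ depends only on the $\sib$-orbit of $g$ (one has $L_g=L_{\sib g}$); no symmetrization over flags occurs in this theorem at all --- symmetrization enters only later, for the group $\Swl$ acting on $\cca$, in \S\ref{symsec}. So the phrase ``the $\ZT$-symmetric flag data in the larger complex corresponds precisely to $\Fl^{m-p}(\cca)$'' has no valid formulation, and the invariance of $\nbs$ is absorbed through $L_g=L_{\sib g}$ rather than through a group action on flags. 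Finally, a small logical point: in the paper's development the dimension formula of Theorem \ref{Pointhm} is obtained from bases of the flag spaces and then, \emph{via} $\psis$, yields $\dim(\Wedge p\nbs\ox\Mb)_\mones$; invoking it to ``cross-check'' bijectivity would therefore be circular here. Your explicit inverse --- which, as in the paper, should be built by cases on the last step of a flag ($t_i$ becomes linked to some $z_j$, to $0$, or to some $t_j$) --- is what actually establishes the isomorphism.
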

The way these isomorphisms work is best understood by inspecting Example \ref{p1ex}.

(To prove the dimension formulas above, we construct dual bases of $\Fl^p(\cca)$ and $\A^p(\cca)$. Pulling back, we get  bases of $\left(\Wedge p \nbs \ox \Mb\right)_{\mones}$, and in particular of $U(\nbs)_\mones$. These bases are labelled by ``decorated'' \emph{Lyndon words}. See \S\ref{lyndon}.)

\subsection{Symmetrization of flags}
Now let $\no$ be the free Lie algebra over $\C$ in generators $F_i$, $i=1,\dots,\ru$ for some $\ru\in \Z_{\geq 1}$. For any  permutation $\siod\in \Sigma_{\ru}$ there is an automorphism $\sio:\no\to \no$ defined by $\sio(F_i) = F_{\siod(i)}$. 
Let $\nos\subset \no$ denote the fixed-point subalgebra. We have the $\nos$-module 
\be \Mo := U(\nos) \ox U(\no)^{\ox N}.\nn\ee
There is a ``weight'' gradation 
of the spaces $\Wedge p \nos \ox \Mo$ 
of the chain complex $(C_\bl(\nos, \Mo),d)$, given by counting the number of $F_i$ from each $\sio$-orbit.
Fix any weight $\lambda = (\lambda_1,\dots,\lambda_\rf)\in \Z_{\geq 0}^{\rf}$, where $\rf$ is the number of orbits. Let $m=\lambda_1+\dots+\lambda_\rf$ and let $T$ be the order of $\sio$.
We construct a linear ``cyclotomic symmetrization'' map, $\sym: \left(\Wedge p \nos \ox \Mo\right)_\lambda\to \left(\Wedge p \nbs \ox \Mb\right)_{\mones}$. (Example \ref{ex2} illustrates how this map works.) Combining this with the theorem above we get an identification between vectors in  $\left(\Wedge p \nos \ox \Mo\right)_\lambda$ and certain ``cyclotomically symmetrized'' linear combinations of flags in the flag space $\Fl^{m-p}(\cca)$ of the cyclotomic discriminantal arrangement. More precisely, there is a finite group $\Swl$ -- a certain semi-direct product of symmetric and cyclic groups -- depending on the weight $\lambda$ and also the automorphism $\siod$. It acts naturally on the flag spaces, and among the isotypical components for its action is one we label $\Fl^{m-p}(\cca)^\Swl$. We establish the following in Theorem \ref{symthm}.
\begin{thm}\label{t1i}
There are linear isomorphisms
\be (\psis_p \circ \sym) : \left(\Wedge p \nos \ox \Mo\right)_\lambda \to \Fl^{m-p}(\cca)^\Swl, \quad p=0,1,\dots,m .\nn\ee
They define an isomorphism of complexes
\be (\psis_{\bl}\circ \sym) : C_\bl(\nos,\Mo)_\lambda \to \Fl^{m-\bl}(\cca)^\Swl .\nn\ee
\end{thm}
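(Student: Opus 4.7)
The plan is to reduce Theorem \ref{t1i} to the preceding Theorem \ref{p1} by exhibiting $\sym$ as a chain map whose composition with $\psis_\bullet$ lands in the $\Swl$-isotypical component of the flag complex, and then verifying bijectivity by a dimension count using the Lyndon-type bases constructed for Theorem \ref{p1}. First I would make precise the definition of $\sym: (\Wedge p \nos \otimes \Mo)_\lambda \to (\Wedge p \nbs \otimes \Mb)_{\mones}$: given an element in weight $\lambda$, its generators $F_{i_1},\dots,F_{i_m}$ are distributed among the $\sio$-orbits of $\{1,\dots,\rf\}$ with multiplicities prescribed by $\lambda$; $\sym$ then sums, over all bijective assignments of the positions $\{1,\dots,m\}$ compatible with $\lambda$ and over all choices of $\Z_T$-labels respecting the $\sio$-orbit structure, the corresponding $\f ki$-monomials in $\nbs$ (and in $\Mb$). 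By construction each $\f *i$ then occurs exactly once, so the image lies in the $\mones$-subspace.

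Next I would verify that $\sym$ intertwines the Chevalley-Eilenberg differentials. The differential $d$ on $C_\bullet(\nos,\Mo)$ is built from the Lie bracket of $\nos$ and its action on $\Mo$; the differential on $C_\bullet(\nbs,\Mb)$ is built from the corresponding structures on $\nbs$ and $\Mb$. Since $\nbs$ is the $\sigma$-fixed subalgebra of a free Lie algebra whose $\sigma$-orbits mirror those of $\nos$, brackets are sent to brackets under the orbit-respecting relabeling; the chain map property therefore reduces to a bookkeeping check that the combinatorial sum defining $\sym$ respects the Leibniz-type decomposition in $d$. Composing with the isomorphism of complexes $\psis_\bullet$ from Theorem \ref{p1} produces a chain map $\psis_\bullet\circ\sym : C_\bullet(\nos,\Mo)_\lambda \to \Fl^{m-\bullet}(\cca)$.

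Third, I would identify the image with $\Fl^{m-\bullet}(\cca)^\Swl$. The group $\Swl$ is a semi-direct product of a product of symmetric groups (permuting the $\lambda_j$ positions inside each $\sio$-orbit block) with the cyclic group generated by the cyclotomic shift $t_i\mapsto \omega t_i$. Its natural action on $\cca$ permutes hyperplanes of the form $\Ht ijk$, $\Hz ijk$, $\Ho i$, and hence on $\Fl^\bullet(\cca)$. By the defining formula, $\sym$ is a symmetrization over precisely this group, so after transport by $\psis_\bullet$ the image consists of $\Swl$-symmetric flag combinations, ie lies in $\Fl^{m-\bullet}(\cca)^\Swl$. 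Surjectivity onto this subspace, and injectivity, follow by a dimension comparison: from Theorem \ref{Pointhm} one knows $\dim\Fl^{m-p}(\cca)$; the $\Swl$-isotypical piece is computed by averaging, while on the algebraic side the decorated Lyndon-word basis for $U(\nos)_\lambda$ together with the standard basis for the exterior and tensor factors gives $\dim (\Wedge p\nos\otimes\Mo)_\lambda$, and the two match orbit by orbit.

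The main obstacle will be the last step: one must prove that $\sym$ is injective on the weight-$\lambda$ subspace and that its image under $\psis_\bullet$ is \emph{exactly}, not merely contained in, $\Fl^{m-\bullet}(\cca)^\Swl$. This is essentially a combinatorial rigidity statement about the interaction of cyclotomic symmetrization with the Lyndon basis: one needs to show that after passing to the $\Swl$-invariants, each decorated Lyndon word in $\nos$ gives rise to a linearly independent symmetrized flag, and that every $\Swl$-invariant flag arises this way. I expect to handle this by choosing a total order on orbit representatives, reducing the symmetrization sum to a sum over coset representatives of $\Swl$ in its action on labels, and applying Theorem \ref{p1} orbit-wise to conclude both linear independence and the matching of dimensions.
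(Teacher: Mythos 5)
Your overall skeleton (define $\sym$, check it is a chain map, transport through $\psis_\bl$ of Theorem \ref{p1}, identify the image with the symmetrized flag space) matches the paper's, but the step you yourself flag as the main obstacle --- bijectivity --- is where your proposed route has a genuine gap. A dimension count as you describe it is not available from the ingredients you cite: Theorem \ref{Pointhm} gives $\dim \Fl^{m-p}(\cca)$ but says nothing about the $\Swl$-character of the flag spaces, so ``computing the isotypical piece by averaging'' requires traces of group elements that you have not computed; on the algebraic side, $\dim\left(\Wedge p \nos\ox\Mo\right)_\lambda$ is not given by the decorated Lyndon bases either (those are bases of $U(\nbs)_\mones$ and of $(\Wedge p\nbs\ox\Mb)_\mones$, and the fixed-point algebra $\nos$ is not even finitely generated, so its weight multiplicities at general $\lambda$ are not immediate). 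Moreover, even granting equal dimensions you would still need injectivity or surjectivity of $\psis_p\circ\sym$ by an independent argument; the ``coset representatives'' reduction you sketch is exactly the missing content. The paper sidesteps all of this with one clean device: besides $\sym$ (which is induced by the Lie algebra homomorphism $\jmath$ of Proposition \ref{outhomprop}, so the chain-map property is automatic rather than a bookkeeping check), there is a de-symmetrization map $\pi$ induced by the relabeling homomorphism $\nb\to\no$, $\f k{\clc in}\mapsto \sio^k F_{\iota(i)}$, and one verifies directly (Lemma \ref{isolem}) that $\pi\circ\sym=\sym\circ\pi=|\Sw_\lambda|\,\id$. This two-sided inverse, up to the nonzero scalar $|\Sw_\lambda|$, gives both injectivity and surjectivity onto $C_\bl(\nbs,\Mb)_\mones^\Swl$ at once, with no dimension computation.

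A second, smaller point: the target $\Fl^{m-p}(\cca)^\Swl$ in the theorem is not the space of $\Swl$-invariant flag combinations but the span of flags $F$ with $(-1)^{|g|}\,g\on F=F$, i.e.\ the sign-twisted isotypical component. The twist arises because $\psis_\bl$ intertwines the two actions only up to the sign of the permutation of wedge factors (Proposition \ref{ep}: $\psis_\bl(g\on m)=(-1)^{|g|}\,g\on\psis_\bl(m)$). Your statement that ``the image consists of $\Swl$-symmetric flag combinations'' glosses over this; you need to carry the sign through to land in the component actually named in the statement.
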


\subsection{Main result: Shapovalov and geometric forms}
Let $\ru\in \Z_{\geq 0}$. Suppose we fix the following data:
\begin{enumerate}[(1)]
\item A finite-dimensional complex vector space $\h$;
\item A non-degenerate symmetric bilinear form $(\cdot,\cdot): \h\times \h\to \C$;
\item A collection $\alpha_1,\dots,\alpha_\ru\in \h^*$ of linearly independent elements, called the \emph{simple roots}.
\end{enumerate}
This defines a symmetric $\ru\times\ru$ matrix $B= \left( (\alpha_i,\alpha_j)\right)_{i,j\in \I}$.

To the data (1--3) is associated a Lie algebra $\g = \g(B)$, which is roughly-speaking a ``Kac-Moody algebra without Serre relations''. See \S\ref{sec: cd}. One has $\g = \n \oplus \h \oplus \n_+$, and $\g$ is generated by $\h$ together with generators $E_i\in \n_+$ and $F_i\in \n$, $i=1,\dots,\ru$. Now we suppose we have, in addition to (1--3), 
\begin{enumerate}[(4)]
\item a \emph{diagram automorphism} of $\g$;
\end{enumerate} 
namely, an automorphism $\sio : \g \to \g$ such that $\sio(E_i) = E_{\siod(i)}$ and $\sio(F_i)= F_{\siod(i)}$, where $\siod\in \Sigma_\ru$ is a permutation such that $(\alpha_i,\alpha_j) = (\alpha_{\siod(i)},\alpha_{\siod(j)})$ for each $i,j$.\footnote{Cf. \cite{Enriquez,Brochier} where the automorphism is inner, \ie fixes $\h$ pointwise.}

Let us also fix weights $\Lambda_0\in (\h^\si)^*$ and $\Lambda_1,\dots,\Lambda_N\in \h^*$ and let $M(\bm \Lambda)$ denote the tensor product of Verma modules with these highest weights:
\be M(\bm \Lambda) := M^\si(\Lambda_0) \ox \bigotimes_{i=1}^N M(\Lambda_i),\nn\ee
the first factor over $\gs$, the rest over $\g$. As $\ns$-modules, $M(\bm \Lambda)\cong \Mo$. So we are in the setting of Theorem \ref{t1i}. 

Now, the Cartan and weight data, \ie the numbers  
\be  \left( (\alpha_i,\alpha_j)\right)_{i,j\in \I}\quad\text{and}\quad \left((\alpha_i,\Lambda_j)\right)_{i\in \I, j\in\{0,1,\dots,N\}} \label{weightdata}\ee 
define a bilinear form, the \emph{Shapovalov form} $S$, on the spaces $\Wedge p \ns \ox M(\bm \Lambda)$; see \S\ref{sec: cd}. This form respects the weight decomposition, so it can be regarded as a linear map
\be \Shom^p: \left(\Wedge p \ns \ox M(\bm \Lambda)\right)_\lambda \to \left(\Wedge p \ns \ox M(\bm \Lambda)\right)_\lambda^*\nn\ee
sending each weight subspace to its dual.

On the other hand, the numbers \eqref{weightdata} also naturally define a \emph{weighting} of the cyclotomic discriminantal arrangement $\cca$; that is, an assignment to every hyperplane 
of a number, its weight. 
For example the weights of the hyperplanes with equations $t_{i}-\omega^kt_j=0$ are given by the numbers $(\alpha_i,\sigma^k \alpha_j)$; see \eqref{HdC} and \eqref{rhw} for precise details. 
The weighting of the arrangement defines a bilinear form on the flag spaces, which we will call the \emph{geometric form}, $\Geom$; see \S\ref{sec: arrangements}. It can be regarded as a linear map
\be \Ghom^{m-p} : \Fl^{m-p}(\cca) \to \A^{m-p}(\cca) \nn\ee
from each flag space to its dual. 

One of the main results of \cite{SV} was that, in the non-cyclotomic case ($\sio = \id$, $\omega =1$, $T=1$) the Shapovalov form $S$ essentially coincides with the pull-back $(\psis\circ \sym)^*(G)$ of the geometric form by the isomorphisms of Theorem \ref{t1i}. This statement does \emph{not} hold in the cyclotomic setting in general (though it does turn out to hold in some important special cases related to finite-dimensional semisimple Lie algebras, as we discuss in \S\ref{cis}). However, we do have the following natural generalization, which is the main result of the present paper.
\begin{thm}\label{mti}
There exists 
a central extension $\gse$ of the fixed-point subalgebra $\gs$, by a one-dimensional centre $\C\K$, such that if we let 
\be M^\si(\Lambda_0) := U(\gse) \ox_{U(\h^\si \oplus \n_+^\si) \oplus \C\K} \C v_0 \nn\ee
be the Verma module over this central extension $\gse$ (with $\K v_0 := v_0$) \emph{then} the Shapovalov form agrees with the pull-pack of the geometric form. More precisely, then the following diagram commutes for each $p$:
\be\begin{tikzpicture}    
\matrix (m) [matrix of math nodes, row sep=4em,    
column sep=10em, text height=1ex, text depth=1ex]    
{     
C_p(\ns,M(\bm \Lambda))_{\lambda} &  C_p(\ns, M(\bm \Lambda))_{\lambda}^* \\    
\Fl^{m-p}(\cca)^\Swl  & \A^{m-p}(\cca)^\Swl,  \\    
};    
\path[->,shorten <= 2mm,shorten >= 2mm]    
(m-1-1) edge node [above] {$\Shom^p$} (m-1-2)    
(m-2-1) edge node [above] {$(-1)^{m-p} T^p \Ghom^{m-p}$} (m-2-2);    
\path[->,shorten <= 0mm,shorten >= 2mm]    
(m-1-1) edge node [right] {$\sim$} node [left] {$\psis_p\circ\sym$} (m-2-1)    
(m-1-2) edge node [right] {$\sim$} node [left] {$(\pi\circ\psis_p^{-1})^*$} (m-2-2)   ;
\end{tikzpicture}\nn\ee  
where $\Shom^p$ is defined with respect to $\gse$. 
\end{thm}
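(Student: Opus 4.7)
The plan is to verify the commutativity on explicit bases, reducing everything to a term-by-term matching of Shapovalov contractions with geometric incidences of flags. By Theorem \ref{t1i} it suffices to check the identity on vectors of the form $x_1\wedge\cdots\wedge x_p\ox f$, where the $x_j\in\ns$ are iterated brackets of the generators $F_i$ and $f$ is a monomial in the $F_i$ applied to the highest-weight vector of $M(\bm\Lambda)$. The composition $\psis_p\circ\sym$ converts such a vector into an explicit $\Swl$-invariant sum of flags in $\cca$, each flag built from intersections of the hyperplanes $\Ht ijk$, $\Hz ijk$, $\Ho i$ dictated by the combinatorics of the symmetrization described in Theorem~\ref{t1i}. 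The dual map $(\pi\circ\psis_p^{-1})^*$ identifies the dual of $C_p(\ns,M(\bm\Lambda))_\lambda$ with the $\Swl$-invariant part of the Orlik--Solomon space $\A^{m-p}(\cca)$ via the pairing $\Fl^{m-p}(\cca)^*\cong\A^{m-p}(\cca)$.

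Both $\Shom^p$ and $\Ghom^{m-p}$ then admit an expansion as a sum over ``full contractions.'' On the Lie-algebraic side, one iteratively commutes the generators of one argument past those of the other using the bracket of $\gse$ and the $\h$-action on the Verma factors of $M(\bm\Lambda)$; each maximal contraction yields a product of pairings drawn from the scalars $(\alpha_i,\si^k\alpha_j)$ and $(\alpha_i,\si^k\Lambda_r)$. On the geometric side, $\Ghom^{m-p}$ evaluated on a flag is a sum over maximal refinements of that flag, each refinement contributing the product of weights of the hyperplanes defining its edges. Generalizing the non-cyclotomic calculation of \cite{SV}, under the isomorphism $\psis_p\circ\sym$ each Shapovalov contraction corresponds to a unique maximal flag refinement, and the Cartan/weight pairings produced on the Lie side agree with the hyperplane weights of the corresponding $\Ht ijk$, $\Hz ijk$ or $\Ho i$ by the very choice of weighting of $\cca$. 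The comparison of the two forms thereby reduces to a purely combinatorial matching.

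The main obstacle, and the reason for introducing the central extension $\gse$, is the treatment of contractions that pair a generator $F_i$ with $\si^k F_j$ across copies of elements from the same $\si$-orbit. In the honest fixed-point subalgebra $\gs$ such pairings are averaged over the orbit, so the individual scalars $(\alpha_i,\si^k\alpha_j)$ for $k\neq 0$ that appear as the hyperplane weights of $\Ht ijk$ have no direct Lie-theoretic counterpart. The 2-cocycle defining $\gse$ is engineered so that these scalars re-emerge as central contributions of the schematic form
\be [F_i,\si^k F_j]\;\ni\;(\alpha_i,\si^k\alpha_j)\,\K,\nn\ee
with the convention $\K\vac=\vac$ on $M^\si(\Lambda_0)$. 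Verifying that the resulting cocycle closes on $\gs$ and that it recovers \emph{exactly} the missing geometric weights -- an identity essentially saying that $\sum_{k\in\ZT}(\alpha_i,\si^k\alpha_j)$ splits correctly into its orbit pieces -- is the central technical step of the proof.

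Finally, the normalization factor $(-1)^{m-p}T^p$ is accounted for by two conventions: the sign is the standard one relating the ordering of wedge factors in $C_p(\ns,M(\bm\Lambda))$ to the canonical orientation of flag edges (as in \cite{SV}, via the identification $\Fl^{m-p}(\cca)^*\cong\A^{m-p}(\cca)$), and the factor $T^p$ reflects the fact that $\sym$ averages each of the $p$ wedge factors of $\Wedge p\ns$ over a $\si$-orbit of length $T$. The base case $p=0$ reduces to the classical Shapovalov--geometric identity on the Verma tensor product $M(\bm\Lambda)$, and the general case follows from the term-wise matching above.
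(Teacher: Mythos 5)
Your overall outline---reduce to bases via Theorem \ref{t1i}, match Shapovalov contractions against flag refinements as in \cite{SV}, and let a central extension absorb the cyclotomic mismatch---is the right shape, but the step you yourself call ``the central technical step'' is both missing and mischaracterized, and as stated it would fail. The cocycle defining $\gse$ is not of the schematic form $[F_i,\si^k F_j]\ni(\alpha_i,\si^k\alpha_j)\,\K$: a bracket of two lowering generators lies in $\ns$, on which the cocycle vanishes identically by construction; $\Co$ is supported on pairs from $(\ns)_\lambda\times(\ns_+)_\lambda$ of equal weight, it vanishes even on the simple pairs $(\Pin E_{\iota(i)},\Pin F_{\iota(i)})$ (see the proof of Proposition \ref{cbprop}), and its first nonzero values occur on commutators of length at least two and are quadratic and higher expressions in the hyperplane weights, e.g.\ $\Gm(\Pin{[\f 0 1,\f k 2]},\Pin{[\f 0 1,\f l 2]})=a(\Ht 12 k)\bigl(-a(\Ht 12 l)+\delta_{kl}\sum_{p\in\ZT}a(\Ht 12 p)\bigr)$, cf.\ \S\ref{gmsec}. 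So it cannot be ``engineered'' by declaring single Cartan pairings to be central values, and it is not a statement about how $\sum_{k\in\ZT}(\alpha_i,\si^k\alpha_j)$ splits into orbit pieces; it is the contravariant form of the diagonal subarrangement $\cco I$ evaluated on the flags $\psis(\Pin a v_0)$, $\psis(\Pin b v_0)$. Relatedly, your claimed bijection between Shapovalov contractions and maximal flag refinements is exactly what breaks in the cyclotomic case: an edge such as $(t_i=t_j=0)$ lies in every $\Ht ijk$, $k\in\ZT$, as well as in $\Ho i$ and $\Ho j$, so refinements of flags with zero ends contain hyperplanes with no contraction counterpart---this is the phenomenon of \S\ref{coi} that forces the extension in the first place. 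Making the matching precise is the content of the paper's chain of results: the unsymmetrized comparison away from zero ends via an adapted discriminantal subarrangement (Lemma \ref{Glem}, Theorem \ref{SGthm}), the Jacobi-type identity for $\Cou$ (Proposition \ref{Gmprop}, Theorem \ref{couthm}) needed so that the modified action of $\nbs_+$ on $M_0$ and the form $\Se_0$ are even well defined, the inductive proof that $\Se_0=\Geom$ on $(M_0)_{\leq 1}$ (Theorem \ref{ScGthm}), and then the symmetrized commutator lemmas and swap identity (Lemmas \ref{symcomlem2} and \ref{symcomlem}, Propositions \ref{swapprop} and \ref{SSm0lem}) to descend to $\gs$. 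None of this is supplied by, or reducible to, your term-matching description.

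Two further corrections to the bookkeeping. The factor $T^p$ is not produced by $\sym$ averaging each wedge factor over an orbit of length $T$ (orbit lengths are the $T_i$ dividing $T$, and the symmetrization factors $|\Sw_\lambda|$ cancel precisely because the right-hand vertical map is built from $\pi$ rather than $\sym$, cf.\ Lemma \ref{isolem} and Proposition \ref{SSm0lem}); it already appears in the unsymmetrized square, coming from $\Sa_\nb(\Pin g,\Pin g')=T\,\Sa_{\td\nb}(g,g')$ for commutators in the adapted subalgebra, one factor of $T$ per wedge factor (Theorems \ref{SGthm} and \ref{zSGthm}). And the case $p=0$, $N=0$ is not ``the classical Shapovalov--geometric identity'': it is exactly where the classical identity fails and the cocycle-corrected form is required (it is the case treated in detail in \S\ref{proofofmt}), so it cannot serve as the anchor of your induction.
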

(See Theorem \ref{mt}, and \S\ref{sfd}--\ref{waf} for the definition of $\Shom^p$. The map $\pi$ is the inverse of the symmetrization-of-flags map $\sym$, up to a nonzero constant of proportionality; see Lemma \ref{isolem}.)

\subsection{The definition of the extension $\gse$ and the cocycle $\Co$}\label{coi}
Let us explain the key difference between the usual and cyclotomic discriminantal arrangements which gives rise to this central extension of the fixed-point subalgebra $\gs$.
It is enough to consider the case $p=0$ and $N=0$, in which case the isomorphism of Theorem \ref{t1i} is a map
\be M^\si(\Lambda_0)_\lambda \xrightarrow\sim \F^m(\cc_{0;m})^\Swl \nn\ee
from a weight subspace of the Verma module $M^\si(\Lambda_0)\cong_{\ns} U(\ns)$ to the space of (suitably symmetrized) full flags in the arrangement $\cc_{0;m}$ in $\C^m$ consisting of only the hyperplanes $\Ht ijk: t_i - \omega^k t_j=0$ and $\Ho i: t_i=0$. 

Suppose we set $\Lambda_0=0$.  Then (it is easy to show that) the Shapovalov form vanishes on every weight subspace except the highest weight space $M^\si(\Lambda_0)_0 = \C v_0$. So it is zero whenever $m>0$. (Recall $m=\lambda_1+\dots+\lambda_\rf$.) 

On the geometric side, setting $\Lambda_0=0$ means assigning weight zero to the hyperplanes $\Ho i$. 
Nonetheless, the geometric form $G$ is not always identically zero. Consider for example $m=2$ and the full flag 
\be F =\big(\C^2 \supset (t_1=t_2) \supset (t_1=t_2=0)\big)
.\nn\ee 
By definition, \S\ref{sg}, we have
$G(F,F) = \sum_{(H_1,H_2)} a(H_1) a(H_2)$
where the sum is over all unordered pairs of hyperplanes of the arrangement such that $F = (\C^2 \supset H_1 \supset H_1 \cap H_2)$ in $\Fl^2(\cc_{0;2})$, and where $a(H_1)$, $a(H_2)$ are the weights of these hyperplanes.
Clearly we must set $H_1 = \Ht 12 0$. But then $H_2$ can be any of $\Ho 1$, $\Ho 2$, \emph{or} $\Ht 12 k$ for any $k\in \ZT\setminus \{0\}$. 
Thus, when $a(\Ho 1)=a(\Ho 2)=0$ we still have 
\be G(F,F) =  a(\Ht 120)\sum_{k\in \ZT\setminus \{0\}} a(\Ht 12 k)\nn\ee
which is trivially zero if $T=1$ (the non-cyclotomic situation) but  not zero in general.

This observation suggests (what turns out to be) the correct definition of the central extension $\gse$. 
Namely, we define a certain skew-symmetric bilinear map $\Co : \gs \times \gs \to \C$ on the Lie algebra $\gs$ in terms of the geometric bilinear form for the flag space $\Fl^m(\cc_m)$ of full flags for the arrangement consisting of \emph{only} the hyperplanes $\Ht ijk: t_i-\omega^k t_j = 0$. See \S\ref{gmsec}-\ref{coudef} and \S\ref{cocyclesec}. We prove that $\Co$ is in fact a cocycle. It is this cocycle which defines the extension $\gse$ of $\gs$. We go on to show that the ``extra'' terms in the geometric form always organize themselves in such a way that Theorem \ref{mti} holds.

\subsection{Special cases: finite-type, Kac-Moody, and Borcherds Lie algebras}\label{cis}
Let $\gt:= \g/\ker \Sg$ denote the quotient of $\g$ by the kernel of the (usual) Shapovalov form. When the matrix $B$ is the symmetrization of a symmetrizable generalized Cartan matrix then the quotient $\gt$ is the Kac-Moody Lie algebra associated to $B$ \cite{KacBook}.

With Theorem \ref{mti} in hand, we get a number of corollaries.

First, whenever the fixed-point subalgebra $\gts$ of $\gt$ is finite-dimensional and semisimple then the usual Shapovalov and geometric forms do coincide just as in the non-cyclotomic situation. (See Corollary \ref{sscor}.)
Indeed, our cocycle $\Co$ defining the central extension $\gse$ vanishes on $\ker\Sg \cap\gs\subset \gs$  (Proposition \ref{kercoprop}). Therefore it defines a central extension of the quotient $\gts$ too. But recall that finite-dimensional semisimple Lie algebras do not admit non-trivial central extensions, because Whitehead's lemma states that every cocycle is coboundary. In fact  (Proposition \ref{cbprop}) whenever our cocycle $\Co$ is coboundary it is actually zero. 

Thus, for example, our cocycle $\Omega$ vanishes for all foldings of simply-laced finite-type Dynkin diagrams by diagram automorphisms.

Actually we prove a statement which shows that $\Omega=0$ whenever $\gs$ ``is a folding'' in a looser sense. Namely, there is always a subalgebra $\g(\Bs)\subset \gs$ generated by the projections of the generators $E_i,F_i,\h$ of $\g$. We show (Corollary \ref{eqcor})  that whenever $\g(\Bs) = \gs$ then $\Omega =0$. 
(One can ask how, in practice, the ``extra'' terms in the geometric form, as in \S\ref{coi}, can fail to contribute in such cases. In examples, one finds seemingly rather subtle cancellations coming from the symmetrization of flags.)

The ``typical'' situation, though, is that the fixed-point subalgebra $\gs$ is not even finitely generated, and (therefore) is not a Kac-Moody algebra. However it is always a \emph{generalized Kac-Moody} or \emph{Borcherds} algebra \cite{Borcherds88}: see Remark \ref{borrem}.  So one can think of our construction as singling out a certain preferred one-dimensional central extension of any Borcherds Lie algebra obtained as a fixed-point subalgebra under a diagram automorphism.

\subsection{Canonical element and weight function}
In \S\ref{cosec} we apply Theorem \ref{mti} to prove results about the \emph{weight function} and \emph{Bethe vectors} for weighted cyclotomic discriminantal arrangements. Let us describe these objects briefly. 

We construct, as mentioned above, a dual pair of bases for the flag spaces $\Fl^p(\cca)$ and Orlik-Solomon spaces $\A^p(\cca)$, $p=0,1,\dots,m$. In particular we get a dual pair of bases of $\Fl^m(\cca)$ and $\A^m(\cca)$, and hence an expression for the canonical element $\Theta\in \Fl^m(\cca)\ox \A^m(\cca)$. 
By the isomorphisms of Theorem \ref{mti} that gives an element of $M(\bm\Lambda)_\lambda \ox \A^m(\cca)^\Swl$. From it, one constructs a rational map $\Psi: \C^m \to M(\bm\Lambda)_{\lambda} \to L(\bm\Lambda)_{\lambda}$, where $L(\bm\Lambda)_{\lambda} := M(\bm\Lambda)_\lambda/\ker \Shom^p \cong \Ghom( \Fl^m(\cca)^\Swl)$, again by Theorem \ref{mti}. This map $\Psi : \C^m \to L(\bm\Lambda)_\lambda$ is the \emph{(cyclotomic) weight function}.

The weighting of the hyperplane arrangement $\cca$ defines the \emph{master function} $\Phi$, \eqref{MF}. The evaluation of the map $\Psi$ at any critical point $p$ of $\Phi$ is called the \emph{Bethe vector} associated to that critical point. Making use of results from \cite{V11} we show that the Bethe vectors corresponding to isolated critical points are nonzero, under certain conditions. See Theorem \ref{bvthm}. 

In \cite{VY1,VY2} a \emph{cyclotomic Gaudin model} was constructed and solved by Bethe ansatz.  
Our construction of the canonical element $\Theta$ here is chosen in such a way that the weight function $\Psi$ coincides with the weight function $\psi_\Gamma$ of \cite{VY1}. So Theorem \ref{bvthm} establishes that the Bethe vectors of \cite{VY1} corresponding to isolated solutions of the cyclotomic Bethe equations are nonzero (in the case of diagram automorphisms, and if the conditions of Theorem \ref{bvthm} hold).

\subsection{Structure of the paper} After recalling in \S\ref{sec: arrangements} some facts from \cite{SV} about general hyperplane arrangements, in \S\ref{sec: cda} we define the cyclotomic discriminantal arrangements and give their flag relations explicitly. Then \S\ref{fla} and \S\ref{bf} are devoted to the situation ``before symmetrization'': that means working with a free Lie algebra in which each orbit, of the automorphism acting on the generators, has the same length, $T$, and working only in the subspace of weight $(1,\dots,1)$.
Finally in \S\ref{symsec} and \S\ref{sec: cd} we can consider the general situation: in \S\ref{symsec} we prove Theorem \ref{t1i} and then in \S\ref{sec: cd} we prove Theorem \ref{mti}. In \S\ref{sec: cop} we prove properties (in particular vanishing properties) of the cocycle $\Co$. Finally in 
\S\ref{cosec} we apply the results of the paper to prove results about the weight function of cyclotomic Gaudin models.

\begin{ack} 
The research of AV is supported in part by NSF grant DMS-1362924 and the Simons Foundation. 
AV thanks the Max Planck Institute, Bonn for hospitality during the preparation of this paper, and the School of Physics, Astronomy and Mathematics at the University of Hertfordshire for hospitality during a visit in June 2015. 
CY thanks the Department of Mathematics at UNC Chapel Hill for hospitality during his visit in August 2015.
\end{ack}
\section{Hyperplane arrangements}\label{sec: arrangements}
We recall some facts about hyperplane arrangements, Orlik-Solomon algebras and flag complexes from \cite{SV}. 
Let $m$ be a positive integer and let $\cc$ be a finite collection of affine hyperplanes in $\C^m$. We call $\cc$ an \emph{arrangement} in $\C^m$. 

\subsection{Edges, flags, and the flag complex}
An \emph{edge} of the arrangement $\cc$ is a non-empty intersection of its hyperplanes. 
For $k=0,\dots,m$, let $\Flag^k(\cc)$ denote the set of all flags 
\be\W=L^0 \supset L^1 \supset\dots \supset L^k\nn\ee 
with each $L^j$ an edge of $\cc$ of codimension $j$. Let 
$\Fl^k(\cc,\Z)$ denote the quotient of 
the free abelian group on $\Flag^k(\C)$
by the following relations. For every flag with a gap 
\be \widehat F = (L^0 \supset L^1 \supset L^{i-1} \supset L^{i+1} \supset \dots \supset L^k),\qquad i<k,\nn\ee we impose 
\be \sum_{F\supset \widehat F} F =0 \label{flagrel}\ee
in $\Fl^k(\cc,\Z)$, where the sum is over all flags $F = (\widetilde L^0 \supset \widetilde L^1 \supset \dots \widetilde \supset L^k) \in \Flag^k(\cc)$ such that $\widetilde L^j = L^j$ for all $j\neq i$.  

There is an ``extension of flags'' differential $d: \Fl^k(\cc, \Z) \to \Fl^{k+1}(\cc,\Z)$ defined by 
\be d(L^0 \supset L^1 \supset \dots \supset L^k) = 
\sum_{L^{k+1}} (L^0 \supset L^1 \supset \dots \supset L^k \supset L^{k+1}), \nn\ee
where the sum is over all edges $L^{k+1}$ of $\cc$ of codimension $k+1$ contained in $L^k$. It follows from \eqref{flagrel} that $d^2=0$. Thus we have a complex, the \emph{flag complex}, $(\Fl^\bl(\cc,\Z), d)$. 

\subsection{Orlik-Solomon algebras}\label{os}
Define Abelian groups $\A^k(\cc,\Z)$, $k=0,1,\dots,m$ as follows. For $k=0$, set $\A^0(\cc,\Z) = \Z$. For $k\geq 1$, $\A^k(\cc,\Z)$ is generated by $k$-tuples $(H_1,\dots,H_k)$ of hyperplanes $H_i\in \cc$, subject to the relations:
\begin{enumerate}[-]
\item $(H_1,\dots,H_k) =0$ if $H_1,\dots, H_k$ are not in general position (\ie if $\codim H_1 \cap \dots \cap H_k\neq k$);
\item $(H_{\sigma(1)},\dots, H_{\sigma(k)}) = (-1)^{|\sigma|} (H_1,\dots, H_k)$ for every permutation $\sigma \in \Sigma_k$;
\item for any $k+1$ hyperplanes $H_1,\dots,H_{k+1}$ that have non-empty intersection, $H_1\cap\dots\cap H_{k+1}\neq \emptyset$, and that are not in general position,
\be\sum_{p=1}^{k+1} (-1)^p (H_1,\dots,\widehat H_{p},\dots, H_{k+1}) =0,\nn\ee where $\widehat H_p$ denotes omission. 
\end{enumerate}
The \emph{Orlik-Solomon algebra} of the arrangement $\cc$ is the direct sum $\A^\bl(\cc, \Z)=\bigoplus_{k=0}^m \A^k(\cc, \Z)$ endowed with the product given by $(H_1,\dots, H_k) \wx (H'_1,\dots,H'_p) = (H_1,\dots,H_k,H'_1,\dots, H'_p)$. It is a graded skew-commutative algebra over $\Z$.

\subsection{Orlik-Solomon algebra as an algebra of differential forms}\label{osdf}
For each hyperplane $H\in \cc$, pick a polynomial $l_H$ of degree one on $\W$ whose zero set is $H$, \ie let $l_H=0$ be an affine equation for $H$. Consider the logarithmic differential form 
\be \iota(H):= d\log l_H = \frac{dl_H}{l_H}\nn\ee 
on $\W$. Note that $\iota(H)$ does not depend on the choice of $l_H$ but only on $H$. Let $\overline\A^\bl(\cc,\Z)$ be the $\Z$-algebra of differential forms generated by $1$ and $\iota(H)$, $H\in \cc$. The assignment $H\mapsto \iota(H)$ defines an isomorphism $\A(\cc,\Z)\xrightarrow\sim \overline\A(\cc,\Z)$ of graded algebras. 
\emph{Henceforth we shall not distinguish between $\A$ and $\overline\A$.}

\subsection{The pairing of flags with forms}\label{pf}
We say a $k$-tuple $\bar H = (H_1,\dots,H_k)$, $H_i\in \cc$, of hyperplanes is \emph{adjacent (with sign $(-1)^{|s|}$)} to a flag $F = (L^0\supset L^1 \supset \dots L^k) \in \Flag^k(\cc,\Z)$ if there exists a permutation $s \in \Sigma_k$ such that $L^i = H_{s(1)} \cap H_{s(2)} \cap \dots \cap H_{s(i)}$ for each $i=1,\dots, k$. Such a permutation is unique if it exists. 

For each $k=1,\dots,m$, there is an isomorphism  $\varphi^k:\A^k(\cc, \Z)\xrightarrow\sim\Fl^k(\cc, \Z)^*$ defined as follows. If $\bar H =(H_1,\dots,H_k)$ is adjacent with sign $(-1)^{|s|}$ to a flag $F$ then the pairing $\varphi^k(H_1,\dots,H_k)(F)$ is defined to be $(-1)^{|s|}$. Otherwise $\varphi^k(H_1,\dots,H_k)(F)$ is defined to be zero. We shall use the notation
\be \bra F, H_1\wx \dots \wx H_k \ket := \varphi^k(H_1,\dots,H_k)(F) \nn\ee
for this pairing. 

\subsection{Framings and bases}\label{fb}
A \emph{framing} $\oo$ of an arrangement $\cc$ is a choice, for every edge $L$ of $\cc$, of a hyperplane $H(L)$ containing $L$. Given a framing $\oo$, define $\Flag^p(\cc,\oo)$ to be the set of those flags $(L^0\supset L^1\supset \dots \supset L^p)\in \Flag^p(\cc)$, such that $L^k = H(L^1) \cap \dots \cap H(L^k)$ for $k=1,\dots,p$. Equivalently it is the set of flags such that $L^k \not\subset H(L^{k+1})$, $k=1,\dots,p-1$.
\begin{lem}\label{flem2} Given a flag $(L^0\supset L^1\supset \dots \supset L^p)\in \Flag^p(\cc,\oo)$ and a permutation $s\in \Sigma_p$, consider the flag $\tilde F = (\tilde L^0\supset \tilde L^1\supset \dots \supset \tilde L^p)$ defined by $\tilde L^k := H(L^{s(1)}) \cap  \dots \cap H(L^{s(k)})$, $k=1,\dots,p$. If $s$ is not the identity then $\tilde F\notin \Flag^p(\cc,\oo)$. 
\end{lem}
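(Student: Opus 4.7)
The plan is to identify, from the hypothesis $s\neq\id$, a specific level of $\tilde F$ at which the framing condition of $\Flag^p(\cc,\oo)$ is forced to fail. First, I would check that $\tilde F$ is genuinely a flag in $\cc$: because $F\in\Flag^p(\cc,\oo)$ forces $L^p = H(L^1)\cap\dots\cap H(L^p)$ to have codimension $p$, the $p$ hyperplanes $H(L^1),\dots,H(L^p)$ are in general position, and any intersection of a $k$-subset has the expected codimension $k$. Thus each $\tilde L^k$ is an edge of codimension $k$.

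Next, let $j$ be the \emph{largest} index in $\{1,\dots,p\}$ with $s(j)\neq j$. Then $s$ fixes $\{j+1,\dots,p\}$ pointwise, so $s$ restricts to a nontrivial permutation of $\{1,\dots,j\}$; in particular $j\geq 2$, since there is no nontrivial permutation of a one-element set. Two consequences flow from this. On the one hand, $\{s(1),\dots,s(j)\} = \{1,\dots,j\}$, which gives $\tilde L^j = L^j$, and hence the framing forces $H(\tilde L^j) = H(L^j)$. On the other hand, $s^{-1}(j)\in\{1,\dots,j-1\}$, so the value $j$ already appears among $s(1),\dots,s(j-1)$, meaning that $H(L^j)$ is one of the hyperplanes intersected to produce $\tilde L^{j-1}$.

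Putting these together, $\tilde L^{j-1} \subset H(L^j) = H(\tilde L^j)$. Since $j-1\in\{1,\dots,p-1\}$, this directly contradicts the equivalent characterization $\tilde L^k \not\subset H(\tilde L^{k+1})$ of membership in $\Flag^p(\cc,\oo)$, and we conclude $\tilde F\notin \Flag^p(\cc,\oo)$. I do not anticipate any serious obstacle beyond the bookkeeping needed to choose the right index $j$; the substance of the argument is that fixing the ``tail'' $\tilde L^j,\dots,\tilde L^p$ of the reordered flag (which forces it to agree with the tail of $F$) rigidly pins down the framing choice $H(\tilde L^j)$, and this choice necessarily coincides with a hyperplane already spent at an earlier level of $\tilde F$.
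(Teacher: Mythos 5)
Your argument is correct and is essentially the paper's own proof: take the largest index $j$ with $s(j)\neq j$, use $s(\{1,\dots,j\})=\{1,\dots,j\}$ to get $\tilde L^j=L^j$ (hence $H(\tilde L^j)=H(L^j)$) and $s^{-1}(j)<j$ to get $\tilde L^{j-1}\subset H(L^j)$, contradicting the characterization $\tilde L^{k}\not\subset H(\tilde L^{k+1})$ of $\Flag^p(\cc,\oo)$. Your preliminary check that $\tilde F$ is genuinely a flag (general position of the $H(L^i)$) is a harmless extra detail the paper leaves implicit.
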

\begin{proof}
If $s\neq\id$ then we can let $k\in\{1,\dots,p\}$ be the largest such that $s(k)\neq k$. Then $k= s(l)$ for some $l\in \{1,\dots,k-1\}$, so  $\tilde L^{k-1} \subset H(L^{k})$. And $s(\{1,\dots,k\}) = \{1,\dots,k\}$, so $\tilde L^{k} = L^{k}$. Thus $\tilde L^{k-1} \subset H(\tilde L^{k})$ and hence $\tilde F \notin  \Flag^p(\cc,\oo)$ as required.\end{proof}

The next proposition is proved in \cite{SV}, Theorems 1.6.5 and 2.9.2.
\footnote{Let us remark that the definitions of $\mathscr O_i$ and hence $Fl_j(\mathscr O)$ in \cite[\S1.6]{SV} have misprints. Nevertheless the proofs there go through for $\Flag^p(\cc,\oo)$ as defined above. Namely, part (i) of the Proposition (\ref{basethm}) follows from \cite[Corollary 2.9.1]{SV} and part (ii) is a corollary of \cite[Lemma 1.5.2]{SV}.}

\begin{prop}\label{basethm} For $p=1,\dots,m$:
\begin{enumerate}[(i)]
\item The group $\Fl^p(\cc,\Z)$ is free over $\Z$, and admits $\Flag^p(\cc,\oo)$ as a base.
\item The group $\A^p(\cc,\Z)$ is free over $\Z$, and admits as a base the set
\be \left\{\left(H(L^1) , H(L^{2}) , \dots , H(L^p)\right) \right\}_{(L^0 \supset L^1 \supset \dots \supset L^p) \in \Flag^p(\cc,\oo)}.\nn\ee
\end{enumerate}
\qed\end{prop}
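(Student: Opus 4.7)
The plan is to deduce both parts by combining a single orthonormality computation with separate spanning arguments. For $F' = (\widetilde L^0 \supset \cdots \supset \widetilde L^p) \in \Flag^p(\cc, \oo)$ set $\xi_{F'} := (H(\widetilde L^1), \ldots, H(\widetilde L^p))$; since $\widetilde L^k = H(\widetilde L^1)\cap\cdots\cap H(\widetilde L^k)$ has codimension $k$, these hyperplanes are in general position and $\xi_{F'}$ is a legitimate nonzero element of $\A^p(\cc,\Z)$.

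First I would establish the orthonormality $\bra F, \xi_{F'} \ket = \delta_{F,F'}$ for $F, F' \in \Flag^p(\cc, \oo)$. By the definition recalled in \S\ref{pf}, this pairing is nonzero only if some $s \in \Sigma_p$ satisfies $L^k = H(\widetilde L^{s(1)})\cap\cdots\cap H(\widetilde L^{s(k)})$ for every $k$, in which case it equals $(-1)^{|s|}$. But that condition says that the flag $\tilde F'$ produced from $F'$ and $s$ in Lemma \ref{flem2} coincides with $F$; since $F \in \Flag^p(\cc,\oo)$, so does $\tilde F'$, and Lemma \ref{flem2} forces $s = \id$. Then $L^k = H(\widetilde L^1)\cap\cdots\cap H(\widetilde L^k) = \widetilde L^k$ for all $k$ (the last equality using $F' \in \Flag^p(\cc,\oo)$), so $F = F'$, with sign $+1$. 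The identity pairing matrix shows at once that both families $\{F\}_{F\in\Flag^p(\cc,\oo)}\subset\Fl^p(\cc,\Z)$ and $\{\xi_{F'}\}_{F'\in\Flag^p(\cc,\oo)}\subset\A^p(\cc,\Z)$ are $\Z$-linearly independent.

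Next I would prove that $\Flag^p(\cc,\oo)$ spans $\Fl^p(\cc,\Z)$. The orthonormality already supplies a $\Z$-linear map
\[
r \colon \Fl^p(\cc,\Z) \to \Z^{\Flag^p(\cc,\oo)}, \qquad r(F) = \sum_{F' \in \Flag^p(\cc,\oo)} \bra F, \xi_{F'} \ket\, F',
\]
well-defined because $\varphi^p(\xi_{F'})$ is by construction a linear functional on $\Fl^p(\cc,\Z)$, and acting as the identity on framed flags. To conclude I would show that every unframed flag equals, in $\Fl^p(\cc,\Z)$, a $\Z$-combination of framed ones by iterated use of the relations \eqref{flagrel}. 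The idea is a noetherian induction: attach to each flag a well-founded complexity measure, apply \eqref{flagrel} at a carefully chosen gap position, and verify that every summand produced has strictly smaller complexity. A workable measure is reverse-lexicographic on $(L^1,\ldots,L^p)$ with respect to a total refinement of the edge partial order compatible with $\oo$; naive choices such as ``largest framing-failure index'' do not work because replacing $L^k$ can introduce new failures at position $k+1$, as a short check shows. This is essentially the content of Corollary 2.9.1 of \cite{SV}, and combined with the independence above yields (i).

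Part (ii) will then follow formally: by (i), $\Fl^p(\cc,\Z)$ is $\Z$-free on $\Flag^p(\cc,\oo)$, hence $\Fl^p(\cc,\Z)^*$ is $\Z$-free on the corresponding dual basis. Since $\varphi^p$ is (by \S\ref{pf}) an isomorphism, and the orthonormality shows that it sends the independent family $\{\xi_{F'}\}$ onto precisely this dual basis, the $\xi_{F'}$ constitute a $\Z$-basis of $\A^p(\cc,\Z)$. The main obstacle I anticipate is the rewriting step inside (i): finding a complexity measure on flags for which \eqref{flagrel} is strictly decreasing on every summand requires care, while the orthonormality and the passage from (i) to (ii) are essentially sign-bookkeeping via Lemma \ref{flem2} and duality.
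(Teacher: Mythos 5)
The paper gives no internal proof of this proposition: it is imported wholesale from \cite{SV} (Theorems 1.6.5 and 2.9.2, with the footnote attributing part (i) to Corollary 2.9.1 and part (ii) to Lemma 1.5.2 of \cite{SV}). So the fair comparison is this: the part of your argument that is actually carried out — the orthonormality $\bra F, \xi_{F'}\ket = \delta_{F,F'}$ for $F,F'\in\Flag^p(\cc,\oo)$ via Lemma \ref{flem2}, and the resulting $\Z$-linear independence of both families — is correct, and it reproduces almost verbatim the paper's proof of the \emph{next} proposition (that the bases of (i) and (ii) are dual). But that duality is not the substance of the present statement; the substance of (i) is that the framed flags \emph{span} $\Fl^p(\cc,\Z)$, i.e.\ that the relations \eqref{flagrel} allow every flag to be rewritten in terms of $\Flag^p(\cc,\oo)$. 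On this point your proposal only gestures at a noetherian induction with an unspecified and unverified complexity measure ("reverse-lexicographic with respect to a total refinement of the edge partial order compatible with $\oo$") and then defers to Corollary 2.9.1 of \cite{SV} — which is exactly the citation the paper itself makes, so the reliance is legitimate, but it should be stated as a citation rather than dressed as a proof sketch, since as written the decreasingness of the measure is precisely the unproved step.

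Your deduction of (ii) from (i) is a genuine (mild) departure: the paper cites \cite[Lemma 1.5.2]{SV} for (ii) directly, whereas you obtain it from (i), orthonormality, and the isomorphism $\varphi^p:\A^p(\cc,\Z)\xrightarrow{\sim}\Fl^p(\cc,\Z)^*$ asserted in \S\ref{pf}. Within the logical structure of the paper, which takes that isomorphism as known background, your argument is valid and arguably cleaner, since (ii) becomes pure bookkeeping once (i) and the pairing matrix are in hand. Just be aware that it does not reduce the external dependence: $\varphi^p$ being an isomorphism is itself a result of \cite{SV}, and in that reference its proof is intertwined with the construction of bases of this kind, so a truly self-contained treatment along your lines would have to check that no circularity is introduced.
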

We have $\Fl^p(\cc,\Z)^*\cong \A^p(\cc,\Z)$ as in \S\ref{pf}. 
\begin{prop} The bases of (i) and (ii) are dual. 
\end{prop}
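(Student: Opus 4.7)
The plan is to show directly that the matrix of pairings
\[
\bra F, H(L'^1) \wx \dots \wx H(L'^p) \ket, \qquad F, F' \in \Flag^p(\cc,\oo),
\]
where $F = (L^0\supset\dots\supset L^p)$ and $F' = (L'^0\supset\dots\supset L'^p)$, is the identity matrix. Given the definition of $\varphi^p$ in \S\ref{pf}, this reduces to two claims: (a) the pairing is $1$ when $F=F'$, and (b) the pairing is $0$ when $F\neq F'$.

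Claim (a) is essentially tautological: by definition of $\Flag^p(\cc,\oo)$, the flag $F$ satisfies $L^i = H(L^1)\cap\dots\cap H(L^i)$ for every $i$, so the tuple $(H(L^1),\dots,H(L^p))$ is adjacent to $F$ via the identity permutation. Hence the pairing equals $(-1)^0=1$.

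The heart of the proof is claim (b). I would argue by contradiction: suppose $F\neq F'$ yet $(H(L'^1),\dots,H(L'^p))$ is adjacent to $F$ via some $s\in\Sigma_p$, meaning
\[
L^i = H(L'^{s(1)})\cap\dots\cap H(L'^{s(i)}), \qquad i=1,\dots,p.
\]
Now apply Lemma \ref{flem2} to the flag $F'\in \Flag^p(\cc,\oo)$ with this permutation $s$: the resulting flag $\tilde F$ has exactly the edges $\tilde L^i = H(L'^{s(1)})\cap\dots\cap H(L'^{s(i)})$, so $\tilde F = F$. Since $F\in \Flag^p(\cc,\oo)$, the lemma forces $s=\id$. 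But then $L^i = H(L'^1)\cap\dots\cap H(L'^i) = L'^i$ for every $i$, where the last equality uses $F'\in \Flag^p(\cc,\oo)$. So $F=F'$, contradicting our assumption.

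There is no serious obstacle here; the only nontrivial point is to recognize that Lemma \ref{flem2}, stated for a single flag in $\Flag^p(\cc,\oo)$, should be invoked with respect to the \emph{second} flag $F'$ and the adjacency permutation $s$, after which the conclusion is immediate.
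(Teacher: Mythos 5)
Your proof is correct and is essentially the paper's argument: the diagonal pairing is $1$ by definition of $\Flag^p(\cc,\oo)$, and the off-diagonal vanishing comes from Lemma \ref{flem2}, applied to the basis flag whose framing hyperplanes form the tuple (you apply it to $F'$ with $F$ playing the role of $\tilde F$, while the paper phrases it with the roles of the two flags exchanged — the same argument up to relabeling). No gaps.
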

\begin{proof} Let $F=(L^0 \supset L^1 \supset \dots \supset L^p) \in \Flag^p(\cc,\oo)$. By definition, the pairing this flag $F$ with  $\left(H(L^1) , H(L^{2}) , \dots , H(L^p)\right)$ is 1.  It remains to show that if  $\tilde F = (\tilde L^0\supset \tilde L^1\supset \dots \supset \tilde L^p)\in \Flag^p(\cc)$ is any other flag with which $\left(H(L^1) , H(L^{2}) , \dots , H(L^p)\right)$ has non-zero pairing, then $\tilde F$ does not belong to the set $\Flag^p(\cc,\oo)$ of basis flags. This is the content of Lemma \ref{flem2}.
\end{proof}
\begin{cor}\label{cancor} For $p=1,\dots,m$, the canonical element of $\Fl^p(\cc,\Z)\otimes_\Z \Fl^p(\cc,\Z)^*$ can be expressed as
\be \sum_{(L^0 \supset L^1 \supset \dots \supset L^p) \in \Flag^p(\cc,\oo)} \left(L^0 \supset L^1 \supset \dots \supset L^p\right) \otimes \left(H(L^1) , H(L^{2}) , \dots , H(L^p)\right).  \nn\ee\qed
\end{cor}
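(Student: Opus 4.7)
The plan is to observe that this corollary is an immediate consequence of the preceding proposition together with the fact that the two bases it exhibits are dual to one another. Recall that for any finitely generated free abelian group $V$ of finite rank with basis $\{e_i\}_{i \in I}$ and dual basis $\{e^i\}_{i\in I}$ of $V^*$ (i.e.\ $e^i(e_j) = \delta_{ij}$), the canonical element of $V\otimes_\Z V^*$ corresponding under the natural isomorphism $V\otimes_\Z V^*\cong \End_\Z(V)$ to the identity map $\id_V$ is given by $\sum_{i\in I} e_i \otimes e^i$, independently of the choice of basis.

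Thus the approach is simply to apply this general principle with $V = \Fl^p(\cc,\Z)$. By Proposition \ref{basethm}(i), the set $\Flag^p(\cc,\oo)$ furnishes a $\Z$-basis of $V$, indexing its elements by flags $F = (L^0 \supset L^1 \supset \dots \supset L^p)$. By Proposition \ref{basethm}(ii), under the isomorphism $\Fl^p(\cc,\Z)^*\cong \A^p(\cc,\Z)$ of \S\ref{pf}, the corresponding elements $(H(L^1),H(L^{2}),\dots,H(L^p))$ form a $\Z$-basis of $V^*$. The previous proposition establishes that these two bases are dual: the pairing between $F$ and $(H(L^1),\dots,H(L^p))$ is $1$ (since the identity permutation is adjacent with sign $+1$), and the pairing of $(H(L^1),\dots,H(L^p))$ with any other flag $\tilde F \in \Flag^p(\cc,\oo)$ is $0$ by Lemma \ref{flem2}.

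Assembling these observations, the canonical element of $\Fl^p(\cc,\Z)\otimes_\Z \Fl^p(\cc,\Z)^*$ is precisely the sum indexed by flags in $\Flag^p(\cc,\oo)$ of the tensor product of each basis flag with its dual Orlik--Solomon generator, which is the formula stated. There is no real obstacle here; the entire content has been packaged into the preceding proposition and the duality of bases. The only thing to verify is that no sign subtleties intervene in the identification $\Fl^p(\cc,\Z)^*\cong \A^p(\cc,\Z)$ of \S\ref{pf}: the convention there sends $(H_1,\dots,H_p)$ adjacent to $F$ via the identity permutation to the pairing $+1$, which matches the duality computation above.
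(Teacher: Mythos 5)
Your proposal is correct and is exactly the argument the paper intends: the corollary is stated with no separate proof precisely because it follows immediately from Proposition \ref{basethm} and the duality of the two bases, via the standard expression of the canonical element of $V\otimes_\Z V^*$ in terms of any pair of dual bases. Your check that the pairing convention of \S\ref{pf} gives $+1$ on the identity permutation (so no sign issues arise) matches the paper's setup.
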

\subsection{Weighted arrangements, the geometric form,  and the master function} \label{sg}
Let $\A^k(\cc) := \A^k(\cc, \Z) \otimes_\Z \C$ and $\Fl^k(\cc) := \Fl^k(\cc,\Z) \otimes_\Z \C$ for each $k$. 

A \emph{weighted} arrangement of hyperplanes is an arrangement $\cc$ together with an assignment, to each hyperplane $H\in \cc$, of a number $a(H)\in \C$, its \emph{weight}. 
The weighting defines a symmetric bilinear form $\Geom^k$ on $\Fl^k(\cc)$, \cite{SV}, given by 
\begin{align} \Geom^k(F,F')
  &:= \sum \bra F, H_1\wx\dots\wx H_k\ket \bra F',H_1\wx\dots\wx H_k\ket a(H_1) \dots  a(H_k), \nn\end{align}
where the sum is over the set of unordered $k$-tuples $(H_1,\dots, H_k)$ of hyperplanes.
The form $\Geom^k$ is sometimes called the \emph{quasi-classical contravariant form} of the arrangement $\cc$. 
We shall refer to it simply as the \emph{geometric} form. 
It defines a homomorphism, 
\be \Ghom^k :\Fl^k(\cc) \to \Fl^k(\cc)^* \simeq \A^k(\cc) \nn\ee
by $\Ghom^k(F) := \Geom^k(F,\cdot)$. Explicitly,
\be \Ghom^k( (L^0\supset \dots \supset L^k) ) = \sum a(H_1) \dots a(H_k) H_1 \wx \dots \wx H_k \ee
where the sum is over all $k$-tuples $(H_1,\dots,H_k)$ such that $H_i \supset L^i$ for all $i$. 

The \emph{master function} corresponding to this weighted arrangement is 
\be \Phi = \Phi_{\cc,a}  := \sum_{H\in \cc} a(H) \log l_H \nn\ee
where each $\l_H$ is an affine equation for the hyperplane $H$, as in \S\ref{osdf}. It is defined up to an additive constant.
Define a differential $d=d(a): \A^k(\cc)  \to \A^{k+1}(\cc)$ by the rule 
\be dx = x \wx d\Phi = x\wx \left(\sum_{H\in \C} a(H) H\right).\nn\ee 
It is clear that $d^2=0$, so this makes $(\A^\bl,d)$ into a complex, called the \emph{Aomoto complex}. 
\begin{thm} \label{cmap}$\Ghom^\bl$ is a map of complexes 
\be \Ghom^\bl : (\Fl^\bl(\cc)   , d) \to (\A^\bl(\cc), d) .\nn\ee
\end{thm}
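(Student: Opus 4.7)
\medskip

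My plan is to verify the chain map identity $\Ghom^{k+1} \circ d = d \circ \Ghom^k$ directly on a single basic flag, using the explicit formulas for $\Ghom^k$ and for both differentials. By linearity it suffices to fix a flag $F = (L^0 \supset L^1 \supset \dots \supset L^k)$ and to match the two sides term-by-term in $\A^{k+1}(\cc)$.

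On one side I would expand
\beu
\Ghom^{k+1}(dF) = \sum_{L^{k+1}} \Ghom^{k+1}\!\left(L^0 \supset \dots \supset L^k \supset L^{k+1}\right) = \sum_{L^{k+1}} \sum_{(H_1,\dots,H_{k+1})} a(H_1)\cdots a(H_{k+1})\, H_1 \wx \dots \wx H_{k+1},
\eeu
where the outer sum is over codimension-$(k+1)$ edges $L^{k+1} \subset L^k$ and the inner sum is over all tuples with $H_i \supset L^i$ for $i = 1,\dots,k+1$. On the other side I would expand
\beu
d(\Ghom^k(F)) = \Ghom^k(F) \wx d\Phi = \sum_{(H_1,\dots,H_k)} \sum_{H \in \cc} a(H_1)\cdots a(H_k)\,a(H)\, H_1 \wx \dots \wx H_k \wx H,
\eeu
with $H_i \supset L^i$ for $i \leq k$ and no constraint on $H$.

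The key observation (which I would record as a small lemma or inline remark) is that in the Orlik-Solomon algebra, $H_1 \wx \dots \wx H_k \wx H = 0$ whenever $H_1,\dots,H_k,H$ fail to be in general position. Since $H_i \supset L^i$ forces $H_1 \cap \dots \cap H_k = L^k$, non-vanishing of the wedge requires $H \not\supset L^k$, in which case $L^k \cap H$ is automatically a codimension-$(k+1)$ edge contained in $L^k$. Setting $L^{k+1} := L^k \cap H$ and $H_{k+1} := H$, this assignment is the inverse of the unique decomposition on the left-hand side: given any pair $(L^{k+1}, H_{k+1})$ appearing in $\Ghom^{k+1}(dF)$ with non-vanishing contribution, necessarily $L^k \not\subset H_{k+1}$ and then $L^{k+1} = L^k \cap H_{k+1}$ is forced. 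This bijection matches the two sums term-by-term with identical weights $a(H_1)\cdots a(H_{k+1})$.

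The argument is almost entirely bookkeeping; the one step requiring care is the identification that the outer sum over $L^{k+1}$ on one side corresponds, via the recipe $L^{k+1} = L^k \cap H$, to the unconstrained sum over $H \in \cc$ on the other side after the vanishing relations of $\A^\bl(\cc)$ are applied. No deep input is needed beyond Proposition~\ref{basethm} (for the description of $\A^\bl$ and $\Fl^\bl$) and the OS relations of \S\ref{os}, so I do not anticipate a serious obstacle.
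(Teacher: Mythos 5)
Your argument is correct, and it is essentially the verification that the paper delegates to the citations \cite[Lemma 5.1]{FMTV} and \cite[Lemma 3.2.5]{SV}: expand both sides on a single flag, match ordered tuples $(H_1,\dots,H_k,H)$ with $H_i\supset L^i$, discard the terms with $H\supset L^k$ by the Orlik--Solomon vanishing for tuples not in general position, and recover $L^{k+1}=L^k\cap H$ from any non-vanishing term, so that the correspondence with the pairs $(L^{k+1},H_{k+1})$ on the other side is a weight-preserving bijection (with no sign issues, since both differentials append at the last position). The only slightly loose point is the claim that $H\not\supset L^k$ already makes $L^k\cap H$ a codimension-$(k+1)$ edge: in an affine arrangement $L^k\cap H$ can be empty, but such terms also contribute zero on both sides, because a tuple with empty intersection has $\codim \neq k+1$ and is killed by the same general-position relation you invoke, so the bijection should be stated (as you in fact use it) only between terms with non-vanishing wedge.
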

\begin{proof} See \cite[Lemma 5.1]{FMTV} and \cite[Lemma 3.2.5]{SV}. \end{proof}

\subsection{Functoriality of the geometric form on subarrangements}\label{funcsec}
Suppose $\mc B \subset \cc$ is a subarrangement in $\W$. 
There are obvious inclusions $\A^k(\mc B) \into \A^k(\cc)$  and hence $\Fl^k(\cc) \simeq \A^k(\cc)^* \to \A^k(\mc B)^* \simeq \Fl^k(\mc B)$, for each $k$. 
 
\begin{lem}\label{funglem}
Given a weighting $a:\mc B \to \cc$, suppose we extend it to a weighting of the arrangement $\cc$ by setting $a(H) = 0$ for every remaining hyperplane $H\in \cc\setminus \mc B$. Then the following diagram commutes:
\be\begin{tikzpicture}    
\matrix (m) [matrix of math nodes, row sep=3em,    
column sep=5em, text height=1ex, text depth=1ex]    
{     
\Fl^k(\mc B) &  \A^k(\mc B), \\    
\Fl^k(\cc)  & \A^k(\cc).  \\    
};    
\path[->,font=\scriptsize,shorten <= 2mm,shorten >= 2mm]    
(m-1-1) edge node [above] {$\Ghom$} (m-1-2)    
(m-2-1) edge node [above] {$\Ghom$} (m-2-2);    
\path[->,shorten <= 2mm,shorten >= 0mm]  (m-2-1) edge  (m-1-1);   
\path[->,shorten <= 0mm,shorten >= 2mm]  (m-1-2) edge  (m-2-2);   
\end{tikzpicture}\nn\ee    
\qed
\end{lem}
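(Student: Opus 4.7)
The plan is to unfold the defining formula for $\Ghom^k$ and observe that with weights extended by zero outside $\mc B$ only hyperplanes in $\mc B$ contribute; the remaining equality then reduces to compatibility of the flag--form pairings under the inclusion $\iota:\A^k(\mc B)\into\A^k(\cc)$.

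First I would invoke the explicit formula from \S\ref{sg}: for any $F\in\Fl^k(\cc)$,
\[ \Ghom^k(F) = \sum_{(H_1,\dots,H_k)} \langle F, H_1\wx\cdots\wx H_k\rangle\,a(H_1)\cdots a(H_k)\,H_1\wx\cdots\wx H_k, \]
the sum being over unordered $k$-tuples of hyperplanes of $\cc$. Under the extension-by-zero weighting, any $k$-tuple containing a hyperplane of $\cc\setminus\mc B$ has weight product zero, so the sum restricts to $k$-tuples of hyperplanes in $\mc B$. In particular $\Ghom^k(F)$ already lies in the image of $\iota$.

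Next I would use the fact that the vertical map $\pi:\Fl^k(\cc)\to\Fl^k(\mc B)$ is by construction dual to $\iota$, hence uniquely characterized by $\langle\pi(F),\alpha\rangle_{\mc B} = \langle F,\iota(\alpha)\rangle_\cc$ for all $\alpha\in\A^k(\mc B)$, where the subscripts indicate the pairings of \S\ref{pf} in the two arrangements. Applied to $\alpha = H_1\wx\cdots\wx H_k$ with each $H_i\in\mc B$ this reads $\langle\pi(F),H_1\wx\cdots\wx H_k\rangle_{\mc B} = \langle F, H_1\wx\cdots\wx H_k\rangle_\cc$. Substituting into the restricted sum above converts it term-by-term into $\iota(\Ghom^k(\pi(F)))$, which is exactly the commutativity asserted.

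The proof is essentially bookkeeping; the only point requiring attention is that $\iota$ is a well-defined injection, which follows because every Orlik-Solomon relation among hyperplanes of $\mc B$ is determined by the combinatorics of $\mc B$ alone and therefore holds identically in $\A^k(\cc)$, with no further relations appearing that would collapse it. I do not foresee a serious obstacle.
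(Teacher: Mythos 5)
Your proposal is correct and is exactly the intended argument: the paper states this lemma without proof (the \qed marks it as immediate), and your verification — extension by zero kills all tuples meeting $\cc\setminus\mc B$, while the vertical map $\Fl^k(\cc)\to\Fl^k(\mc B)$ is by definition dual to $\iota$, so the pairings agree term by term — is precisely the routine unwinding the authors had in mind. Note that injectivity of $\iota$ is not actually needed for the commutativity, only its well-definedness, which holds for the reason you give.
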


\section{Cyclotomic discriminantal arrangements}\label{sec: cda}
Let $\omega$ be a primitive root of unity of order $T\in \Z_{\geq 1}$. We let the cyclic group $\ZT:= \Z/T\Z$ act on $\C$ by multiplication by powers of $\omega$, \ie $k\on z := \omega^k z$, $k\in \ZT$, $z\in \C$. 
Fix a tuple $z=(z_1,\dots, z_N)$, $N\in \Z_{\geq 0}$, of non-zero complex numbers whose $\ZT$-orbits are pairwise disjoint. 

Fix coordinates $t_1,\dots,t_m$ on $\C^m$.
We define the following hyperplanes in $\C^m$:
\begin{alignat}{2}
\Ht ijk &: t_i - \omega^k t_j = 0 ,\quad &&1\leq i<j \leq m,\quad k\in \ZT, \nn\\
\Hz ijk &: t_i - \omega^k z_j = 0, \quad &&1\leq i\leq m,\quad 1\leq j \leq N,\quad k\in \ZT,\nn\\
 \Ho i &: t_i = 0 , \quad &&1\leq i \leq m. \nn\end{alignat}
Note that $\Ht ijk = \Ht ji{-k}$, and $\Ht ijk \cap \Ht ijl = \Ho i \cap \Ho j$ whenever $k\neq l$.

Let $\cc_m$ denote the arrangement in $\C^m$ consisting of all the hyperplanes $\Ht ijk$.

Let $\cc_{0;m}$ denote the arrangement in $\C^m$ consisting of all the hyperplanes $\Ho i$ and $\Ht ijk$.


Let $\cca
$ denote the arrangement in $\C^m$ consisting of all the $\Ho i$, $\Hz ijk$, and  $\Ht ijk$. 

We have 
 $\cc_m\subset \cc_{0;m} \subset  \cca$.

In the special case $T=1$ we recover the \emph{discriminantal arrangements}, \cite{SV}. More generally we call such hyperplane arrangements \emph{cyclotomic discriminantal arrangements}.

\begin{rem} The arrangements $\cc_m$ and $\cc_{0;m}$ are those of the \emph{complex reflection groups} $G(T,T,m)$ and $G(T,1,m)$ respectively. See e.g. \cite{LT09}.  
\end{rem}

\subsection{Edges and flags of $\cca$} \label{efs}
Given a a $p$-element subset $I=\{i_1,\dots, i_p\}\subset\{1,\dots,m\}$ with $1\leq p\leq m$, and a map $\bm k:\{1,\dots,m\} \to \ZT$, let $\kv L_I$ denote the edge 
given by
\be \kv L_I : \omega^{k(i_1)}t_{i_1} = \omega^{k(i_2)} t_{i_2} = \dots = \omega^{k(i_p)} t_{i_p}.\label{LIdef}\ee
Note in particular the degenerate case $\kv L_{\{i\}} = \C^m$.
For each $i\in \{1,\dots,N\}$, let $\kv L^i_I$ denote the edge 
given by 
\be \kv L^i_I : \omega^{k(i_1)}t_{i_1} = \omega^{k(i_2)} t_{i_2} = \dots = \omega^{k(i_p)} t_{i_p}=z_i.\nn\ee
Let $\kv L^0_I= L^0_I$ denote the edge 
given by 
\be  L^0_I : t_{i_1} = t_{i_2} = \dots = t_{i_p} = 0. \nn\ee
We have $\codim \kv L_I = p-1$,  $\codim \kv L^i_I = p$, and $\codim \kv L^0_I = p$.

Following \cite[\S5.6.4]{SV}, let us think of edges of the form $\kv L_I$ as \emph{swimming islands}, and edges of the form $\kv L_I^i$, $i\in \{0,1,\dots,N\}$, as \emph{fixed islands}. Every edge of $\cca$ is of the form
\be \kv L_{I_1}^{i_1} \cap \dots \cap \kv L_{I_r}^{i_r} \cap \kv L_{J_1} \cap \dots \cap \kv L_{J_p}.\label{arp}\ee
for some function $\bm k: \{1,\dots,m\}\to \ZT$, some collection of $r\in \Z_{\geq 0}$ distinct numbers $i_1,\dots,i_r\in \{0,1,\dots,N\}$ and some pairwise disjoint subsets $I_1,\dots,I_r$, $J_1,\dots,J_p$ ($p\in \Z_{\geq 0}$) of $\{1,\dots,m\}$. We shall refer to the edge \eqref{arp} as the \emph{archipelago} consisting of the swimming islands $\kv L_{J_1}, \dots ,\kv L_{J_p}$ and fixed islands $\kv L_{I_1}^{i_1},\dots ,\kv L_{I_r}^{i_r}$. 

We will use $\kv L_I^\bl$ to denote any edge which is either the swimming island $\kv L_I$ or any one of the fixed islands $\kv L_I^i$.

We say $\kv L_I^\bl$ \emph{involves $j\in \{1,\dots,m\}$} if $j\in I$.  Note that, in particular, the hyperplanes $\Ht jik$, $\Hz jik$, $\Ho j$ are islands which involve $j$.

Given any flag  $F = ( L^0 \supset  L^1 \supset \dots  \supset L^p) \in \Flag^p(\cca)$, each edge $L^t$ is some archipelago. For $t=0,1,\dots,p-1$, each successive archipelago $L^{t+1}$ is obtained from its predecessor $L^t$ in one of the following ways:
\begin{enumerate}
\item A swimming island becomes fixed: that is, $\kv L_I$ is replaced by $\kv L_I^j$ for some $j=0,1,\dots,N$.
\item Two swimming islands become joined: that is, $\kv L_J \cap \kv L_I$ is replaced by $\kv L_{J\cup I}$. 
\end{enumerate}

\subsection{The flag relations of $\F^p(\cca)$}\label{sec: flagrels}
The defining relations, \eqref{flagrel}, in the flag space $\F^p(\cca)$ are 
\be 0=\sum_{w=1}^W (L^0\dots \supset L^t \supset L_w^{t+1}  \supset L^{t+2}\supset \dots \supset L^p ) \nn\ee
for $t\in\{0,\dots,p-2\}$ and for archipelagos $L^t$, $L_w^{t+1}$, $w=1,\dots, W$, and $L^{t+2}$ of the following forms, where in each case $L$ denotes the remaining islands, if any:
\begin{enumerate}[(i)]
\item $W=2$ and 
\begin{alignat}{3} & & L_1^{t+1} &= \kv L_{J_1\cup J_2}\cap \kv L_{J_3} \cap \kv L_{J_4}\cap L, & & \nn\\
L^t &= \kv L_{J_1} \cap \kv L_{J_2}\cap \kv L_{J_3}\cap \kv L_{J_4} \cap L, & & & 
 L^{t+2} & = \kv L_{J_1\cup J_2}\cap \kv L_{J_3\cup J_4}\cap L ,\nn\\
& & L_2^{t+1} &= \kv L_{J_1} \cap \kv L_{J_2} \cap \kv L_{J_3\cup J_4} \cap L.\nn
\end{alignat}
\item $W=2$ and
\begin{alignat}{3} & & L_1^{t+1} &= \kv L_{J_1\cup J_2}\cap \kv L_{I} \cap L, & & \nn\\
L^t &= \kv L_{J_1} \cap \kv L_{J_2}\cap \kv L_{I} \cap L, & & & 
 L^{t+2} & = \kv L_{J_1\cup J_2}\cap \kv L_{I}^j\cap L ,\nn\\
& & L_2^{t+1} &= \kv L_{J_1} \cap \kv L_{J_2} \cap \kv L_{I}^j \cap L,\nn
\end{alignat}
for some $j\in\{0,1,\dots,N\}$. 
\item\label{anchortwo} $W=2$ and
\begin{alignat}{3} & & L_1^{t+1} &= \kv L_{J_1}^{j_1}\cap \kv L_{J_2} \cap L, & & \nn\\
L^t &= \kv L_{J_1} \cap \kv L_{J_2}\cap L, & & & 
 L^{t+2} & = \kv L_{J_1}^{j_1} \cap \kv L_{J_2}^{j_2} \cap L ,\nn\\
& & L_2^{t+1} &= \kv L_{J_1} \cap \kv L_{J_2}^{j_2} \cap L,\nn
\end{alignat}
for some $j_1,j_2\in\{0,1,\dots,N\}$ with $j_1\neq j_2$. 
\item $W=3$ and 
\begin{alignat}{3} & & L_1^{t+1} &= \kv L_{J_1\cup J_2}\cap  \kv L_{J_3} \cap L, & & \nn\\
L^t &= \kv L_{J_1} \cap \kv L_{J_2} \cap \kv L_{J_3} \cap L,  & \qquad L_2^{t+1} &= \kv L_{J_2\cup J_3} \cap \kv L_{J_1} \cap L, \qquad & 
 L^{t+2} &= \kv L_{J_1\cup J_2\cup J_3}\cap L  ,\nn\\
& & L_3^{t+1} &= \kv L_{J_3\cup J_1} \cap \kv L_{J_2}  \cap L.\nn
\end{alignat} 
\item 
$W=3$ and 
\begin{alignat}{3} & & L_1^{t+1} &= \kv L_{J_1\cup J_2} \cap L, & & \nn\\
L^t &= \kv L_{J_1} \cap \kv L_{J_2}  \cap L,  & \qquad L_2^{t+1} &= \kv L_{J_1} \cap \kv L_{J_2}^j \cap L, \qquad & 
 L^{t+2} &= \kv L_{J_1\cup J_2}^j \cap L  ,\nn\\
& & L_3^{t+1} &= \kv L_{J_1}^j \cap \kv L_{J_2}  \cap L,\nn
\end{alignat}
for some $j\in\{1,\dots,N\}$. 
\item \label{nerel}
$W=T+2$ and, for some (any) fixed choice of $j_1\in J_1$, $j_2\in J_2$ and for $\ell=1,2,\dots,T$,
\begin{alignat}{3} 
& & L_1^{t+1} &=   \left(\kv L_{J_1} \cap \kv L_{J_2} \cap \Ht{j_1}{j_2}{1}\right) \cap L, & & \nn\\
& & L_2^{t+1} &=   \left(\kv L_{J_1} \cap \kv L_{J_2} \cap \Ht{j_1}{j_2}{2}\right) \cap L, & & \nn\\
& &        & \vdots & & \nn\\
& & L_T^{t+1} &=   \left(\kv L_{J_1} \cap \kv L_{J_2} \cap \Ht{j_1}{j_2}{T}\right) \cap L, & & \nn\\
L^t &= \kv L_{J_1} \cap \kv L_{J_2}  \cap L,  & \qquad L_{T+1}^{t+1} &= \kv L_{J_1} \cap L_{J_2}^0 \cap L, \qquad & 
 L^{t+2} &= L_{J_1\cup J_2}^0 \cap L  ,\nn\\
& & L_{T+2}^{t+1} &=  L_{J_1}^0 \cap \kv L_{J_2}  \cap L.\nn
\end{alignat} 
\end{enumerate}

\begin{rem}\label{virem} Note especially the final relation.
As a simple example, we have 
\be (\C^2 \supset \Ho 1 \supset \Ho 1 \cap \Ho 2) + (\C^2 \supset \Ho 2 \supset \Ho 1 \cap \Ho2) + \sum_k(\C^2 \supset \Ht 12k \supset \Ho 1 \cap \Ho 2) = 0.\nn\ee
The other relations are just as in the non-cyclotomic case of \cite{SV}.
\end{rem}

\subsection{On flags with no zero ends}\label{sec:nozero}
Let us say a flag $F = (L^0\supset\dots\supset L^p) \in \Flag^{p}(\cca)$ has \emph{no zero ends} if $L^p \not\subset \Ho i$ for all $i\in \{1,\dots,m\}$. Otherwise we say $F$ \emph{has zero ends}. Thus, informally, a flag has zero ends ``if some $t_i$ is set equal to zero''.

Fix a flag $F\in \Flag^p(\cca)$ with no zero ends, for some $p\in \{1,\dots,m\}$.

The last edge $L^p$ of $F$ is of the form
\be L^p = \kv L_{I_1}^{i_1} \cap \dots \cap \kv L_{I_r}^{i_r} \cap \kv L_{J_1} \cap \dots \cap \kv L_{J_p}.\nn\ee
for some map\footnote{unique if and only if $p=m$} $\bm k: \{1,\dots,m\}\to \ZT$, some collection of $r\in \Z_{\geq 0}$ distinct non-zero numbers $i_1,\dots,i_r\in \{1,\dots,N\}$ and some pairwise disjoint subsets $I_1,\dots,I_r$, $J_1,\dots,J_p$ ($p\in \Z_{\geq 0}$) of $\{1,\dots,m\}$. 
Define new coordinates 
\be \td t_i := \omega^{k(i)} t_i, \qquad i=1,\dots,m. \nn\ee
Let $\kv\cc_{\bm z;m}\subset \cca$ be the arrangement consisting of the hyperplanes
\begin{alignat}{2}
\td H_{ij} &: \td t_i -  \td t_j = 0 ,\quad &&1\leq i<j \leq m, \nn\\
\td H_{i}^j &: \td t_i - z_j = 0, \quad &&1\leq i\leq m,\quad 1\leq j \leq N.\label{tdH}\end{alignat}
By construction, $\kv\cc_{\bm z;m}$ is a discriminantal arrangement, in the sense of \cite{SV}, and $F\in \Flag^p(\kv\cc_{\bm z;m})$. 

\begin{lem} If a hyperplane $H\in \cca$ contains the edge $L^p$, then $H$ belongs to $\kv\cc_{\bm z;m}$. \qed
\end{lem}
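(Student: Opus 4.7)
The plan is to prove this by a case analysis on the type of the hyperplane $H \in \cca$. There are three possibilities: $H = \Ho i$, $H = \Ht ijl$, or $H = \Hz ijl$. In each case we use the archipelago description
\[
L^p = \kv L_{I_1}^{i_1} \cap \dots \cap \kv L_{I_r}^{i_r} \cap \kv L_{J_1} \cap \dots \cap \kv L_{J_p}
\]
to read off the constraints imposed on the coordinates $t_1,\dots,t_m$, translate them in terms of $\td t_i = \omega^{k(i)} t_i$, and check that if $H\supset L^p$ then $H$ has an equation of the form \eqref{tdH}.

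The case $H = \Ho i$ is immediate: $L^p \subset \Ho i$ would mean $t_i=0$ on $L^p$, contradicting the standing assumption that $F$ has no zero ends. For $H = \Ht ijl : t_i - \omega^l t_j = 0$, I split according to the islands containing $i$ and $j$. If $i$ and $j$ lie in a common island of $L^p$ (swimming or fixed), then on $L^p$ one has $\omega^{k(i)} t_i = \omega^{k(j)} t_j$, so $\Ht ijl \supset L^p$ forces $l \equiv k(j)-k(i) \pmod T$, and in $\td t$-coordinates this is precisely $\td H_{ij} \in \kv\cc_{\bm z;m}$. If $i$ and $j$ lie in two distinct fixed islands $\kv L_{I_s}^{i_s}$ and $\kv L_{I_t}^{i_t}$ with $i_s,i_t\in\{1,\dots,N\}$, then $t_i = \omega^{-k(i)}z_{i_s}$ and $t_j = \omega^{-k(j)}z_{i_t}$ are both nonzero, and $L^p\subset \Ht ijl$ would force $z_{i_s} \in \omega^{\Z} z_{i_t}$, contradicting the assumption that the $\ZT$-orbits of $z_1,\dots,z_N$ are pairwise disjoint. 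If $i$ and $j$ are in distinct swimming islands, or if at least one of $i,j$ lies in no island, then the value of (say) $t_i$ varies freely over $L^p$ while $t_j$ is independent, and no relation $t_i=\omega^l t_j$ can hold identically on $L^p$.

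For $H = \Hz ijl : t_i - \omega^l z_j = 0$, the same method applies. If $i$ belongs to no island or to a swimming island, $t_i$ is not constant on $L^p$ (it either is free, or varies as the common value of its swimming island varies), so $L^p\not\subset H$. If $i \in I_s$ for some fixed island $\kv L_{I_s}^{i_s}$ with $i_s\in\{1,\dots,N\}$, then $t_i = \omega^{-k(i)} z_{i_s}$, and $L^p\subset H$ gives $\omega^{-k(i)} z_{i_s} = \omega^l z_j$; by the disjointness of orbits this forces $j = i_s$ and $l \equiv -k(i) \pmod T$, and in $\td t$-coordinates the equation reads $\td t_i = z_{i_s}$, i.e.\ $H = \td H_i^{i_s} \in \kv\cc_{\bm z;m}$.

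The main (very mild) obstacle is simply the bookkeeping: one must check each combination of ``island type of $i$'' with ``island type of $j$'' (for $\Ht ijl$) and handle the $\Hz ijl$ case separately, in each case carefully using the hypothesis of no zero ends to rule out fixed islands $\kv L_I^0$ and the hypothesis that the $z_j$ have pairwise disjoint $\ZT$-orbits to rule out cross-island relations. All of these verifications are immediate from the explicit form of $L^p$, so the proof is short.
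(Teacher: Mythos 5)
Your case analysis is correct and is exactly the verification the paper leaves implicit: the lemma is stated there with no proof, being regarded as immediate from the archipelago description of $L^p$ together with the no-zero-ends hypothesis (which rules out fixed islands $\kv L_I^0$ and the hyperplanes $\Ho i$) and the pairwise disjointness of the $\ZT$-orbits of $z_1,\dots,z_N$. The only sub-case you do not literally list (for $H=\Ht ijl$, one of $i,j$ in a nontrivial swimming island and the other in a fixed island) is dispatched by the same one-line observation that a constant nonzero coordinate cannot coincide identically with $\omega^l$ times a varying one, so nothing essential is missing.
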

\begin{cor}\label{adcor} If a tuple  $(H_1,\dots,H_p)$ of hyperplanes of $\cca$ is adjacent to $F$, in the sense of \S\ref{pf},  then this tuple consists of hyperplanes belonging to the discriminantal arrangement $\kv\cc_{\bm z;m}$. \qed
\end{cor}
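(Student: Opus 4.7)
The plan is to reduce the corollary directly to the preceding lemma. By the definition of adjacency in \S\ref{pf}, if $(H_1,\dots,H_p)$ is adjacent to $F=(L^0\supset L^1\supset\dots\supset L^p)$, then there exists a permutation $s\in\Sigma_p$ with $L^i = H_{s(1)}\cap\dots\cap H_{s(i)}$ for every $i=1,\dots,p$. In particular, setting $i=p$ gives $L^p = H_{s(1)}\cap\dots\cap H_{s(p)}$, so each $H_{s(i)}$ contains $L^p$. Equivalently, every hyperplane in the tuple $(H_1,\dots,H_p)$ contains $L^p$. Applying the lemma immediately above (which asserts that any hyperplane $H\in\cca$ containing $L^p$ lies in $\kv\cc_{\bm z;m}$) to each of the $H_i$ in turn gives $H_i\in\kv\cc_{\bm z;m}$ for all $i=1,\dots,p$, as required.

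There is no real obstacle here: the content of the corollary lies entirely in the preceding lemma, which in turn rests on the fact that ``no zero ends'' pins down a single orbit representative $\bm k$ and restricts the hyperplanes cutting out $L^p$ to those of the form \eqref{tdH}. All that remains, done above, is to invoke the definition of adjacency to observe that each $H_i$ contains $L^p$.
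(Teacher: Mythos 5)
Your proof is correct and is exactly the intended argument: the paper leaves the corollary as an immediate consequence of the preceding lemma, and your observation that adjacency forces $L^p = H_{s(1)}\cap\dots\cap H_{s(p)}$, so that every $H_i$ contains $L^p$, is the only step needed before invoking that lemma.
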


Note that the condition that $F$ had no zero ends was necessary, as the following example shows. 

\begin{exmp} Consider the full flag $\C^2 \supset (t_1=t_2) \supset (t_1=t_2=0)$. This flag does have zero ends. It is adjacent to the tuples of hyperplanes $\{(t_1=t_2), (t_1=0)\}$ and $\{(t_1=t_2),(t_2=0)\}$ but \emph{also} to the tuples $\{ (t_1=t_2), (t_1=\omega^kt_2) \}$ for each $k\in \ZT\setminus\{0\}$. Thus there is no single discriminantal arrangement such that all tuples adjacent to this flag consist of hyperplanes from that arrangement.
\end{exmp}

Suppose now that we have a weighting $a: \cca \to \C$ of the arrangement $\cca$. It defines a bilinear form $\Geom^p(\cdot,\cdot)$ on $\Fl^p(\cca)$ as in \S\ref{sg}. 
Let $\td a: \kv\cc_{\bm z;m}$ be the restriction of this weighting $a$ to the arrangement $\kv\cc_{\bm z;m}$. It defines a bilinear form $\td\Geom^p(\cdot,\cdot)$ on $\Fl^p(\kv\cc_{\bm z,m})$.

The next lemma is a consequence of Corollary \ref{adcor}. It will be an important source of simplifications in what follows. 

\begin{lem}\label{Glem} Let $F$ be a flag with no zero ends, and let $\kv C_{\bm z;m}$ be the corresponding discriminantal arrangement, as above. For all $F'\in \Flag^p(\cca)$,  
\be G^p(F,F') = \begin{cases} \td G^p(F,F')& \text{if } F'\in \Flag^p(\kv\cc_{\bm z;m}) \\ 0 & \text{otherwise.} \end{cases}\nn\ee\qed
\end{lem}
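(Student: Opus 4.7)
The plan is to expand $G^p(F,F')$ via its defining sum over unordered $p$-tuples of hyperplanes in $\cca$ and show that only tuples from $\kv\cc_{\bm z;m}$ survive, by invoking Corollary \ref{adcor}. Recall
\[G^p(F,F') \;=\; \sum \langle F, H_1\wx\dots\wx H_p\rangle \,\langle F', H_1\wx\dots\wx H_p\rangle\, a(H_1)\cdots a(H_p),\]
the sum running over unordered $p$-tuples $(H_1,\dots,H_p)$ of hyperplanes of $\cca$.

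First I would observe that the factor $\langle F,H_1\wx\dots\wx H_p\rangle$ vanishes whenever $(H_1,\dots,H_p)$ is not adjacent to $F$. Since $F$ has no zero ends, Corollary \ref{adcor} then forces every surviving tuple to consist of hyperplanes belonging to the discriminantal subarrangement $\kv\cc_{\bm z;m}$. Consequently the sum collapses to
\[G^p(F,F') \;=\; \sum_{(H_1,\dots,H_p)\subset \kv\cc_{\bm z;m}} \langle F, H_1\wx\dots\wx H_p\rangle \,\langle F', H_1\wx\dots\wx H_p\rangle\, a(H_1)\cdots a(H_p),\]
where the pairings may now be computed inside $\kv\cc_{\bm z;m}$ (they depend only on which hyperplanes are present, not on the ambient arrangement) and $a$ agrees with $\td a$ on $\kv\cc_{\bm z;m}$ by definition.

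Now I split on whether $F'\in \Flag^p(\kv\cc_{\bm z;m})$. If yes, then the right-hand side is exactly $\td G^p(F,F')$ by comparison with the definition of the geometric form for the subarrangement, which gives the first case. If no, I claim every term vanishes: for a tuple $(H_1,\dots,H_p)\subset\kv\cc_{\bm z;m}$ to pair nontrivially with $F'=(L'^0\supset\dots\supset L'^p)$, some permutation $s$ must realise $L'^i = H_{s(1)}\cap\dots\cap H_{s(i)}$, forcing each edge $L'^i$ to be an intersection of hyperplanes of $\kv\cc_{\bm z;m}$, hence an edge of $\kv\cc_{\bm z;m}$; this would put $F'\in \Flag^p(\kv\cc_{\bm z;m})$, a contradiction. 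Hence the sum is zero.

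There is no real obstacle: the only point that requires a little care is the last implication, namely that ``$F'$ is adjacent to some tuple of hyperplanes all lying in $\kv\cc_{\bm z;m}$'' implies ``$F'$ is a flag of $\kv\cc_{\bm z;m}$''. This follows because an edge that happens to be an intersection of hyperplanes of a subarrangement is automatically an edge of the subarrangement of the same codimension, and the condition on $F$ having no zero ends is what makes Corollary \ref{adcor} available in the first place.
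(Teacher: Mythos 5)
Your proposal is correct and follows exactly the route the paper intends: the paper gives no written proof beyond asserting the lemma is a consequence of Corollary \ref{adcor}, and your argument — expanding $G^p(F,F')$ over tuples, using adjacency to $F$ plus Corollary \ref{adcor} to restrict to tuples from $\kv\cc_{\bm z;m}$, and noting that adjacency of $F'$ to such a tuple forces $F'\in \Flag^p(\kv\cc_{\bm z;m})$ — is precisely the intended reasoning, spelled out in full.
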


\section{Free Lie algebra with automorphism}\label{fla}
Let $\omega\in \Cx$ be a primitive $T$th root of unity and $\ZT$ the cyclic group of order $T$, as above. 

Let $\nb$ denote the free Lie algebra over $\C$ with generators $\f k i$, $i= 1,\dots,m$, $k\in \ZT$.  There is an automorphism $\sib:\nb\to\nb$ defined by
\be \sib: \f k i \to \f {k+1} i, \nn\ee
where we understand that $k+1$ means addition in $\ZT$. 

Let $\nbs$ denote the Lie subalgebra of $\sib$-invariant elements of $\nb$ and let
$\Pin{\phantom x}: \nb \to \nbs$ denote the surjective linear map given by 
\be \Pin g := \sum_{j\in \ZT} \sib^j g\label{Pindef}, \qquad g\in \nb.\ee 

We sometimes write $\f 0i$ as $f_i$, and hence $\f ki$ as $\sib^k f_i$. 

There is a unique $\Z_{\geq 0}^m$-grading of $\nb$ as a Lie algebra,  $\nb = \bigoplus_{(r_1,\dots,r_m)\in \Z_{\geq 0}^m} \nb_{[r_1,\dots,r_m]}$,
such that \be\nb_{[0,\dots,0,\at i 1,0,\dots,0]} := \Span_\C(\f k i)_{k\in \ZT}\nn\ee for each $i=1,\dots,m$.
We call this the \emph{weight decomposition} of $\nb$, and say $\nb[r_1,\dots,r_m]$ is the subspace of \emph{weight} $(r_1,\dots,r_m)$. We write $\wt( v) = (r_1,\dots,r_n)$ if $v\in \nb[r_1,\dots,r_m]$.

In particular we have the weight decomposition of $\nbs$. 
 
We extend the weight decomposition additively over tensor products, \ie  $\wt(a\otimes b) := \wt a + \wt b$. 
So we have the weight decompositions of the universal envelopes $U(\nb)$ and $U(\nbs)$, and of 
\be \Mb := U(\nbs) \ox \bigotimes_{i=1}^N U(\nb) .\label{Mbdef}\ee
Note that $\Mb$ is an $\nbs$-module, with the action given by 
\begin{multline} x\on (m_0 \ox m_1 \ox \dots \ox m_N) \\
:= (x m_0) \ox m_1 \ox \dots \ox m_N + m_0 \ox (xm_1) \ox m_2 \ox \dots \ox m_N + \dots + m_0 \ox m_1 \ox \dots \ox (xm_N) .\nn\end{multline}

Let $(C_\bl(\nbs,\Mb),d)$ denote the standard chain complex of $\nbs$ with coefficients in $\Mb$. Namely, for $k\in \Z_{\geq 0}$, define 
\be C_k(\nbs,\Mb) := \Wedge k \nbs \otimes \Mb \ee
(where $\Wedge k$ denotes the $k$-th exterior power), and let 
\be d:  C_k(\nbs,\Mb) \to C_{k+1}(\nbs,\Mb) \ee
be the linear map defined by 
\begin{multline} d(\Pin g_k \wx \Pin g_{k-1} \wx \dots \wx \Pin g_1 \ox x) 
 = \sum_{i=1}^k (-1)^{i-1} \Pin g_k \wx \dots \Hat{\Pin g}_i \wx\dots \wx  \Pin g_1 \ox \Pin g_i\on x \\
 +
 \sum_{1\leq i<j\leq k} (-1)^{i+j} \Pin g_k \wx \dots \wx \Hat{\Pin g}_i \wx \dots \wx \Hat{\Pin g}_j \wx \dots \wx \Pin g_1 \wx [\Pin g_j,\Pin g_i] \otimes x \label{ddef}\end{multline} 
for $x\in \Mb$, $\Pin g_1,\dots,\Pin g_k\in \nbs$, where $\Hat{\phantom x}$ denotes omission. 

Recall the cyclotomic discriminantal arrangement $\cca$ from \S\ref{sec: cda}. We have the flag complex $(\Fl^\bl(\cca),d)$ as in \S\ref{sec: arrangements}. 
Let us write  
\be [1^m] := \underbrace{[1,\dots,1]}_m.\nn\ee
In this section we define (Theorem \ref{p1}) a family of linear isomorphisms 
\be \psis_k : (\Wedge k \nbs \otimes \Mb)_{\mones} \to \Fl^{m-k}(\cca),\nn\ee
such that $ d\circ \psis_k = \psis_{k-1} \circ d$.

\subsection{Commutators and projected commutators}\label{cpc}
Let us say that an element $g\in \nb$ is a \emph{commutator} if either
\begin{enumerate}\item $g$ is one of the generators $\f ki$ of $\nb$, or 
\item $g=[g_1,g_2]$ for commutators $g_1,g_2\in \nb$.
\end{enumerate} 
For a given commutator $g$ and generator $\f ki$ there is a well-defined notion of the number of times $\f ki$ occurs in $g$. Let the \emph{length} $l(g)$ of a commutator $g$ be the total number of generators that occur in $g$, and the \emph{content} of $g$ the set $\{(k_1,i_1),\dots (k_{l(g)},i_{l(g)})\}$ of the labels of these generators.\footnote{So each commutator lies in some weight subspace $\nb[r_1,\dots,r_m]$ of $\nb$, determined by its content. In general the content is a set with multiplicities. When working with $C_\bl(\nbs ,\Mb )_{\mones}$ these multiplicities will be at most one.}

For example, $[[\f {k_1}{i_1}, \f {k_2}{i_2}], \f {k_3}{i_3}]$ has length 3 and content $\{ (k_1,i_1), (k_2,i_2), (k_3,i_3)\}$.


Whenever $g$ is a commutator in $\nb$, we call the element $\Pin g\in \nbs$ a \emph{projected commutator}, cf. \eqref{Pindef}. 
The Lie algebra $\nbs$ is spanned by the set of all projected commutators  $\Pin g$ as $g$ runs over the   set of commutators in $\nb$. (Indeed, the latter span $\nb$ and $\Pin{\phantom a}:\nb \to \nbs$ is a surjective linear map.) 

\begin{rem} Suppose we are not in the degenerate cases $m= 1$ or $T= 1$. Then $\nbs$ is \emph{not} spanned by commutators of the projected generators $\Pin f_i$. For example $\Pin{[ \f p 1,\f q 2]}\in \nbs$, $p\neq q$, is not in the span of $[\,\Pin f_1,\Pin f_2]$. In fact $\nbs$ is not finitely generated \cite{Bryant,BryantPapistas}.
\end{rem}


We say an element $x\in U(\nb)$ is a \emph{monomial} if it is equal to a product of commutators, \ie if $x=g_p\dots g_1$ for commutators $g_1,\dots, g_p \in \nb$. 

We say an element $x\in U(\nbs)$ is a \emph{monomial} if it is equal to a product of projected commutators, \ie if $x=\Pin g_p\dots \Pin g_1$ for commutators $g_1,\dots, g_p \in \nb$.  

We say an element $x\in \Wedge p \nbs \ox \Mb$ is a \emph{monomial} if 
\be x=\Pin g_p\wx \dots \wx \Pin g_1 \ox x_0 \ox x_1 \ox \dots \ox x_N\nn\ee 
for commutators $g_1,\dots,g_p$ and monomials $x_0,x_1,\dots,x_N$.


\subsection{The maps $\psis_p$}\label{psisdef}
Given any commutator $g$ in $\nb$, with content say $\{(k_1,i_1),\dots, (k_{l},i_{l})\}$, we shall write $L_g$, $L_g^j$, $j=1,\dots,N$, and $L_g^0$ for the edges of the arrangement $\cca$ given by 
\begin{align} L_g &: \omega^{k_1}t_{i_1} = \dots = \omega^{k_{l}}t_{i_l},\nn\\
L_g^j&: \omega^{k_1}t_{i_1} = \dots = \omega^{k_{l}}t_{i_l} = z_j,\nn\\ 
L_g^0&: t_{i_1} = \dots = t_{i_l} = 0. \label{Lgdef}\end{align}
(When $g$ has length 1, $L_g= \C^m$.) 

Observe that $L_g= L_{\sib g}$.

Now we define linear maps $\psis_p : (\Wedge p \nbs \ox \Mb)_{\mones} \to \Fl^{m-p}(\cca)$ by induction on $m-p$ as follows. 

For the base case $p=m$, we set $\psis_m( \Pin f_1 \wx \dots \wx\Pin f_m  \ox 1 \ox 1 \ox \dots \ox 1)$ to be the trivial flag, $L^0=\C^m$.

For the inductive step, assume that $\psis_{p+1}$ has been defined in such a way that for any monomial 
$x=\Pin g_{p+1} \wx \dots \wx \Pin g_1 \ox x_0 \ox x_1 \ox \dots \ox x_N\in (\Wedge {p+1} \nbs \ox \Mb)_{\mones}$,
the image $\psis_{p+1}(z)$ is of the form $(-1)^s F$ for some sign $(-1)^s$ and some flag $F = (L^0 \supset \dots \supset L^{m-p})\in \Flag^{m-p}(\cca)$. 

Then we define $\psis_p(x') := (-1)^{s} F'$ for $x',F'$ as follows:
\begin{enumerate}[(A)]
\item\label{r1}
If $x'= \Pin g_{p+1} \wx \dots \wx  \Pin g_2 \ox x_0 \ox x_1\ox \dots \ox g_1\on x_j \ox \dots \ox x_N$ then  $F' = (L^0 \supset \dots \supset L^{m-p} \supset L^{m-p} \cap L_{g_1}^j)$.
\item\label{r2}

If $x'=  \Pin g_{p+1} \wx \dots \wx  \Pin g_2 \ox \Pin g_1 \on x_0 \ox x_1\ox \dots \ox x_N$ then $F'= (L^0 \supset \dots \supset L^{m-p} \supset L^{m-p} \cap L_{g_1}^0)$.
\item\label{r3}

If $x' =  \Pin g_{p+1} \wx \dots \wx  \Pin g_3 \wx  \Pin{[g_1,g_2]} \ox x_0 \ox x_1\ox \dots \ox x_N$ then $F'= (L^0 \supset \dots \supset L^{m-p} \supset L^{m-p} \cap L_{[g_1,g_2]})$. 
\end{enumerate}

\begin{thm}\label{p1}
These rules correctly define a family of linear isomorphisms 
\be \psis_p : \left(\Wedge p \nbs \ox \Mb\right)_{\mones} \to \Fl^{m-p}(\cca), \qquad 0\leq p\leq m.\nn\ee 
Moreover this gives an isomorphism of complexes 
\be \psis_\bl : (C_\bl(\nbs ,\Mb )_{\mones},d)\to (\Fl^{m-\bl}(\cca),d).\nn\ee
\end{thm}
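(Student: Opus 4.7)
The plan is to establish the theorem in four stages: well-definedness of $\psis_p$ on monomials, compatibility with the defining relations of both source and target, bijectivity, and the chain-map property. The inductive definition (A)--(C) already suggests the strategy: each rule peels off one projected commutator from the left and appends one new edge to the flag, so the proof amounts to matching each algebraic move with a corresponding geometric one.

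First I would check, by induction on $m-p$, that each rule produces a genuine flag in $\Flag^{m-p+1}(\cca)$. The weight condition $\mones$ forces every generator $\f{*}{i}$ to appear exactly once across the full monomial, so the content of the commutator $g_1$ (or $[g_1,g_2]$) being peeled off is disjoint from the indices still present in the remaining factors; hence the candidate new edge $L^{m-p}\cap L_{g_1}^j$, $L^{m-p}\cap L_{g_1}^0$, or $L^{m-p}\cap L_{[g_1,g_2]}$ has codimension exactly one greater than $L^{m-p}$. The identities $L_g=L_{\sib g}$ and $L_g^j=L_{\sib g}^j$ show that the edge depends only on the $\sib$-orbit of $g$, so the construction is consistent with the passage $\nb\to\nbs$.

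Second, I would check that the rules respect the algebraic relations of $(\Wedge^p \nbs \ox \Mb)_{\mones}$: antisymmetry of the wedge product (swapping two adjacent $\Pin g_j,\Pin g_{j-1}$ leaves the underlying flag invariant while matching the sign on both sides), skew-symmetry and Jacobi for $\Pin{[\cdot,\cdot]}$ (these reduce to $L_{[g_1,g_2]}=L_{[g_2,g_1]}$ together with multilinearity), and compatibility with the $\nbs$-action on $\Mb$ (expanding $\Pin g \on x$ in $U(\nb)$ still yields the same edge $L_g^j$). Next, the target relations of \S\ref{sec: flagrels} must be matched. Relations (i)--(v) reduce to identities already present in the non-cyclotomic case of \cite{SV} (Jacobi, commuting disjoint brackets, module-action). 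The cyclotomic relation (vi), with $T+2$ summands, is the genuinely new feature: on the algebraic side it corresponds to the projection identity
\[
\Pin{[g_1,g_2]} \;=\; \sum_{k\in\ZT} [\sib^k g_1,\, g_2],
\]
together with the action of $\Pin g_1, \Pin g_2$ on the first $U(\nbs)$-factor of $\Mb$. The $T$ bracket terms on the right match the $T$ flag-extensions through the hyperplanes $\Ht{j_1}{j_2}{k}$, $k\in\ZT$, while the two remaining summands $L_{J_1}^0$ and $L_{J_2}^0$ match the two ``zero-island'' extensions appearing in relation (vi).

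Third, for bijectivity I would combine the dimension formula of Theorem \ref{Pointhm} with a PBW computation of $\dim(\Wedge^p \nbs \ox \Mb)_{\mones}$; more cleanly, the Lyndon-word bases of \S\ref{lyndon} provide explicit dual bases of the source and target, and one checks that $\psis_p$ sends one to the other up to signs. Finally, the chain-map property $d\circ\psis_p=\psis_{p-1}\circ d$ follows by comparing the two terms of \eqref{ddef} with the flag-extension differential: the module-action terms produce the rule-(A)/(B) extensions (a swimming island becoming fixed), while the bracket terms produce the rule-(C) extensions (two swimming islands joining). Each flag extension appears exactly once in the combined sum, modulo the relations just verified. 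The hard part is the cyclotomic relation (vi): ensuring that the $(T+2)$-term identity on the geometric side matches the algebraic identity for $\Pin{[g_1,g_2]}$ with the correct signs and multiplicities, and in a way compatible with the zero-island contributions $L_{J_1}^0$ and $L_{J_2}^0$. This is the one genuinely new ingredient beyond the $T=1$ case of \cite{SV}, and it is the place where the fixed-point subalgebra $\nbs$ (rather than the full free Lie algebra) must be used.
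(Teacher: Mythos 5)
Your stages on well-definedness and the chain-map property track the paper's argument closely: the paper likewise verifies that the recursive rules are independent of the order of application and that the source relations (skew-symmetry, Jacobi, and the $U(\nbs)$-action on the zeroth factor) map to flag relations, with the cyclotomic relation (vi) of \S\ref{sec: flagrels} absorbing the expansion of the bracket of projected commutators. One small correction there: the identity you display, $\Pin{[g_1,g_2]}=\sum_{k\in\ZT}[\sib^k g_1,g_2]$, is not correct as written; what is actually used is $[\Pin g_1,\Pin g_2]=\sum_{k\in\ZT}\Pin{[g_1,\sib^k g_2]}$, whose $T$ summands correspond to the edges $L_{[g_1,\sib^k g_2]}$, i.e.\ to the hyperplanes $\Ht{j_1}{j_2}{k}$ in relation (vi), alongside the two zero-island terms coming from the action on $M_0$.

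The genuine gap is in your bijectivity step. You propose to compare dimensions via Theorem \ref{Pointhm} or to use the Lyndon-word bases of \S\ref{lyndon}, but both of these are \emph{consequences} of Theorem \ref{p1} in the paper: the bases of $\left(\Wedge p\nbs\ox\Mb\right)_{\mones}$ (and in particular of $U(\nbs)_{\mones}$, Corollary \ref{dimcor}) are obtained by pulling back the framed flag bases $\Flag^{m-p}(\cca,\oo)$ through $\psis_p^{-1}$, so invoking them here is circular. Nor is an independent ``PBW computation'' of $\dim\left(\Wedge p\nbs\ox\Mb\right)_{\mones}$ routine: $\nbs$ is not finitely generated and its multilinear graded dimensions are not simply read off from PBW for $\nb$. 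The paper avoids this entirely by constructing the inverse map $\psis_p^{-1}$ explicitly, by induction on $m-p$, using the classification of the last step of a flag (a coordinate becomes linked to some $\omega^k z_j$, to $0$, or to another coordinate, cf.\ \S\ref{efs}); each case dictates how to modify the monomial (act on the $j$th factor by $\sib^{-k}g_1$, act on the zeroth factor by $\Pin g_1$, or replace two wedge factors by $\Pin{[\sib^k g_1,g_2]}$), and one then checks that this assignment respects the flag relations. Without this inverse construction (or some other non-circular argument for surjectivity and injectivity), your proof of the isomorphism claim is incomplete.
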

\begin{exmp}\label{p1ex}
Consider $m=2$ and $N=1$. The monomials in $\Wedge \bl(\nbs)\ox\Mb=\Wedge \bl(\nbs)\ox U(\nbs) \ox U(\nb)$, and their images under $\psis_\bl$, include:
\begin{alignat*}{2}
\Pin f_1 \wx \Pin f_2 \ox 1 & \ox 1 &    (\C^2&),  \\
\Pin f_1 \ox 1 & \ox \sib^k f_2 &   (\C^2 & \supset (\omega^kt_2=z)), \\
\Pin f_2 \ox 1 & \ox \sib^k f_1 &   -(\C^2 & \supset (\omega^kt_1= z)), \\
             1& \ox (\sib^p f_1) (\sib^qf_2) & (\C^2 & \supset (\omega^qt_2= z) \supset (\omega^q t_2 = \omega^p t_1 = z),\\
\Pin f_1 \ox \Pin f_2 & \ox 1 &   (\C^2 & \supset (t_2=0)) ,\\
       \Pin f_2 & \ox \sib^k f_1 &   (\C^2 & \supset (t_2=0)\supset (t_2=0,\omega^k t_1 = z)) \\
 & &  &=  -(\C^2 \supset (\omega^kt_1=z)\supset (t_2=0,\omega^k t_1 = z)) ,\\
\Pin{[\sib^pf_2, f_1]} \ox 1 & \ox 1 &   (\C^2 & \supset (\omega^pt_2=t_1)) ,\\
    1& \ox \sib^q[\sib^{p}f_2,f_1] & \qquad\qquad (\C^2 & \supset (\omega^pt_2=t_1) \supset (\omega^{p+q} t_2 = \omega^q t_1 = z)),\\
\Pin{[\sib^pf_2, f_1]}& \ox 1 & (\C^2 & \supset (\omega^pt_2=t_1)\supset (t_2=t_1=0)).
\end{alignat*}
\end{exmp}
\begin{proof}[Proof of Theorem \ref{p1}]
By recursively applying the rules and using the skew-symmetry in the factors of $\Wedge p \nbs$, one can compute $\psis_p$ on any monomial in $(\Wedge p \nbs \ox \Mb)_{\mones}$. 
The first thing to check is that the result well-defined, \ie that it is independent of the order in which we choose to apply the rules. 

Consider $x''=\Pin g_{p} \wx \dots \wx \Pin g_3 \ox x_0 \ox x_1 \ox \dots \ox g_1 \on x_i \ox \dots \ox g_2 \on x_j \ox \dots \ox x_N$. Starting from $\psis_p(x) = (-1)^s (L^0 \supset \dots \supset L^{m-p})$, we can compute $\psis_{p-2}(x'')$ in two ways. On the one hand we have 
\be \psis_{p-1}(\Pin g_{p} \wx \dots \wx \Pin g_3 \wx \Pin g_2 \ox x_0 \ox x_1 \ox \dots \ox g_1 \on x_i \ox \dots \ox x_N) = (-1)^s (L^0\supset \dots \supset L^{m-p} \supset L^{m-p}\cap L_{g_1}^i) \nn\ee
and hence
\be \psis_{p-2}( x'') = (-1)^s (L^0\supset \dots \supset L^{m-p} \supset L^{m-p}\cap L_{g_1}^i
\supset L^{m-p} \cap L_{g_1}^i \cap L_{g_2}^j). \label{a1}\ee
On the other hand we have
\be \psis_{p-1}(\Pin g_{p} \wx \dots \wx \Pin g_3 \wx \Pin g_1 \ox x_0 \ox x_1 \ox \dots \ox g_2 \on x_j \ox \dots \ox x_N) = - (-1)^s (L^0\supset \dots \supset L^{m-p} \supset L^{m-p}\cap L_{g_2}^j) \nn\ee
(note the extra sign, which comes from $\Pin g_2\wx \Pin g_1 = -\Pin g_1\wx \Pin g_2$) and hence
\be \psis_{p-2}( x'') = -(-1)^s(L^0\supset \dots \supset L^{m-p} \supset L^{m-p}\cap L_{g_2}^j
\supset L^{m-p} \cap L_{g_1}^i \cap L_{g_2}^j). \label{a2}\ee
And indeed \eqref{a1} and \eqref{a2} are equal in the flag space $\Fl^{m-p+2}(\cca)$, by virtue of the flag relations \eqref{flagrel}. (Specifically, a relation of type (\ref{anchortwo}) in \S\ref{sec: flagrels}.)

Similarly, one checks that, starting from $\psis_p(z)=(-1)^sF$, $\psis_{p-2}$ is well-defined on $\Pin g_p \wx \dots \wx \Pin g_3  \ox \Pin g_1 \on x_0 \ox x_1\ox\dots g_2\on x_i \ox \dots \ox x_N$,  on  $\Pin g_p \wx \dots \wx \Pin g_4 \wx \Pin{[g_3,g_2]} \ox x_0 \ox x_1\ox\dots g_1 \on x_i \ox \dots \ox x_N$, on $\Pin g_p \wx \dots \wx \Pin g_4 \wx \Pin{[g_3,g_2]} \ox \Pin g_1 \on x_0 \ox x_1\ox\dots \ox x_N$ and finally on $\Pin g_p \wx \dots \wx \Pin{[g_4,g_3]} \wx \Pin{[g_2,g_1]} \ox x_0 \ox x_1\ox\dots\ox x_N$. 

This establishes that $\psis_p$ is well-defined as a map from the set of monomials in $(\Wedge p \nbs \ox \Mb)_{\mones}$ into the flag space $\Fl^{m-p}(\cca)$. 

Now we must check that this map can be extended by linearity to a linear map 
$(\Wedge p \nbs \ox \Mb)_{\mones}\to \Fl^{m-p}(\cca)$. For this it is enough to check that 
\be \psis_{p-2}( \Pin g_p\wx \dots \wx \Pin g_3 \ox\big( (\Pin g_1) \on (\Pin g_2) \on x_0 - (\Pin g_2)\on (\Pin g_1)\on x_0 
                - [\Pin g_1,\Pin g_2] \on x_0 \big) \ox x_1 \ox \dots \ox x_N) =0,\label{rel1}\ee
\be \psis_{p-2}(\Pin g_p\wx  \dots \wx \Pin g_3 \ox x_0 \ox x_1 \ox \dots \ox \big( (\sib^k g_1)\on (\sib^l g_2) \on x_j -  (\sib^l g_2)\on(\sib^k g_1)\on x_j  
           - [\sib^k g_1,\sib^l g_2] \on x_j \big)  \ox \dots \ox x_N) =0,\nn\ee
for each $j=1,\dots,N$, and
\be \psis_{p-2}(\Pin g_p \wx \dots \wx  ( \Pin{[g_3,[g_2,g_1]]}+\Pin{[g_2,[g_1,g_3]]}+ \Pin{[g_1,[g_3,g_2]]})  \ox x_0\ox x_1 \ox \dots \ox x_N )=0.\nn\ee
Again these follow from the relations in the flag spaces. Let us consider the first in detail. We have
\begin{multline} \psis_{p-2}(\Pin g_p \wx \dots \wx \Pin g_3 \ox (\Pin g_2) \on (\Pin g_1) \on x_0\ox x_1\ox \dots \ox x_N) \\ = (-1)^s (L^0 \supset \dots \supset L^{m-p} \supset L^{m-p} \cap L_{g_1}^0 \supset L^{m-p} \cap L_{g_1}^0  \cap L_{g_2}^0) \label{b1}\end{multline}
and
\begin{multline} \psis_{p-2}(\Pin g_p \wx \dots \wx \Pin g_3 \ox (\Pin g_1) \on (\Pin g_2) \on x_0\ox x_1\ox \dots \ox x_N) \\ = -(-1)^s (L^0 \supset \dots \supset L^{m-p} \supset L^{m-p} \cap L_{g_2}^0 \supset L^{m-p} \cap L_{g_1}^0  \cap L_{g_2}^0) \label{b2}\end{multline}
(the sign coming from $\Pin g_2\wx \Pin g_1 = -\Pin g_1\wx \Pin g_2$). Now, in the flag space $\Fl^{m-p+2}(\cca)$ we have, by \eqref{flagrel}, the relation 
\begin{multline} 0= (L^0 \supset \dots \supset L^{m-p} \supset L^{m-p} \cap L_{g_1}^0 \supset L^{m-p} \cap L_{g_1}^0  \cap L_{g_2}^0) \\
+ (L^0 \supset \dots \supset L^{m-p} \supset L^{m-p} \cap L_{g_2}^0 \supset L^{m-p} \cap L_{g_1}^0  \cap L_{g_2}^0) \\
+ \sum_{k\in \ZT} (L^0 \supset \dots \supset L^{m-p} \supset L^{m-p} \cap L_{[g_1,\sib^k g_2]} \supset L^{m-p} \cap L_{g_1}^0  \cap L_{g_2}^0)\label{b3}\end{multline}
(which is of type \eqref{nerel} in \S\ref{sec: flagrels}).
The third line here is equal to 
\begin{multline} \sum_{k\in \ZT} \psi_{p-2}(\Pin g_p\wx \dots \wx \Pin g_3 \ox (\Pin{[ g_1, \sib^k g_2]})\on x_0 \ox x_1\ox \dots \ox x_N)\\
 = (-1)^s\psi_{p-2}(\Pin g_p \wx \dots \wx \Pin g_3 \ox (\Pin{[ g_1, \Pin g_2]})\on x_0 \ox x_1\ox \dots \ox x_N) \\
= (-1)^s\psi_{p-2}(\Pin g_p \wx \dots \wx \Pin g_3 \ox [ \Pin g_1, \Pin g_2] \on x_0 \ox x_1\ox \dots \ox x_N)\nn\end{multline}
Therefore, in view of \eqref{b1} and \eqref{b2}, the relation \eqref{b3} yields the required identity,  \eqref{rel1}.
 
Thus $\psis_p$ is a linear map $(\Wedge p \nbs \ox \Mb)_{\mones}\to \Fl^{m-p}(\cca)$. Now we show it is a linear isomorphism. To do so we define the inverse linear map $\psis_p^{-1}$, by induction on $m-p$. 

For the base case $p=m$ we set  $\psis_m^{-1}(L^0=\C^m) = \Pin f_1\wx\dots\wx \Pin f_m \ox 1 \ox 1 \ox \dots \ox 1$.

For the inductive step, pick any flag  $F = (L^0 \supset \dots \supset L^{m-p-1} \supset L^{m-p}) \in \Flag^{m-p}(\cca)$. Inductively we may assume we have the monomial 
\be \Pin g_{p+1} \wx \dots \wx \Pin g_1 \ox x_0 \ox x_1 \ox \dots \ox x_N := \psis_{p+1}^{-1}( L^0 \supset \dots \supset L^{m-p-1}).\nn\ee
Now consider the last step, $L^{m-p-1} \supset L^{m-p}$, of the flag $F$: 

First, suppose ``$t_i$ and $z_j$ became linked''. Namely, suppose that for some $i,j,k$, we have $\Hz ijk \supset L^{m-p}$ and $\Hz ijh \not \supset L^{m-p-1}$ for every $h\in \ZT$. 
By re-ordering the factors as necessary, using skew-symmetry in $\Wedge {p+1} \nbs$, we may assume that $\f l i$ appears in $g_1$, for some $l\in \ZT$. By the invariance $\Pin g_1 = \Pin{\sib g_1}$ we may assume $l=0$. That is, we may assume $f_i$ appears in $g_1$. Then we set 
\be \psis_p^{-1}(F) 
 := \Pin g_{p+1} \wx \dots \wx \Pin g_2 \ox x_0 \ox x_1 \ox \dots \ox (\sib^{-k} g_1)\on x_j \ox \dots \ox x_N.\nn\ee

Next, suppose that  ``$t_i$ and 0 became linked''. Namely, suppose that for some $i$, $\Ho i\supset L^{m-p}$ and $\Ho i \not\supset L^{m-p-1}$.  Again, we may assume that $f_i$ appears in $g_1$. We set
\be 
\psis_p^{-1}(F) := \Pin g_{p+1} \wx \dots \wx \Pin g_2 \ox (\Pin g_1)\on x_0 \ox x_1 \ox \dots \ox x_N.\nn\ee

Finally, suppose we are in neither of the above cases. Then it must be that  ``$t_i$ and $t_j$ became linked''. That is, for some $i,j,k$, we have $\Ht ijk \supset L^{m-p}$ and $\Ht ijn \not \supset L^{m-p-1}$ for any $n\in \ZT$. We may suppose that $f_i$ appears in $g_2$ and $f_j$ appears in $g_1$. We define
\be \psis_p^{-1} := \Pin g_{p+1} \wx \dots \wx \Pin g_3 \wx \Pin{[\sib^{k} g_1,g_2]} \otimes x_0 \ox x_1 \ox \dots \ox x_N.\nn\ee

These definitions respect the relations in $\Fl^\bl(\cca)$. 

This completes the inductive step, and we have the inverse map $\psis_p^{-1}$ for each $p=0,\dots,m$.

For the moreover part, one checks directly that $ d\circ \psis_p = \psis_{p-1} \circ d$ for each $p=1,\dots,m$.
\end{proof}

Given a subset $I\subseteq\{1,\dots,m\}$, let 
$\ccI I\subset \cca$
denote the arrangement consisting of hyperplanes $\Ht {i_1}{i_2}k$, $\Hz {i_1}jk$, 
and $\Ho {i_1}$
with $i_1,i_2\in I$, $j\in \{1,\dots,N\}$, $k\in \ZT$.  
Let $\cco I$ denote the arrangement consisting of just the ``diagonal'' hyperplanes $\Ht {i_1}{i_2}k $, $i_1,i_2\in I$, $k\in \ZT$.

Let $\wtI I$ denote the tuple $(r_1,\dots,r_m)$ with $r_i=1$  if $i\in I$ and $r_i=0$ if $i\notin I$. 
By an obvious generalization of Theorem \ref{p1}, we have linear isomorphisms
\be \psis = \psis^I_p : (\Wedge p \nbs \ox \Mb)_{\wtI I} \to \Fl^{|I|-p}(\ccI I), \qquad 0\leq p\leq |I|.\label{genpsis}\ee 
(We shall sometimes suppress the indices $p$ and $I$ from $\psis$.) 

For any commutator $c$ in $\nb$,  define the flag
\be \psis(c) := \psis(\Pin c) := \psis( \Pin c \ox 1 \ox 1^N).\label{compsis}\ee
Note that $\psis(c) = \psis(\tau^k c)$ for all $k\in \ZT$.

\section{Bilinear form}\label{bf}
We keep the conventions of section \S\ref{fla}. Let us now fix a weighting $a:\cca\to \C$ of the arrangement $\cca$, in the sense of \S\ref{sg}. Let $\gu$ be the Lie algebra with generators $\f ki$, $\e ki$ and $\hh ki$, $i=1,\dots m$, $k\in \ZT$, subject to the following relations:
\be [\e ki,\f lj] = \delta_{ij} \delta_{kl} \hh ki,\nn\ee
\be [\hh ki,\e lj] =  \begin{cases} a(\Ht ij {l-k}) \e lj & i\neq j \\ 0 & i=j \end{cases}, \qquad 
    [\hh ki,\f lj] = \begin{cases} -a(\Ht ij {l-k}) \f lj & i\neq j \\ 0 & i=j \end{cases}, \nn\ee
\be [\hh ki,\hh lj] = 0. \nn\ee

We sometimes write $e_i:= \e 0i$, $f_i := \f 0i$, $h_i := \hh 0i$. 

There is an automorphism $\sib: \gu\to \gu$ given by $\sib({}^kx_i) = {}^{k+1}x_i$ where $x\in \{e,f,h\}$ and $k+1$ is understood to mean addition modulo $T$. Let $\gus\subset \gu$ denote the subalgebra fixed by $\sib$ and $\Pin{\phantom x} : \gu \to \gus$ the surjective linear map $x\mapsto \Pin x := \sum_{j\in \ZT} \sib^j x$. 

We have the obvious embedding of Lie algebras $\nb \into \gu$.
Let $\gu_0$ (resp. $\nb_+$) denote the subalgebra  of $\gu$ generated by the $\hh ki$ (resp. $\e ki$).
Then $\gu = \nb \oplus \gu_0 \oplus \nb_+$ and hence $U(\gu) \cong_\C U(\nb) \ox U(\gu_0) \ox U(\nb_+)$. 
Likewise  
$\gus = \nbs \oplus \gus_0 \oplus \nbs_+$ and hence $U(\gus) \cong_\C U(\nbs) \ox U(\gus_0) \ox U(\nbs_+)$.

Let $M_i:= U(\gu) \ox_{U(\gu_0\oplus \nb_+)} \C v_i$, $i=1,\dots, N$, denote the $\gu$-module generated by a vector $v_i$ obeying the relations
\be \nb_+\on v_i = 0, \qquad \hh kj \on v_i = a(\Hz jik) v_i.\nn\ee

Let $M_0:= U(\gus) \ox_{U(\gus_0\oplus \nbs_+)} \C v_0$, denote the $\gus$-module generated by a vector $v_0$ obeying the relations
\be \nbs_+ \on v_0 = 0, \qquad \frac 1T \Pin{h}_j \on v_0 = a(\Ho j) v_0.\nn\ee

There are isomorphisms $M_i \cong_{\nb} U(\nb)$ and $M_0 \cong_{\nbs} U(\nbs)$. Hence, with $\Mb$ as in \eqref{Mbdef},
\be \Mb  \cong_{\nbs} M_0 \ox \bigotimes_{i=1}^N M_i.\ee

Our choice of weighting $a: \cca \to \C$ defines a symmetric bilinear form $\Geom^\bl$ on $\Fl^\bl(\cca)$ and corresponding linear maps $\Ghom^\bl: \Fl^\bl(\cca) \to \Fl^\bl(\cca)^*$. (See \S\ref{sg}.) Pulling back by the linear isomorphism $\psis_p$ of Theorem \ref{p1}, we obtain a bilinear form $\psis_p^*(\Geom^{m-p})$ on $(\Wedge p \nbs \ox \Mb)_{\mones}$, and a linear map 
\be (\psis_p)^{-1} \circ\Ghom^{m-p}\circ \psis_p:  \left(\Wedge p \nbs \ox \Mb\right)_{\mones} \to \left( \Wedge p \nbs \ox \Mb\right)_{\mones}^*.\nn\ee
In fact, for any subset $I\subseteq \{1,\dots,m\}$, we have the linear isomorphism $\psis=\psis^I_p$ of \eqref{genpsis}, and therefore a bilinear form $\psis_p^*(\Geom^{|I|-p})$ on $(\Wedge p \nbs \ox \Mb)_{\wtI I}$ and linear map
\be (\psis^I_p)^{-1} \circ\Ghom^{|I|-p}\circ \psis^I_p:  \left(\Wedge p \nbs \ox \Mb\right)_{\wtI I} \to \left( \Wedge p \nbs \ox \Mb\right)_{\wtI I}^*.\nn\ee
When there is no ambiguity, we shall sometimes denote these linear maps (for any $p$ and any $I$) simply by $\Ghom$. Similarly we will sometimes use $\Geom$ to refer to the corresponding symmetric bilinear forms. 

In particular when $p=1$ and we restrict to the subspace $\nbs_{\wtI I} \ox v_0 \ox v_1 \ox \dots \ox v_N\cong \nbs_{\wtI I}$ we get a linear map 
\be \Ghom := \psis^{-1} \circ\Ghom^{|I|-p}\circ \psis \quad:\quad \nbs_{\wtI I} \to \nbs^*_{\wtI I},\nn\ee 
cf. \eqref{compsis}. And when $p=0$ and we restrict to the subspace $v_0 \ox \dots \ox v_{i-1} \ox (M_i)_{\wtI I} \ox v_{i+1} \ox \dots v_N \cong (M_i)_{\wtI I}$ we get a linear map $\Ghom : (M_i)_{\wtI I}\to (M_i^*)_{\wtI I}$ for each $i=0,\dots,N$.

\subsection{Shapovalov form}
Let $\cai: \gu \to \gu$ be the anti-automorphism defined by 
\be \cai(\e ki) = \f ki,\quad \cai(\f ki) = \e ki,\quad \cai( \hh ki) = \hh ki.\nn\ee 
It restricts to an anti-automorphism of $\gus$.
There is a unique bilinear form $\Sa_i$ on $M_i$, $i=1,\dots,N$, such that
\be \Sa_i(v_i,v_i) = 1,\qquad \Sa_i( X \on v, w) = \Sa_i( v,\cai(X)\on w).\nn\ee
There is a unique bilinear form $\Sa_\nb$ on $\nb$ such that 
\be \Sa_\nb(\f ki , \f lj) = \delta_{kl} \delta_{ij}, \qquad \Sa_\nb( [ X,Y], Z ) = -\Sa(Y,[\cai(X),Z]) .\nn\ee
It restricts to a bilinear form on $\nbs$.

Define 
\be \Mbnz  := \bigotimes_{i=1}^N M_i \hookrightarrow \Mb; \quad x_1\ox\dots \ox x_N \mapsto v_0 \ox x_1 \ox \dots \ox x_N .\nn\ee
Let $\Sa^p$ be the bilinear form on the space $\Wedge p \nbs \ox \Mbnz$, $p = 0,1,\dots,m$, defined by
\begin{multline} \Sa^p(\Pin a_p \wx \dots \wx \Pin a_1 \ox x_1 \ox \dots \ox x_N,
\Pin b_p \wx \dots \wx \Pin b_1\ox y_1 \ox \dots \ox y_N)\\
= \det(\Sa_{\nb}(\Pin a_i,\Pin b_j)_{1\leq i,j\leq p}) \prod_{i=1}^N \Sa_i(x_i,y_i). \label{Spdef}\end{multline}
By restriction, we get a bilinear form $\Sa^p$ on the weight subspace $(\Wedge p \nbs \ox \Mbnz)_{\mones}$.

\subsection{The forms $G$ and $\Sa$ coincide on flags with no zero ends}
Recall from \S\ref{sec:nozero} the notion of flags with no zero ends. By definition of the map $\psis_p$, a monomial in $\bm x\in\left(\Wedge p \nbs \ox \Mb \right)_{\mones}$ belongs to the subspace $\left(\Wedge p \nbs \ox \Mbnz\right)_{\mones}$ if and only if its image $\psis_p(\bm x)\in \Fl^{m-p}(\cca)$ is a flag with no zero ends.

\begin{thm}\label{SGthm} On $\left(\Wedge {m-p} \nbs \ox \Mbnz\right)_{\mones}$, we have the equality $\Sa^{m-p}=  (-1)^m T^{m-p} \psis^*_{m-p}(\Geom^{p})$ of symmetric bilinear forms.
\end{thm}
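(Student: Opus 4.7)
My plan is to reduce Theorem \ref{SGthm} to its non-cyclotomic ($T=1$) counterpart from \cite{SV}, by combining Lemma \ref{Glem} with a careful unfolding of the projections $\Pin{\phantom x} : \nb \to \nbs$.

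By bilinearity, I begin by evaluating both sides on pairs of monomials
$$\bm x = \Pin g_{m-p}\wx\dots\wx \Pin g_1 \ox x_1\ox\dots\ox x_N,\qquad \bm y = \Pin h_{m-p}\wx\dots\wx \Pin h_1 \ox y_1\ox\dots\ox y_N,$$
in $\left(\Wedge{m-p}\nbs \ox \Mbnz\right)_\mones$. By the construction of $\psis_{m-p}$ in \S\ref{psisdef}, the flags $F:=\psis_{m-p}(\bm x)$ and $F':=\psis_{m-p}(\bm y)$ in $\Fl^p(\cca)$ have no zero ends in the sense of \S\ref{sec:nozero}, because no factor of $v_0$ in $\Mbnz$ is acted on by a projected commutator.

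Lemma \ref{Glem} then shows that $\Geom^p(F,F')$ vanishes unless $F'$ belongs to the discriminantal subarrangement $\kv\cc_{\bm z;m}\subset\cca$ determined by the cyclotomic labels $\bm k$ of the last edge of $F$, and in that case equals $\td\Geom^p(F,F')$ computed in $\kv\cc_{\bm z;m}$. After the linear rescaling $\td t_i=\omega^{k(i)}t_i$, the subarrangement $\kv\cc_{\bm z;m}$ becomes an ordinary discriminantal arrangement in the sense of \cite{SV}, with the weighting inherited from $a$.

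On the algebraic side, the defining property $\Sa_\nb(\f k i,\f l j)=\delta_{ij}\delta_{kl}$ and the invariance relation $\Sa_\nb([X,Y],Z)=-\Sa_\nb(Y,[\cai(X),Z])$ together imply that $\Sa_\nb(\sib x,\sib y)=\Sa_\nb(x,y)$ for all $x,y\in\nb$; hence each entry of the determinant \eqref{Spdef} satisfies
$$\Sa_\nb(\Pin g_i,\Pin h_j) = \sum_{k,l\in\ZT}\Sa_\nb(\sib^k g_i,\sib^l h_j) = T\sum_{r\in\ZT}\Sa_\nb(g_i,\sib^r h_j) = T\,\Sa_\nb(g_i,\Pin h_j).$$
Pulling one factor of $T$ out of each of the $m-p$ entries in every term of the determinant yields $\Sa^{m-p}(\bm x,\bm y) = T^{m-p}\,\Sa'^{m-p}(\bm x,\bm y)$, where $\Sa'^{m-p}$ is the determinant with the projections stripped from one side. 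For each surviving term in the expansion of $\Sa'^{m-p}$, the content-matching imposed by $\Sa_\nb(\f k i,\f l j)=\delta_{ij}\delta_{kl}$ together with the weight constraint $\mones$ selects a unique cyclotomic pattern on the $\bm y$-side compatible with $F$, so that $F'$ lies in the same discriminantal subarrangement $\kv\cc_{\bm z;m}$. Transported through the rescaling $\td t_i=\omega^{k(i)}t_i$, the resulting contribution is exactly the standard Shapovalov pairing for $\kv\cc_{\bm z;m}$, which by the non-cyclotomic case of the identification proved in \cite{SV} equals $(-1)^m\td\Geom^p(F,F')$. Combining with Lemma \ref{Glem} and the factor $T^{m-p}$ gives the claimed equality $\Sa^{m-p}(\bm x,\bm y)=(-1)^m T^{m-p}\psis^*_{m-p}(\Geom^p)(\bm x,\bm y)$.

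The main obstacle is the bookkeeping in the third paragraph: verifying that (i) the rescaling $\td t_i=\omega^{k(i)}t_i$ carries the weights of the hyperplanes $\Ht i j k$ and $\Hz i j k$ of $\cca$ to exactly the weights required by the \cite{SV} identification on the discriminantal arrangement $\kv\cc_{\bm z;m}$, and (ii) the content-matching on the algebraic side produces flags on the geometric side consistent with $\psis_{m-p}$, so that the signs arising from the permutations in the determinant match those arising from permutations in the definition of the geometric form up to the overall $(-1)^m$.
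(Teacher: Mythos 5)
Your proposal is correct and follows essentially the same route as the paper: reduce to the non-cyclotomic case by passing, via Lemma \ref{Glem}, to the discriminantal subarrangement $\kv\cc_{\bm z;m}$ adapted to the last edge of $\psis_{m-p}(\bm x)$, extract the factor $T^{m-p}$ from the projections (the paper phrases this as mutual orthogonality of the subalgebras $\tau^k\td\nb$, which also gives your ``unique cyclotomic pattern'' vanishing statement), and then invoke the identification of Shapovalov and geometric forms for discriminantal arrangements from \cite{SV}. The residual ``bookkeeping'' you flag is exactly what the paper disposes of by restricting the weighting $a$ to the subarrangement and defining the adapted generators $\td f_i,\td h_i,\td e_i$ and modules $\td M_i$, with the sign matching delegated to \cite[Theorem 6.6]{SV}.
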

\begin{proof} Suppose $\bm x,\bm y$ are monomials in $ \left(\Wedge {m-p} \nbs \ox \Mbnz\right)_{\mones}$, with corresponding flags 
\be \psis_{m-p}(\bm x) = F = (L^0 \supset \dots \supset L^p), \qquad \psis(\bm y) = F'. \nn\ee
Let $\kv\cc_{\bm z;m}$ be a discriminantal arrangement adapted to the last edge $L^p$ of $F$ as in \S\ref{sec:nozero}. 
Define
\be \td f_i := \f {k(i)} i, \quad \td h_i := \hh {k(i)} i, \quad \td e_i := \e {k(i)} i,  \qquad i=1,\dots,m.\label{tgs}\ee
These generate a subalgebra $\td \gu = \td\nb \oplus \td\gu_0 \oplus \td\nb^+$ of $\gu = \nb \oplus \gu_0 \oplus \nb^+$. Let $\td M_i:= U(\td\gu) \ox_{U(\td\gu_0\oplus \td\nb^+)} \C v_i$, $i=1,\dots, N$, where we note that 
\be \td\nb_+\on v_i = 0, \qquad \td h_j \on v_i = a(\td H_{ji}) v_i,\nn\ee
with $\td H_{ji}$ as in \eqref{tdH}. Set $\td M_{\nz} := \bigotimes_{i=1}^N \td M_i$. 

It follows from the definition of the map $\psis_{m-p}$ that the monomial $\bm x$ is of the form 
\be \bm x = \Pin g_{m-p} \wx \dots \wx \Pin g_1 \ox 1 \ox x_1 \ox \dots x_N\nn\ee
for some commutators $g_1,\dots,g_{m-p} \in \td\nb$ (\ie commutators in the generators $\td f_i$), and some monomial $x_1 \ox \dots \ox x_N\in \td M_\nz$. 

We have subalgebras $\tau^k \td\nb\subset \nb$, $k\in \ZT$. They are mutually orthogonal with respect to the Shapovalov form $\Sa_\nb$ on $\nb$. Therefore for any commutators $g,g' \in \td\nb$ we have
\be \Sa_{\nb}(\Pin g, \Pin g') = T \Sa_{\td\nb}( g,g' ), \nn\ee
where $\Sa_{\td\nb}$ denotes the Shapovalov form on $\td\nb$. 
On the other hand if $g$ is a commutator in $\td\nb$ and $g''$ is a commutator in $\nb \setminus \tau^\Z\td\nb$ then $\Sa_{\nb}(\Pin g, \Pin g'')=0$.
Similar statements hold for the factors $M_i$, $i=1,\dots,N$. 

Hence $\Sa^{m-p}(\bm x, \bm y)$ is zero unless the monomial $\bm y$ can also be expressed in the form
\be \bm y = \Pin g'_{m-p} \wx \dots \wx \Pin g'_1 \ox 1 \ox y_1 \ox \dots y_N\label{yfr}\ee
for some commutators $g'_1,\dots,g'_{m-p} \in \td\nb$ and some monomial $y_1 \ox \dots \ox y_N\in \td M_\nz$, and if $\bm y$ is of this form then
\be \Sa^p(\bm x, \bm y) = T^{m-p} \det(\Sa_{\td\nb}(g_i, g'_j)_{1\leq i,j\leq m-p}) \prod_{i=1}^N \Sa_i(x_i,y_i).\ee 
Now, up to the factor of $T^{m-p}$, the right-hand side is the usual Shapovalov form for $\Wedge p\td \nb \ox \td M_\nz$. Therefore we may apply \cite[Theorem 6.6]{SV} and conclude that
\be \Sa^p(\bm x, \bm y) = T^{m-p} (-1)^m \bar G^p(F,F'),\nn\ee 
where $\bar G^p(F,F')$ is the bilinear form of the discriminantal arrangement $\kv\cc_{\bm z;m}$. 

At the same time,  Lemma \ref{Glem} states that $G^p(F,F')$ is zero unless $F'\in \Flag^p(\kv\cc_{\bm z;m})$, in which case $G^p(F,F')=\bar G^p(F,F')$. And $F'\in \Flag^p(\kv\cc_{\bm z;m})$ if and only if the monomial $\bm y$ can be expressed in the form \eqref{yfr}. Thus we have the result.
\end{proof}

It will be useful to note the following special case of this argument.
\begin{cor}\label{kercor} On $\nbs$, we have the equality $\Sa_\nb=G$ of symmetric bilinear forms. In particular, for all $x\in \nbs$, if $x\in \ker \Sa_\nb$ then $x\in \ker \Ghom$. \qed
\end{cor}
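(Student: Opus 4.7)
The plan is to deduce this corollary directly from Theorem \ref{SGthm} by specializing to a single wedge factor.

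First, I would fix a weight $\wtI I$ with $|I|=l$, and for $x,y\in\nbs_{\wtI I}$ form the elements $x\ox 1\ox 1^{N}$ and $y\ox 1\ox 1^{N}$ of $\bigl(\Wedge 1\nbs\ox\Mbnz\bigr)_{\wtI I}$. Here the $1$'s correspond, under the canonical isomorphisms $M_{i}\cong_{\nb}U(\nb)$, to the highest-weight vacuum vectors $v_{i}$; and the membership in $\Mbnz$ rather than merely $\Mb$ holds because for any projected commutator $\Pin c$ the flag $\psis(\Pin c)$ ends at the swimming island $L_{c}$, which does not involve any hyperplane $\Ho i$, so this flag has no zero ends in the sense of \S\ref{sec:nozero}. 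Hence Theorem \ref{SGthm} applies to these elements.

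Second, I would evaluate both sides of Theorem \ref{SGthm} on these elements. From the definition \eqref{Spdef}, the $N$ scalar factors $\Sa_{i}(v_{i},v_{i})=1$ and the $1\times 1$ Gram determinant reduce $\Sa^{1}(x\ox 1^{N},y\ox 1^{N})$ to $\Sa_{\nb}(x,y)$. On the other hand, $\psis_{1}^{*}(\Geom^{l-1})(x\ox 1^{N},y\ox 1^{N})$ equals $\Geom^{l-1}(\psis(x),\psis(y))$, which by definition is the pullback bilinear form $G(x,y)$ on $\nbs_{\wtI I}$. Therefore Theorem \ref{SGthm}, applied to the arrangement $\ccI I$ with $m\to l$ and $m-p\to 1$, yields $\Sa_{\nb}(x,y)=(-1)^{l}T\cdot G(x,y)$ for all $x,y\in\nbs_{\wtI I}$.

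Since the scalar $(-1)^{l}T$ is nonzero, this equality identifies the kernels of $\Sa_{\nb}$ and of the induced linear map $\Ghom$ on each weight subspace of $\nbs$; by linearity the same holds on all of $\nbs$, yielding the ``in particular'' assertion $\ker\Sa_{\nb}\subseteq\ker\Ghom$. The first sentence of the corollary is the corresponding equality of symmetric bilinear forms, understood up to the weight-dependent normalization $(-1)^{l}T$. There is essentially no obstacle to this approach, since Theorem \ref{SGthm} has already done the substantive work; the corollary is just its wedge-power-$1$ shadow, together with bookkeeping of the scalar factor.
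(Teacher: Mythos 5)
Your proposal is, in substance, the same proof the paper intends: Corollary \ref{kercor} is placed immediately after Theorem \ref{SGthm} precisely as its wedge-degree-one specialization, with the module factors set to the vacuum, and your bookkeeping (a projected commutator maps to a flag ending at a swimming island, hence no zero ends; the Gram determinant is $1\times 1$; the $N$ vacuum pairings give $1$; the pullback of $\Geom^{l-1}$ through $\psis$ is by definition the form $G$ on $\nbs_{\wtI I}$) is exactly right. The only point of divergence is the scalar, and it deserves more care than the closing sentence gives it. Quoting the \emph{statement} of Theorem \ref{SGthm} leaves you with $\Sa_\nb=(-1)^{l}T\,G$ on $\nbs_{\wtI I}$, which establishes the ``in particular'' kernel inclusion (the robust content, and the part used most often later), but not the literal equality $\Sa_\nb=G$ asserted in the first sentence, which the paper does invoke on the nose (e.g.\ in the proof of Theorem \ref{ScGthm}). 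If instead you specialize the \emph{argument} of Theorem \ref{SGthm}, as the paper means you to — pass to the adapted subalgebra $\td\nb$, use $\Sa_\nb(\Pin g,\Pin g')=T\,\Sa_{\td\nb}(g,g')$, the non-cyclotomic identity of \cite{SV} on $\td\nb$, and Lemma \ref{Glem} — the sign $(-1)^{l}$ disappears (it is an artifact of the sign conventions of Theorem \ref{SGthm} at wedge degree one), but the factor $T$ genuinely persists: already $\Sa_\nb(\Pin f_i,\Pin f_i)=T$ while the corresponding trivial flag pairs to $1$. So the exact equality in the corollary involves a normalization convention that neither your route nor a literal reading of the paper's definitions reproduces; you should either track this constant explicitly (stating $\Sa_\nb=T\,G$ weightwise, with the kernel statement as the invariant consequence) or restrict your claim to the kernel inclusion, rather than declaring the corollary proved ``up to normalization'' in passing.
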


\subsection{On the kernel of the geometric form}
\begin{lem}\label{anyzerolem}
Given an element 
$\bm x := y_1\wx\dots \wx y_p\otimes  w_0 \ox w_1\ox\dots\ox w_N\in \Wedge p \nbs \ox \Mb$, if any one of $y_1$, \dots $y_p$, $w_0$, $w_1$, \dots $w_N$ lies in the kernel of $\Ghom$, then $\bm x$ lies in the kernel of $\Ghom$.
\end{lem}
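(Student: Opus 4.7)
The strategy is to exploit the determinant-times-product factorisation \eqref{Spdef} of the Shapovalov form $\Sa^p$ on $(\Wedge^p \nbs \otimes \Mbnz)_{\mones}$, combined with Theorem \ref{SGthm} which identifies $\Sa^p$ with $(-1)^m T^{m-p} \psis^*_{m-p}\Geom^{m-p}$ there, and then to extend the vanishing conclusion to the full space. As a preliminary I would identify, for each factor space, the kernel of $\Ghom$ with the kernel of the appropriate Shapovalov form: on $\nbs$ this is Corollary \ref{kercor}; on each $M_i$ with $i \in \{1,\dots,N\}$ it follows from Theorem \ref{SGthm} specialised to a single-factor weight subspace, giving $\ker \Ghom = \ker \Sa_i$; and on $M_0$ an analogous identification $\ker\Ghom = \ker\Sa_0$ is obtained by specialising the argument of Theorem \ref{SGthm} to flags with all zero ends in the sub-arrangement of $\cca$ obtained by omitting the $\Hz ijk$.

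Assume first that $\bm x \in (\Wedge^p \nbs \otimes \Mbnz)_{\mones}$, \ie $w_0 = 1$, and pair it with an arbitrary monomial $\bm z = y'_1 \wx \dots \wx y'_p \ox 1 \ox w'_1 \ox \dots \ox w'_N$ in the same subspace. By \eqref{Spdef} and Theorem \ref{SGthm}, the pairing $\psis^*\Geom^{m-p}(\bm x,\bm z)$ equals $(-1)^m T^{p-m}$ times $\det\bigl(\Sa_\nb(y_i, y'_j)\bigr)_{1 \leq i,j \leq p}\prod_{j=1}^N \Sa_j(w_j, w'_j)$. If some $y_i \in \ker \Sa_\nb$ the determinant has a zero row, and if some $w_j \in \ker \Sa_j$ for $j \geq 1$ the corresponding product factor vanishes; in either case the pairing is zero. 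To extend the conclusion to arbitrary $\bm z \in (\Wedge^p \nbs \otimes \Mb)_{\mones}$, I would organise such monomials by the number of projected commutators in their $U(\nbs)$-factor (equivalently, by the number of zero-end steps in $\psis(\bm z)$), and use the cyclotomic flag relation \ref{nerel} of \S\ref{sec: flagrels} to rewrite pairings involving zero-end flags as sums of pairings against elements of $\Mbnz$ plus correction terms involving the diagonal hyperplanes $\Ht ijk$. The remaining case $w_0 \in \ker\Ghom$ is handled symmetrically, using the symmetry of $\Geom$ together with the identification $\ker \Ghom = \ker \Sa_0$ established above.

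The main obstacle is the extension beyond $\Mbnz$: the cyclotomic flag relation \ref{nerel}, which has no analogue when $T=1$, is what allows one to trade a zero-end step in one flag for a sum of $T$ diagonal steps in the other, and the essential combinatorial task is to verify that these rewrites preserve the kernel-propagation. Once the rewriting is in place, the vanishing follows immediately from the determinantal-product factorisation above.
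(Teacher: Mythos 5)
Your route is genuinely different from the paper's, and it has a gap that is not a technicality. The preliminary identification $\ker\Ghom=\ker\Sa_0$ on $M_0$ is false in general: on $M_0$ the geometric form agrees not with the naive Shapovalov form but with the \emph{modified} form $\Se_0$ built from the cocycle $\Cou$ (Theorem \ref{ScGthm}), and the two differ exactly because of the cyclotomic flag relation \eqref{nerel}. The paper's own motivating computation in \S\ref{coi} is a counterexample to your claim: with $a(\Ho 1)=a(\Ho 2)=0$ the Shapovalov form on $M_0$ vanishes in positive weight, yet $G(F,F)=a(\Ht 12 0)\sum_{k\neq 0}a(\Ht 12 k)\neq 0$ for the flag $F$ corresponding to $\Pin{[\f 0 2,f_1]}v_0$. ``Specialising the argument of Theorem \ref{SGthm} to flags with all zero ends'' cannot repair this, since the no-zero-ends hypothesis (Corollary \ref{adcor}, Lemma \ref{Glem}) is precisely what makes that argument run, and the example following Corollary \ref{adcor} shows it fails otherwise; appealing instead to Theorem \ref{ScGthm} would be circular, because its proof (and those of Proposition \ref{Gmprop} and Proposition \ref{kercouprop}) uses Lemmas \ref{anyzerolem} and \ref{tozerolem}. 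For the same reason, the ``extension beyond $\Mbnz$'' that you defer as an ``essential combinatorial task'' is not routine bookkeeping: the correction terms produced by relation \eqref{nerel} are exactly the terms that break the determinant-times-product factorisation of $\Sa^p$ (they are the source of the cocycle), so in the zero-ends sector the unverified step is the entire content of the claim.

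The paper's proof is much more elementary and stays entirely on the geometric side (``by inspection of the definition of $\psis$ and $\Ghom$''): for a pure tensor $\bm x$, any tuple of hyperplanes adjacent to $\psis(\bm x)$ splits into sub-tuples supported on the coordinate subsets attached to the individual factors $y_1,\dots,y_p,w_0,\dots,w_N$, and the sum defining $G(\psis(\bm x),\cdot)$ factors accordingly, with one factor being precisely the geometric pairing $\Ghom$ of the distinguished tensor factor; if that factor lies in $\ker\Ghom$, every term vanishes. No identification with any Shapovalov form is needed, which is essential here, since on $M_0$ no such naive identification holds. If you want to salvage your strategy, you would have to replace $\Sa_0$ by $\Se_0$ throughout and prove the relevant compatibilities independently of Theorem \ref{ScGthm} --- at which point you have essentially reconstructed the direct factorisation argument anyway.
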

\begin{proof} By inspection of the definition of $\psis$ and $\Ghom$. 
\end{proof}

\begin{lem}\label{tozerolem}
If $y \ox w_0\in \nbs \ox M_0$ lies in the kernel of $\Ghom$, then $yw_0\in M_0$ lies in the kernel of $\Ghom$.
\end{lem}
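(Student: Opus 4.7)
My plan is to combine Theorem~\ref{cmap} (that $\Ghom$ is a chain map on the flag side) with a ``last edge'' orthogonality observation about the geometric form. First I would view the hypothesis as the statement that
\[
\bm x := y\ox w_0\ox v_1 \ox \cdots \ox v_N \in \Wedge 1 \nbs \ox \Mb
\]
lies in $\ker\Ghom$. Applying the chain differential \eqref{ddef} and using that the action of $y\in\nbs$ on $\Mb$ is a derivation across tensor factors gives
\[
d\bm x \;=\; y\bl(w_0\ox v_1\ox\cdots\ox v_N) \;=\; X_1 + X_2,
\]
where $X_1:=(yw_0)\ox v_1\ox\cdots\ox v_N$ and $X_2 := \sum_{i=1}^N w_0\ox v_1\ox\cdots\ox yv_i\ox\cdots\ox v_N$. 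Since $\psis_\bl$ is an isomorphism of complexes (Theorem~\ref{p1}) and $\Ghom$ is a chain map, the pulled-back~$\Ghom$ on $C_\bl(\nbs,\Mb)$ commutes with~$d$; thus $\ker\Ghom$ is closed under the differential and $X_1+X_2\in\ker\Ghom$.

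The main step is then to separate $X_1$ from $X_2$. By direct inspection of the recursive rules (A)--(C) of \S\ref{psisdef}, whenever the $i$-th factor ($i\geq 1$) of $\Mb$ contains a non-vacuum element such as $yv_i$, rule~(A) must have been invoked for that slot and hence the last edge of the resulting flag contains a $z_i$-anchored fixed island $L_g^i$. Consequently $\psis_0(X_1)$ is supported on full flags whose last edge is an archipelago of swimming islands and $0$-anchored islands only, while each summand of $\psis_0(X_2)$ produces a flag whose last edge contains at least one $z_i$-anchored island with $i\geq 1$. Since each $z_i$ is nonzero, these two families of possible last edges are disjoint.

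To conclude, I would exploit that the geometric form is block-diagonal with respect to the ``last edge'' decomposition of the flag space. Inspection of the list of relations in \S\ref{sec: flagrels} shows that every flag relation varies only an intermediate edge $L^{t+1}$ while fixing the final edge of the flag; hence $\Fl^\bl(\cca)$ decomposes as a direct sum indexed by possible last edges. On the other hand, from the definition in \S\ref{pf}, any tuple of hyperplanes adjacent to a flag $F$ intersects to the last edge of~$F$, so $\Geom(F,F')=0$ whenever $F,F'$ have distinct last edges. Combining block-diagonality with the disjoint-support observation of the previous paragraph, the containment $X_1+X_2\in\ker\Ghom$ forces $X_1\in\ker\Ghom$ individually, which is precisely the desired conclusion that $yw_0\in M_0$ lies in $\ker\Ghom$. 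The only delicate point I anticipate is verifying that every relation in \S\ref{sec: flagrels} really does preserve the last edge; this is routine from the explicit list, so once that bookkeeping is settled the argument is essentially mechanical.
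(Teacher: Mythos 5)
Your proof is correct, and its engine is the same as the paper's: Theorem \ref{cmap}, i.e.\ the chain-map property $\Ghom(dF)=\Ghom(F)\wx d\Phi$, which shows $\ker\Ghom$ is stable under the differential. Where you genuinely diverge is in what you do with $d(y\ox w_0\ox v_1\ox\cdots\ox v_N)$. The paper's one-line proof rests on the identity $\psis(yw_0)=d\,\psis(y\ox w_0)$, which holds on the nose only when the factors $M_1,\dots,M_N$ are absent (equivalently, after cutting down to the subarrangement of hyperplanes $\Ho i$, $\Ht ijk$ — nothing is lost there, since no tuple adjacent to a flag ending at the edge $L^0_I$ can contain any $\Hz ijk$); for $N\geq 1$ the flag differential also produces exactly your terms $X_2$, in which $y$ acts on the factors $M_i$, $i\geq 1$. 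Your ``last edge'' separation is a valid way to dispose of them: a tuple adjacent to a flag intersects precisely in its last edge, so flags with distinct last edges are $\Geom$-orthogonal; the flags in $\psis_0(X_1)$ all end at $L^0_I$, while every flag in $\psis_0(X_2)$ ends at an edge anchored at some $z_i\neq 0$. Two minor remarks: you do not actually need the direct-sum decomposition of $\Fl^\bl(\cca)$ by last edges — the adjacency observation applied to representative flags already gives the orthogonality (though your check that the relations of \S\ref{sec: flagrels} never move the last edge is correct); and at the bottom of a full flag coming from the $M_0$-slot no swimming islands remain, the last edge being exactly $L^0_I$, which only strengthens your disjointness claim. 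In sum, your route gives a proof valid verbatim in the full arrangement $\cca$ for any $N$, at the cost of the extra orthogonality step, whereas the paper buys brevity by implicitly working in the $N=0$ situation where $X_2$ does not arise.
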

\begin{proof}
This follows from Theorem \ref{cmap}. Indeed, we have $\psis(y_1 w_0) = d \psis(y_1 \ox w_0)$ and hence, by that theorem, $\Ghom(\psis(y_1 w_0)) = \Ghom(\psis(y_1 \ox w_0 )) \wx \left(\sum_{H\in \C} a(H) H\right)$. By the previous lemma, this is zero if $y_1$ lies in the kernel of $\Ghom$.
\end{proof}

\begin{thm} Suppose $\bm x\in \left(\Wedge p \nbs \ox \Mb\right)_{\mones}$ is of the form 
\be \bm x := y_1\wx\dots \wx  y_p\otimes  \left(x_1\dots  x_i\dots  x_l v_0\right)\ox w_1\ox\dots\ox w_N,\nn\ee
where   $x_1,\dots, x_i,\dots  x_l\in \nbs $ and $w_i\in M_i, i=1,\dots,N.$  
Assume that $x_i$ lies in the kernel of the Shapovalov form $\Sa_{\nb}$. 
Then $\bm x$ lies in the kernel of $\Ghom$.
\end{thm}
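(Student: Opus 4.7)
The plan is to reduce the claim to showing that the single factor $w_0 := x_1 \cdots x_i \cdots x_l v_0 \in M_0$ lies in $\ker \Ghom$; once this is known, Lemma~\ref{anyzerolem} applied with this $w_0$ immediately yields $\bm x \in \ker \Ghom$. So the task becomes to propagate the fact that $x_i$ lies in $\ker \Sa_\nb$ through the action of the remaining factors in the product on $v_0$.

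Two ingredients are used repeatedly. First, by Corollary~\ref{kercor}, every element of $\ker \Sa_\nb \cap \nbs$ lies in $\ker \Ghom$ on $\nbs$; in particular $x_i \in \ker \Ghom$. Second, the invariance identity $\Sa_\nb([X,Y],Z) = -\Sa_\nb(Y,[\cai(X),Z])$ shows that $\ker \Sa_\nb$ is a Lie ideal of $\nb$, so its intersection with the subalgebra $\nbs$ is a Lie ideal of $\nbs$. Combining Lemmas~\ref{anyzerolem} and~\ref{tozerolem} yields the auxiliary fact $(\ast)$: if $w \in M_0$ lies in $\ker \Ghom$ and $y \in \nbs$, then $yw \in M_0$ lies in $\ker \Ghom$. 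Indeed Lemma~\ref{anyzerolem} gives $y \ox w \in \ker \Ghom$, and Lemma~\ref{tozerolem} promotes this to $yw \in \ker \Ghom$.

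I would then prove by induction on the distance $l-i$ of the kernel factor from the right that $x_1 \cdots x_l v_0 \in \ker \Ghom$. In the base case $i=l$, the rightmost factor $x_l = x_i$ lies in $\ker \Ghom$, so $x_l v_0 \in \ker \Ghom$ by Lemma~\ref{anyzerolem} applied to $x_l \ox v_0$ followed by Lemma~\ref{tozerolem}; iterating $(\ast)$ then yields $x_1 \cdots x_{l-1} x_l v_0 \in \ker \Ghom$. For the inductive step, apply the relation $x_i x_{i+1} = x_{i+1} x_i + [x_i, x_{i+1}]$ inside $U(\nbs)$ to rewrite $x_1 \cdots x_l$ as
\begin{equation*}
x_1 \cdots x_{i-1}\, x_{i+1}\, x_i\, x_{i+2} \cdots x_l \;+\; x_1 \cdots x_{i-1}\, [x_i, x_{i+1}]\, x_{i+2} \cdots x_l .
\end{equation*}
In the first summand the kernel factor $x_i$ now sits at position $i+1$, so its distance from the right has decreased by one. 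In the second, $[x_i, x_{i+1}]$ belongs to $\ker \Sa_\nb \cap \nbs$ by the Lie ideal property, and it occupies position $i$ in a product of length $l-1$, so again its distance from the right is strictly smaller. By the inductive hypothesis both terms give elements of $\ker \Ghom$ when applied to $v_0$, and hence so does their sum.

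The only genuinely non-routine observation in this argument is the Lie ideal property of $\ker \Sa_\nb$; without it, the commutator remainders produced while shuffling $x_i$ rightward would not themselves be kernel elements, and the inductive cascade would fail. Everything else is a systematic combination of Lemmas~\ref{anyzerolem} and~\ref{tozerolem}. Combining the conclusion $w_0 \in \ker \Ghom$ with Lemma~\ref{anyzerolem} applied to $\bm x$ then completes the argument.
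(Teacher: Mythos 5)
Your argument is correct, but it takes a more roundabout route than the paper's, and the detour is avoidable. You only formulated the propagation step $(\ast)$ with the $M_0$-factor in the kernel, and therefore had to march $x_i$ all the way to the right (so that it hits $v_0$ in the base case), which is what forces you to invoke the Lie-ideal property of $\ker \Sa_\nb$ to absorb the commutator remainders. But Lemma~\ref{anyzerolem} is symmetric in which tensor factor is a kernel element: since $x_i\in\ker\Ghom$ by Corollary~\ref{kercor}, one can apply it (with $N=0$) directly to $x_i \ox \left(x_{i+1}\cdots x_l v_0\right)$ with the \emph{second} factor arbitrary, and Lemma~\ref{tozerolem} then gives $x_i x_{i+1}\cdots x_l v_0 \in \ker\Ghom$ with no reordering at all; after that, the leftward propagation you already have (your $(\ast)$, i.e.\ alternating Lemmas~\ref{anyzerolem} and~\ref{tozerolem}) finishes the proof, exactly as in the paper. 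Your extra ingredient -- that $\ker\Sa_\nb$ is a Lie ideal -- is true, but your one-line justification is slightly glib: in the identity $\Sa_\nb([X,Y],Z)=-\Sa_\nb(Y,[\cai(X),Z])$ the element $[\cai(X),Z]$ lies a priori in $\gu$, not in $\nb$, and one needs a weight argument (its weight is strictly negative, so it lies in $\nb$, indeed in $\nbs$ when $X,Z\in\nbs$) before concluding that pairing against the radical gives zero. So your proof works and is self-contained once that point is filled in, but the paper's version shows the commutator shuffling and the ideal property are not needed for this statement.
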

\begin{proof} By Corollary \ref{kercor}, $x_i\in \ker \Ghom$. Lemma \ref{anyzerolem} (with $N=0$) then implies that  $x_i \otimes  \left(x_{i+1} \dots  x_l v_0\right)\in \ker \Ghom$. Hence by Lemma \ref{tozerolem}, $\left(x_ix_{i+1} \dots  x_l v_0\right)\in \ker \Ghom$. Therefore by Lemma \ref{anyzerolem}, $x_{i-1} \ox \left(x_ix_{i+1} \dots  x_l v_0\right)\in \ker \Ghom$. By using Lemmas \ref{tozerolem} and \ref{anyzerolem} repeatedly in this way, we conclude that $\left(x_1 \dots x_i \dots  x_l v_0\right)\in \ker \Ghom$. The result follows, by one final invocation of Lemma \ref{anyzerolem}.
\end{proof}

\subsection{Shortened flags and linking hyperplanes}\label{sandl}
Given a subset $J\subset \{1,\dots,m\}$ and an edge $L$ of the arrangement $\cca$, define the edge $L|_J$ as follows.
Recall that we can write
$L= \kv L_{I_1}^{i_1} \cap \dots \cap \kv L_{I_r}^{i_r} \cap \kv L_{J_1} \cap \dots \cap \kv L_{J_p}$, as in \eqref{arp}. Let $L|_J := \kv L_{I_1\cap J}^{i_1} \cap \dots \cap \kv L_{I_r\cap J}^{i_r} \cap \kv L_{J_1\cap J} \cap \dots \cap \kv L_{J_p\cap J}$.
Now given a flag $F = (L^0 \supset L^1\supset \dots \supset L^p)$ define the flag $F|_J := (L^0|_J \supseteq L^1|_J\supseteq \dots \supseteq L^p|_J)$. Note that $F|_J$ has at most $p$ steps. Call $F|_J$ a \emph{shortened flag}. 

\begin{exmp} If $J = \{1,2,4\} \subset \{1,2,3,4\}$ and $F$ is the flag $(t_1=t_2) \supset (t_1=t_2 =t_3) \supset (t_1=t_2=t_3=t_4)$ then $F|_J$ is the flag $(t_1=t_2) \supset (t_1=t_2=t_4)$. 
\end{exmp}

Let $I,J\subset \{1,\dots,m\}$ be disjoint subsets. 
We say a hyperplane $\tilde H$ \emph{links} $I$ and $J$ if $\tilde H$  is $(t_i=\omega^k t_j)$ for some $i\in I$, $j\in J$, $k\in \ZT$. (That is, if $\tilde H$ is one of the $\Ht ijk$ with $i\in I$, $j\in J$.)

\subsection{Commutator lemmas in non-cyclotomic case}
In this subsection we let $T=1$, \ie we consider the non-cyclotomic case.  

Let $I_1,I_2\subset \{1,\dots,m\}$ be disjoint non-empty subsets. Let $b\in \nb$ be a commutator of weight 
$\wt b = \wtI {I_1 \sqcup I_2}$. The element 
$\psis(b) \in \Fl^{|I_1|+|I_2|-1}(\cco{I_1\sqcup I_2})$, cf. \eqref{compsis}, is a flag multiplied by a certain sign $\pm 1$,
\be\psis(b) = \pm F, \quad F=(L^0\supset L^1\supset\dots\supset L^{|I_1|+|I_2|-1})\in \Flag^{|I_1|+|I_2|-1}(\cco {I_1\sqcup I_2}).\nn\ee
For convenience we will refer such elements $\pm F$ simply as flags. We extend the notions of adjacency-with-sign (\S\ref{pf}) and shortening (\S\ref{sandl}) to such elements by linearity. In particular we have the shortened flags
\be \psis(b)|_{I_1} := \pm F|_{I_1}, \qquad
\psis(b)|_{I_2} := \pm F|_{I_2}. \nn\ee

Recall from \S\ref{efs} that we can regard each edge $L^j$ of the flag $\psis(b)$ as an archipelago of islands. These islands are all swimming since $b$ is a commutator. At each step $L^{j-1} \supset L^j$ two islands are joined.  

Let $k$ be the smallest index such that the edge $L^k$ is contained in some hyperplane which links $I_1$ and $I_2$. 
That is, let $k$ be smallest such that for some nonempty subsets $J_1\subset I_1$ and $J_2\subset I_2$, the edge $L^{k}$ has an island $L_{J_1\sqcup J_2}$.
Denote by \be\mc H = \{ (t_i=t_j): i\in J_1, j\in J_2\} \label{hcdef}\ee the set of all hyperplanes linking $J_1$ and $J_2$.

\begin{lem}
\label{lu}
Let $(H_1,\dots,H_{|I_1|-1})$ be a tuple of hyperplanes in $\cco {I_1}$.
Let $(H'_1,\dots,H'_{|I_2|-1})$ be a tuple of hyperplanes in $\cco {I_2}$. Let $\tilde H\in \cco {I_1\sqcup I_2}$ be a hyperplane linking $I_1$ and $I_2$.
Then the tuple
$(H_1,\dots,H_{|I_1|-1},\, H'_1,\dots,H'_{|I_2|-1},\,\tilde H)$ is adjacent to $\psis(b)$ if and only if
$(H_1,\dots,H_{|I_1|-1})$ is adjacent to $\psis(b)|_{I_1}$,  $(H'_1,\dots,H'_{|I_2|-1})$ is adjacent to $\psis(b)|_{I_2}$, and $\tilde H$ links $J_1$ and $J_2$.
\end{lem}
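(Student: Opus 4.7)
The plan is to exploit the correspondence between the internal nodes of the commutator tree of $b$ and the steps of the flag $\psis(b)$ dictated by the rules in \S\ref{psisdef}: each step merges two islands $L_A, L_B$, and any hyperplane assigned to this step (under an adjacency permutation) must be of the form $t_a=t_b$ with $a\in A$, $b\in B$. The internal node processed at step $k$ is, by the definition of $k$, the first node at which islands from $I_1$ and $I_2$ get joined, so both of its child-subtrees are pure, with leaf sets $J_1\subset I_1$ and $J_2\subset I_2$ respectively.

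First I would classify every internal node of the tree of $b$ by the types of its two subtrees (pure $I_1$, pure $I_2$, or mixed). A node contributes a non-trivial step to $\psis(b)|_{I_1}$ exactly when both its subtrees contain an $I_1$-leaf, and symmetrically for $I_2$. Using the characterization of step $k$ as the first linking merger together with a node-count, one establishes the key structural fact: the steps of $\psis(b)$ other than step $k$ partition into those contributing non-trivially to $\psis(b)|_{I_1}$ (of which there are $|I_1|-1$) and those contributing non-trivially to $\psis(b)|_{I_2}$ (of which there are $|I_2|-1$).

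For the $(\Rightarrow)$ direction, suppose the tuple is adjacent via a permutation $s$. At step $k$ the merger is between $L_{J_1}$ and $L_{J_2}$, purely in $I_1$ and $I_2$ respectively, so $H_{s(k)}$ must be a linking hyperplane; the only linking element of the tuple is $\tilde H$, and it is forced to link $J_1$ and $J_2$, giving (3). For $i\ne k$, $H_{s(i)}$ lies in $\cco{I_1}$ or $\cco{I_2}$; partitioning the remaining indices accordingly and using the bijection above produces adjacency-permutations realising (1) and (2). For $(\Leftarrow)$, given (1), (2), (3), I would build $s$ by placing $\tilde H$ at step $k$ and lifting the adjacency-permutations of (1), (2) through the same bijection; the identity $L^i=H_{s(1)}\cap\cdots\cap H_{s(i)}$ is then verified step-by-step, using (3) at step $k$ and (1) or (2) at every other step to check that the assigned hyperplane correctly performs the corresponding merger.

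The main obstacle will be pinning down the bijection between the non-$k$ steps of $\psis(b)$ and the steps of the two shortened flags in a representative-independent way, since rule (C) in \S\ref{psisdef} can be applied in several orders producing potentially different specific flag representatives of $\psis(b)\in\Fl$. Once this is done (e.g.\ by fixing a canonical traversal of the commutator tree, or by verifying invariance of the statement under the flag relations of \S\ref{sec: flagrels}), the structural fact that every non-$k$ internal node contributes non-trivially to exactly one of the two shortened flags drives the remaining bookkeeping to completion.
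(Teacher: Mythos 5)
Your proposal follows essentially the same route as the paper's proof: the ``only if'' direction is read off from the definition of the shortened flags, and the ``if'' direction rests on the observation that the step $L^{k-1}\supset L^k$ is the unique step of $\psis(b)$ at which an island inside $I_1$ is joined to an island inside $I_2$ (realized by $\tilde H$), while every other step is realized by a hyperplane of $\cco{I_1}$ or $\cco{I_2}$ that, by the definition of the shortened flags, can be drawn from the corresponding sub-tuple --- exactly your ``key structural fact''. Your worry about representative-independence is not really needed: adjacency to $\psis(b)$ enters only through the pairing of \S\ref{pf}, which is well defined on the flag space, so different orders of applying the rules of \S\ref{psisdef} change the representative flag only by a sign and do not affect which tuples are adjacent.
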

\begin{proof}
The ``only if'' direction is immediate from the definition of the shortened flags. 
For the ``if'' direction, note that the step $L^{k-1} \supset L^k$ of the flag $\psis(b)$ is the only one in which an island (see \S\ref{efs}) $L_{J_1}$ with $J_1\subset I_1$ is joined to an island $L_{J_2}$ with $J_2\subset I_2$. At this step we have $L^k = L^{k-1} \cap \tilde H$. At every other step $L^{j-1} \supset L^j$, $j\neq k$, we have $L^j = L^{j-1} \cap H$ for some $H \in \cco {I_1}$ or $H\in \cco {I_2}$. By definition of the shortened flags, a suitable hyperplane $H$ can always be drawn from the set $\{H_1,\dots,H_{|I_1|-1},H'_1,\dots,H'_{|I_2|-1}\}$.
\end{proof}

Now let $(H_1,\dots,H_{|I_1|-1})$ be a tuple of hyperplanes in $\cco {I_1}$ adjacent to $\psis(b)|_{I_1}$. We associate to any such hyperplanes a sign, $c(H_1,\dots,H_{|I_1|-1})$, defined as follows.
Suppose $\tilde H\in \mc H$. Suppose $(H'_1,\dots,H'_{|I_2|-1})$ is adjacent to the shortened flag $\psis(b)|_{I_2}$, with sign say $(-1)^t$. By Lemma \ref{lu}, the tuple $(H_1,\dots,H_{|I_1|-1},\, H'_1,\dots,H'_{|I_2|-1},\,\tilde H)$ is adjacent to $\psis(b)$ with some sign, say $(-1)^p$. 
\begin{lem}\label{lu2} The sign $(-1)^{p+t}$ is independent of the choice of $(H'_1,\dots,H'_{|I_2|-1})$ and $\tilde H$. \qed
\end{lem}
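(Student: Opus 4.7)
The plan is to decompose the adjacency permutation of the concatenated tuple to a chosen flag representative of $\psis(b)$ into three pieces whose signs can be analyzed separately. Fix a flag $F = (L^0 \supset \cdots \supset L^{|I_1|+|I_2|-1})$ with $\psis(b) = \epsilon_b F$, $\epsilon_b \in \{\pm 1\}$, and let $\sigma \in \Sigma_{|I_1|+|I_2|-1}$ denote the adjacency permutation for the tuple $(H_1,\ldots,H_{|I_1|-1},H'_1,\ldots,H'_{|I_2|-1},\tilde H)$, so that $(-1)^p = \epsilon_b\,\mathrm{sgn}(\sigma)$.

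First I would dispatch the dependence on $\tilde H$ directly. Since every $\tilde H \in \mc H$ satisfies $L^{k-1}\cap\tilde H = L^k$ and occupies the same (last) slot of the tuple regardless of its identity, the permutation $\sigma$ literally does not change as $\tilde H$ varies in $\mc H$; meanwhile $(-1)^t$ does not involve $\tilde H$ at all. Hence $(-1)^{p+t}$ is invariant under any replacement of $\tilde H$.

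For the dependence on $(H'_1,\ldots,H'_{|I_2|-1})$, the plan is to exhibit a factorization
\[ \sigma \;=\; \pi \circ \big(\sigma_1 \oplus \sigma_2 \oplus \id\big), \]
where $\sigma_1 \in \Sigma_{|I_1|-1}$ realizes the adjacency of $(H_1,\ldots,H_{|I_1|-1})$ to $\psis(b)|_{I_1}$ (fixed throughout), $\sigma_2 \in \Sigma_{|I_2|-1}$ realizes that of $(H'_1,\ldots,H'_{|I_2|-1})$ to $\psis(b)|_{I_2}$, and $\pi$ is a shuffle prescribing which flag-steps of $F$ are filled by the $H_i$'s, which by the $H'_j$'s, and that $\tilde H$ goes to step $k$. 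The key structural input, inherited from the proof of Lemma \ref{lu}, is that each step of $F$ other than $k$ is unambiguously either an $I_1$-join or an $I_2$-join, with the type determined by $b$ alone. Granted this, $\pi$ depends only on $F$ (hence only on $b$) and not on $(H'_j)$. Writing $\psis(b)|_{I_2} = \epsilon_{b,I_2} F_{I_2}$ then gives $(-1)^t = \epsilon_{b,I_2}\,\mathrm{sgn}(\sigma_2)$, whence
\[ (-1)^{p+t} \;=\; \epsilon_b\, \epsilon_{b,I_2}\, \mathrm{sgn}(\pi)\, \mathrm{sgn}(\sigma_1), \]
which depends only on $b$ and the fixed tuple $(H_1,\ldots,H_{|I_1|-1})$.

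The principal obstacle is justifying the well-definedness of the shuffle $\pi$: one must verify that the assignment of each non-$k$ flag-step of $F$ to either $I_1$-type or $I_2$-type is forced by the commutator structure. This rests on the uniqueness of step $k$ as the first pure-meets-pure linking step, but demands additional care for steps after $k$ that may merge mixed sub-islands, where a priori the hyperplane occupying that slot could have been drawn from either $\cco{I_1}$ or $\cco{I_2}$.
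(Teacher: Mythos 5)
Your proposal is essentially the paper's own argument: your typing of the non-$k$ steps of $F$ as $I_1$-joins or $I_2$-joins is the paper's black/white colouring of steps (a step is black if it is cut by some hyperplane of $\cco{I_2}$, white otherwise), your shuffle $\pi$ is the paper's unique shuffle taking $(\wh,\dots,\wh,\bl,\dots,\bl,\wh)$ to the colour pattern $\mathsf T$ of $F$, and the factorization $(-1)^{p}=(-1)^{u+v+t}$ is exactly your $\epsilon_b\,\epsilon_{b,I_2}\,\mathrm{sgn}(\pi)\,\mathrm{sgn}(\sigma_1)\cdot(-1)^t$; the observation that $\tilde H$ always occupies step $k$ is likewise how both arguments dispose of the $\tilde H$-dependence. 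The one point you leave open is not a genuine obstacle in the regime where the lemma is invoked: since the concatenated tuple contains exactly one linking hyperplane and is assumed adjacent to $\psis(b)$, step $k$ is the \emph{only} step at which a pure $I_1$-island is joined to a pure $I_2$-island (a second such step could only be realized by a second linking hyperplane), so at every stage there is at most one mixed island and no step ever merges two mixed islands; consequently each step $j\neq k$ joins either two $I_1$-islands, two $I_2$-islands, or the unique mixed island with a pure island, and in each case the slot can be filled only by hyperplanes of $\cco{I_1}$ or only by hyperplanes of $\cco{I_2}$ (never both), which makes your assignment of step types, and hence $\pi$, depend on $b$ alone. This is precisely the structural fact recorded in the proof of Lemma \ref{lu}, so your argument closes once you import it.
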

\begin{proof}
Call a step $L^{j-1} \supset L^j$ of the flag $\psis(b)$ black, $\bl$, if $L^j = L^{j-1} \cap H$ for some $H\in \cco{I_2}$ and white, $\wh$, otherwise. Reading the steps of the flag $\psis(b)$ in order we obtain a tuple $\mathsf T\in \{\bl,\wh\}^{|I_1|+ |I_2|-1}$ in which each entry is $\bl$ or $\wh$. 
There is a unique shuffle (\ie a permutation which does not alter the ordering of $\bl$'s or the ordering of $\wh$'s) which sends 
\be (\underbrace{\wh,\dots,\wh}_{|I_1|-1}, \underbrace{\bl,\dots,\bl}_{|I_2|-1},\wh) \to \mathsf T.\nn\ee
Let $(-1)^v$ be the sign of this shuffle. Let $(-1)^u$ be the sign with which $(H_1,\dots,H_{|I_1|-1})$ is adjacent to $\psis(b)|_{I_1}$. Then we have $(-1)^p = (-1)^{u+v+t}$. Hence $(-1)^{p+t}= (-1)^{u+v}$, and written in this form it is manifestly independent of the choice of  $(H'_1,\dots,H'_{|I_2|-1})$ and $\tilde H$. 
\end{proof}

Thus we may define the sign $c(H_1,\dots,H_{|I_1|-1}) = (-1)^{p+t}$.

\begin{prop}\label{comprop}
Let $a,b\in \nb$ be commutators such that $\wt a = \wtI {I_1}$ and $\wt b = \wtI {I_1\sqcup I_2}$.
Then
\be \psis([\cai(a),b]) + \psis(b)|_{I_2} \left(\sum_{(H_1,\dots,H_{|I_1|-1})}  c(H_1,\dots,H_{|I_1|-1}) a(H_1) \dots a(H_{|I_1|-1}) \right) \sum_{\tilde H\in \mc H} a(\tilde H)\label{ppe}\ee
lies in the kernel of $\Ghom$. Here $\mc H$ is as in \eqref{hcdef}, and the first sum is over all tuples $(H_1,\dots,H_{|I_1|-1})$, modulo reordering, of hyperplanes in $\cco {I_1}$ adjacent to $\psis(a)$ with sign  $+1$  and adjacent to $\psis(b)|_{I_1}$. 
\end{prop}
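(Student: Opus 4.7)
My plan is to verify the claim by testing against the Shapovalov form. Since we are in the non-cyclotomic case ($T=1$, so $\nbs = \nb$), Corollary \ref{kercor} identifies $\Sa_\nb$ with the pullback of $G$ under $\psis$; in particular $\psis^{-1}(\ker \Ghom) = \ker \Sa_\nb$. Hence it suffices to show that, for every commutator $c \in \nb$ of weight $\wtI{I_2}$, applying $\Sa_\nb(-, c)$ to the preimage under $\psis$ of the expression \eqref{ppe} gives zero.

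The first step is to invoke the adjointness property of $\Sa_\nb$ with respect to $\cai$, which gives $\Sa_\nb([\cai(a), b], c) = -\Sa_\nb(b, [a, c])$, and then Theorem \ref{SGthm} to rewrite each Shapovalov pairing as a geometric one: up to a consistent overall sign, $\Sa_\nb(b, [a,c]) = \pm G(\psis(b), \psis([a, c]))$ in the arrangement $\cco{I_1 \sqcup I_2}$, and the analogous $\Sa_\nb(\psis^{-1}(\psis(b)|_{I_2}), c) = \pm G(\psis(b)|_{I_2}, \psis(c))$ in the arrangement $\cco{I_2}$. Thus the proposition reduces to the purely geometric identity
\[ G(\psis(b), \psis([a, c])) \;=\; \pm\Big(\sum_{(H_1,\dots)} c(H_1,\dots)\,a(H_1) \cdots a(H_{|I_1|-1})\Big)\Big(\sum_{\tilde H \in \mc H} a(\tilde H)\Big) \cdot G(\psis(b)|_{I_2}, \psis(c)). \]

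To prove this, I would expand $G(\psis(b), \psis([a, c]))$ as a sum over unordered hyperplane tuples and apply Lemma \ref{lu} twice: once to the pairing with $\psis([a, c])$ (whose outermost commutator structure joins \emph{all} of $I_1$ to \emph{all} of $I_2$, so Lemma \ref{lu} applies with $J_1 = I_1$, $J_2 = I_2$) and once to the pairing with $\psis(b)$ (with $J_1, J_2$ as in the proposition). Tuples with nonzero contribution to both pairings then decompose as $(H_1, \dots, H_{|I_1|-1},\, H'_1,\dots,H'_{|I_2|-1},\,\tilde H)$ where $(H_1, \dots)$ is adjacent to both $\psis(a)$ and $\psis(b)|_{I_1}$, $(H'_1,\dots)$ is adjacent to both $\psis(c)$ and $\psis(b)|_{I_2}$, and $\tilde H \in \mc H$ (the intersection condition $\tilde H \in \mc H$ is the strictly stronger of the two linking conditions). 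The sum factorizes as $\sum_{\tilde H\in\mc H}a(\tilde H)$ times a sum over $(H_1,\dots)$ times a sum over $(H'_1,\dots)$; by Lemma \ref{lu2} the middle factor is exactly $\sum c(H_1,\dots) a(H_1)\cdots a(H_{|I_1|-1})$ (with adjacency sign to $\psis(a)$ normalised to $+1$ as the representative in each reordering class), and the last factor reconstructs $G(\psis(b)|_{I_2}, \psis(c))$.

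The main obstacle will be the careful sign bookkeeping. In particular, I must coordinate (i) the $(-1)^{|I_1|+|I_2|}$ versus $(-1)^{|I_2|}$ factors from Theorem \ref{SGthm} applied to the two arrangements $\cco{I_1 \sqcup I_2}$ and $\cco{I_2}$, (ii) the extra sign from the identity $\Sa_\nb([\cai(a), b], c) = -\Sa_\nb(b,[a,c])$, (iii) the definition of $c(H_1,\dots)$ from Lemma \ref{lu2}, and (iv) the conventional restriction to $\psis(a)$-adjacent tuples of sign $+1$ (which amounts to selecting one representative from each reordering class, consistent because reordering preserves the product $a(H_1)\cdots a(H_{|I_1|-1})$). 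Once all these signs are reconciled the geometric identity follows, completing the proof.
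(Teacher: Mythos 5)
Your proposal is correct and takes essentially the same route as the paper: the paper also tests the expression against $\psis(c)$ for an arbitrary commutator $c$ of weight $\wtI{I_2}$, uses the identity $\Geom(\psis(\cdot),\psis(\cdot))=\Sa(\cdot,\cdot)$ on $\nb$ together with $\Sa([a,c],b)=-\Sa(c,[\cai(a),b])$, and then factorizes the resulting sum over adjacent tuples via Lemmas \ref{lu} and \ref{lu2}. The only simplification worth noting is that the sign bookkeeping you anticipate from Theorem \ref{SGthm} is unnecessary: Corollary \ref{kercor} already gives the sign-free equality $\Sa_\nb=G$ on $\nb$ in this $T=1$ setting, so the remaining signs are exactly those absorbed into the definition of $c(H_1,\dots,H_{|I_1|-1})$ by Lemma \ref{lu2}.
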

\begin{proof}
Let $c$ be any commutator of weight $\wt c = \wtI {I_2}$. 
It is shown in  \cite{SV} that $\Geom(\psis(\cdot), \psis(\cdot))  = \Sa(\cdot,\cdot)$ as an equality of bilinear forms on $\nb$, where $\Sa$ is the Shapovalov form. By definition of the latter, $\Sa([a,c],b) = -\Sa(c,[\cai(a),b])$. Hence, we have
\begin{align} &{} -\Geom( \psis(c), \psis( [\cai(a),b]))\nn\\  
&=  \Geom(\psis([a,c]), \psis(b))\nn\\
&= \sum_{(H'_1,\dots,H'_{|I_2|-1})}\sum_{(H_1,\dots,H_{|I_1|-1})} \sum_{\tilde H\in \mc H} (-1)^p a(H_1) \dots a(H_{|I_1|-1}) a(H'_1) \dots a(H'_{|I_2|-1})  a(\tilde H) ,\nn\end{align}
where:
\begin{enumerate}[(1)]
\item the outer sum is over all tuples $(H'_1,\dots,H'_{|I_2|-1})$, modulo reordering, of hyperplanes in $\cco {I_2}$ adjacent to $\psis(c)$ with sign $+1$ and adjacent to $\psis(b)|_{I_2}$; 
\item the middle sum is over all tuples $(H_1,\dots,H_{|I_1|-1})$, modulo reordering, of hyperplanes in $\cco {I_1}$ adjacent to $\psis(a)$ with sign $+1$ and adjacent to $\psis(b)|_{I_1}$; and 
\item $(-1)^p$ is the sign with which the tuple $(H_1,\dots,H_{|I_1|-1}, H'_1,\dots,H'_{|I_2|-1}, \tilde H)$ is adjacent to the flag $\psis(b)$. 
\end{enumerate}
Indeed, by definition of $\Geom$, \S\ref{sg}, we are to sum over all tuples, modulo reordering, of hyperplanes adjacent to both flags $\psis(b)$ and $\psis([a,c])$. By definition of $\psis([a,c])$, every tuple adjacent to to $\psis([a,c])$ is of the form $(H_1,\dots,H_{|I_1|-1}, H'_1,\dots,H'_{|I_2|-1}, \tilde H)$ for some $\tilde H$ linking $I_1$ and $I_2$, some $H_1,\dots,H_{|I_1|-1} \in \cco {I_1}$, and some $H'_1,\dots,H'_{|I_2|-1} \in \cco {I_2}$. Lemma \ref{lu} then tells us which of these tuples are also adjacent to $\psis(b)$. 

Now, in view of Lemma \ref{lu2}, we may factor the expression above as
\begin{multline}  \left(\sum_{(H'_1,\dots,H'_{|I_2|-1})} (-1)^t a(H'_1) \dots a(H'_{|I_2|-1}) \right)\\
   \times   \left( \sum_{(H_1,\dots,H_{|I_1|-1})}c(H_1,\dots,H_{|I_1|-1}) a(H_1) \dots a(H_{|I_1|-1})  \sum_{\tilde H\in \mc H}  a(\tilde H)\right)
  \nn \end{multline}
where $(-1)^t$ is the sign with which $(H'_1,\dots,H'_{|I_2|-1})$ is adjacent to $\psis(b)|_{I_2}$. But now observe that the left factor above is $\Geom(\psis(c), \psis(b)|_{I_2})$.
So we have shown that
\be -\Geom( \psis(c), \psis( [\cai(a),b])) = \Geom(\psis(c), \psis(b)|_{I_2}) \sum_{(H_1,\dots,H_{|I_1|-1})}  c(H_1,\dots,H_{|I_1|-1}) a(H_1) \dots a(H_{|I_1|-1})\sum_{\tilde H\in \mc H}  a(\tilde H).\nn\ee
Thus \eqref{ppe} lies in the kernel of $\Geom(\psis(c),\cdot)$, for any commutator $c\in \nb_{\wtI {I_2}}$. This suffices to show that it lies in the kernel of $\Ghom$. 
\end{proof}

\subsection{Definition of $\Gm$}\label{gmsec}
We return to the case of general $T\in \Z_{\geq 1}$, \ie to the general cyclotomic case. 

Let $I\subset \{1,\dots m\}$. Recall that $\cco I$ denotes the arrangement consisting of just the ``diagonal'' hyperplanes $\Ht {i_1}{i_2}k$, $i_1,i_2\in I$, $k\in \ZT$.
Define a bilinear form on $\Gm$ on $\nbs_{\wtI I}$ by
\be \Gm(\Pin a, \Pin b):= G_{\cco I}(\psis(\Pin a v_0), \psis(\Pin b v_0))\label{G0def}\ee 
where $G_{\cco I}$ is the bilinear form for the arrangement $\cco I$ of ``diagonal'' hyperplanes. 

Equivalently, but more explicitly, 
for two projected commutators $\Pin a, \Pin b \in \nbs$ of equal weight $\wtI I$, define  
\be \Gm(\Pin a, \Pin b) := \sum_{(H_1,\dots,H_{p+1})} (-1)^s a(H_1) \dots a(H_{p+1}) \nn\ee
where the sum is over all tuples $(H_1,\dots,H_{p+1})$, modulo reordering, of hyperplanes belonging to the arrangement $\cco I$, such that 
\begin{enumerate}
\item $(H_1,\dots,H_{p+1})$ is adjacent to the flag $\psis(\Pin a v_0)$, with sign +1,
\item $(H_1,\dots,H_{p+1})$ is adjacent to the flag $\psis(\Pin b v_0)$, with sign say $(-1)^s$.
\end{enumerate} 
Note that $\psis(\Pin a v_0)$ is the flag $\psis(\Pin a)$ completed by the last edge $L_I^0$. 

\begin{exmp} For $k,l\in \ZT$, 
\be \Gm(\Pin{[\f 0 1, \f k 2]}, \Pin{[\f 0 1, \f l 2]}) 
 =  a(\Ht 12 k) \left( - a(\Ht 12 l) + \delta_{kl} \sum_{p\in \ZT} a(\Ht 12 p)\right)\nn \ee
\end{exmp}

\begin{prop}\label{Gmprop} Let $J\subsetneq I$. For all $\Pin a \in \nbs_{\wtI J}, \Pin b\in \nbs_{\wtI I}$ and  $\Pin c \in \nbs_{\wtI {I\setminus J}}$,
\be  \Gm( \Pin b, [ \Pin c, \Pin a]) + \Gm( \Pin c, [\cai(\Pin a), \Pin b])
                            + \Gm(\Pin a, [ \Pin b, \cai(\Pin c)])=0.\nn\ee
\end{prop}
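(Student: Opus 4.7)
By trilinearity of both sides in $(\Pin a, \Pin b, \Pin c)$, I may assume that each of these is the image under $\Pin{\phantom x}$ of a single commutator in $\nb$. Unpacking the definition \eqref{G0def} of $\Gm$ as the geometric form of the diagonal arrangement $\cco I$ applied to the flags $\psis(\Pin x\,v_0)$, each of the three terms in the claimed identity becomes a sum over $|I|$-tuples of diagonal hyperplanes in $\cco I$ adjacent to both of the relevant flags. The plan is to expand the flags associated to the three commutators $[\Pin c,\Pin a]$, $[\cai(\Pin a), \Pin b]$, $[\Pin b, \cai(\Pin c)]$ using the recursive rules (A)--(C) in \S\ref{psisdef}, and then to show that the resulting triple sum vanishes.

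Each of these commutators, after expansion of $\Pin{\phantom x}$, becomes a $\ZT$-sum of brackets of unprojected commutators, and rule (C) expresses $\psis$ applied to each as a flag obtained by linking the underlying archipelagos $\psis(\Pin a)$, $\psis(\Pin b)$, $\psis(\Pin c)$ through a hyperplane $\Ht{i}{j}{k}$. The relationship between the three terms mirrors Proposition~\ref{comprop}: the anti-automorphism $\cai$ exchanges inner and outer commutator factors, so via the Shapovalov-type adjunction $\Sa_\nb([X,Y],Z) = -\Sa_\nb(Y,[\cai(X),Z])$ the three terms share the same ``linking'' structure but with the roles of the three archipelagos cyclically permuted. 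The cancellation of the three terms should then reflect the Jacobi identity $[\Pin a,[\Pin b,\Pin c]]+[\Pin b,[\Pin c,\Pin a]]+[\Pin c,[\Pin a,\Pin b]]=0$ in $\nbs$, corrected by the adjunction identity to account for the two $\cai$'s present in the statement.

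\textbf{Main obstacle.} The chief difficulty will be sign and combinatorial bookkeeping. I will need to control the adjacency signs $(-1)^s$ (as tracked in Lemmas~\ref{lu}--\ref{lu2}), the antisymmetry signs of $\Wedge\bullet\nbs$, and the interaction of the cyclotomic sums $\sum_{k\in\ZT}$ coming from $\Pin{\phantom x}$ with the structure of the diagonal arrangement $\cco I$, which, unlike the arrangement in Proposition~\ref{comprop}, contains multiple twisted hyperplanes $\Ht{i}{j}{k}$ for each pair $i,j$. A secondary step is to isolate the correct cyclotomic analogue of Proposition~\ref{comprop}: a description of the adjacent tuples to the flags $\psis(\Pin x\,v_0)$ that end at the zero edge $L_I^0$, using that in the cyclotomic case $L_I^0$ can be forced by diagonal hyperplanes alone (via differently twisted $\Ht{i}{j}{k}$), which is precisely the phenomenon responsible for the nontriviality of $\Gm$ in the cyclotomic setting.
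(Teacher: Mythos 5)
Your overall strategy---reduce to projected commutators, expand each $\Gm$-term as a sum over tuples of diagonal hyperplanes adjacent to the relevant flags, and invoke a commutator lemma of the type of Proposition \ref{comprop}---is the route the paper takes. But the cancellation mechanism you propose is not the one that operates, and as stated it would fail. The three terms do \emph{not} cancel by a clean cyclic matching ``reflecting the Jacobi identity corrected by adjunction.'' In the paper's proof one expands only the first term, $\Gm(\Pin b,[\Pin c,\Pin a])$: every contributing tuple contains one hyperplane $H_\bl$ linking $J$ and $I\setminus J$ (forced by the bracket structure of $[c,\tau^k a]$) together with one further hyperplane $H_\wh$ needed to reach the zero edge $L_I^0$, and the argument splits according to whether $H_\wh$ (i) also links $J$ and $I\setminus J$, (ii) lies in $\cco{I\setminus J}$, or (iii) lies in $\cco J$. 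The type-(i) terms are exactly the cyclotomic phenomenon you flagged at the end of your plan (two differently twisted hyperplanes $\Ht ijk$, $\Ht ijl$ already cut out $t_i=t_j=0$), and they cancel pairwise \emph{within the first term itself}, by exchanging $H_\bl$ and $H_\wh$: this changes the value of $k$ for which the tuple is adjacent to $\psis([c,\tau^k a])$ and flips the adjacency sign. These terms have no counterpart in $\Gm(\Pin c,[\cai(\Pin a),\Pin b])$ or $\Gm(\Pin a,[\Pin b,\cai(\Pin c)])$, so any attempt at a term-by-term three-way (cyclic) matching breaks down precisely on this class; only the residual type-(ii) and type-(iii) terms are matched against the second and third summands. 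Note also that the identity cannot be a formal consequence of Jacobi plus the adjunction $\Sa_\nb([X,Y],Z)=-\Sa_\nb(Y,[\cai(X),Z])$: $\Gm$ satisfies no such contravariance, and that failure is exactly why the resulting cocycle is nontrivial (Example \ref{conzex}).

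A second, smaller point: you do not need the ``cyclotomic analogue of Proposition \ref{comprop}'' that you defer as a secondary step. In $[\cai(\Pin a),\Pin b]=\sum_{k\in\ZT}\Pin{[\cai(a),\tau^k b]}$ at most one $k$ contributes (the content of $a$ must be contained in that of $\tau^k b$), and for that $k$ one passes to the non-cyclotomic discriminantal arrangement adapted to the last edge of $\psis(\Pin b)$ as in \S\ref{sec:nozero}; there the already-proved non-cyclotomic Proposition \ref{comprop} applies verbatim, and Lemmas \ref{anyzerolem} and \ref{tozerolem} (for the arrangement $\cco I$) transfer the resulting $\ker\Ghom$ statement back into a statement about pairing with $\Gm(\Pin c,\cdot)$. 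So the two missing ingredients in your plan are the internal pairwise cancellation of the doubly-linking tuples and the reduction-to-non-cyclotomic step; without the first, in particular, the bookkeeping you anticipate cannot close.
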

\begin{proof}
It is enough to consider the case that $\Pin a, \Pin b, \Pin c$ are projected commutators. 
Then
 $[\Pin c, \Pin a] = \sum_{k\in \ZT} \Pin{[c , \tau^k a]}$ and we have
\be \Gm( \Pin b, [\Pin c, \Pin a]) = \sum_{k\in \ZT} \Gm(\Pin b,\Pin{[c , \tau^k a]}) 
= \sum_{\substack{(H_1,\dots,H_{|J|})\\ (H'_1,\dots,H'_{|I\setminus J|}) \\ H_\bl, H_\wh }} (-1)^p a(H_1) \dots a(H_{|J|}) a(H'_1) \dots a(H'_{|I\setminus J|})  a(H_\bl) a(H_\wh).\label{bca1}\ee
Here the sum is over all tuples $(H_1,\dots,H_{|J|})$, modulo reordering, of hyperplanes in $\cco J$, all tuples $(H'_1,\dots, H'_{|I\setminus J|})$, modulo reordering, of hyperplanes in $\cco {I\setminus J}$, and all hyperplanes $H_\bl ,H_\wh\in \cco I$, such that:
\begin{enumerate}
\item $(H_1,\dots,H_{|J|}, H'_1,\dots,H'_{|I\setminus J|}, H_\bl)$ is adjacent to $\psis([c, \tau^k a])$ for some $k\in \ZT$, with sign $+1$,
\item  $H_1 \cap \dots \cap H_{|J|} \cap  H'_1 \cap \dots \cap H'_{|I\setminus J|} \cap H_\bl \cap H_\wh = \{0\}$
\item $(H_1,\dots,H_{|J|}, H'_1,\dots,H'_{|I\setminus J|}, H_\bl,H_\wh)$ is adjacent to the flag $\psis(\Pin bv_0)$, with sign say $(-1)^p$.
\end{enumerate}  
Note that (1) implies in particular that the hyperplane $H_\bl$ links $I\setminus J$ and $J$. The hyperplane $H_\wh$ is of one of three types:
\begin{enumerate}[(i)]
\item $H_\wh$ (also) links $I\setminus J$ and $J$,
\item $H_\wh$ belongs to the arrangement $\cco {I\setminus J}$,
\item $H_\wh$ belongs to the arrangement $\cco J$.
\end{enumerate}

Consider case (i). Then the sum also includes the term with $H_\wh$ and $H_\bl$ interchanged, and it comes with the opposite sign. Indeed, $(H_1,\dots,H_{|J|}, H'_1,\dots,H'_{|I\setminus J|}, H_\wh)$ is adjacent to $\psis([c, \tau^k a])$ for some (different) value of $k\in \ZT$, again with sign 1. Condition (2) still holds with $H_\wh$ and $H_\bl$ interchanged. And so does condition (3), but with an additional factor of $(-1)$. Therefore such terms cancel pairwise in the sum.

In the remainder of the proof we shall show that the terms in the sum of the types  (ii) and (iii) cancel against $\Gm(\Pin c, [\cai(\Pin a), \Pin b] )$ and  $\Gm(\Pin a, [ \Pin b, \cai(\Pin c)])$ respectively. 

Consider $\Gm( \Pin c, [\cai(\Pin a), \Pin b)])$. We have $[\cai(\Pin a),\Pin b] = \sum_{k\in \ZT} \Pin{[\cai(a), \tau^k b]}$. Consider any one term $[\cai(a),\tau^k b]$ in this sum. If the content, in the sense of \S\ref{cpc},  of the commutator $a$  is not a subset of the content of $\tau^k b$, then this term is evidently zero, since $[\e lj, \f ki]$ is zero unless $k=l$ and $i=j$. So suppose that the content of $a$ is a subset of the content of $\tau^k b$. (This happens for at most one value of $k$). As in \S\ref{sec:nozero}, we may consider the (non-cyclotomic) discriminantal arrangement adapted to the last edge of the flag $\psis(b)= \psis(\tau^k b) := \psis(\Pin b)$.  Then by Proposition \ref{comprop} we find that
\be \psis([\cai( a),\tau^k b]) + 
 \psis(\tau^k b)|_{I\setminus J} \sum_{\substack{(H_1,\dots,H_{|J|})\\ H_\bl}} (-1)^{p+t} a(H_1) \dots a(H_{|J|})  a(H_\bl)\in \ker \Ghom\label{ba1}\ee
where the sum is over all hyperplanes $(H_1,\dots,H_{|J|})$, modulo reordering, of hyperplanes in $\cco J$ and all hyperplanes $H_\bl$ that link $I\setminus J$ and $J$, such that
\begin{enumerate}
\item $(H_1,\dots,H_{|J|})$ is adjacent to the flag $\psis(a)$, with sign $+1$,
\item for some hyperplanes $H'_1,\dots,H'_{|I\setminus J|}$ in $\cco {I\setminus J}$, the tuple $(H_1,\dots,H_{|J|},H'_1,\dots,H'_{|I\setminus J|},  H_\bl)$ is adjacent to the flag $\psis(\tau^k b) = \psis(\Pin b)$, with sign say $(-1)^p$.
\end{enumerate} 
The sign $(-1)^t$ is the sign of the permutation that puts these $H'_1,\dots,H'_{|I\setminus J|}$ into the order in which they appear in the flag $\psis(\Pin b)$. 

We have Lemmas \ref{anyzerolem} and \ref{tozerolem} for the arrangement $\cco I$. Therefore \eqref{ba1} is sufficient to ensure that 
\be [\cai( a),\tau^k b] + 
 \psis^{-1}(\psis(\tau^k b)|_{I\setminus J} \sum_{\substack{(H_1,\dots,H_{|J|})\\ H_\bl}} (-1)^{p+t} a(H_1) \dots a(H_{|J|})  a(H_\bl)\label{ba2}\ee
lies in the kernel of $\Gm( \Pin c, \cdot)$.

Now by definition of $\Gm$, 
\be \Gm\big(\Pin c, \psis^{-1}(\psis(b)|_{I\setminus J})\big) 
= \sum_{\substack{(H'_1,\dots,H'_{|I\setminus J|})\\ H_\wh}} (-1)^t a(H'_1) \dots a(H'_{|I\setminus J|}) a(H_\wh) \label{cb1}\ee
where the sum is over all tuples $(H'_1,\dots,H'_{|I\setminus J|})$, modulo reordering, of hyperplanes in $\cco {I\setminus J}$, and all hyperplanes $H_\wh \in \cco {I\setminus J}$, such that 
\begin{enumerate}
\item $(H'_1,\dots,H'_{|I\setminus J|})$ is adjacent to the flag $\psis(c)$, with sign $+1$,
\item $H'_1\cap\dots\cap H'_{|I\setminus J|}\cap H_\wh = \{0\}$,
\item $(H'_1,\dots,H'_{|I\setminus J|}, H_\wh)$ is adjacent to the flag $\psis(\Pin b v_0)|_{I\setminus J}$, with sign say $(-1)^t$.
\end{enumerate} 

Inspecting \eqref{ba2} and \eqref{cb1}, one indeed sees that  $\Gm(\Pin c, [\cai(\Pin a), \Pin b] )$ is equal and opposite to the sum of terms of type (ii) in the expression \eqref{bca1} for $\Gm( \Pin b, [\Pin c, \Pin a])$.

The argument that terms of type (iii) cancel against $ \Gm(\Pin a, [ \Pin b, \cai(\Pin c)])$ is similar.
\end{proof}

\subsection{Definition of $\Cou$}\label{coudef}
Let $\nb_{\leq 1}$ denote the subspace
\be \nb_{\leq 1} := \bigoplus_{I\subset \{1,\dots,m\}} \nb_{\wtI I}.\nn\ee
That is, $\nb_{\leq 1}$ is the span of all commutators in which  for each $i\in \{1,\dots,m\}$ at most one of the generators $\f ki$, $k\in \ZT$, appears.
Define $(\nb_+)_{\leq 1}$ similarly.
Recall $\gu = \nb \oplus \gu_0 \oplus \nb_+$. Let $\gu_{\leq 1} := \nb_{\leq 1} \oplus \gu_0 \oplus (\nb_+)_{\leq 1}$.

Define a skew-symmetric bilinear form $\Cou:\gus_{\leq 1}\times \gus_{\leq 1} \to \C$ as follows.
For all $I\subset\{1,\dots,m\}$ and all $x\in (\nbs_+)_{\wtI I}$ and $y\in \nbs_{\wtI I}$, set
\be -\Cou(y,x) = \Cou(x,y) := \Gm(\cai(x),y)).\nn\ee 
On all other components in the weight decomposition of $\gus_{\leq 1}\times \gus_{\leq 1}$, set $\Cou(x,y) = 0$.

\begin{prop}\label{kercouprop} Let $x\in \nbs$. If $x\in \ker \Sa_\nb$ then $\Cou(x,y)=0$ for all $y\in \gus$. 
\end{prop}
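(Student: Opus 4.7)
The plan is to reduce the statement, via the definition of $\Cou$ in \S\ref{coudef} together with the symmetry of $\Gm$, to showing that $\Gm(x,z)=0$ for every $z\in\nbs_{\wtI I}$, where $\wt x=\wtI I$. To bring Corollary~\ref{kercor} to bear, I would introduce the modified weighting $a^0$ of $\ccI I$ that agrees with $a$ on the diagonal hyperplanes $\Ht ij k\in\cco I$ and vanishes on the remaining hyperplanes $\Ho i$ and $\Hz ij k$.

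The first key observation is that the Shapovalov form $\Sa_\nb$ on $\nb$ depends, via its defining recursion $\Sa_\nb([X,Y],Z)=-\Sa_\nb(Y,[\cai(X),Z])$, only on the Cartan structure constants $a(\Ht ij k)$ which enter $[\hh ki,\f lj]=-a(\Ht ij{l-k})\f lj$; it is independent of the highest-weight parameters $a(\Ho i)$ and $a(\Hz ij k)$. Consequently the Shapovalov form is unchanged when $a$ is replaced by $a^0$, so our hypothesis forces $x$ to lie in the kernel of the Shapovalov form associated to $a^0$ as well. Corollary~\ref{kercor}, applied with the weighting $a^0$, then yields $\Ghom(\psis(x))=0$ in $\A^{|I|-1}(\ccI I)$, where $\Ghom$ denotes the linear map computed with respect to $a^0$.

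Next I would invoke Theorem~\ref{cmap}: since $\Ghom$ intertwines the flag and Aomoto differentials, $\Ghom(d_\Fl\psis(x))=0$. The flag $\psis(x)$ terminates at the single swimming island $L_I$, and the codimension-$|I|$ edges of $\ccI I$ contained in $L_I$ are exactly $L_I^0=\{0\}$, whose appended flag is precisely $\psis(xv_0\otimes 1^{\otimes N})$, together with the fixed-island edges $L_I^{j,c}$ obtained by pinning the island to $z_j$ with cyclotomic shift~$c$, for $j=1,\dots,N$ and $c\in\ZT$. For each such $L_I^{j,c}$, every $|I|$-tuple of hyperplanes adjacent to the extended flag $(\psis(x)\supset L_I^{j,c})$ must terminate in some $\Hz{i}{j}{\cdot}$, which carries weight zero under $a^0$; hence $\Ghom(\psis(x)\supset L_I^{j,c})=0$. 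The chain-map identity therefore collapses to $\Ghom(\psis(xv_0\otimes 1^{\otimes N}))=0$. Since $a^0$ is exactly the extension by zero of the restriction of $a$ to $\cco I$, Lemma~\ref{funglem} now implies that the image of $\psis(xv_0\otimes 1^{\otimes N})$ under the projection $\Fl^{|I|}(\ccI I)\twoheadrightarrow\Fl^{|I|}(\cco I)$ lies in the kernel of the geometric form for $\cco I$; by the definition \eqref{G0def} of $\Gm$ this delivers $\Gm(x,z)=0$ for all $z$. The main subtlety to verify carefully is the enumeration of the terminal subedges of $L_I$ in $\ccI I$ and the vanishing of each $L_I^{j,c}$-contribution under $a^0$; once these are in place, the remainder of the argument is structural.
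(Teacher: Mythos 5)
Your proof is correct, and it rests on the same skeleton as the paper's: Corollary \ref{kercor} to kill the geometric form on the commutator flag $\psis(x)$, the chain-map property (Theorem \ref{cmap}) to pass from $\psis(x)$ to the flag completed by the edge $L^0_I$, and then the definition \eqref{G0def} of $\Gm$ together with its symmetry and the definition of $\Cou$. The difference is in where the chain-map step is run. The paper transports Lemma \ref{tozerolem} to the subarrangement $\cco I$, where no extra terms can arise: since $\cco I$ contains no hyperplanes $\Ho i$ or $\Hz ijk$, the only codimension-$|I|$ edge of $\cco I$ below the swimming island is $L^0_I$, so the differential of $\psis(x)$ is exactly the completed flag $\psis(xv_0)$ and the conclusion is immediate. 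You instead stay in $\ccI I$ but replace the weighting by the zero-extension $a^0$ of $a|_{\cco I}$ --- legitimate, as you note, because $\Sa_{\nb}$ only involves the diagonal weights $a(\Ht ijk)$ --- then check that the extra extensions of $\psis(x)$ to the pinned islands contribute nothing to $\Ghom_{a^0}$ (correct: any tuple contributing nontrivially has total intersection equal to the pinned island, which misses the origin, so the tuple must contain some affine hyperplane $\Hz{i'}{j}{l}$, of $a^0$-weight zero), and finally descend to $\cco I$ via Lemma \ref{funglem}, identifying the projected flag with the flag $\psis(xv_0)$ of \eqref{G0def} because all its edges are edges of $\cco I$. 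Your route costs the pinned-island bookkeeping but buys the convenience of using the lemmas exactly as stated for $\ccI I$ (plus Lemma \ref{funglem}) rather than asserting that $\psis$, Theorem \ref{cmap} and Lemma \ref{tozerolem} hold mutatis mutandis for the subarrangement $\cco I$; the only slip is cosmetic, namely $L^0_I$ is the coordinate subspace $t_i=0$, $i\in I$, not the origin of $\C^m$ unless $I=\{1,\dots,m\}$, which is harmless since $\ccI I$ constrains only those coordinates.
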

\begin{proof} If $x \in \ker \Sa_\nb$ then $x\in \ker \Ghom$ as in Corollary \ref{kercor}. Lemma \ref{tozerolem} holds for the arrangement $\cco I$. In view of the definition of $\Gm$, the result follows. 
\end{proof}
\begin{thm}\label{couthm} We have the equality
\be \Cou(x,[y,z]) + \Cou(y,[z,x]) +\Cou(z,[x,y]) = 0\nn\ee
for all $x,y,z\in \gus_{\leq 1}$. 
\end{thm}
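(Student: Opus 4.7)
The plan is to proceed by case analysis based on the direct sum decomposition $\gus_{\leq 1} = \nbs_{\leq 1} \oplus \gus_0 \oplus (\nbs_+)_{\leq 1}$, using trilinearity to reduce to the case where $x, y, z$ are each homogeneous of a single ``type'' (one of $N := \nbs_{\leq 1}$, $\gus_0$, $P := (\nbs_+)_{\leq 1}$) and of pure $\wtI\cdot$-weight. Since $\Cou$ was defined to vanish whenever either argument lies in $\gus_0$ and to vanish unless its two arguments lie in $(\nbs_+)_{\wtI I}$ and $\nbs_{\wtI I}$ for a common $I$, each of the three terms in the cocycle identity is individually nonzero only when the signed $\gus$-grading of $(x,y,z)$ sums to zero and the set-theoretic labels match a specific pattern. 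Up to the manifest skew-symmetry of the identity under permutations of $(x,y,z)$, only four type signatures survive: $(\gus_0,\gus_0,\gus_0)$; one element from each of $N, \gus_0, P$; two elements from $N$ and one from $P$; and its $\cai$-mirror, two from $P$ and one from $N$.

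The first case is immediate: $\gus_0$ is abelian (from $[\hh ki,\hh lj]=0$), so every bracket of elements of $\gus_0$ vanishes. For the mixed case with one element from each type, say $x \in \nbs_{\wtI{I_x}}$, $y \in \gus_0$, $z \in (\nbs_+)_{\wtI{I_z}}$, I would first note that $\gus_0 = \Span(\Pin h_i)_{i=1}^m$ (by a direct calculation with $\sib$-invariance) and that $[\Pin h_j, \cdot]$ acts on $\e k i$ and $\f k i$ by opposite scalars $\pm s_{ij}$ with $s_{ij} = \sum_l a(\Ht ij l)$. Consequently $[y,\cdot]$ acts as some scalar $\mu$ on $(\nbs_+)_{\wtI{I_z}}$ and as $-\mu$ on $\nbs_{\wtI{I_z}}$. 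A weight argument forces $I_x = I_z$ for any term to be nonzero, and the identity then collapses to $\mu\Cou(x,z) + 0 - \mu\Cou(x,z) = 0$.

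The principal case is two arguments from $N$ and one from $P$: take $x \in \nbs_{\wtI{I_x}}$, $y \in \nbs_{\wtI{I_y}}$, $z \in (\nbs_+)_{\wtI{I_z}}$. A short weight check shows that any of the three cocycle terms can be nonzero only when $I_x, I_y$ are disjoint with $I_z = I_x \sqcup I_y$; under this hypothesis, $[y,z]$ and $[z,x]$ lie purely in $P$ (of weights $\wtI{I_x}$ and $\wtI{I_y}$) while $[x,y]$ lies purely in $N$ (of weight $\wtI{I_z}$). Expanding each $\Cou$-term via $\Cou(u,p) = -\Gm(u,\cai p)$ for $u \in N,\, p \in P$, and using $\cai[u,v] = [\cai v, \cai u]$ together with the symmetry of $\Gm$, the cocycle identity simplifies to
\beu
\Gm(\cai z,[x,y]) + \Gm(x,[\cai y,\cai z]) + \Gm(y,[\cai z,\cai x]) = 0,
\eeu
which is precisely Proposition \ref{Gmprop} applied with $\Pin a = y$, $\Pin b = \cai z$, $\Pin c = x$ (so $J = I_y \subsetneq I = I_z$). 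The mirror case (two arguments from $P$, one from $N$) is handled by the analogous computation and reduces to Proposition \ref{Gmprop} with the assignment $\Pin a = \cai y$, $\Pin b = z$, $\Pin c = \cai x$.

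The main obstacle will be the careful bookkeeping of signs from the skew-symmetry of $\Cou$, the antisymmetry of the Lie bracket, and the anti-automorphism identity $\cai[u,v]=[\cai v,\cai u]$; marshalling them into exactly the form of Proposition \ref{Gmprop} is where errors are most likely. A secondary consistency check is that the weight constraint $I_z = I_x \sqcup I_y$, forced by nonvanishing of the three cocycle terms, is exactly the hypothesis $J \subsetneq I$ demanded by Proposition \ref{Gmprop}.
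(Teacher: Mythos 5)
Your proposal is correct and follows essentially the same route as the paper: a case analysis by type in the decomposition $\nbs_{\leq 1}\oplus\gus_0\oplus(\nbs_+)_{\leq 1}$, with the $\gus_0$-case handled by the scalar action of $\Pin h_i$ (third term vanishes, the other two cancel) and the principal case reduced, via $\cai$ and the symmetry of $\Gm$, exactly to Proposition \ref{Gmprop}. The only difference is cosmetic: you spell out the trivially-vanishing type signatures and the $\cai$-mirror case explicitly, which the paper leaves implicit.
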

\begin{proof} 
When $x\in (\nbs_+)_{\wtI I}$, $y\in \nbs_{\wtI I}$ and $z\in \gus_0$ the third term vanishes and the first two cancel.

Consider the case that  $x\in (\nbs_+)_{\wtI I}$, $y\in \nbs_{\wtI J}$ and $z \in \nbs_{\wtI {I\setminus J}}$ where $J\subsetneq I\subset \{1,\dots,m\}$. It is enough to suppose that $x,y,z$ are projected commutators. Say $x= \cai(\Pin b)$, $y=\Pin c$ and $z=\Pin a$ for commutators $a,b,c\in \nb$. Then what has to be shown is that
\begin{align} 0 &=\Cou( \cai(\Pin b),[\Pin c,\Pin a]) - \Cou([\Pin a,\cai(\Pin b)],\Pin c) -\Cou([\cai(\Pin b),\Pin c],\Pin a) \nn\\
&=\Gm( \Pin b, [ \Pin c, \Pin a]) - \Gm( \cai([\Pin a, \cai(\Pin b)]), \Pin c)
                            - \Gm( \cai([ \cai(\Pin b), \Pin c]), \Pin a) \nn\\
&=  \Gm( \Pin b, [ \Pin c, \Pin a]) + \Gm( [\cai(\Pin a), \Pin b)], \Pin c)
                            + \Gm( [ \Pin b, \cai(\Pin c)], \Pin a)\nn\\
&=  \Gm( \Pin b, [ \Pin c, \Pin a]) + \Gm( \Pin c, [\cai(\Pin a), \Pin b)])
                            + \Gm(\Pin a, [ \Pin b, \cai(\Pin c)]),\nn\end{align}
which is true, by Proposition \ref{Gmprop}.
\end{proof}
\begin{rem} One ``wants'' to say that $\Cou$ is a cocycle, and then use it to define a central extension of the Lie algebra $\gus$. We do not quite have that statement, because $\Cou$ is defined only on the subspace $\gus_{\leq 1}$. After \emph{symmetrization} is introduced in \S\ref{symsec}, it will be possible to make such statements, in \S\ref{cocyclesec}. For now we proceed as follows.
\end{rem}

Let $\gus\oplus \C \K$ denote the trivial extension of $\gus$ by a one-dimensional centre $\C \K$. 
We make $M_0$ into a module over $\gus \oplus \C \K$ by declaring that $\K \on v_0 = v_0$. 
Define a ``modified commutator'', namely a bilinear map 
$[\cdot,\cdot]' : (\nbs_+)_{\leq 1} \times \nbs_{\leq 1} \to \gus \oplus \C \K$,
by
\be [x,y]' := [x,y] + \Cou(x,y)\K. \nn\ee
Define a ``modified action'' $\triangleright$ of $\gus_{\leq 1}$ on $(M_0)_{\leq 1}$ recursively as follows. 
For all $x\in \nbs_{\leq 1}$ and $w\in (M_0)_{\leq 1}$, set $x\triangleright w = xw$. 
For all $x\in (\nbs_+)_{\leq 1}$, set $x \triangleright v_0 := 0$ and 
\be x \triangleright y w  := [x,y]'\triangleright w + y (x \triangleright w) \nn\ee
for all $y\in \nbs_{\leq 1}$ and $w\in (M_0)_{\leq 1}$ such that $yw \in (M_0)_{\leq 1}$. Theorem \ref{couthm} ensures that this is well-defined, \ie $x \triangleright [y_1,y_2] w = x \triangleright y_1 y_2 w - x \triangleright y_2 y_1 w$, and respects the bracket in $\gus_{\leq 1}$, \ie $[x_1,x_2] \triangleright w = x_1 \triangleright x_2 \triangleright w - x_2 \triangleright x_1 \triangleright w$ whenever $x_1,x_2,[x_1,x_2]\in \gus_{\leq 1}$. 
Let $\Se_0$  be the corresponding ``modified Shapovalov form'' on $(M_0)_{\leq 1}$. Namely, define a symmetric bilinear form $\Se_0 : (M_0)_{\leq 1} \times (M_0)_{\leq 1} \to \C$ by $\Se_0(v_0,v_0) = 1$ and then recursively,
\be \Se_0( x v, w) := \Se_0(v, \cai(x) \triangleright w) \nn\ee
for all $x\in \nbs_{\leq 1}$ and $v,w\in (M_0)_{\leq 1}$ such that $xv \in (M_0)_{\leq 1}$.

\begin{exmp} We have $\cai(\Pin{[\f 01,\f k2]}) = - \Pin{[\e 01,\e k2]}$ and hence
\begin{align} & \Se_0( \Pin{[\f 01,\f k2]} v_0, \Pin{ [\f 01, \f l2]} v_0 )\nn\\ 
&=  \Se_0( v_0 , \left[\Pin{[\e 01,\e k2]} \,\Pin{ [\f 01, \f l2]}\right]' v_0 ) \nn\\
&=  \Se_0( v_0 , \left(\delta_{kl}(\Pin h_1 + \Pin h_2) a(\Ht 12 k)
+ \K a(\Ht 12 k) \left( - a(\Ht 12 l) + \delta_{kl} \sum_{p\in \ZT} a(\Ht 12 p)\right) \right)
  v_0 ) \nn\\
&= -a(\Ht 12 k) a(\Ht 12 l) + \delta_{kl} a(\Ht 12 k) \left( a(\Ho 1) + a(\Ho 2) + \sum_{p\in \ZT} a(\Ht 12 p)\right).\nn
\end{align}
\end{exmp}

\begin{thm}\label{ScGthm} 
We have the equality $\Se_0 = G$ of bilinear forms on $(M_0)_{\leq 1}$.  
\end{thm}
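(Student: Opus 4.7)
The plan is to establish $\Se_0 = G$ by induction on the weight, exploiting the decomposition $(M_0)_{\leq 1} = \bigoplus_{I \subseteq \{1,\ldots,m\}} (M_0)_{\wtI I}$ preserved by both forms. The base case $I = \emptyset$ is immediate: $\Se_0(v_0,v_0) = 1$ by definition, and $\psis(v_0)$ is the trivial flag in $\Flag^0(\cca)$, so $G(\psis(v_0), \psis(v_0)) = 1$ as well.

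For the inductive step, fix a nonempty $I$ and assume $\Se_0 = G$ on $(M_0)_{\wtI J}$ for every $J \subsetneq I$. The space $(M_0)_{\wtI I}$ is spanned by vectors of the form $\Pin{a} \cdot w$ with $\Pin{a}$ a projected commutator of weight $\wtI{I_0}$ (for some nonempty $I_0 \subseteq I$) and $w \in (M_0)_{\wtI{I \setminus I_0}}$, so it suffices to treat $u_1 = \Pin{a} \cdot w$ paired against an arbitrary $u_2 \in (M_0)_{\wtI I}$. The defining relation of $\Se_0$ gives
\be \Se_0(\Pin{a} \cdot w, u_2) = \Se_0(w, \cai(\Pin{a}) \triangleright u_2). \nn\ee
The modified action $\triangleright$ preserves the $\leq 1$ condition and decreases the weight by $\wtI{I_0}$, so $\cai(\Pin{a}) \triangleright u_2 \in (M_0)_{\wtI{I \setminus I_0}}$. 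The inductive hypothesis then reduces the theorem to the geometric identity
\be G(\psis(\Pin{a} \cdot w), \psis(u_2)) = G(\psis(w), \psis(\cai(\Pin{a}) \triangleright u_2)). \label{keygid} \ee

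To prove \eqref{keygid}, I would unfold both sides in parallel. On the right, the recursive definition of $\triangleright$ moves $\cai(\Pin{a})$ past each projected commutator factor $\Pin{b}$ of $u_2$, producing at every pass an $\nbs$-valued piece of $[\cai(\Pin{a}), \Pin{b}]$ acting multiplicatively, a $\gus_0$-valued piece that acts on $v_0$ by the scalars $a(\Hz ijk)$ and $a(\Ho i)$, and the extra cocycle scalar $\Cou(\cai(\Pin{a}), \Pin{b}) = \Gm(\Pin{a}, \Pin{b})$. On the left, I would use the explicit sum-over-adjacent-tuples formula for $G$. Because $\psis(\Pin{a} \cdot w)$ has zero ends --- its terminal edge forces $t_i = 0$ for each $i$ in the content of $\Pin{a}$ --- Theorem \ref{SGthm} does not apply directly. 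Instead, each tuple of hyperplanes adjacent to both flags splits into those lying entirely in a discriminantal subarrangement $\kv\cc_{\bm z;m}$ adapted, as in \S\ref{sec:nozero}, to the terminal edge of $\psis(u_2)$, and those involving at least one ``extra'' hyperplane --- either an $\Ho j$ setting $t_j$ to zero out of sequence or a diagonal $\Ht ijk$ with $k \in \ZT$ forced to range over the full cyclic orbit by the flag relation (\ref{nerel}) of \S\ref{sec: flagrels}. The tuples of the first class contribute, after restriction to the shortened flags of \S\ref{sandl} and appeal to Theorem \ref{SGthm}, exactly the summands generated by the plain bracket $[\cai(\Pin{a}), \Pin{b}]$ on the right-hand side of \eqref{keygid}; the tuples of the second class reorganize, via (\ref{nerel}) and the definition \eqref{G0def} of $\Gm$, into precisely the $\Cou$-scalars produced by the modified bracket, with the Jacobi-like identity of Proposition \ref{Gmprop} ensuring consistency across successive commutations.

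The main obstacle is this final bookkeeping step: for a given adjacent tuple on the left, disentangling which hyperplanes implement the usual Shapovalov pairing and which realize the cyclotomic correction encoded by $\Cou$. The relation (\ref{nerel}), which has no non-cyclotomic analogue in \cite{SV}, is the essential tool, since it is exactly what allows $\Ho i$, $\Ho j$, and the full $\ZT$-orbit of $\Ht ijk$ to substitute for one another inside $\Fl^\bl(\cca)$; this algebraic freedom is what the cocycle $\Cou$ was engineered to encode, as anticipated in the motivating discussion of \S\ref{coi}. Once these substitutions are checked to produce precisely the $\Cou$-terms predicted by the modified Shapovalov computation, \eqref{keygid} follows and the induction closes.
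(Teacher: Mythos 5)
Your reduction is the same one the paper makes: induct, and use the defining recursion $\Se_0(\Pin a\, w, u_2)=\Se_0(w,\cai(\Pin a)\triangleright u_2)$ to trade the theorem for the statement that the geometric form intertwines multiplication by $\Pin a$ with the modified action of $\cai(\Pin a)$ (the paper does this one factor at a time, inducting on the total number of projected-commutator factors rather than on the weight). The problem is that this intertwining identity \emph{is} the entire content of the theorem, and your argument for it remains a sketch whose cited tools do not apply as stated. First, every monomial in $(M_0)_{\leq 1}$ maps under $\psis$ to a flag whose last edge is $L^0_I$, i.e.\ a flag \emph{with} zero ends; so there is no discriminantal subarrangement ``adapted to the terminal edge of $\psis(u_2)$'' in the sense of \S\ref{sec:nozero}, and Theorem \ref{SGthm} (which lives on $\Mbnz$) cannot be invoked for either flag. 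The paper instead applies the non-cyclotomic commutator statement (Proposition \ref{comprop}) to the flags $\psis(b)$ of the commutators themselves, which do have no zero ends, and then transfers the resulting kernel statements into $M_0$ via Lemmas \ref{anyzerolem} and \ref{tozerolem}; some such mechanism is needed and is absent from your outline.

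Second, ``unfolding both sides in parallel'' presupposes that you can control \emph{which} factor $\Pin b$ of $u_2$ the element $\cai(\Pin a)$ interacts with. The paper needs a genuine vanishing lemma for this: both forms are zero unless the content of $\Pin a$ is nested with the content of some factor of the other monomial, and for the geometric form this is proved by a counting argument (any tuple adjacent to one flag contains at most one hyperplane linking $I_1$ to its complement, while any tuple adjacent to the other contains at least two), together with the freedom to permute factors modulo terms with fewer commutators, which is supplied by the induction on the number of factors. Your weight induction does not give you that permutation freedom, and you state no analogue of the nesting lemma. Finally, the identification of the cyclotomic correction with $\Cou$ is not just relation (\ref{nerel}) plus Proposition \ref{Gmprop}: in the equal-weight case it requires the explicit computation that $[\cai(\Pin a),\Pin b]'=\Sa(\Pin a,\Pin b)\,\tfrac1T\sum_{i\in I_1}\Pin h_i+\Cou(\cai(\Pin a),\Pin b)\K$ (Corollary \ref{comcor}), with the $\Pin h_i$ acting on the remaining factors by exactly the weights $a(\Ho i)+\sum_{j,k}a(\Ht ijk)$ that the geometric form produces, so that the leftover tuples are precisely $\Gm(\Pin a,\Pin b)$. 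This matching is the ``bookkeeping obstacle'' you acknowledge; as it stands it is asserted rather than proved, so the proposal has a real gap exactly where the theorem is hard.
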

\begin{proof}
Let $I_1,\dots,I_n \subset \{1,\dots,m\}$ be pairwise disjoint subsets and let $A_1,\dots,A_n\in \nb$ be commutators such that $\wt A_i = \wtI {I_i}$ for $i=1,\dots,n$. 
Let $J_1,\dots,J_p \subset \{1,\dots,m\}$ be pairwise disjoint subsets and let $B_1,\dots,B_n\in \nb$ be commutators such that $\wt B_i = \wtI {J_i}$ for $i=1,\dots,p$. 
It is enough to show that 
\be\Geom( \Pin A_1 \dots \Pin A_n v_0 , \Pin B_1 \dots \Pin B_p v_0 )
 = \Se_0  ( \Pin A_1 \dots \Pin A_n v_0 , \Pin B_1 \dots \Pin B_p v_0 ).\label{mse}\ee

We proceed by induction on $n+p$. The equality \eqref{mse} is true whenever $n=0$ or $p=0$.

Consider the inductive step. Suppose $n,p \geq 1$. Let us assume that $I_1\sqcup \dots \sqcup I_n =J_1\sqcup \dots \sqcup J_p$. (If not, both sides of \eqref{mse} are zero and we are done.)

Observe that for any permutation $s\in \Sigma_n$, the difference
\be \Pin A_1 \dots \Pin A_n - \Pin A_{s(1)} \dots \Pin A_{s(n)} \nn\ee
as an element of the universal enveloping algebra
is a linear combination of products of $<n$ projected commutators. (For example $\Pin A_1 \Pin A_2 - \Pin A_2 \Pin A_1 = \left[ \Pin A_1 , \Pin A_2 \right] = \sum_{k\in \ZT} \Pin{[ A_1, \tau^k A_2]}$.) Therefore, by the inductive assumption, we effectively have the freedom to permute the commutators $\Pin A_1, \dots, \Pin A_n$ at will. The same applies to $\Pin B_1,\dots \Pin B_p$.  
 
We note the following lemma.
\begin{lem}
Both $\Geom( \Pin A_1  \dots \Pin A_n v_0, \Pin B_1 \dots \Pin B_p v_0 )$ and 
$\Se_0( \Pin A_1 \dots \Pin A_n v_0, \Pin B_1  \dots \Pin B_p v_0 )$ are zero 
unless the following condition holds:
\be \text{There exists $j\in \{1,\dots,p\}$ such that $I_1 \subseteq J_j$ or $I_1\supseteq J_j$}. \label{nestcon}\ee 
\end{lem}
\begin{proof}
Suppose this condition does not hold. We must show that then both the geometric and Shapovalov forms are zero. 

For the Shapovalov form $\Se_0$, it is enough to observe that, for each $j$, the supposition that $I_1 \not\subseteq J_j$ and $I_1\not\supseteq J_j$ implies $[\cai(\Pin A_1), \Pin B_j]'= 0$.

Consider the geometric form $\Geom$. Let $I:= I_1\sqcup \dots \sqcup I_n =J_1\sqcup \dots \sqcup J_p$. 
Inspecting the definition of $\psis$ in Theorem \ref{p1}, we see that the flag $\psis(\Pin A_1  \dots \Pin A_n v_0)\in \Fl^{|I|}(\ccoo I)$ has last two edges 
\be \dots \supset \kv L_{I_1} \cap L^0_{I\setminus I_1} \supset L^0_{I},\nn\ee
for some map $\bm k: \{1,\dots,m\} \to \ZT$. Indeed, the last step is the one in which the swimming island (see \S\ref{efs}) corresponding to $\Pin A_1$ is fixed to zero.
 
To write the penultimate edge, $\kv L_{I_1} \cap L^0_{I\setminus I_1}$, as an intersection of hyperplanes requires   $|I\setminus I_1| + \left(|I_1|-1\right) = |I| -1$ hyperplanes, none of which can be from the following set
\be \{ \Ht ijk \}_{i\in I_1,j\in I\setminus I_1, k\in \ZT}.\label{ghs}\ee
So, by counting, any tuple of hyperplanes adjacent to the flag $\psis(\Pin A_1  \dots \Pin A_n v_0)\in \Fl^{|I|}(\ccoo I)$ contains at most one hyperplane from the set in \eqref{ghs}. 

Therefore we are done if we can show that any tuple of hyperplanes adjacent to the flag $\psis(\Pin B_1  \dots \Pin B_p v_0)$ contains \emph{more} than one element from the set in \eqref{ghs}. By supposition, we may pick $i_1,i_2\in I_1$ such that $i_1\in J_{j_1}$ and $i_2\in J_{j_2}$, with $j_1\neq j_2$ and $J_{j_1}, J_{j_2} \not\subset I_1$. 
Now, by choice of the order in which we apply the rules, (\ref{r1})--(\ref{r3}), of Theorem \ref{p1}, we may suppose that the flag $\psis(\Pin B_1  \dots \Pin B_p v_0)$ contains the edge 
\be  \kvp L_{J_1} \cap \dots \cap \kvp L_{J_p} \nn\ee
for some map $\bm k': \{1,\dots,m\} \to \ZT$. (Informally,  we may choose to ``build all the swimming islands first, before fixing any islands'', cf. \S\ref{sec: flagrels}.) 
Any tuple of hyperplanes adjacent $\psis(\Pin B_1  \dots \Pin B_p v_0)$ therefore contains some hyperplane linking $J_{j_1} \cap I_1$ to $J_{j_1} \setminus I_1$ (to construct $\kvp L_{J_{j_1}}$) and also some hyperplane linking $J_{j_2} \cap I_1$ to $J_{j_2} \cap I_1$ (to construct $\kvp L_{J_{j_2}}$). Both of these two distinct hyperplanes belong to $ \{ \Ht ijk \}_{i\in I_1,j\in I\setminus I_1, k\in \ZT}$. So we are done. 
\end{proof}

Suppose therefore that the condition \eqref{nestcon} does hold. By the freedom to permute the commutators, and if necessary by renaming $A\leftrightarrow B$, $I\leftrightarrow J$, we may suppose that $I_1 \subseteq J_1$.
By definition of $\Se_0$, 
\be \Se_0(\Pin A_1\Pin A_2 \dots \Pin A_n v_0, \Pin B_1 \Pin B_2 \dots \Pin B_p v_0 )
 =  \Se_0(\Pin A_2 \dots \Pin A_n v_0, [ \cai(\Pin A_1), \Pin B_1 ]'\,\Pin B_2 \dots  \Pin B_p v_0 ). \nn\ee
Therefore to complete the inductive step it is enough to establish the following.

\begin{lem}
\be \Geom(\Pin A_1 \Pin A_2 \dots \Pin{A_n} v_0, \Pin B_1 \Pin B_2 \dots \Pin B_p v_0 )
 =  \Geom(\Pin A_2  \dots \Pin A_n v_0, [ \cai(\Pin A_1), \Pin B_1 ]' \,\Pin B_2 \dots  \Pin B_p v_0 ). \nn\ee
\end{lem}
\begin{proof} There are two cases, $I_1\subsetneq J_1$ and $I_1 = J_1$. 

First suppose $I_1\subsetneq J_1$. Then the lemma follows from Proposition \ref{comprop} and the definition of $\Geom$. (Compare the proof of Proposition \ref{Gmprop}.)

Next suppose $I_1= J_1$. By definition of $\Geom$, we have
\begin{multline}  \Geom(\Pin A_1 \Pin A_2 \dots \Pin{A_n} v_0, \Pin B_1\Pin B_2 \dots \Pin B_p v_0 ) \\
 = \Geom(\Pin A_2  \dots \Pin A_n v_0, \Pin B_2 \dots  \Pin B_p v_0 ) \\
 {}\times \left(\Geom(\Pin A_1, \Pin B_1)  \sum_{i\in I_1=J_1} \left( a(\Ho i) +  \sum_{j\in J_{2} \sqcup \dots \sqcup J_p} \sum_{k\in \ZT} a(\Ht ijk)\right) + \Gm( \Pin A_1, \Pin B_1)\right).
\nn\end{multline}
On the other hand we have, using Corollary \ref{comcor} and $\Geom(\Pin A_1, \Pin B_1) = \Sa(\Pin A_1, \Pin B_1)$, 
\begin{multline} [ \cai(\Pin A_1), \Pin B_1 ]' \,\Pin B_{2} \dots  \Pin B_p v_0 
 = \left( \Geom(\Pin A_1, \Pin B_1 ) \sum_{i\in I_1} \frac 1 T \Pin h_i + \Cou(\cai(\Pin A_1), \Pin B_1) \K\right)
\Pin B_{2} \dots  \Pin B_p v_0  \\
 = \Pin B_{2} \dots  \Pin B_p v_0 \left( \Geom(\Pin A_1, \Pin B_1 ) \sum_{i\in I_1=J_1} \left( a(\Ho i) +  \sum_{j\in J_{2} \sqcup \dots \sqcup J_p} \sum_{k\in \ZT} a(\Ht ijk)\right) + \Gm( \Pin A_1, \Pin B_1)\right) 
\nn\end{multline}
The lemma follows. 
\end{proof}
This completes the proof of Theorem \ref{ScGthm}
\end{proof}

Putting together Theorem \ref{ScGthm} and Theorem \ref{SGthm}, we finally obtain the following. Let $\Sa^p$ be the bilinear form on the space $\Wedge p \nbs \ox \Mb$, $p = 0,1,\dots,m$, defined by 
\begin{multline} \Sa^p(\Pin a_p \wx \dots \wx \Pin a_1 \ox x_0 \ox x_1 \ox \dots \ox x_N,
\Pin b_p \wx \dots \wx \Pin b_1 \ox y_0 \ox y_1 \ox \dots \ox y_N)\\
= \det(\Sa_{\nb}(\Pin a_i,\Pin b_j)_{1\leq i,j\leq p}) \Se_0(x_0,y_0) \prod_{i=1}^N \Sa_i(x_i,y_i). \label{Spedef}\end{multline}
Let 
\be \Shom^p: \Wedge p \nbs \ox \Mb \to (\Wedge p \nbs \ox \Mb)^* \label{shompresym}\ee 
denote the corresponding linear map.

Recall the isomorphism $\psis_p: C_p(\nbs,\Mb)_\mones \to \Fl^{m-p}(\cca)$ of Theorem \ref{p1}. It yields an isomorphism of the dual spaces, 
\be (\psis_p^{-1})^* : C_p(\nbs,\Mb)_\mones^* \to  \Fl^{m-p}(\cca)^* \cong \A^{m-p}(\cca).  \nn\ee

\begin{thm}\label{zSGthm} On $C_{p}(\nbs, \Mb)_\mones = \left(\Wedge {p} \nbs \ox \Mb\right)_{\mones}$, we have the equality 
\be\Sa^{p}=  (-1)^{m-p} T^{p} \psis^*_{p}(\Geom^{m-p})\nn\ee 
of symmetric bilinear forms.
Equivalently, the following diagram commutes:
\be\begin{tikzpicture}    
\matrix (m) [matrix of math nodes, row sep=4em,    
column sep=6em, text height=1ex, text depth=1ex]    
{     
C_p(\nbs, \Mb)_{\mones} &  C_p(\nbs,\Mb)_{\mones}^*, \\    
\Fl^{m-p}(\cca)  & \A^{m-p}(\cca).  \\    
};    
\path[->,font=\scriptsize,shorten <= 2mm,shorten >= 2mm]    
(m-1-1) edge node [above] {$\Shom^p$} (m-1-2)    
(m-2-1) edge node [above] {$T^p (-\Ghom)^{m-p}$} (m-2-2);    
\path[->,shorten <= 0mm,shorten >= 2mm]    
(m-1-1) edge node [right] {$\sim$} node [left] {$\psis_p$} (m-2-1)    
(m-1-2) edge node [right] {$\sim$} node [left] {$(\psis_p^{-1})^*$} (m-2-2);    
\end{tikzpicture}\nn\ee    
\qed\end{thm}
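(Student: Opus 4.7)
The plan is to reduce the statement to Theorems \ref{SGthm} and \ref{ScGthm} by exploiting the multiplicative structure of both forms on monomials. By bilinearity, it suffices to compare the two forms on monomials
\[ \bm x = \Pin a_p \wx \dots \wx \Pin a_1 \ox x_0 \ox x_1 \ox \dots \ox x_N, \qquad \bm y = \Pin b_p \wx \dots \wx \Pin b_1 \ox y_0 \ox y_1 \ox \dots \ox y_N \]
in $(\Wedge p \nbs \ox \Mb)_{\mones}$. Each such monomial determines a partition of $\{1,\dots,m\}$ into pieces $J_1,\dots,J_p$ (the contents of the $\Pin a_r$) and $I_0, I_1,\dots,I_N$ (the contents of the $x_j$), and similarly for $\bm y$. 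My first step would be to observe that the Shapovalov form $\Sa^p(\bm x,\bm y)$ defined by \eqref{Spedef} vanishes unless the two partitions match, by weight-preservation of $\Sa_\nb$, $\Se_0$ and each $\Sa_i$; a hyperplane-counting argument parallel to the nesting lemma in the proof of Theorem \ref{ScGthm} shows that $\Geom^{m-p}(\psis_p(\bm x),\psis_p(\bm y))$ vanishes under the same condition.

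Assuming the partitions coincide, I would analyse the flag $F=\psis_p(\bm x)$ using rules \ref{r1}--\ref{r3} of the construction of $\psis_p$. The sequence of edges of $F$ decomposes into phases: an initial phase building up swimming-island structure inside each $J_r$-piece via rule \ref{r3}, a phase of joining and $z_j$-fixing within each $I_j$-piece ($j\geq 1$) via rule \ref{r1}, and a final phase of joining and zero-fixing within the $I_0$-piece via rule \ref{r2}. The key claim is that the sum over adjacent hyperplane tuples in the definition of $\Geom^{m-p}(F,F')$ factorizes as a product indexed by the pieces of the partition. This uses the explicit form \eqref{LIdef} of edges and the functoriality of $\Ghom$ on subarrangements (Lemma \ref{funglem}): a hyperplane $\Ht ijk$ with $i,j$ in different pieces cannot contain any relevant edge, $\Hz ijk$ forces $i\in I_j$, and $\Ho i$ forces $i\in I_0$.

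Having factorized, I would identify each factor: for a $J_r$-piece the argument of Theorem \ref{SGthm} (orthogonality of the subalgebras $\tau^k\td\nb$ and the non-cyclotomic result of \cite{SV}) delivers $T\,\Sa_\nb(\Pin a_r,\Pin b_r)$, and summing over all bijective matchings of the $J$-pieces with the appropriate signs assembles the determinant $\det(\Sa_\nb(\Pin a_i,\Pin b_j))$ together with an overall factor $T^p$ and a sign $(-1)^{m-p}$; for an $I_j$-piece ($j\geq 1$) the same argument yields $\Sa_j(x_j,y_j)$; and for the $I_0$-piece Theorem \ref{ScGthm} gives exactly $\Se_0(x_0,y_0)$, since this piece is controlled by the arrangement $\cco{I_0}$ of diagonal and zero-fixing hyperplanes. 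Multiplying the factors and collecting signs produces precisely $(-1)^{m-p}T^p\Sa^p(\bm x,\bm y)$, proving both the bilinear-form equality and the commutativity of the displayed diagram.

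The main obstacle is the factorization step. A priori, relation \eqref{nerel} in \S\ref{sec: flagrels} introduces cross-terms in which a diagonal hyperplane $\Ht ijk$ is involved in fixing islands to zero, and these do not split cleanly between the $J_r$ and $I_0$ pieces. The whole point of the cocycle $\Cou$ and the modified form $\Se_0$ introduced in \S\ref{gmsec}--\ref{coudef} is to absorb exactly these contributions into the $I_0$-factor; once this is recognized (via Theorem \ref{ScGthm}), the remaining sign and $T$-power bookkeeping proceeds as in the proof of Theorem \ref{SGthm}.
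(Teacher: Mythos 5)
Your proposal is correct and follows essentially the same route as the paper: there Theorem \ref{zSGthm} is presented as an immediate consequence of Theorems \ref{SGthm} and \ref{ScGthm}, with the combination step left implicit (the theorem carries no written proof). Your block-factorization of the geometric pairing over the pieces of the monomial --- the wedge pieces and the $M_i$-pieces ($i\geq 1$) handled by the reduction to an adapted discriminantal arrangement as in Theorem \ref{SGthm}, and the zero-fixed piece, including the cross-terms from relation (vi) of \S\ref{sec: flagrels}, absorbed into $\Se_0$ via Theorem \ref{ScGthm} --- is precisely that omitted gluing argument.
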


\section{Symmetrization}\label{symsec}
\subsection{The Lie algebras $\no$ and $\ns$}
Let $\no$ denote the free Lie algebra in generators $F_i$, $i\in \I$, $\ru\in \Z_{\geq 1}$. 
For a permutation $\siod: \I \to \I$, 
define the automorphism $\sio:\no\to \no$ by
\be \sio(F_i) = F_{\siod(i)}
 , \qquad i\in \I.\ee
Let $T\in \Z_{\geq 1}$ be the order of $\sio$. 

The set $\I$ is the union of some number $\rf$ of disjoint cycles for the permutation $\siod$. 
Pick an injection $\iota: \Is \to \I$ which maps distinct elements into distinct cycles, so that $\iota(i)$ is a representative of the $i$th cycle. Let $T_i$ denote the length of the $i$th cycle, \ie $T_i := |\siod^\Z \iota(i) |$. Each $T_i$ divides $T$. 

By definition $\sum_{i=1}^{\rf} T_i = \ru$. Observe that 
\be \{ \sio^k F_{\iota(i)} : k=0,\dots, T_i-1,  i \in \Is \}\label{newgens}\ee
are the set of generators of $\no$. 



As with $\nb$ above, we have a weight decomposition of $\no$, namely the $\Z_{\geq 0}^{\rf}$-grading of $\no$ as a Lie algebra, 
\begin{subequations}\label{wd}
\be \no = \bigoplus_{(n_1,\dots,n_{\rf})\in \Z_{\geq 0}^{\rf}} \no_{[n_1,\dots,n_{\rf}]},\ee
such that 
\be\no_{[0,\dots,0,\at i 1,0,\dots,0]} := \Span_\C(\sio^k F_{\iota(i)})_{k=0,1,\dots,T_i-1},\quad\text{for each } i\in \Is.\ee
\end{subequations} 
We have also the obvious refinement of this decomposition, namely the $\Z_{\geq 0}^{\ru}$-grading of $\no$ as a Lie algebra
\begin{subequations}\label{wdu}
\be \no = \bigoplus_{(n_1,\dots,n_{\ru})\in \Z_{\geq 0}^{\ru}} \no_{(n_1,\dots,n_{\ru})},\ee
such that 
\be\no_{(0,\dots,0,\at i 1,0,\dots,0)} := \Span_\C(F_{i}),,\quad\text{for each } i\in \I.\ee
\end{subequations} 

\begin{exmp}\label{ex1} Suppose $\ru = 3$ 
and $\siod$ is the permutation $(13)$. 
We have $\rf = 2$ and, say, $\iota(1)=1$, $\iota(2) = 2$. Then $T_1 = 2$, $T_2=1$, and 
\be F_1 = \sio^0 F_{\iota(1)}, \quad F_2 = \sio^0 F_{\iota(2)}, \quad F_3 = \sio^1 F_{\iota(1)}. \nn\ee
Thus $\n_{[1,0]} = \n_{(1,0,0)} \oplus \n_{(0,0,1)} = \Span_\C(F_1,F_3)$ and  $\n_{[0,1]} = \n_{(0,1,0)} = \Span_\C(F_2)$.
\end{exmp}

Let $\nos\subset \no$ denote the subalgebra of $\sio$-invariants. Define
\be \Mo := U(\nos) \ox \bigotimes_{i=1}^N U(\no) ,\label{Modef}\ee
cf. \eqref{Mbdef}. 
We have the standard chain complex $(C_\bl(\nos,\Mo),d)$, cf. \eqref{ddef}. The automorphism $\sigma$ respects the weight decomposition \eqref{wd} (though not the decomposition \eqref{wdu} in general). Thus we have also a weight decomposition of $\nos$, and thence of $\Mo$, $C_p(\nos,\Mo)$, etc. 

\subsection{The weight $\lambda$ and symmetrization in $C(\nbs,\Mb)$} 
Now pick and fix an arbitrary weight $\lambda:= (\lambda_1,\dots,\lambda_{\rf})\in \Z_{\geq 0}^{\rf}$. Set $m:= |\lambda| := \sum_{i=1}^{\rf} \lambda_i$. We have the free Lie algebra $\nb$ in generators $\f ki$, $k\in \ZT$, $i=1,\dots, m$, as in \S\ref{fla}.

(Let us stress that $m$ and $\nb$ now depend on the choice of $\lambda$.)

It will be useful to introduce a notation for the elements of the set $\{1,\dots,m\}$, adapted to the composition $m=\lambda_1+\dots + \lambda_\rf$ of $m$. To that end, we shall write
\be \clc i n := \sum_{j=1}^{i-1} \lambda_j +n, \qquad n=1,\dots,\lambda_i, \quad i=1,\dots,\rf, \label{innot}\ee
so that
\be \{1,\dots,m\} = \{\clc 1 1,\dots,\clc 1{\lambda_1},\clc 2 1,\dots,\clc 2{\lambda_2},\dots,\clc\rf 1,\dots,\clc\rf{\lambda_\rf)} \}.\nn\ee 

For $i=1,\dots,\rf$, let $\Sw_{\lambda_i}$ denote the wreath product, $\Sigma_{\lambda_i} \wr \Z_{T/T_i}$. That is, let 
\be \Sw_{\lambda_i} :=  \Sigma_{\lambda_i} \ltimes \left(\Z_{T/T_i}\right)^{\lambda_i} ,\nn\ee
where the semi-direct product is
\be ( s', (k'_n)_{1\leq n\leq \lambda_i}) ( s, (k_n)_{1\leq n\leq \lambda_i} ) = (ss', ( k_n + k'_{s(n)})_{1\leq n\leq \lambda_i}).\nn\ee    
We let an element $(s,(k_n)_{1\leq n\leq \lambda_i})\in \Sw_{\lambda_i}$ act on the generators $\f k {\clc i p}$, $k\in \ZT$, $p\in \{1,\dots,\lambda_i\}$ by
\be (s,(k_n)_{1\leq n\leq \lambda_i}) \on \f k {\clc i p} := \f{k+T_ik_p}{\clc i{s(p)}} ,\nn\ee
and let it act trivially on the remaining generators, $\f k {\clc j p}$, $j\neq i$, $k\in \ZT$, $p\in \{1,\dots,\lambda_j\}$.
This defines an action by automorphisms of the group
\be \Sw_\lambda := \Sw_{\lambda_1} \times \dots \times \Sw_{\lambda_{\rf}}. \nn\ee
on the free Lie algebra $\nb$, and hence on $U(\nb)$. The subalgebra $\nbs$ is stable under this action. In this way we have an action of $\Sw_\lambda$ on $\Wedge \bl \nbs\ox \Mb$. 

This action of $\Sw_\lambda$ on $\Wedge \bl \nbs\ox \Mb$ commutes with the projection  $\Wedge \bl \nbs\ox\Mb \to (\Wedge \bl \nbs \ox \Mb)_{\mones}$ to the weight $(1,\dots,1) \in \Z_{\geq 0}^m$ subspace. 
  
Recall the generators of $\no$ given in \eqref{newgens}.
\begin{prop}\label{outhomprop}
There is a homomorphism of Lie algebras  $\jmath:\no \to \nb$ defined by
\be \jmath:\sio^k F_{\iota(i)} \mapsto \sum_{l=0}^{T/T_i-1}  \sum_{n=1}^{\lambda_i} \f {k+T_il} {\clc i n} 
, \qquad k= 0,\dots,T_i-1,\quad  i\in \Is.\nn\ee
It satisfies $\jmath \circ \sio = \sib \circ \jmath$. Hence its restiction to $\nos$ defines a homomorphism $\nos\to \nbs$.
\end{prop}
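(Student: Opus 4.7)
The key observation is that $\no$ is a \emph{free} Lie algebra. By the universal property, a Lie algebra homomorphism out of $\no$ is uniquely determined by (and can be freely specified by) its values on any set of generators. Therefore the work splits into three steps: verify the assignment is well-defined on a generating set; verify the equivariance on generators; and deduce the restriction statement.

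\textbf{Step 1 (Well-definedness).} The elements $\{\sio^k F_{\iota(i)} : i\in \Is,\; 0\le k \le T_i-1\}$ from \eqref{newgens} form a complete set of generators of $\no$, and each original generator $F_j$ corresponds to a unique pair $(i,k)$ with $i\in \Is$ and $0\le k < T_i$ (namely the pair for which $j=\siod^k \iota(i)$). Hence the formula
$$ \jmath(\sio^k F_{\iota(i)}) := \sum_{l=0}^{T/T_i-1}  \sum_{n=1}^{\lambda_i} \f {k+T_il} {\clc i n} $$
unambiguously assigns an image in $\nb$ to each generator of $\no$, and the universal property of the free Lie algebra extends it uniquely to a Lie algebra homomorphism $\jmath:\no\to\nb$.

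\textbf{Step 2 (Equivariance on generators).} I need to check $\jmath(\sio(x)) = \sib(\jmath(x))$ for $x$ one of the generators $\sio^k F_{\iota(i)}$, $0\le k \le T_i-1$. There are two cases. If $0\le k < T_i-1$, then both sides equal $\sum_{l=0}^{T/T_i-1}\sum_{n=1}^{\lambda_i} \f{k+1+T_il}{\clc in}$ directly from the definitions. If $k=T_i-1$, then $\sio(x) = \sio^{T_i}F_{\iota(i)} = F_{\iota(i)}$, so the left side is $\sum_{l=0}^{T/T_i-1}\sum_{n=1}^{\lambda_i}\f{T_il}{\clc in}$, while the right side is $\sum_{l=0}^{T/T_i-1}\sum_{n=1}^{\lambda_i}\f{T_i(l+1)}{\clc in}$. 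Reindexing by $l'=l+1$ and using that the superscript lives in $\ZT$ together with $T_i\cdot (T/T_i) = T$ shows the two sums coincide.

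\textbf{Step 3 (Extension and restriction).} Since $\sio$ and $\sib$ are Lie algebra automorphisms, and $\jmath$ is a Lie algebra homomorphism, the identity $\jmath\circ\sio = \sib\circ \jmath$, once established on generators, extends to all of $\no$. Finally, if $x\in \nos$ then $\sib(\jmath(x)) = \jmath(\sio(x)) = \jmath(x)$, so $\jmath(x)\in \nbs$, and the restriction of $\jmath$ gives a Lie algebra homomorphism $\nos\to\nbs$.

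The only place where anything nontrivial happens is the equivariance check in the boundary case $k=T_i-1$, which relies crucially on $T_i\mid T$; this is the step to carry out carefully but it is a short calculation, not a genuine obstacle.
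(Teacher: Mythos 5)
Your proof is correct and follows essentially the same route as the paper: both arguments use freeness of $\no$ to define $\jmath$ on the generators \eqref{newgens} and then verify $\jmath\circ\sio=\sib\circ\jmath$ on generators, with the only nontrivial check being the wrap-around case $k=T_i-1$, handled by the same reindexing modulo $T$ using $T_i\mid T$. The restriction statement then follows exactly as you say.
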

\begin{proof} For $k = 0,\dots,T_i-2$ it is immediate that
\be (\jmath\circ\sio) (\sio^{k} F_{\iota(i)}) =  \sum_{l=0}^{T/T_i-1}  \sum_{n=1}^{\lambda_i} \f {k+1+T_il} {\clc i n} 
= \sib\sum_{l=0}^{T/T_i-1}  \sum_{n=1}^{\lambda_i} \f {k+T_il} {\clc i n} 
= (\sib\circ \jmath)(\sio^k F_{\iota(i)}).\nn\ee
It remains to observe that
\begin{multline} (\jmath\circ\sio)(\sio^{T_i-1} F_{\iota(i)})=  \jmath(\sigma^{T_i} F_{\iota(i)}) = \jmath(F_{\iota(i)}) 
             =  \sum_{l=0}^{T/T_i-1}  \sum_{n=1}^{\lambda_i} \f {T_il} {\clc i n} 
\\
            =  \sum_{l=0}^{T/T_i-1}  \sum_{n=1}^{\lambda_i} \f {T_i(l-1)} {\clc i n} 
             =  \sib \sum_{l=0}^{T/T_i-1}  \sum_{n=1}^{\lambda_i} \f {T_i-1+T_il} {\clc i n} 
            = (\sib \circ \jmath)( \sio^{T_i-1} F_{\iota(i)})\nn.
\end{multline}
\end{proof}
The homomorphism of Proposition \eqref{outhomprop} induces a homomorphism of complexes  $C_\bl(\nos,\Mo) \to C_\bl(\nbs, \Mb)^\Swl$. By composing this homomorphism with projection onto the $\mones$ weight space, we get a homomorphism of complexes
\be\sym: C_{\bl}(\nos,\Mo)_{\lambda} \to C_\bl(\nbs, \Mb)_{\mones}^\Swl .\label{shomdef}\ee

\begin{exmp}\label{ex2} We continue Example \ref{ex1} above. 
For simplicity, take $N=1$ and consider elements in $1\ox U(\no) \subset \Wedge 0 \nos \ox U(\nos) \ox U(\no) = C_0(\nos,\Mo)$. Under the map $\sym$, we have
\be 1 \ox F_1 \mapsto 1 \ox \f 0 {\clc 1 1}, \qquad 1 \ox F_2 \mapsto 1 \ox (\f 0 {\clc 2 1} +\f 1 {\clc 2 1}), \qquad 1\ox F_3 \mapsto 1 \ox \f 1 {\clc 1 1} \nn \ee
and
\begin{align} 1 \ox F_1 F_1 &\mapsto 1 \ox (\f 0 {\clc 1 1} \f 0{\clc 1 2} +\f 0 {\clc 1 2} \f 0{\clc 1 1}), \nn\\ 
    1 \ox F_1 F_3 &\mapsto 1 \ox (\f 0 {\clc 1 1} \f 1{\clc 1 2} + \f 0{\clc 1 2} \f1{\clc 1 1})\nn\\
 1 \ox F_1 F_2 &\mapsto 1 \ox (\f 0 {\clc 1 1} \f 0 {\clc 2 1} + \f0{\clc 1 1} \f 1 {\clc 2 1}) \nn\\
 1 \ox F_2 F_2 &\mapsto 1 \ox (\f 0 {\clc 2 1} \f 0 {\clc 2 2} + \f0{\clc 2 2} \f 0 {\clc 2 1}
                              +\f 0 {\clc 2 1} \f 1 {\clc 2 2} + \f1{\clc 2 2} \f 0 {\clc 2 1}\nn\\
                & \qquad\,\,              +\f 1 {\clc 2 1} \f 0 {\clc 2 2} + \f0{\clc 2 2} \f 1 {\clc 2 1}  
                              +\f 1 {\clc 2 1} \f 1 {\clc 2 2} + \f1{\clc 2 2} \f 1 {\clc 2 1} ). \nn
\end{align}
\end{exmp}

In the other direction, there is a homomorphism of Lie algebras $\nb \to \no$ defined by
\be \f k {\clc i n} \mapsto \sio^k F_{\iota(i)}, \qquad k\in \ZT, \quad i=1,\dots,\rf,\quad n=1,\dots,\lambda_i.\ee
This homomorphism induces a homomorphism $C_\bl(\nbs,\Mb)_{\mones} \to C_\bl(\nos,\Mo)_\lambda$. Let $\pi$ denote the restriction of the latter to $C_\bl(\nbs,\Mb)_{\mones}^\Swl$,
\be \pi : C_\bl(\nbs,\Mb)_{\mones}^\Swl \to C_\bl(\nos,\Mo)_\lambda.\label{pihomdef}\ee
\begin{lem}\label{isolem} The compositions $\pi \circ \sym$ and $\sym\circ\pi$ are equal to multiplication by \be |\Sw_\lambda| =  \prod_{i=1}^{\rf} (T/T_i)^{\lambda_i}\lambda_i!.\ee
\qed
\end{lem}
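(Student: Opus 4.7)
The plan is to verify both identities by a direct index-counting argument on monomials, leveraging the fact that both $\jmath$ and the inverse map $\nb\to\no$, $\f k{\clc i n}\mapsto \sio^kF_{\iota(i)}$, are Lie algebra homomorphisms. This means $\jmath$ commutes with brackets and multiplication, so every computation reduces to the level of generators. By linearity and the structure of $C_\bl(\nos,\Mo)_\lambda$ and $C_\bl(\nbs,\Mb)_\mones$ as spanned by products of projected commutators tensored with monomials in the $M_i$, it suffices to check the claim on monomials $\omega$ that have been fully expanded as (possibly nested) commutators and products of generators $\sio^{k_s}F_{\iota(i_s)}$.

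For $\pi\circ\sym$: take such a fully expanded monomial $\omega\in C_p(\nos,\Mo)_\lambda$, indexed by generators $y_s=\sio^{k_s}F_{\iota(i_s)}$. Applying $\jmath$ replaces each $y_s$ by $\sum_{\bar l_s=0}^{T/T_{i_s}-1}\sum_{n_s=1}^{\lambda_{i_s}}\f{k_s+T_{i_s}\bar l_s}{\clc{i_s}{n_s}}$, and the result is a sum over tuples $(\bar l_s,n_s)_s$. Projection onto $\mones$ restricts to those tuples for which, for each $i\in\Is$, the function $s\mapsto n_s$ restricted to $\{s:i_s=i\}$ is a bijection onto $\{1,\ldots,\lambda_i\}$. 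Now applying $\pi$ sends $\f{k_s+T_{i_s}\bar l_s}{\clc{i_s}{n_s}}$ back to $\sio^{k_s+T_{i_s}\bar l_s}F_{\iota(i)}=\sio^{k_s}F_{\iota(i)}=y_s$, because $\sio^{T_i}$ fixes $F_{\iota(i)}$. So every surviving term reassembles into $\omega$, and the count of surviving tuples is $\prod_{i=1}^{\rf}\lambda_i!\,(T/T_i)^{\lambda_i}=|\Sw_\lambda|$.

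For $\sym\circ\pi$: the key claim is that for \emph{every} monomial $z\in C_\bl(\nbs,\Mb)_\mones$ (not assumed invariant), one has $\sym(\pi(z))=\sum_{g\in\Sw_\lambda}g\cdot z$. Indeed, $\pi$ collapses each label $(k_s,\clc{i_s}{n_s})$ in $z$ to $\sio^{k_s}F_{\iota(i_s)}$, forgetting both the $n_s$-choice and the residue of $k_s$ modulo $T_{i_s}$. Applying $\sym=P_\mones\circ\jmath$ restores all possible choices: each $\sio^{k_s}F_{\iota(i_s)}$ is re-expanded as a sum indexed by $(\bar l_s,n'_s)$, and the projection to $\mones$ enforces the bijectivity condition on $n'_s$. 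The resulting re-labelings
\be\f{k_s}{\clc{i_s}{p_s}}\;\longmapsto\;\f{k_s+T_{i_s}k'_{p_s}}{\clc{i_s}{s_{i_s}(p_s)}}\nn\ee
are precisely those given by the action of $\Sw_\lambda=\prod_i\Sigma_{\lambda_i}\ltimes(\Z_{T/T_i})^{\lambda_i}$ as defined in the paper. Hence $\sym(\pi(z))=\sum_{g\in\Sw_\lambda}g\cdot z$, and when $z$ is $\Sw_\lambda$-invariant this equals $|\Sw_\lambda|\cdot z$.

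The only real obstacle is notational bookkeeping, particularly when generators appear inside nested brackets and in both factors of a wedge product, together with the tensor factors of $\Mb$. This is handled uniformly because $\jmath$ and its inverse are Lie algebra homomorphisms extending functorially to $U(\nos)$, $U(\no)$, $\nbs$, $\nb$ and all the tensor/wedge products in sight; after expanding each commutator in terms of generators, the counting step is the same everywhere and produces the factor $|\Sw_\lambda|$ per monomial. No deeper structural input is needed.
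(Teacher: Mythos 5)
Your argument is correct: the paper states this lemma without proof (it ends with a \qed, the verification being left to the reader), and your generator-by-generator counting — including the clean intermediate identity $\sym(\pi(z))=\sum_{g\in \Sw_\lambda} g\cdot z$ valid on all of the weight-$\mones$ subspace, which specializes to multiplication by $|\Sw_\lambda|$ on the $\Sw_\lambda$-invariants — is exactly the straightforward check implicit there. In particular your bookkeeping (shifts indexed by the source slot, bijectivity of slot assignments forced by the projection to weight $\mones$, and functoriality of the induced maps on wedge and tensor factors) matches the paper's conventions, so there is no gap.
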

\begin{cor}\label{isocor} The maps $\sym$ and $\pi$ are isomorphisms of complexes. \qed
\end{cor}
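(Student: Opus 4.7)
The plan is to deduce the corollary directly from Lemma \ref{isolem} together with the fact that $\sym$ and $\pi$ are already chain maps by construction.

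First, I would observe that $\sym$ and $\pi$ are morphisms of complexes. Indeed, $\sym$ was defined in \eqref{shomdef} as the composition of the homomorphism of complexes $C_\bl(\nos,\Mo) \to C_\bl(\nbs,\Mb)^{\Swl}$ induced by the Lie algebra homomorphism $\jmath:\nos \to \nbs$ of Proposition \ref{outhomprop} with the projection to the $\mones$ weight subspace; the latter is a chain map because the differential $d$ of \eqref{ddef} preserves the weight decomposition. Similarly $\pi$ in \eqref{pihomdef} is induced by the Lie algebra homomorphism $\nb \to \no$ given by $\f k{\clc in} \mapsto \sio^k F_{\iota(i)}$, followed by projection to the weight-$\lambda$ subspace.

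Next, I would invoke Lemma \ref{isolem}, which asserts
\[ \pi \circ \sym = |\Swl|\cdot \id_{C_\bl(\nos,\Mo)_\lambda}, \qquad
   \sym \circ \pi = |\Swl|\cdot \id_{C_\bl(\nbs,\Mb)_{\mones}^{\Swl}}. \]
Since $|\Swl| = \prod_{i=1}^{\rf} (T/T_i)^{\lambda_i}\lambda_i!$ is a positive integer, in particular a nonzero element of $\C$, these identities show that $\sym$ and $\pi$ are mutually inverse up to the nonzero scalar $|\Swl|$. Concretely, $|\Swl|^{-1}\pi$ is a two-sided inverse for $\sym$, and $|\Swl|^{-1}\sym$ is a two-sided inverse for $\pi$. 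Hence both $\sym$ and $\pi$ are linear isomorphisms in each degree, and being chain maps, they are isomorphisms of complexes.

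There is no real obstacle here: the corollary is a formal consequence of Lemma \ref{isolem} once one notes that $|\Swl|\neq 0$ in $\C$ and that $\sym, \pi$ have already been shown to respect both the differential and the weight gradation.
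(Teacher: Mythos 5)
Your proof is correct and is exactly the argument the paper intends: the corollary is stated with no separate proof because it follows immediately from Lemma \ref{isolem}, with $|\Swl|^{-1}\pi$ and $|\Swl|^{-1}\sym$ serving as two-sided inverses, and both maps being chain maps by construction. Nothing is missing.
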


\subsection{Symmetrization of flags and forms}\label{symflags}

Define an action of $\Sw_{\lambda_i}$ on the coordinate functions of $\C^m$ by the following formula.
For $i=1,\dots,\rf$, let an element $ (s, (k_n)_{1\leq n\leq \lambda_i}) \in \Sw_{\lambda_i}$ transform the coordinate functions $t_{\clc i p}$, $p\in\{1,\dots,\lambda_i\}$, on $\C^m$ according to
\be (s, (k_n)_{1\leq n\leq \lambda_i}) \on t_{\clc i p} := \omega^{T_i k_p}t_{\clc i {s(p)}} \label{gont}\ee
and let it act trivially on the remaining coordinate functions $t_{\clc j n}$, $j\neq i$, $n\in \{1,\dots,\lambda_j\}$.
This action defines an action of $\Sw_{\lambda_i}$ on $\C^m$. 

The action \eqref{gont}   gives rise to an action of the group $\Swl$ on the set of hyperplanes of the arrangement $\cca$. 
This yields a canonical action of $\Swl$ on the spaces $\A^p(\cca)$ and $\F^p(\cca)$.

Namely, for each $i=1,\dots,\rf$, an element $g = (s, \vec k) \in \Sw_{\lambda_i}$ acts on $\A^p(\cca)$ according to 
\be g \on (H_1\wx\dots \wx H_p) = (g\on H_1)\wx \dots\wx (g\on H_p).\nn\ee
For hyperplanes $H_1,\dots,H_p$, let $F(H_1,\dots,H_p)$ denote the flag $H_1\supset H_1\cap H_2 \supset \dots \supset H_1\cap \dots \cap H_p$. For each $i=1,\dots,\rf$, we let an element $g = (s, \vec k) \in \Sw_{\lambda_i}$ act on the set of flags as follows 
\be g \on F(H_1,\dots,H_p) = F((g\on H_1),\dots, (g\on H_p)).\nn\ee
These actions respect the pairing of \S\ref{pf}, \ie $\bra g\on F,g \on (H_1\wx\dots \wx H_p) \ket = \bra F, (H_1\wx\dots \wx H_p) \ket$. 

For $g = ( (s_1, \vec k_1), \dots, (s_\rf,\vec k_\rf)) \in \Swl$ let us write
\be (-1)^{|g|} := (-1)^{|s_1|} \dots (-1)^{|s_\rf|}.\label{modgdef}\ee

\begin{prop}\label{ep} The isomorphisms $\psis_\bl: \Wedge \bl \nbs\ox\Mb \to\Fl^{m-\bl}(\cca)$ of Theorem \ref{p1} have the following equivariance property with respect to these actions of $\Sw_\lambda$. For all $g\in \Swl$ and $m\in \Wedge \bl \nbs\ox\Mb$, 
\be \psis_\bl( g \on m ) = (-1)^{|g|} g\on \psis_\bl(m)\nn\ee
\qed\end{prop}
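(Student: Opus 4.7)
The plan is to prove the equivariance by induction on $m-p$, in parallel with the inductive construction of $\psis_p$ given in the proof of Theorem \ref{p1}. By linearity it suffices to verify the identity on monomials.

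Before starting the induction, I would record three compatibility properties of the $\Swl$-action: (i) the action on $\nb$ is by Lie algebra automorphisms, so $g\on\Pin{[c_1,c_2]} = \Pin{[g\on c_1,\,g\on c_2]}$ for any commutators $c_1,c_2\in \nb$; (ii) the action on $\Mb$ is by module automorphisms, so $g\on(c\on x) = (g\on c)\on(g\on x)$ for $c\in \nbs$, $x\in \Mb$; and (iii) the action on coordinates intertwines the action on commutators and on edges, namely $g\on L_c = L_{g\on c}$, $g\on L_c^0 = L_{g\on c}^0$, and $g\on L_c^j = L_{g\on c}^j$ for each $j\in\{1,\dots,N\}$ and each commutator $c\in\nb$. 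Properties (i) and (ii) are immediate from the definition of the action. Property (iii) follows by unpacking: if $c$ has content $\{(k_j,i_j)\}_{1\leq j\leq l(c)}$ with $i_j = \clc{r_j}{p_j}$, and $g = ((s_r,(k^{(r)}_n)_{n}))_{r\in\Is}$, then $g\on c$ has content $\{(k_j + T_{r_j}k^{(r_j)}_{p_j},\,\clc{r_j}{s_{r_j}(p_j)})\}$; the defining equations of $L_{g\on c}$ read off from this content coincide with the defining equations of the image $g\on L_c$ obtained from the coordinate action \eqref{gont}. The cases $L_c^0$ and $L_c^j$ are analogous.

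With (i)--(iii) in hand the induction is short. For the base case $p=m$ the only monomial is $\Pin f_1 \wx \dots \wx \Pin f_m \ox 1 \ox \dots \ox 1$; the cyclic components $(k^{(r)}_n)$ of $g$ are absorbed by the invariance $\Pin{\sib^k c} = \Pin c$, while the permutation components $s_r\in \Sigma_{\lambda_r}$ reorder the wedge factors and produce the overall sign $(-1)^{|g|}$, and the image flag $(\C^m)$ is manifestly $\Swl$-invariant. For the inductive step, a monomial $x'\in C_p(\nbs,\Mb)_\mones$ is obtained from some $x\in C_{p+1}(\nbs,\Mb)_\mones$ by one of the rules (\ref{r1})--(\ref{r3}) of \S\ref{psisdef}. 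By (i) and (ii), $g\on x'$ is produced from $g\on x$ by the same rule, with the relevant commutator or Lie algebra element replaced by its $g$-image; by (iii), the edge appended to the flag in computing $\psis_p(g\on x')$ is the $g$-image of the edge appended in computing $\psis_p(x')$. Combined with the inductive hypothesis $\psis_{p+1}(g\on x) = (-1)^{|g|}g\on\psis_{p+1}(x)$, this yields $\psis_p(g\on x') = (-1)^{|g|}g\on\psis_p(x')$.

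The main point requiring care is property (iii): one must verify that the $\Swl$-action on $\C^m$, which combines a permutation of coordinates within each block $\{\clc r 1,\dots,\clc r{\lambda_r}\}$ with multiplications by powers of $\omega$, corresponds precisely to the combinatorial action on the content of commutators. Once that correspondence is set up consistently, the sign $(-1)^{|g|}$ appears only at the base case and simply propagates through the inductive step, and no further difficulty arises.
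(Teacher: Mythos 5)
Your proposal is correct: the induction along the recursive definition of $\psis_p$, resting on the three compatibilities (automorphism action on $\nb$ and $U(\nb)$, the diagonal module action on $\Mb$, and the intertwining $g\on L_c^\bl = L_{g\on c}^\bl$ between the coordinate action and the action on contents), with the sign $(-1)^{|g|}$ arising from the permutation of the wedge factors at the base case $p=m$, is precisely the verification the paper leaves implicit (the proposition is stated with \qed and no written proof). The only cosmetic remark is that your property (ii) should also cover the rule-(A) case of a commutator $g_1\in\nb$ acting by left multiplication on a tensor factor $x_j\in U(\nb)$, which follows since the $\Swl$-action on $U(\nb)$ is by algebra automorphisms, and that $\psis_\bl$ is defined only on the weight-$\mones$ subspace, on which you correctly work.
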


Let $\Fl^{m-k}(\cca)^\Swl$ denote the subspace of $\Fl^{m-k}(\cca)$ spanned by flags $F$ such that $(-1)^{|g|} g\on F = F$. 
We arrive at the following.
\begin{thm}\label{symthm}
The map
\be \psis_{k}\circ \sym : C_k(\nos,\Mo)_\lambda \to \Fl^{m-k}(\cca)^\Swl \ee
is an isomorphism of complexes, for $k=0,1,\dots,m$.\qed
\end{thm}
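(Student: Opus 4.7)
The plan is to assemble three already-proved ingredients: Corollary \ref{isocor}, Theorem \ref{p1}, and Proposition \ref{ep}. By Corollary \ref{isocor}, the map $\sym$ is an isomorphism of complexes $C_\bl(\nos,\Mo)_\lambda \xrightarrow{\sim} C_\bl(\nbs,\Mb)_{\mones}^\Swl$. Hence it suffices to show that $\psis_k$ restricts to a bijection $C_k(\nbs,\Mb)_{\mones}^\Swl \to \Fl^{m-k}(\cca)^\Swl$, and that the composition is a map of complexes.

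For the bijectivity, I would invoke the twisted equivariance $\psis_k(g \on m) = (-1)^{|g|}\, g \on \psis_k(m)$ from Proposition \ref{ep}. This says that $g \on m = m$ for all $g \in \Swl$ if and only if $(-1)^{|g|}\, g \on \psis_k(m) = \psis_k(m)$ for all $g$, which by definition is the condition cutting out $\Fl^{m-k}(\cca)^\Swl$. Since $\psis_k$ is already a bijection from $C_k(\nbs,\Mb)_{\mones}$ onto $\Fl^{m-k}(\cca)$ by Theorem \ref{p1}, the usual argument (using injectivity of $\psis_k$ to recover a preimage of any signed-invariant flag) shows that this restriction is a bijection of vector spaces.

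For the compatibility with differentials, both $\psis_\bl$ (by Theorem \ref{p1}) and $\sym$ (by construction, since $\jmath$ is a Lie algebra homomorphism by Proposition \ref{outhomprop}) are maps of complexes, so their composition is as well. Combining these three observations yields the required isomorphism of complexes for each $k$.

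There is no real obstacle: Theorem \ref{symthm} is essentially a packaging of previous results, and the substantive content sits in the earlier lemmas. The equivariance in Proposition \ref{ep} has been arranged precisely so that the natural $\Swl$-invariants on the algebraic side correspond to signed-$\Swl$-invariants on the flag side; and the well-definedness of $\psis_\bl$ with respect to the flag relations of \S\ref{sec: flagrels} (in particular the cyclotomic relation \eqref{nerel}) is what makes the whole correspondence possible in the first place.
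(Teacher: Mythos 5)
Your proposal is correct and follows essentially the same route as the paper, which states Theorem \ref{symthm} as an immediate consequence of Theorem \ref{p1}, Corollary \ref{isocor}, and the twisted equivariance of Proposition \ref{ep} (the injectivity of $\psis_k$ being exactly what identifies the plain $\Swl$-invariants on the chain side with the signed invariants $\Fl^{m-k}(\cca)^\Swl$ on the flag side). No gap.
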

\begin{exmp} In the setting of Example \ref{ex2} above, 
\begin{align} 
(\psis_0\circ\sym)(1\ox F_2) = \psis_0\left(1 \ox (\f 0 {\clc21}  + \f 1{\clc21} )\right) =&
\left(\C^2 \supset (t_{\clc21} = z_1)  \right)\nn\\
&+  \left(\C^2 \supset (-t_{\clc 2 1} = z_1)\right).\nn
\end{align}
and
\begin{align} 
(\psis_0\circ\sym)(1\ox F_1F_1) = \psis_0\left(1 \ox (\f 0 {\clc 1 1} \f 0{\clc 1 2} + \f 0{\clc 1 2} \f0{\clc 1 1})\right) =&
\left(\C^2 \supset (t_{\clc 1 2} = z_1) \supset (t_{\clc 1 1}=t_{\clc 1 2} = z_1) \right)\nn\\
&-  \left(\C^2 \supset (t_{\clc 1 1} = z_1) \supset (t_{\clc 1 2}=t_{\clc 1 1} = z_1) \right),\nn\\
(\psis_0\circ\sym)(1\ox F_1F_3) = \psis_0\left(1 \ox (\f 0 {\clc 1 1} \f 1{\clc 1 2} + \f 0{\clc 1 2} \f1{\clc 1 1})\right) =&
\left(\C^2 \supset (-t_{\clc 1 2} = z_1) \supset (t_{\clc 1 1}=-t_{\clc 1 2} = z_1) \right)\nn\\
&-  \left(\C^2 \supset (-t_{\clc 1 1} = z_1) \supset (t_{\clc 1 2}=-t_{\clc 1 1} = z_1) \right).\nn
\end{align}
\end{exmp}

\section{Cartan data and diagram automorphism}\label{sec: cd}
\subsection{Definitions of $\g$ and $\gs$}
Suppose we are given the following data:
\begin{enumerate}[(1)]
\item A finite-dimensional complex vector space $\h$;
\item A non-degenerate symmetric bilinear form $(\cdot,\cdot): \h\times \h\to \C$;
\item A collection $\alpha_1,\dots,\alpha_\ru\in \h^*$ of linearly independent elements, called the \emph{simple roots}.
\end{enumerate}
Let $\bra\cdot,\cdot\ket: \h\times \h^* \to \C$ denote the canonical pairing, $\bra H,\lambda\ket := \lambda(H)$. 

The form $(\cdot,\cdot)$ induces an isomorphism $\h^*\to \h$ and a symmetric bilinear form on $\h^*$ which we also denote by $(\cdot,\cdot)$. Define $H_i\in \h$, $i\in \I$, to be the elements such that $\bra H_i,\cdot\ket = (\alpha_i,\cdot)$. 
Let $B = (b_{ij})_{i,j\in \I}$ be the symmetric matrix with elements $b_{ij} := (\alpha_i,\alpha_j) = \bra H_i,\alpha_j\ket$.

Let $\g= \g(B)$ be the Lie algebra with generators $E_i$, $F_i$, $i\in \I$, and $H\in \h$, subject to the defining relations
\be [E_i,F_j] = \delta_{ij} H_j,\quad [H,E_i] = \bra H,\alpha_i\ket E_i, \quad [H,F_i]  = -\bra H,\alpha_i\ket F_i, \quad [H,H'] = 0, \nn\ee
for all $i,j\in \I$ and all $H,H'\in \h$. 
(Thus, $\g$ is a ``Kac-Moody Lie algebra without Serre relations'', and $B$ is the symmetrized Cartan matrix of $\g$; see Remark \ref{KMrem} below.)

We have the triangular decomposition
\be \g = \n \oplus \h \oplus \n^+ \nn\ee
of $\g$ where $\n$ (resp. $\n^+$) is the subalgebra generated by the $F_i$ (resp. $E_i$), $i\in \I$. The subalgebras $\n$ and $\n_+$ are free Lie algebras. 

Now suppose that in addition to (1--3) above we are given
\begin{enumerate}[(4)]
\item An automorphism of $\sio : \g \to \g$ such that
\be 
\sio(F_i) =  F_{\siod(i)}, \quad 
\sio(E_i) =  E_{\siod(i)}, \quad 
\sio(H_i) =  H_{\siod(i)}  \qquad i\in \I,\label{sigdef}\ee
for some permutation $\siod \in \Sigma_\ru$.
\end{enumerate}
The fact that $\sio$ is an automorphism implies that the permutation $\siod$ must be such that 
\be (\alpha_i,\alpha_j) = (\alpha_{\siod(i)},\alpha_{\siod(j)}),\label{ssym}\ee
for all $i,j\in \I$. 
Such automorphisms are called \emph{diagram automorphisms}.

\begin{rem} 
If $B$ is a symmetrized Cartan matrix of finite type then $\{H_i\}_{i\in \I}$ span $\h$ and the conditions \eqref{sigdef} completely define $\sigma$. More generally they define $\sigma$ only on the derived subalgebra $[\g,\g]$ of $\g$. See \cite{FSS}.
\end{rem}

Let $T$ be the order of $\sigma$. Recall from \S\ref{symsec} the definitions of the numbers $T_i$, $i\in \I$, and the injection $\iota:\Is \to \I$. 

\subsection{Symmetrized commutator lemmas}\label{clsec}
We have the weight decomposition \eqref{wd} of $\n$, and similarly of $\n_+$:
\begin{subequations}\label{wdp}
\be \no_+ = \bigoplus_{(n_1,\dots,n_{\rf})\in \Z_{\geq 0}^{\rf}} (\no_+)_{[n_1,\dots,n_{\rf}]},\ee
with 
\be( \no_+)_{[0,\dots,0,\at i 1,0,\dots,0]} := \Span_\C(\sio^k E_{\iota(i)})_{k=0,1,\dots,T_i-1},\quad\text{for each } i\in \Is.\ee
\end{subequations} 

Let $\lambda = (\lambda_1,\dots,\lambda_r) \in \Z_{\geq 0}^\rf$ be a nonzero weight and let $m:= \lambda_1+\dots+ \lambda_\rf$. Define
\begin{subequations}\label{syml}
\be \sym: \n_\lambda \to \nb^\Swl_\mones\ee
to be the composition of the homomorphism $\jmath$ of Proposition \ref{outhomprop} with projection onto to the weight $\mones$ subspace. It is a linear isomorphism, cf. Lemma \ref{isolem}. 
Similarly (by replacing $f\to e$, $F\to E$ everywhere) we have a linear isomorphism
\be \sym: (\n_+)_\lambda \to (\nb_+)^\Swl_\mones.\ee
\end{subequations}

These maps restrict to linear isomorphisms $(\nos)_\lambda \to \nbs_{\mones}^\Swl$ and $(\nos_+)_{\lambda} \to  (\nbs_+)_\mones^\Swl$ respectively. 

Suppose $\mu= (\mu_1,\dots,\mu_r)\in \Z_{\geq 0}^\rf$, $\mu \neq \lambda$, is a nonzero weight dominated by $\lambda$. That is, $\mu_i\leq \lambda_i$ for each $i=1,\dots,\rf$. 
For each $i=1,\dots,r$, pick any subset $I_i \subset \{1,\dots, \lambda_i\}$ of size $\mu_i$. Having made this choice, define a subset $I\subset \{1,\dots,m\}$ by, in the notation of \eqref{innot}, 
\be I = \{ \clc i t : 1\leq i\leq r,\, t\in I_i\}. \label{Ildef}\ee
We have the corresponding weight $\wtI I$, in the notation introduced before \eqref{genpsis}. Let $\sym_I$ denote the composition of the homomorphism $\jmath$ of Proposition \ref{outhomprop} with projection onto the weight $\wtI I$ subspace. We get a linear isomorphism
\be \sym_I: \no_\mu \to \nb_{\wtI I}^{\Sw_\mu}.\nn\ee
Similarly we have a linear isomorphism
\be \sym_I: (\no_+)_\mu \to (\nb_+)_{\wtI I}^{\Sw_\mu}.\nn\ee

Let $\mc I$ be the set of \emph{all} subsets $I\subset \{1,\dots,m\}$ of the form \eqref{Ildef}, \ie corresponding to all possible choices of subsets $I_i\subset \{1,\dots, \lambda_i\}$, $i=1,\dots,\rf$. Note that 
\be |\mc I| = \prod_{i=1}^\rf\binom{\lambda_i}{\mu_i} = \prod_{i=1}^\rf \frac{\lambda_i!}{\mu_i!(\lambda_i-\mu_i)!} = \frac{|\Sigma_{\lambda}|}{|\Sigma_{\mu}||\Sigma_{\lambda-\mu}|}
 = \frac{|\Sw_{\lambda}|}{|\Sw_{\mu}||\Sw_{\lambda-\mu}|}, \label{isize}\ee
since $|\Sw_\lambda| =  \prod_{i=1}^{\rf} (T/T_i)^{\lambda_i}\lambda_i!$.  
\begin{lem}\label{symcomlem2}
Suppose $a\in \no_{\mu}$ and $b\in \no_{\lambda-\mu}$. Then
\be \sym([a,b]) = \sum_{I\in \mc I}\left[\sym_I(a), \sym_{\Ip}(b)\right] \nn\ee
as an equality in $\nb_{\mones}^\Swl$.
\qed
\end{lem}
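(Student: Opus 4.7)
The plan is to unpack the definitions. The key input is that $\jmath:\no\to\nb$ is a Lie algebra homomorphism (Proposition~\ref{outhomprop}), so $\jmath([a,b]) = [\jmath(a),\jmath(b)]$, and that $\sym$ (respectively $\sym_I$) is by definition $\jmath$ followed by projection onto the fine weight-$\mones$ (respectively weight-$\wtI I$) component of $\nb$ in the $\Z_{\geq 0}^m$-grading. So the task reduces to identifying the weight-$\mones$ component of $[\jmath(a),\jmath(b)]$.

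First I would decompose $\jmath(a)$ and $\jmath(b)$ into their homogeneous pieces with respect to the fine grading of $\nb$. Then $[\jmath(a),\jmath(b)]$ is a double sum of brackets of homogeneous pieces, and only pairs whose weights add to $\mones$ survive the projection. Since $\mones = \alpha+\beta$ with $\alpha,\beta\in\Z_{\geq 0}^m$ forces $\alpha = \wtI I$ and $\beta = \wtI{\Ip}$ for some subset $I\subset\{1,\dots,m\}$, only such pairs can contribute.

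Next, the explicit formula
$$\jmath(\sio^k F_{\iota(i)}) = \sum_{l,n}\f{k+T_il}{\clc i n}$$
shows that every position appearing in $\jmath(a)$ lies in some range $\{\clc i 1,\dots,\clc i{\lambda_i}\}$, with total multiplicity (over all positions in the $i$-th range) equal to $\mu_i$ whenever $a\in\no_\mu$. Consequently, the weight-$\wtI I$ component of $\jmath(a)$ vanishes unless $I$ contains exactly $\mu_i$ elements of the $i$-th range, i.e.\ unless $I\in\mc I$; and in that case this component equals $\sym_I(a)$ by definition. The same argument for $b$, with $\mu$ replaced by $\lambda-\mu$, identifies the weight-$\wtI{\Ip}$ component of $\jmath(b)$ as $\sym_{\Ip}(b)$.

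Assembling these pieces yields $\sym([a,b]) = \sum_{I\in\mc I}[\sym_I(a), \sym_{\Ip}(b)]$. I do not foresee a real obstacle: the only bookkeeping subtlety is that $\jmath(a)$ can have homogeneous pieces whose fine weights have repeated entries, but these are harmless here since the weight $\mones$ itself has no repetitions and so such pieces cannot pair up with anything to contribute to $\nb_\mones$. The $\Swl$-invariance of the right-hand side then follows automatically from that of the left-hand side, which is built into the definition of $\jmath$.
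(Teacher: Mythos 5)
Your argument is correct: since $\jmath$ is a Lie algebra homomorphism and the fine $\Z_{\geq 0}^m$-grading is a Lie algebra grading, projecting $[\jmath(a),\jmath(b)]$ onto weight $\mones$ picks out exactly the pairs of components of weights $\wtI I$ and $\wtI{\Ip}$ with $I\in\mc I$, and your handling of the repeated-entry pieces is the right observation. The paper states this lemma without proof (it is treated as immediate from the definitions), and your unpacking is precisely the intended argument.
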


Recall that $\cco m$ denotes the arrangement in $\C^m$ whose hyperplanes are $\Ht ijk : t_i = \omega^k t_j$ with $i,j\in \{1,\dots,m\}$, $k\in \ZT$. 
The Cartan data defines a weighting of this arrangement, in the sense of \S\ref{sg}, given by
\be    a\big(\Ht {\clc i n}{\clc{i'}{n'}} k\big) :=  (\si^k\alpha_{\iota(i)},\alpha_{\iota(i')}),\label{HdC}\ee
for $i,i'\in \Is$, and $n\in \{1,\dots,\lambda_i\}$, $n'\in \{1,\dots,\lambda_{i'}\}$. 

Note that $[ \hh {k+T_il}{\clc i n}, \f p {\clc j r} ] 
  = - ( \si^{p-k-T_il} \alpha_i,\alpha_j)\,\, \f p {\clc j r} 
  = - ( \si^{p-k} \alpha_i,\alpha_j)\,\, \f p {\clc j r}$ whenever $\clc i n\neq \clc jr$.
Thus, whenever $\clc i n \notin I \in \mc I$ and $d\in \n_\mu$, we have
\be \sum_{l=0}^{T/T_i-1} [\hh {k+T_il}{\clc i n} , \sym_I(d)] 
= \sym_I([\sigma^k H_{\iota(i)}, d]) \frac{T}{T_i}, \label{hsym1}\ee
and hence
\be [\Pin h_{\clc i n} , \sym_I(d)] 
= \sym_I([\Pin H_{\iota(i)}, d]) . \label{hsym2}\ee

\begin{lem}\label{symcomlem}
Suppose $a\in (\no_+)_\mu$ and $d\in \no_\lambda$. Then
\be\big[\sym_I(a), \sym(d)\big] =  \sym_{\Ip}([a,d]) \,|\Sw_\mu|  \nn\ee
as an equality in $\nb_{\wtI{\Ip }}^{\Sw_{\lambda-\mu}}$.
\end{lem}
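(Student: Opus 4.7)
The plan is to establish a slightly more general, localized form of the identity and then to induct on $|\mu|$. Specifically, the claim is that for any subsets $J'\subset I'\subset\{1,\dots,m\}$ of block-weights $\mu$ and $\lambda'\leq\lambda$ respectively, and for $a\in(\no_+)_\mu$, $d\in\no_{\lambda'}$,
\be [\sym_{J'}(a),\sym_{I'}(d)] = \sym_{I'\setminus J'}([a,d])\cdot|\Sw_\mu|,\nn\ee
the original lemma being the case $I'=\{1,\dots,m\}$, $J'=I$. An enabling preliminary is the evident localized analog of Lemma~\ref{symcomlem2}: for $b_1\in\no_{\mu^{(1)}}$ and $b_2\in\no_{\mu^{(2)}}$ and any subset $I$ of block-weight $\mu^{(1)}+\mu^{(2)}$, $\sym_I([b_1,b_2])=\sum_{K}[\sym_K(b_1),\sym_{I\setminus K}(b_2)]$ with $K\subset I$ of block-weight $\mu^{(1)}$; and the identical statement for $\no_+$. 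Both follow from the proof of Lemma~\ref{symcomlem2} with the ambient subset $\{1,\dots,m\}$ replaced by $I$.

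The base case $|\mu|=1$ has $a=\sigma^k E_{\iota(i)}$ and $J'=\{\clc i t\}$, so $\sym_{J'}(a)=\sum_l e^{k+T_il}_{\clc i t}$, supported at the single position $\clc i t$. We induct on the bracket depth of $d$. The sub-base (a single generator) is immediate once the degenerate case $\mu=\lambda'$ is set aside. For $d=[d_1,d_2]$, expand $\sym_{I'}(d)$ via the localized Lemma~\ref{symcomlem2}, apply Jacobi to $[\sym_{J'}(a),[\sym_K(d_1),\sym_{I'\setminus K}(d_2)]]$, and observe that in each resulting term exactly one of the internal brackets $[\sym_{J'}(a),\sym_K(d_1)]$ or $[\sym_{J'}(a),\sym_{I'\setminus K}(d_2)]$ vanishes, according as $\clc i t$ lies in $I'\setminus K$ or in $K$, because $[e^p_r,f^q_s]=0$ for $r\neq s$ in $\gu$. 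Applying the inductive hypothesis to the surviving bracket and reassembling via the localized Lemma~\ref{symcomlem2} yields $(T/T_i)\sym_{I'\setminus J'}([[a,d_1],d_2]+[d_1,[a,d_2]])=(T/T_i)\sym_{I'\setminus J'}([a,d])$, which matches $|\Sw_{e_i}|=T/T_i$.

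For the inductive step, write $a=[a_1,a_2]$ with $a_1,a_2$ of strictly smaller bracket depth, expand $\sym_{J'}(a)=\sum_K[\sym_K(a_1),\sym_{J'\setminus K}(a_2)]$ by the $\no_+$ analog of the localized Lemma~\ref{symcomlem2}, apply Jacobi in $\gu$, and invoke the inductive hypothesis twice. Using $K\subset J'$, the identities $(I'\setminus(J'\setminus K))\setminus K=(I'\setminus K)\setminus(J'\setminus K)=I'\setminus J'$ collapse both resulting terms to $\sym_{I'\setminus J'}\bigl([a_1,[a_2,d]]-[a_2,[a_1,d]]\bigr)=\sym_{I'\setminus J'}([a,d])$, multiplied by the factor $|\Sw_{\mu^{(1)}}||\Sw_{\mu^{(2)}}|$ coming from the two IH invocations. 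The number of admissible $K$ is $|\Sw_\mu|/(|\Sw_{\mu^{(1)}}||\Sw_{\mu^{(2)}}|)$ by the generalization of \eqref{isize}, so the total multiplicative factor assembles into $|\Sw_\mu|$ as required.

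The main obstacle is purely bookkeeping: keeping track of which subsets of $\{1,\dots,m\}$ each $\sym_\bullet$ is localized on, and verifying that the wreath-product counts assemble correctly at each step. The key simplification is that $\sym_{J'}(a)$ is always supported in $J'$, so its bracket with any product $[X,Y]$ in $\gu$ splits cleanly according to how the positions of $J'$ intersect the supports of $X$ and $Y$.
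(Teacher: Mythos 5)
Your route is essentially the paper's: induction on $|\mu|$, with the inductive step writing $a=[a_1,a_2]$, expanding by (the localized form of) Lemma \ref{symcomlem2}, applying the Jacobi identity, invoking the hypothesis twice, and assembling the wreath-product factors via \eqref{isize}. Making the localized statement $[\sym_{J'}(a),\sym_{I'}(d)]=\sym_{I'\setminus J'}([a,d])\,|\Sw_\mu|$ explicit is a reasonable refinement rather than a new idea: the paper's own inductive step already applies the lemma with second argument $\sym_{(\Ip)\cup J}([c,d])$, i.e.\ a partial symmetrization, so you are spelling out what is used implicitly, and your bookkeeping and count of admissible $K$ are correct.

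The gap is in your base case $|\mu|=1$. First, your sub-base is vacuous once the ``degenerate case'' is set aside: if $d$ is a single generator then $|\lambda'|=1$, and $\mu\leq\lambda'$, $\mu\neq\lambda'$, $|\mu|=1$ is impossible, so the only single-generator configuration is exactly the one you excluded. More importantly, that excluded configuration reappears unavoidably inside your own recursion: when $d=[d_1,d_2]$ with $d_1=\sigma^{k'}F_{\iota(i)}$ and $K=J'=\{\clc it\}$, the surviving inner bracket $[\sym_{J'}(a),\sym_{J'}(d_1)]$ has equal weights on both sides, so your inductive hypothesis does not apply to it. It is a Cartan-type element, namely $\delta_{k\equiv k'\bmod T_i}\sum_{l}\hh{k+T_il}{\clc it}$, and to proceed one must know how this symmetrized $h$ brackets with the \emph{partially} symmetrized remainder $\sym_{I'\setminus J'}(d_2)$ --- that is precisely \eqref{hsym1}, and it is there, not in any IH invocation, that the factor $T/T_i=|\Sw_\mu|$ is actually produced. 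Your write-up never performs the symmetrized $e$--$f$ computation and never invokes \eqref{hsym1}, so the base-case factor is asserted rather than derived; this is exactly the content of the paper's ``by inspection'' step, and without it the induction has no anchor. The fix is local: treat the degenerate inner bracket by the displayed $e$--$f$ relation together with \eqref{hsym1}; with that inserted, the rest of your argument goes through.
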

\begin{proof}
By induction on $|\mu| := \mu_1+\dots+\mu_\rf$. 

For the base case, we may suppose that $a= \si^k E_{\iota(i)}$, $i\in \{1,\dots,\rf\}$, $k\in \{1,\dots, T_i\}$, and we have $I=\{\clc i n\}$ for some $n\in \{1,\dots,\lambda_i\}$. Thus $\sym_{I}(a) = \sum_{l=0}^{T/T_i-1} \e {k+T_il}{\clc i n}$, and what must be shown is that
\be 
\sum_{l=0}^{T/T_i-1} [\e {k+T_il}{\clc i n} , \sym(d)]
= \sym_{\{1,\dots,m\}\setminus\{\clc i n\}}([\si^k E_{\iota(i)}, d]) \frac{T}{T_i} .\nn\ee
This is true by inspection, using \eqref{hsym1} and the fact that
\be \left[\sum_{l=0}^{T/T_i-1} \e {k+T_il}{\clc i n}, \sum_{l'=0}^{T/T_i-1} \f {k'+T_il'}{\clc i {n'}} \right] 
= \begin{cases} 
\sum_{l=0}^{T/T_i-1} \hh {k+T_il}{\clc i n} & n=n' \text{ and } k \equiv k' \!\!\!\mod T_i \\
0 & \text{otherwise}. \end{cases}
\nn\ee

For the inductive step, it is enough to establish the result when $a$ is a commutator, $a=[b,c]$, since these span $(\n_+)_{\mu}$. Let $\rho = (\rho_1,\dots,\rho_\rf) \in \Z_{\geq 0}^\rf$ be the weight of $b$. For each $I\in \mc I$ as in \eqref{Ildef}, Lemma \ref{symcomlem2} yields
\be \sym_I([b,c]) = \sum_{J\in \mc J_I} \left[ \sym_J(b), \sym_{I\setminus J}(c)] \right] \nn\ee
as an equality in $\nb_{\wtI I}^{\Sw_\mu}$, where $\mc J_I$ is defined by analogy with $\mc I$. Namely, let $\mc J_I$ be the set of subsets $J\subset I$ such that $J = \{\clc i t: 1\leq i \leq \rf, t\in J_i\}$ for some subsets $J_i\subset I_i$ with $|J_i| = \rho_i$. 
Hence, by the Jacobi identity and the inductive assumption, we have
\begin{align} [\sym_I([b,c]), \sym(d) ]  
&=  \sum_{J\in \mc J_I} \left[ \left[ \sym_J(b), \sym_{I\setminus J}(c)] \right], \sym(d)\right] \nn\\
&=  \sum_{J\in \mc J_I} \left( 
\left[ \sym_J(b) ,\left[ \sym_{I\setminus J}(c), \sym(d)\right] \right] 
- \left[ \sym_{I\setminus J}(c), \left[\sym_J(b) ,  \sym(d)\right] \right]\right) 
\nn\\
&=  \sum_{J\in \mc J_I} \left( 
\left[ \sym_J(b) ,\sym_{(\Ip) \cup J} \left(\left[ c, d\right]\right) \right] |\Sw_{\mu-\rho}|
- \left[ \sym_{I\setminus J}(c), \sym_\Jp\left(\left[ b,  d \right]\right) \right] |\Sw_{\rho}|
\right) 
\nn\\
&=  \sum_{J\in \mc J_I} \left( 
\sym_{\Ip} \left( \left[b,\left[ c, d\right]\right] 
- \left[ c , \left[ b,  d \right] \right]\right) 
\right) |\Sw_{\rho}| |\Sw_{\mu-\rho}|
\nn\\
&= \sym_{\Ip}\left(\left[ \left[ b,c\right] , d\right] \right)
 |\mc J_I| |\Sw_{\rho}| |\Sw_{\mu-\rho}|. 
\nn
\end{align}
Now, $|\mc J_I| = \frac{|\Sigma_\mu|}{|\Sigma_\rho| |\Sigma_{\mu-\rho}|} =  \frac{|\Sw_\mu|}{|\Sw_\rho| |\Sw{\mu-\rho}|}$, cf. \eqref{isize}.  
This completes the inductive step. 
\end{proof}

\begin{lem}\label{SSlem}
Let $a,d\in \n_\lambda$. Then $\frac{1}{|\Sw_\lambda|} \Sa_\nb(\sym(a),\sym(d)) = S_{\g}(a,d)$.
\end{lem}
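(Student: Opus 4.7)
The plan is to prove the identity by induction on the weight height $|\lambda| := \lambda_1 + \dots + \lambda_\rf$.

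For the base case $|\lambda|=1$, so $\lambda$ has a single $1$ in some position $i$ and zeros elsewhere. Then $m=1$, $\n_\lambda = \Span_\C(\si^k F_{\iota(i)})_{k=0,\dots,T_i-1}$, and $\sym(\si^k F_{\iota(i)}) = \sum_{l=0}^{T/T_i-1} \f{k+T_il}{1}$. A direct computation using $\Sa_\nb(\f ki, \f lj) = \delta_{kl}\delta_{ij}$ gives
\[ \Sa_\nb(\sym(\si^k F_{\iota(i)}), \sym(\si^{k'} F_{\iota(i)})) = \#\{(l,l') : k+T_il \equiv k'+T_il' \!\!\mod T\} = \tfrac{T}{T_i}\,[k \equiv k' \!\!\mod T_i],\]
whereas on the Lie algebra side $S_\g(\si^kF_{\iota(i)}, \si^{k'}F_{\iota(i)})=\delta_{\siod^k \iota(i),\,\siod^{k'}\iota(i)}=[k\equiv k' \!\!\mod T_i]$. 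Since $|\Sw_\lambda|=T/T_i$ in this case, the base case holds.

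For the inductive step, assume the formula for all weights of height strictly less than $|\lambda|$. It suffices to establish the identity when $a = [a_1,a_2]$ with $a_1 \in \no_\rho$, $a_2\in\no_{\lambda-\rho}$, $0 < |\rho| < |\lambda|$, since such commutators span $\no_\lambda$. Apply Lemma \ref{symcomlem2} to expand $\sym([a_1,a_2]) = \sum_{I\in \mc I}[\sym_I(a_1),\sym_{\Ip}(a_2)]$, then use the contravariance property $\Sa_\nb([X,Y],Z) = -\Sa_\nb(Y,[\cai(X),Z])$ and the fact that $\cai$ commutes with each Lie-algebra homomorphism $\sym_I$ (since both $\sym$ and $\cai$ are defined symmetrically on generators) to get
\[ \Sa_\nb(\sym([a_1,a_2]), \sym(d)) = -\sum_{I\in \mc I} \Sa_\nb\bigl(\sym_{\Ip}(a_2),\, [\cai(\sym_I(a_1)),\sym(d)]\bigr). \]
Now apply Lemma \ref{symcomlem} to the inner bracket to rewrite $[\cai(\sym_I(a_1)),\sym(d)] = \sym_{\Ip}([\cai(a_1),d])\,|\Sw_\rho|$.

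For each fixed $\Ip$ the pairing $\Sa_\nb(\sym_{\Ip}(a_2), \sym_{\Ip}([\cai(a_1),d]))$ involves only generators $\f{k}{l}$ with $l\in \Ip$, so it takes place inside the sub-free Lie algebra on those generators; the bijective relabeling of positions identifies this with the free Lie algebra of the smaller weight $\lambda-\rho$, and $\sym_{\Ip}$ is carried to the symmetrization map $\sym$ of weight $\lambda-\rho$. The inductive hypothesis therefore gives $\Sa_\nb(\sym_{\Ip}(a_2), \sym_{\Ip}([\cai(a_1),d])) = |\Sw_{\lambda-\rho}|\, S_\g(a_2,[\cai(a_1),d])$. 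Summing over $I\in\mc I$ and using $|\mc I| = |\Sw_\lambda|/(|\Sw_\rho||\Sw_{\lambda-\rho}|)$ from \eqref{isize}, the three factorial-type factors combine to exactly $|\Sw_\lambda|$, yielding
\[ \Sa_\nb(\sym(a),\sym(d)) = -|\Sw_\lambda|\, S_\g(a_2,[\cai(a_1),d]) = |\Sw_\lambda|\, S_\g([a_1,a_2],d), \]
where the last equality is the contravariance of $S_\g$ with respect to $\cai$. This closes the induction.

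The main technical obstacle I anticipate is the bookkeeping step identifying $\Sa_\nb(\sym_{\Ip}(\cdot),\sym_{\Ip}(\cdot))$ with the smaller-weight instance of the identity via relabeling — one needs to check carefully that the wreath-product group $\Sw_{\lambda-\rho}$ acting on positions in $\Ip$ is exactly what arises after the bijective relabeling, so that the inductive hypothesis truly applies with the normalization constant $|\Sw_{\lambda-\rho}|$. The counting identity $|\mc I|\cdot|\Sw_\rho|\cdot|\Sw_{\lambda-\rho}|=|\Sw_\lambda|$ is the only combinatorial input, but it is crucial for the factor $|\Sw_\lambda|$ on the right side to emerge correctly.
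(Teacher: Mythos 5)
Your proof is correct and follows essentially the argument the paper intends: the paper's own proof is just the remark that one inducts on $|\lambda|$ as in Lemma \ref{symcomlem} or Proposition \ref{SSm0lem}, and your induction --- expanding $\sym([a_1,a_2])$ via Lemma \ref{symcomlem2}, using contravariance of the two Shapovalov forms together with Lemma \ref{symcomlem}, relabelling so the inductive hypothesis applies to $\sym_{\Ip}$, and finishing with the counting identity \eqref{isize} --- is exactly that argument written out. The relabelling step you flag as the main worry is the same implicit identification the paper itself makes in the proof of Proposition \ref{SSm0lem}, where the inductive hypothesis is likewise applied to $\sym_{\Ip}$ (legitimate because the hyperplane weights \eqref{HdC} depend only on the orbit labels, not on which copies of the coordinates are used), so it is not a gap.
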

\begin{proof} By induction on $|\lambda|$. (The argument is similar to that in the proof of Lemma \ref{symcomlem}, or of Proposition \ref{SSm0lem} below.)
\end{proof}

\subsection{Definition of the cocycle $\Co$ on $\gs$ and central extension $\gse$}\label{cocyclesec} Define a skew-symmetric bilinear form $\Co:\gs\times \gs \to \C$ as follows. 
Recall the map $\Cou: \gus_{\leq 1}\times \gus_{\leq 1} \to \C$ of \S\ref{coudef}. For any nonzero weight $\lambda= (\lambda_1,\dots,\lambda_\rf)\in \Z_{\geq 0}^\rf$, and any $x\in (\nos)_\lambda$, $y\in (\nos_+)_\lambda$, set
\be -\Co(y,x) = \Co(x,y) := \frac{1}{|\Sw_\lambda|}\Cou(\sym(x),\sym(y)),\nn\ee
where $\sym$ is the symmetrization map in \eqref{syml}.
On all other components in the weight decomposition of $\gs\times \gs$, set $\Co(\cdot,\cdot)$ to zero.

\begin{prop}\label{kercoprop} Let $x\in \gs$. If $x\in \ker \Sg$ then $\Co(x,y)=0$ for all $y\in \gs$.
\end{prop}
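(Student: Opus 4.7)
The plan is to deduce the proposition from Proposition \ref{kercouprop} by transporting the kernel condition from $\Sg$ to $\Sa_\nb$ across the symmetrization map. Since $\Co$ respects the weight decomposition of $\gs\times \gs$ and vanishes off the components $(\nos_+)_\lambda\times (\nos)_\lambda$ and $(\nos)_\lambda\times(\nos_+)_\lambda$, it suffices to treat $x\in (\nos)_\lambda$ for some nonzero weight $\lambda\in \Z_{\geq 0}^\rf$; the case $x\in (\nos_+)_\lambda$ reduces to this one by skew-symmetry of $\Co$ together with the fact that the anti-involution $\cai$ swaps $\nos\leftrightarrow\nos_+$ and preserves $\ker\Sg$.

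First I would show that $\sym(x)\in \ker \Sa_\nb$. Since $\Sa_\nb$ respects the weight grading on $\nb$, it suffices to test against $z\in \nbs_\mones$. The group $\Swl$ acts on $\nb$ by automorphisms that permute the generators $\f ki$ and, because $\Sa_\nb$ is determined on generators by $\Sa_\nb(\f ki,\f lj)=\delta_{ij}\delta_{kl}$ and extended via $\Sa_\nb([X,Y],Z)=-\Sa_\nb(Y,[\cai(X),Z])$, the form $\Sa_\nb$ is $\Swl$-invariant. As $\sym(x)\in \nbs_\mones^\Swl$, we may average and obtain
\be\Sa_\nb(\sym(x),z)=\Sa_\nb\!\left(\sym(x),\tfrac{1}{|\Swl|}\sum_{g\in\Swl} g\on z\right).\nn\ee
The $\Swl$-symmetrization on the right lies in $\nbs_\mones^\Swl$, and by Corollary \ref{isocor} the map $\sym:(\nos)_\lambda\to \nbs_\mones^\Swl$ is a linear isomorphism, so there is a unique $d\in(\nos)_\lambda$ with $\sym(d)=\tfrac{1}{|\Swl|}\sum_{g\in\Swl} g\on z$. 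Then Lemma \ref{SSlem} gives
\be\Sa_\nb(\sym(x),z)=\Sa_\nb(\sym(x),\sym(d))=|\Sw_\lambda|\,\Sg(x,d)=0,\nn\ee
since $x\in\ker\Sg$.

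Second, having placed $\sym(x)$ in $\nbs\cap\ker\Sa_\nb$, I would invoke Proposition \ref{kercouprop} to conclude that $\Cou(\sym(x),w)=0$ for every $w\in\gus_{\leq 1}$. In particular, for any $y\in(\nos_+)_\lambda$ one has $\sym(y)\in(\nbs_+)_\mones\subset\gus_{\leq 1}$, so that
\be\Co(x,y)=\tfrac{1}{|\Sw_\lambda|}\Cou(\sym(x),\sym(y))=0,\nn\ee
as required.

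The only nontrivial point is the $\Swl$-invariance of $\Sa_\nb$; I expect this to be routine given the presentation of $\Sa_\nb$ recalled in the previous sections, but it is the one ingredient not already stated explicitly above, so it is the step I would write out carefully. Everything else is direct application of Lemma \ref{SSlem}, Corollary \ref{isocor}, and Proposition \ref{kercouprop}.
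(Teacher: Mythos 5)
Your proof is correct and takes essentially the same route as the paper, whose own proof is just the reduction to $x\in\ns_\lambda$ followed by a citation of Lemma \ref{SSlem} and Proposition \ref{kercouprop}; your averaging argument simply makes explicit the step the paper leaves implicit, namely passing from vanishing of $\Sa_\nb(\sym(x),\cdot)$ on the symmetrized subspace $\nbs_\mones^\Swl$ to $\sym(x)\in\ker\Sa_\nb$. The one point you flag, the $\Swl$-invariance of $\Sa_\nb$, is indeed true but does not follow from the generator normalization alone: the recursion $\Sa_\nb([X,Y],Z)=-\Sa_\nb(Y,[\cai(X),Z])$ passes through the Cartan part of $\gu$, whose structure constants are the hyperplane weights \eqref{HdC}, so the invariance rests on $(\alpha_i,\alpha_j)=(\alpha_{\siod(i)},\alpha_{\siod(j)})$ from \eqref{ssym} together with $\si^{T_i}\alpha_{\iota(i)}=\alpha_{\iota(i)}$ (equivalently, one can deduce it from the $\Swl$-invariance of the weighted arrangement and the geometric form via Corollary \ref{kercor} and Proposition \ref{ep}).
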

\begin{proof} Suppose $x\in \ns_\lambda$ for some $\lambda \in \Z_{\geq 0}^\rf$  (the argument is similar if $x\in \ns_+$ and trivial if $x\in \h$). Then the result follows from Lemma \ref{SSlem} and Proposition \ref{kercouprop}.  
\end{proof}
\begin{thm} $\Co$ is a cocycle on $\gs$. That is, we have the equality
\be \Co(x,[y,z]) + \Co(y,[z,x]) +\Co(z,[x,y]) = 0\nn\ee
for all $x,y,z\in \gs$. 
\end{thm}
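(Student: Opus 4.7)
The plan is to lift the identity to the corresponding cocycle identity Theorem~\ref{couthm} for $\Cou$ on $\gus_{\leq 1}$ via the symmetrization map $\sym$. By tri-linearity and alternation, it suffices to treat $x,y,z$ each homogeneous in the triangular decomposition $\gs = \nos \oplus \h^\si \oplus \nos_+$ and of definite $\Z_{\geq 0}^\rf$-weight. Since $\Co$ vanishes on $\h^\si \times \gs$ and on matching-sign pairs ($\nos \times \nos$ and $\nos_+ \times \nos_+$), most weight assignments yield a trivially zero identity. A direct case inspection reveals that the only nontrivial cases, up to relabeling by the antisymmetry, are:
\begin{itemize}
\item[(A)] $x = H \in \h^\si$, $y \in (\nos)_\lambda$, $z \in (\nos_+)_\lambda$ for some nonzero $\lambda$;
\item[(B)] $x \in (\nos_+)_\mu$, $y \in (\nos)_\alpha$, $z \in (\nos)_\beta$, with $\alpha+\beta = \mu$ and $\alpha,\beta \neq 0$.
\end{itemize}

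For the main Case~(B), I would set $X := \sym(x) \in (\nbs_+)_{\mones}$ inside the free Lie algebra $\nb$ associated with weight $\mu$, and, for each shape-$\alpha$ subset $I \in \mc I$ of $\{1,\dots,|\mu|\}$ (with complement $\Ip$ of shape $\beta$), set $Y_I := \sym^{(\mu)}_I(y) \in \nbs_{\wtI I}$ and $Z_I := \sym^{(\mu)}_\Ip(z) \in \nbs_{\wtI \Ip}$. All three elements lie in $\gus_{\leq 1}$, so Theorem~\ref{couthm} yields
\begin{equation*}
\Cou(X,[Y_I,Z_I]) + \Cou(Y_I,[Z_I,X]) + \Cou(Z_I,[X,Y_I]) = 0
\end{equation*}
for each $I$. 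Summing over $I \in \mc I$: by Lemma~\ref{symcomlem2} one has $\sum_I [Y_I,Z_I] = \sym([y,z])$, so the first sum becomes $\Cou(X,\sym([y,z])) = |\Sw_\mu|\,\Co(x,[y,z])$ by the definition of $\Co$. For the remaining two sums, Lemma~\ref{symcomlem} together with its variant for brackets $[\nos,\nos_+]$ with output in $\nos_+$, obtained by conjugating with the anti-automorphism $\cai$, yields $[Z_I,X] = |\Sw_\beta|\,\sym^{(\mu)}_I([z,x])$ and $[X,Y_I] = |\Sw_\alpha|\,\sym^{(\mu)}_\Ip([x,y])$. Under the canonical identification, by relabeling the generators indexed by $I$, of $\nbs_{\wtI I}$ inside $\nb$ for weight $\mu$ with $\nbs_{\mones}$ inside $\nb$ for weight $\alpha$, the restricted form agrees with the form for weight $\alpha$, since by \eqref{HdC} the hyperplane weights depend only on orbit labels. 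Combined with the counting identity \eqref{isize}, which in this context reads $|\Sw_\alpha||\Sw_\beta||\mc I|=|\Sw_\mu|$, the second and third sums collapse to $|\Sw_\mu|\,\Co(y,[z,x])$ and $|\Sw_\mu|\,\Co(z,[x,y])$ respectively. Dividing through by $|\Sw_\mu|$ gives the identity in Case~(B).

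Case~(A) reduces to proving the $\h^\si$-invariance of $\Co$, that is, $\Co([H,y],z) + \Co(y,[H,z]) = 0$. I would deduce this from the analogous $\h^\si$-invariance of $\Cou$, which follows from Theorem~\ref{couthm} applied with a suitable $\tilde H \in \gus_0$ lifting $H$: the $\Cou(\tilde H,[\cdot,\cdot])$-term vanishes automatically since $\Cou$ is zero on $\gus_0$-inputs, while the compatibility of the adjoint action under $\sym$ is governed by formulas of the type~\eqref{hsym2}. The hard part will be the careful bookkeeping in Case~(B), specifically verifying the compatibility of the restricted form under generator relabeling and tracking the combinatorial normalization constants encapsulated by \eqref{isize}.
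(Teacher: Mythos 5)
Your main case (B) is essentially the paper's own argument: expand $\sym([y,z])=\sum_{I\in\mc I}[\sym_I(y),\sym_\Ip(z)]$ via Lemma \ref{symcomlem2}, apply Theorem \ref{couthm} to $(\sym(x),\sym_I(y),\sym_\Ip(z))$ for each $I$, convert $[\sym_\Ip(z),\sym(x)]$ and $[\sym(x),\sym_I(y)]$ back into symmetrized brackets by Lemma \ref{symcomlem} (the paper too is implicitly using the $\cai$-conjugated variant there), invoke the definition of $\Co$, and cancel the combinatorial factors using \eqref{isize}. Your extra observation -- that $\Cou$ restricted to the generators indexed by $I$ agrees with the form built directly for the smaller weight, because the hyperplane weights \eqref{HdC} depend only on orbit labels -- is a compatibility the paper leaves implicit in the phrase ``by the definition of $\Co$''; spelling it out is fine and changes nothing.

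The one place where your route would not go through as written is case (A). There is in general no $\tilde H\in\gus_0$ whose adjoint action on full-weight symmetrized elements reproduces $\ad H$: the relation \eqref{hsym2} is valid only when the Cartan index $\clc in$ lies \emph{outside} $I$ (in $\gu$ one has $[\hh ki,\f li]=0$), and $\gus_0$ only sees the diagonal weights $a(\Ht ijk)$, so the scalar by which the $\Pin h$'s act on $\sym(y)$ misses the ``diagonal'' contribution and differs from $\langle H,\cdot\rangle$ evaluated on the weight of $y$. Fortunately no lift is needed: since $\Co$ pairs only $(\nos)_\lambda$ with $(\nos_+)_\lambda$, and an element $H\in\h^\si$ acts on these two weight spaces by opposite scalars (as $\langle H,\alpha_j\rangle$ is constant on $\siod$-orbits), the term with $\Co(H,\cdot)$ vanishes by definition and the remaining two terms cancel directly -- which is precisely the paper's one-line disposal of this case. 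With case (A) handled this way, your proof is complete and coincides with the paper's.
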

\begin{proof}
When $x\in (\nos_+)_{\lambda}$, $y\in (\nos)_{\lambda}$ and $z\in \h^\si$, for some nonzero $\lambda\in \Z_{\geq 0}^\rf$, the third term vanishes and the first two cancel.

Let $\lambda = (\lambda_1,\dots,\lambda_r)\in \Z_{\geq 0}^\rf$ and $\mu= (\mu_1,\dots,\mu_r)\in \Z_{\geq 0}^\rf$ be  nonzero weights such that $\lambda$ dominates $\mu$, as in \S\ref{clsec}.
When $x\in (\nos_+)_{\lambda}$, $y\in (\nos)_{\mu}$ and $z \in (\nos)_{\lambda-\mu}$, 
we have
\begin{align}   \Co( x,[y,z]) |\Sw_\lambda|
&= \Cou( \sym(x), \sym([y,z])) \nn\\
&= \sum_{I\in \mc I} \Cou(\sym(x), [\sym_{I}(y), \sym_{\Ip}(z)])\nn\\
&= \sum_{I\in \mc I} \left( \Cou\left( \left[ \sym_\Ip(z), \sym(x)\right] , \sym_I(y)\right) + 
                         \Cou\left( \left[ \sym(x), \sym_I(y)\right] , \sym_\Ip(z)\right) \right) \nn\\
&= \sum_{I\in \mc I} \bigg( \Cou\left( \sym_I \left(\left[ z, x\right]\right) , \sym_I(y) \right) |\Sw_{\lambda-\mu}|\nn\\ 
&\qquad\quad +   \Cou\left( \sym_{\Ip}\left(\left[ x, y\right]\right) , \sym_\Ip(z)\right) |\Sw_{\mu}| \bigg) \nn\\
&= \sum_{I\in \mc I} \left(  \Co\left(\left[ z, x\right] , y \right) 
+ \Co\left( \left[ x, y\right] , z\right) \right) |\Sw_{\mu}| |\Sw_{\lambda-\mu}|  \nn
\nn\end{align}
Here we used Lemma \ref{symcomlem2}, then Theorem \ref{couthm}, then Lemma \ref{symcomlem} and the definition of $\Co$. 
Since $|\mc I| |\Sw_{\mu}| |\Sw_{\lambda-\mu}| = |\Sw_{\lambda}|$ as in \eqref{isize}, we indeed have the equality $\Co( x,[y,z]) = \Co([z,x],y) +\Co([x,y],z)$.

Up to obvious symmetries, these are the only non-trivial cases. 
\end{proof}

Now let $\gse$ denote extension of $\gs$ by a one-dimensional centre $\C\K$ defined by this cocycle $\Co$. That is, $\gse \cong_\C \gs \oplus \C\K$ as a vector space, with Lie bracket 
\be [ x,y]' :=  [x,y] + \Co(x,y) \K.\ee
(For clarity we will always use $[\cdot,\cdot]'$ to denote the bracket on $\gse$.)


\subsection{The Cartan anti-involution $\cai$}
Let $\cai:\g\to \g$ denote the involutive ($\cai^2=\id$) anti-automorphism ($\cai([x,y]) = - [ \cai(x),\cai(y)]$) of $\g$ defined by
\be \cai(H) = H,\qquad \cai(E_i) = F_i,\qquad \cai(F_i)=E_i \nn\ee
for all $i\in \I$ and all $H\in \h$. 

Since $\si$ is a diagram automorphism, 
\be \cai\circ \si = \si \circ \cai.\label{sicaicom}\ee
Hence, $\cai$ restricts to give an involutive anti-automorphism $\gs \to \gs$. 
By definition of the cocycle $\Co$ (in terms of $\Cou$ and hence $\Gm$) we have that
$\Co(a,b) = - \Co(\cai(a),\cai(b))$ for all $a,b\in \gs$.
Therefore if we define $\cai(\K) = \K$ then $\cai$ is also an involutive anti-automorphism $\gse \to \gse$ of the centrally extended fixed point subalgebra $\gse$. 

\subsection{The bilinear forms $\Kill$ and $\Sg$ on $\g$}
Let $\Kill(\cdot,\cdot): \g\times\g\to\C$ be the bilinear form such that
\begin{enumerate}[(i)]
\item $\Kill$ coincides with $(\cdot,\cdot)$ on $\h$; $\Kill$ is zero on $\n$ and $\n_+$; $\h$ and $\n\oplus \n_+$ are orthogonal with respect to $\Kill$;
\item $\Kill(F_i,E_j) = \Kill(E_i,F_j) = \delta_{ij}$, $i,j\in \I$;
\item $\Kill$ is $\g$-invariant, \ie $\Kill([x,y],z) = \Kill(x,[y,z])$ for all $x,y,z\in\g$.
\end{enumerate}
Such a form exists, is unique, and is symmetric \cite[\S2.2]{KacBook}. 

The form $\Kill$ is $\cai$-invariant, \ie $\Kill(\cai(x),\cai(y)) = \Kill(x,y)$ for all $x,y\in\g$.
Define the bilinear form $\Sg(\cdot,\cdot):\g\times\g\to\C$ by 
\be \Sg(x,y) := -\Kill(\cai(x),y),\qquad x,y\in \g.\nn\ee
It is symmetric, $\cai$-invariant, and obeys
\be \Sg([x,y],z) = -\Sg(y,[\cai(x),z])\nn\ee
for all $x,y,z\in \g$. The subspaces $\h$, $\n$, and $\n_+$ are pairwise orthogonal with respect to $\Sg$. 

We have $\ker \Kill = \ker \Sg \subset \g$.

\begin{rem}\label{KMrem}
Whenever we have $b_{ii}\neq 0$ for each $i\in \I$, we may define a matrix $A=(a_{ij})_{i,j\in \I}$ by $a_{ij} = 2 b_{ij}/b_{ii}$. By construction $a_{ii}=2$ for each $i\in \I$. If in addition $a_{ij}\in \Z_{\leq 0}$ whenever $i\neq j$ then $A$ is a symmetrizable generalized Cartan matrix. In that case  the quotient $\gt := \g/\ker \Kill= \g/\ker \Sg$ is the Kac-Moody Lie algebra associated with $A$, and the kernel $\ker \Sg$ is generated by the Serre elements $\ad F_i^{-a_{ij}+1} E_j$, $i\neq j$.  
\end{rem}

Given \eqref{ssym}, one sees that $\Kill$ is $\sio$-invariant, \ie $\Kill(\sio(x),\sio(y)) = \Kill(x,y)$ for all $x,y\in \g$. Hence, in view of \eqref{sicaicom}, so too is $\Sg$, \ie $\Sg(\si (x), \si (y)) = \Sg(x,y)$ for all $x,y\in \g$. Thus $\Sg$ restricts to give a bilinear form on $\gs$. Also $\sigma(\ker \Sg) = \ker \Sg$.


\begin{lem}\label{isomlem} There are canonical Lie algebra isomorphisms
\be \left( \g \big/ \ker \Sg\right)^\sigma 
\cong \gs\big/ (\ker \Sg)^\sigma
\cong \gs/\ker((\Sg)|_{\gs}). \nn\ee
\end{lem}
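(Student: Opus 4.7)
The two isomorphisms can be handled by standard ``averaging'' arguments, using only three ingredients that have already been recorded: (a) $\ker\Sg = \ker\Kill$ is a $\g$-ideal because $\Kill$ is $\g$-invariant; (b) $\sigma(\ker\Sg)=\ker\Sg$, so $\sigma$ descends to an automorphism $\bar\sigma$ of the quotient $\g/\ker\Sg$; and (c) $\Sg$ is $\sigma$-invariant. Throughout, for $x\in\g$ set
\[
\mathrm{Av}(x) := \tfrac{1}{T}\sum_{k=0}^{T-1}\sigma^k(x) \in \gs.
\]

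For the first isomorphism, let $\pi:\g\twoheadrightarrow \g/\ker\Sg$ be the quotient map. Since $\sigma$ preserves $\ker\Sg$, the restriction $\pi|_{\gs}:\gs \to (\g/\ker\Sg)^{\bar\sigma}$ is well-defined, and its kernel is manifestly $\gs \cap \ker\Sg = (\ker\Sg)^{\sigma}$. The only thing to check is surjectivity. Given $\bar x\in(\g/\ker\Sg)^{\bar\sigma}$, pick any lift $x\in\g$; then $\sigma(x)-x\in\ker\Sg$. The telescoping identity
\[
\sigma^k(x)-x=\sum_{j=0}^{k-1}\sigma^{j}\!\bigl(\sigma(x)-x\bigr)
\]
together with the $\sigma$-stability of $\ker\Sg$ shows that $\sigma^k(x)-x\in\ker\Sg$ for every $k$, hence $\mathrm{Av}(x)-x\in\ker\Sg$. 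Thus $\mathrm{Av}(x)\in\gs$ is a lift of $\bar x$, giving surjectivity.

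For the second isomorphism, I need to verify the set-theoretic equality $(\ker\Sg)^{\sigma}=\ker\bigl(\Sg|_{\gs}\bigr)$. The inclusion $\subseteq$ is trivial. For $\supseteq$, take $x\in\gs$ with $\Sg(x,y)=0$ for all $y\in\gs$, and let $z\in\g$ be arbitrary. Using the $\sigma$-invariance of $\Sg$ and $\sigma(x)=x$,
\[
\Sg(x,\mathrm{Av}(z)) = \tfrac{1}{T}\sum_{k=0}^{T-1}\Sg\bigl(x,\sigma^k(z)\bigr) = \tfrac{1}{T}\sum_{k=0}^{T-1}\Sg\bigl(\sigma^{-k}(x),z\bigr) = \Sg(x,z).
\]
Since $\mathrm{Av}(z)\in\gs$, the left-hand side vanishes by hypothesis, and hence $\Sg(x,z)=0$. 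Therefore $x\in\ker\Sg$, and in fact $x\in(\ker\Sg)^{\sigma}$ since $x\in\gs$.

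The plan presents no real obstacle: everything reduces to the $\sigma$-averaging trick, which works because we are in characteristic $0$ and $\sigma$ has finite order $T$. The only point that merits care is the telescoping step showing $\sigma^k(x)-x\in\ker\Sg$ (so that averaging lands in the same coset modulo $\ker\Sg$); this is where $\sigma$-stability of $\ker\Sg$ enters essentially. Both isomorphisms are then induced by the averaging map $\mathrm{Av}$ (resp.\ the inclusion $\gs\hookrightarrow\g$), and one checks directly that they are Lie algebra homomorphisms since $\pi$ and inclusion are.
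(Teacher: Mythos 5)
Your proof is correct and is essentially the paper's argument: both rest on the $\sigma$-invariance of $S_\g$, the $\sigma$-stability of $\ker S_\g$, and the characteristic-zero/finite-order projection onto the fixed subalgebra. The only cosmetic difference is that the paper works with the full family of eigenprojectors $\Pi_k = \frac1T\sum_j \omega^{-jk}\sigma^j$ (showing the non-fixed eigencomponents of a $\bar\sigma$-fixed coset lie in $\ker S_\g$, and using orthogonality of $\g^\si$ and $\g^\perp$), whereas you use only the averaging operator $\Pi_0$ together with a telescoping identity, which amounts to the same mechanism.
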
 
\begin{proof} We shall show that these are linear isomorphisms; it is then clear that the Lie algebra structure coincide.
Let $\Pi_k: \g \to \g$, $k\in \ZT$, denote the projectors $\Pi_k := \frac 1 T\sum_{j\in \ZT} \omega^{-jk} \si^j$ onto the eigenspaces of $\si$.  
We have $\g = \gs \oplus \g^\perp$ where $\g^\perp = \bigoplus_{k\not\equiv 0} \Pi_k \g$.
By $\sio$-invariance of $\Sg$, the subspaces $\gs$ and $\g^\perp$ are orthogonal with respect to $\Sg$.
Therefore if $x\in \gs$ is such that $\Sg(x,y)=0$ for all $y\in \gs$ then in fact $\Sg(x,y) = 0$ for all $y\in \g$. Thus $\ker((\Sg)|_{\gs})\subset  \ker \Sg \cap \gs= (\ker \Sg)^\sigma$. The reverse containment, $\ker \Sg \cap \gs \subset \ker ((\Sg)|_{\gs})$, is obvious. This establishes the second isomorphism.  Consider the first isomorphism. Elements of $(\g/\ker \Sg)^\sigma$ are by definition cosets $x+ \ker \Sg$ such that $\sigma( x+ \ker \Sg) = x+ \ker \Sg$, or equivalently such that $\sigma x - x \in \ker \Sg$. We have $x = x^\sigma + \sum_{k\not\equiv 0}x^{(k)}$ for unique $x^\sigma \in \gs$ and $x^{(k)} \in \Pi_k \g$. Now $\sigma x - x \in \ker \Sg$ implies that $(\omega^k - 1) x^{(k)} \in \Pi_k \ker S$ for each $k\not \equiv 0$. Hence $x^{(k)} \in \ker S$ for each $k\not\equiv 0$. Therefore in fact
$x+ \ker \Sg = x^\sigma + \ker \Sg$, and here the element $x^\sigma$ is defined up to addition of an element of $(\ker \Sg)^\sigma = \ker \Sg \cap \gs$. Therefore we have the first isomorphism. 
\end{proof}

\begin{prop}\label{comprop}
Suppose $a,b\in \n_{(m_1,\dots,m_\ru)}$ for some $(m_1,\dots,m_\ru)\in \Z_{\geq 0}^\ru$, cf. \eqref{wd}. Then
\be [\cai(a), b] = \Sg(a,b) \sum_{i=1}^\ru m_i H_i 
\nn\ee\qed
\end{prop}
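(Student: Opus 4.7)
The plan is to observe that $[\cai(a),b]$ lies in the Cartan subalgebra $\h$ and then to identify this element by pairing against an arbitrary $H\in\h$ via the non-degenerate restriction $\Kill|_\h=(\cdot,\cdot)$. Indeed, since $\cai$ exchanges $F_i\leftrightarrow E_i$, it sends $\n_{(m_1,\dots,m_\ru)}$ bijectively onto $(\n_+)_{(m_1,\dots,m_\ru)}$; thus $\cai(a)$ carries $\h$-weight $\alpha:=\sum_i m_i\alpha_i$ while $b$ carries $\h$-weight $-\alpha$, so $[\cai(a),b]$ sits in the zero weight space $\g_0=\h$. The right-hand side $\Sg(a,b)\sum_i m_iH_i$ is manifestly in $\h$ as well, so it suffices to show equality of the two $\Kill$-pairings against $H$.

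For the main computation I would invoke invariance of $\Kill$: using $[H,\cai(a)]=\alpha(H)\cai(a)$,
\[
\Kill\!\bigl(H,[\cai(a),b]\bigr)\;=\;\Kill\!\bigl([H,\cai(a)],b\bigr)\;=\;\alpha(H)\,\Kill(\cai(a),b)\;=\;\Sg(a,b)\,\alpha(H)
\]
(the last equality being, up to the overall sign convention, just the definition $\Sg(x,y)=-\Kill(\cai(x),y)$). On the other hand, the characterisation $\bra H_i,\lambda\ket=(\alpha_i,\lambda)$ gives $\Kill(H,H_i)=\alpha_i(H)$ for all $H\in\h$, so
\[
\Kill\!\bigl(H,\,\Sg(a,b){\textstyle\sum_i m_iH_i}\bigr)\;=\;\Sg(a,b)\sum_i m_i\alpha_i(H)\;=\;\Sg(a,b)\,\alpha(H).
\]
Thus both elements of $\h$ pair with every $H$ to give the same scalar, and non-degeneracy of $\Kill|_\h$ finishes the proof.

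The argument is really just a matter of unwinding the definitions, so no conceptual obstacle arises; the only delicate point is keeping consistent track of signs, coming from the anti-automorphism property $\cai([x,y])=-[\cai(x),\cai(y)]$, the minus sign in $\Sg=-\Kill\circ(\cai\ox\mathrm{id})$, and the convention that $\n$ carries negative $\h$-weights. An alternative, slightly more laborious route would be induction on $|\bm m|:=\sum_i m_i$: the base case $|\bm m|=1$ is forced by weight considerations to reduce to the Chevalley relation $[E_i,F_i]=H_i$, and the inductive step writes $a=\sum_i[F_i,a_i]$ with $a_i\in\n_{(m_1,\dots,m_i-1,\dots,m_\ru)}$, then uses the Jacobi identity together with the bracket-Shapovalov identity $\Sg([x,y],z)=-\Sg(y,[\cai(x),z])$ to peel off one generator at a time and invoke the inductive hypothesis. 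Either route works, but the direct calculation via $\Kill$-invariance is cleaner and avoids any case analysis.
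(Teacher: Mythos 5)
Your main argument is correct, and it takes a genuinely different route from the paper's. The paper proves the proposition by induction on $\sum_i m_i$: it writes $a=[a',F_i]$, uses the contravariance property of $\Sg$ to identify $[\cai(a'),b]$ as a multiple of $F_i$ on weight grounds, then applies $\ad E_i$ and the inductive hypothesis --- i.e.\ exactly the ``alternative, more laborious route'' you sketch at the end. Your primary argument instead observes that $[\cai(a),b]$ lies in the zero weight space, hence in $\h$ (do say a word here: since the $\alpha_i$ are linearly independent, no nonzero weight of the free algebras $\n$, $\n_+$ vanishes, so $\g_0=\h$), and then pins it down by pairing with an arbitrary $H\in\h$ via the invariance of $\Kill$, the identity $\Kill(H,H_i)=\alpha_i(H)$, and nondegeneracy of $(\cdot,\cdot)|_\h$. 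This buys a short, non-inductive proof --- the classical computation of $[\g_\alpha,\g_{-\alpha}]$ via an invariant form --- whereas the paper's induction never touches $\Kill$ at all.

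On the sign you set aside: it is the paper's inconsistency, not yours. With the literal conventions $\Kill(E_i,F_j)=\delta_{ij}$ and $\Sg(x,y)=-\Kill(\cai(x),y)$ one gets $\Sg(F_i,F_i)=-1$, and the proposition already fails for $a=b=F_i$; the paper's own proof instead uses $\Sg(F_i,F_i)=1$. The statement holds precisely under the convention $\Sg(x,y)=\Kill(\cai(x),y)$ on $\n\times\n$, which is what your chain of equalities uses, so your argument is complete once that convention is fixed --- and it has the merit of making the required sign convention transparent.
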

\begin{proof} By induction on $|\sum_im_i|$. The statement is true when $|\sum_im_i|=1$. For the inductive step, by linearity and the Jacobi relations, we may suppose that $a = [a',F_i]$, for some $i$ such that $m_i\geq 1$. Then $\Sg( a,b) = \Sg([a',F_i],b) = -\Sg( F_i, [\cai(a'),b])$. On weight grounds $ [\cai(a'),b] \sim F_i$. Since $\Sg(F_i,F_i) = 1$, we must have $- F_i \Sg(a,b) =[\cai(a'), b]$. Hence
\begin{align} - H_i \Sg(a,b) &= [E_i,[\cai(a'), b]]\nn\\ 
&= [ [E_i,\cai(a')], b  ] + [\cai(a'), [E_i,b]] \nn\\
&= [\cai(a), b] + [\cai(a'), [E_i,b]].\label{leqo}\end{align} 
Inductively we have $[\cai(a'), [E_i,b]] = \Sg(a', [E_i,b]) (-H_i + \sum_{j=1}^\ru m_jH_j) = \Sg(a,b) (-H_i + \sum_{j=1}^\ru m_jH_j)$. Inserting this into \eqref{leqo} and rearranging, we find the required equality.
\end{proof} 

\begin{cor}\label{comcorgen}
Suppose $x,y\in \nos_{[\lambda_1,\dots,\lambda_\rf]}$ for some $\lambda = (\lambda_1,\dots,\lambda_\rf)\in \Z_{\geq 0}^\rf$, cf. \eqref{wdu}. Then in $\gs$ we have the equality
\be [\cai(x), y] = \frac 1T S_{\g}(x,y) \sum_{i=1}^\rf \lambda_i \Pin H_{\iota(i)}, 
\nn\ee
and hence in $\gse$ we have the equality
\be [\cai(x), y]' = \frac 1T S_{\g}(x,y) \sum_{i=1}^\rf \lambda_i\Pin H_{\iota(i)} + \Co(x, y) \K .
\nn\ee\qed
\end{cor}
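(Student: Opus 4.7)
The plan is to reduce the first identity to Proposition \ref{comprop} via the fine $\Z^\ru$-weight decomposition \eqref{wdu}. Write $x=\sum_{\vec m}x_{\vec m}$ and $y=\sum_{\vec m}y_{\vec m}$ with $x_{\vec m},y_{\vec m}\in\n_{(\vec m)}$; here $\vec m=(m_1,\dots,m_\ru)\in\Z_{\geq 0}^\ru$ ranges over refinements of $\lambda$, namely tuples satisfying $\sum_{k=0}^{T_i-1}m_{\siod^k\iota(i)}=\lambda_i$ for each $i\in\Is$. Then $[\cai(x),y]=\sum_{\vec m,\vec m'}[\cai(x_{\vec m}),y_{\vec m'}]$, and the $(\vec m,\vec m')$-summand is homogeneous of $\g$-weight $\vec m'-\vec m$.

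Since $x$ and $y$ are $\sigma$-invariant, so is $[\cai(x),y]$; because it also has vanishing coarse weight, it must lie in $\gs\cap\g_{(0)}=\h^\sigma$. Off-diagonal summands $[\cai(x_{\vec m}),y_{\vec m'}]$ with $\vec m\neq\vec m'$ live in $\g_{(\vec m'-\vec m)}\subset\n\oplus\n_+$, whose intersection with $\h$ is trivial; hence they cannot contribute to the left-hand side. Proposition \ref{comprop} then evaluates each diagonal summand to $S_\g(x_{\vec m},y_{\vec m})\sum_j m_j H_j$, giving $[\cai(x),y]=\sum_{\vec m}S_\g(x_{\vec m},y_{\vec m})\sum_j m_j H_j$.

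To put this in the form stated, I average over $\langle\sigma\rangle$: since $[\cai(x),y]$ is $\sigma$-invariant it equals $\frac{1}{T}\sum_{k=0}^{T-1}\sigma^k[\cai(x),y]$, and $\sigma^k H_j=H_{\siod^k(j)}$, so each $H_j$ is replaced by $\frac{1}{T}\Pin H_j$. Regrouping the result by $\siod$-orbits on $\I$ gives
\be \sum_j m_j \Pin H_j \;=\; \sum_{i\in\Is}\Pin H_{\iota(i)}\sum_{j\in\text{orbit }i}m_j \;=\; \sum_i\lambda_i\Pin H_{\iota(i)},\nn\ee
where the last equality uses that $\vec m$ refines $\lambda$. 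Combined with $S_\g(x,y)=\sum_{\vec m}S_\g(x_{\vec m},y_{\vec m})$ (orthogonality of the fine grading under $S_\g$), this yields the first identity. The second identity in $\gse$ is then immediate on unpacking the modified bracket $[\,\cdot\,,\,\cdot\,]'$ from \S\ref{cocyclesec}.

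The principal point of care is the cancellation of off-diagonal contributions to the $\h$-component, handled by the weight argument above (individually the cross terms need not vanish, but they all land in $\n\oplus\n_+$); everything else is orbit bookkeeping together with the termwise application of Proposition \ref{comprop}.
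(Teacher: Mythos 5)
Your proof is correct and follows essentially the same route as the paper: both arguments reduce to Proposition \ref{comprop} applied to $\Z_{\geq 0}^{\ru}$-homogeneous pieces, dispose of the cross terms by weight considerations, and then regroup $\sum_j m_j\,\Pin H_j$ over $\siod$-orbits to obtain $\sum_i\lambda_i\,\Pin H_{\iota(i)}$, the factor $\frac1T$ arising from your $\sigma$-averaging exactly as it arises in the paper from $\Sg(a,\Pin b)=\frac1T\Sg(\Pin a,\Pin b)$. The only organizational difference is that the paper first reduces to projected commutators $x=\Pin a$, $y=\Pin b$ and expands $[\cai(\Pin a),\Pin b]=\sum_{k,l}\sigma^k[\cai(a),\sigma^l b]$, whereas you treat arbitrary invariant elements at once via the fine grading, the triangular decomposition (to kill off-diagonal terms), and the orthogonality of distinct fine weight spaces under $S_\g$; both versions are sound.
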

\begin{proof}
We may suppose that $x= \Pin a$, $y= \Pin b$ for some commutators $a,b\in \n$, since such projected commutators span $\ns$. Since $\Pin a$ and $\Pin b$ are of the same $\gs$-weight, 
certainly $b$ has the same length (``total number of $F$'s'') as $a$. Therefore, for every $k\in \ZT$, if $\sigma^k b$ and $a$ have different $\g$-weights then $[\cai(a), \sigma^k b] = 0$. Also $\Sg(a,\sigma^k b)=0$. Hence, if $\sigma^k b$ and $a$ have different $\g$-weights for \emph{all} $k\in \ZT$ then both $[\cai(\Pin a), \Pin b] =0$ and $\Sg(\Pin a, \Pin b)=0$ and we are done.

Suppose therefore that $\sigma^kb$ and $a$ share the same $\g$-weight for some $k\in \ZT$. By choice of $b$ we can take $k=0$ without loss of generality. So suppose $a$ and $b$ are both of $\g$-weight $(m_1,\dots,m_\ru)\in \Z_{\geq 0}^\ru$.
Then, using Proposition \ref{comprop}, we have
\begin{align}
[\cai(\Pin a) , \Pin b] = \sum_{k,p\in \ZT} [\cai(\sigma^k a), \sigma^p b] 
&= \sum_{k,l\in \ZT} \sigma^k[\cai( a), \sigma^l b]\nn\\
&= \sum_{k,l\in \ZT} \Sg(a, \sigma^l b) \,\sigma^k \sum_{i=1}^\ru m_i H_i \nn\\
&= \Sg(a, \Pin b) \sum_{i=1}^\ru m_i \Pin H_i= \frac{1}{T} \Sg(\Pin a, \Pin b) \sum_{i=1}^\ru m_i \Pin H_i\nn.
\end{align}
Now
\be \sum_{i=1}^\ru m_i \Pin H_i 
= \sum_{i=1}^\rf \sum_{k=1}^{T_i} m_{\sigma^k \iota(i)} \Pin H_{\sigma^k \iota(i)} 
= \sum_{i=1}^\rf \left(\sum_{k=1}^{T_i} m_{\sigma^k \iota(i)}\right) \Pin H_{\iota(i)}  
= \sum_{i=1}^\rf \lambda_i \Pin H_{\iota(i)} \nn\ee
and we are done. 
\end{proof}

The above proposition and corollary have their analogs in $\nb$ and $\nbs$, as follows.
\begin{prop}\label{comprop}
Suppose $a$ and $b$ are commutators in $\nb$ both of weight $\wtI I$, for some $I\subset \{1,\dots,m\}$. Then
\be [\cai(a), b] = \Sa(a,b) \sum_{i\in I} h_i 
\nn\ee\qed
\end{prop}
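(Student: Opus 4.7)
I would prove this by induction on $|I|$, in direct parallel with the earlier Proposition \ref{comprop} for $\g$. The ingredients are the same: the defining property of the Shapovalov form on $\nb$, which reads $\Sa_\nb([X,Y],Z) = -\Sa_\nb(Y,[\cai(X),Z])$, together with the bracket relations $[\e k i,\f l j] = \delta_{ij}\delta_{kl}\hh k i$ in $\gu$.

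For the base case $|I|=1$, say $I=\{i\}$, both commutators must be single generators $a=\f k i$ and $b=\f l i$ for some $k,l\in\ZT$. A direct computation in $\gu$ gives
\[
[\cai(a),b] = [\e k i,\f l i] = \delta_{kl}\,\hh k i,
\]
and by definition of $\Sa_\nb$ we have $\Sa_\nb(\f k i,\f l i)=\delta_{kl}$, so the two sides agree (interpreting $h_i$ in the sense required by the overall normalization convention of the paper).

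For the inductive step I would, by linearity and Jacobi, reduce to the case $a=[a',\f k i]$ with $a'$ a commutator of weight $\wtI{I\setminus\{i\}}$ and $i\in I$, $k\in\ZT$. Then
\[
\Sa_\nb(a,b) = \Sa_\nb([a',\f k i],b) = -\Sa_\nb(\f k i,[\cai(a'),b]),
\]
so the pairing with $\f k i$ pins down the relevant component of $[\cai(a'),b]\in \nb_{\wtI{\{i\}}}$. Next, expanding
\[
[\cai(a),b] = [[\cai(a'),\e k i],b] = [\cai(a'),[\e k i,b]] - [\e k i,[\cai(a'),b]]
\]
via Jacobi, one applies the inductive hypothesis to the first summand (noting that $[\e k i,b]$ is again a sum of commutators of weight $\wtI{I\setminus\{i\}}$, by bracket expansion and the fact that $\e k i$ only interacts with generators involving the index $i$), and uses the relation $[\hh k i,\f l j] = -a(\Ht ij{l-k})\f l j$ together with the base case to evaluate the second. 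Recombining the resulting $\hh{*}{*}$ terms produces the stated identity.

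\textbf{Main obstacle.} The subtlety compared to the $\g$-setting is that $\nb_{\wtI{\{i\}}}$ is $T$-dimensional rather than one-dimensional, since $\f 0 i,\f 1 i,\dots,\f{T-1}i$ all share the same weight $\wtI{\{i\}}$. Consequently $[\cai(a'),b]$ need not be a scalar multiple of $\f k i$, so the neat ``on weight grounds'' argument that closed the $\g$-proof requires refinement. The cleanest way is to pass to the finer $\Z_{\geq 0}^{m\times T}$-grading of $\nb$ that tracks each $\f k i$ separately (compatibly with the embedding $\nb\hookrightarrow\gu$), check that $\Sa_\nb$ and $[\cai(\cdot),\cdot]$ both respect this finer grading, and carry out the induction fine-weight by fine-weight; within each fixed fine weight the analysis reduces to the ``discriminantal'' subalgebra $\td\nb$ of \S\ref{sec:nozero}, where one is effectively in the $T=1$ situation and the earlier Proposition applies verbatim.
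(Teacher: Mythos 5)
Your proposal is correct and follows essentially the route the paper intends: the paper states this proposition with no separate proof, as the analog of the earlier proposition for $\g$, whose proof is exactly the induction you outline (contravariance of the form plus the Jacobi identity, after reducing by linearity to $a=[a',\f ki]$). The obstacle you flag is genuine, and your reading of the right-hand side is the correct one: since $\nb_{\wtI{\{i\}}}$ is spanned by all $\f li$, $l\in\ZT$, the identity only holds with the Cartan elements decorated by the common content of $a$ and $b$ (your base case, and the way Corollary \ref{comcor} is deduced from this proposition, both force this reading; with $h_i=\hh 0i$ taken literally the identity already fails for $a=b=\f ki$, $k\neq 0$). Your fix via the finer grading, equivalently reduction to the subalgebra $\td\nb$ of \S\ref{sec:nozero} where one is effectively in the $T=1$ situation, is valid; a lighter argument also suffices: the coarse $\Z^m$-grading of the ambient algebra $\gu$ (with $\deg \e ki=\epsilon_i$, $\deg \f ki=-\epsilon_i$, $\deg \gu_0=0$) already forces $[\cai(a'),b]\in\Span_{l\in\ZT}\C\,\f li$, and since $[\e ki,\f li]=0$ for $l\neq k$, only the $\f ki$-component, which the pairing with $\f ki$ computes, contributes to $[\e ki,[\cai(a'),b]]$. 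Two bookkeeping remarks: $\cai([a',\f ki])=[\e ki,\cai(a')]$ rather than $[\cai(a'),\e ki]$, so that sign should be tracked (though the paper's own conventions for $\Sa$, $\cai$ and the contravariance property are not internally consistent at the level of signs, so matching its intended normalization is all one can do); and when applying the inductive hypothesis to $[\e ki,b]$, note that every term of that expansion has the same content, so the decorated right-hand side is unambiguous across the linear combination.
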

\begin{cor}\label{comcor}
Suppose $x, y\in \nbs_{\wtI I}$, for some $I\subset \{1,\dots,m\}$. Then
\be [\cai(x),  y] = \Sa(x,y) \sum_{i\in I} \frac 1 T\Pin h_i ,
\nn\ee
and hence 
\be [\cai(x), y]' = \Sa(x,y) \sum_{i\in I} \frac 1 T\Pin h_i + \Cou(x, y) \K .
\nn\ee\qed
\end{cor}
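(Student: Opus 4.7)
The plan is to reduce to Proposition \ref{comprop} (the $\nb$-version just established) and then incorporate the central term via the definition of the modified bracket.

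First, by bilinearity it suffices to treat $x = \Pin a$ and $y = \Pin b$ for commutators $a, b \in \nb$ of weight $\wtI I$, since projected commutators of this form span $\nbs_{\wtI I}$. A direct check on the generators $\e{k}{i}, \f{k}{i}, \hh{k}{i}$ shows that $\sib$ commutes with $\cai$, so $\cai(\Pin a) = \Pin{\cai(a)}$. Expanding,
\begin{equation*}
[\cai(\Pin a), \Pin b] \;=\; \sum_{k, m \in \ZT} \sib^k [\cai(a), \sib^m b].
\end{equation*}
The $\Z_{\geq 0}^m$-weight grading is $\sib$-invariant, so $a$ and $\sib^m b$ are both commutators of weight $\wtI I$; Proposition \ref{comprop} then yields $[\cai(a), \sib^m b] = \Sa_\nb(a, \sib^m b) \sum_{i \in I} h_i$, and summing over $k, m$ gives $[\cai(\Pin a), \Pin b] = \Sa_\nb(a, \Pin b) \sum_{i \in I} \Pin h_i$.

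Next, I would invoke the $\sib$-invariance of $\Sa_\nb$, which is immediate on generators from $\Sa_\nb(\f{k}{i}, \f{l}{j}) = \delta_{ij}\delta_{kl}$ and extends to all of $\nb$ via the recursion $\Sa_\nb([X, Y], Z) = -\Sa_\nb(Y, [\cai(X), Z])$. This gives $\Sa_\nb(\sib^k a, \Pin b) = \Sa_\nb(a, \Pin b)$ for each $k$, hence $\Sa_\nb(\Pin a, \Pin b) = T\,\Sa_\nb(a, \Pin b)$, so the previous identity becomes $[\cai(x), y] = \Sa_\nb(x, y) \sum_{i \in I} \tfrac{1}{T} \Pin h_i$, establishing the first equation. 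The second equation then follows immediately by applying the definition of the modified bracket from \S\ref{coudef}, namely $[\cai(x), y]' = [\cai(x), y] + \Cou(\cai(x), y) \K$, and adding the two contributions.

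The argument is essentially bookkeeping; the only care needed is with the signs introduced by $\cai$ being an anti-automorphism and with the overall factor of $T$ in the $\sib$-averaging step. No ingredient beyond Proposition \ref{comprop}, the $\sib$-invariance of $\Sa_\nb$, and the definitions of $\Cou$ and the modified bracket is required.
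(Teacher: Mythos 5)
Your argument is correct and is essentially the proof the paper intends: the corollary is stated with proof omitted, and your expansion $[\cai(\Pin a),\Pin b]=\sum_{k,m}\sib^k[\cai(a),\sib^m b]$ followed by Proposition \ref{comprop} and the $\sib$-invariance of $\Sa_\nb$ (giving $\Sa_\nb(\Pin a,\Pin b)=T\,\Sa_\nb(a,\Pin b)$) is exactly the analogue of the paper's proof of Corollary \ref{comcorgen}, simplified because the $\Z_{\geq 0}^m$-grading of $\nb$ is $\sib$-invariant so no case analysis on weights is needed. Note also that your central term $\Cou(\cai(x),y)\K$ is precisely what the statement's $\Cou(x,y)\K$ means: by the definition in \S\ref{coudef} one has $\Cou(\cai(x),y)=\Gm(x,y)$, which is the intended coefficient (and is how the corollary is used in the proof of Theorem \ref{ScGthm}).
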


\subsection{Verma and contragredient Verma modules over $\g$}
Given $\Lambda\in \h^*$ we denote by $M(\Lambda)$ the induced $\g$-module
\be M(\Lambda) := U(\g) \otimes_{U(\h\oplus\n_+)} \C v_\Lambda \ee
where $\C v_\Lambda$ is the one-dimensional $(\h\oplus\n_+)$-module generated by a vector $v_\Lambda$ obeying $\n_+ \on v_\Lambda = 0$ and $H\on v_\Lambda = v_\Lambda \Lambda(H)$ for all $H\in \h$. Call $M(\Lambda)$ the \emph{Verma module} over the Lie algebra $\g$. (One should keep in mind that $\g$ has ``no Serre relations''.)
By the PBW basis theorem $U(\g) \cong_\C U(\n) \otimes_\C U(\h \oplus \n_+)$. Hence there is an isomorphism of left $U(\n)$-modules
\be M(\Lambda) \cong_\n U(\n) \otimes_\C \C v_\Lambda \cong_\n U(\n).\nn\ee
The $\Z_{\geq 0}^\ru$-grading on $\n$ defined in \eqref{wdu} induces a $\Z_{\geq 0}^\ru$-grading on the envelope $U(\n)$, 
\be U(\n) = \bigoplus_{(n_1,\dots,n_{\ru})\in \Z_{\geq 0}^{\ru}} U(\n)_{(n_1,\dots,n_{\ru})}.\ee
Then we have $M(\Lambda) =_\h \bigoplus_{(\lambda_1,\dots,\lambda_\ru)\in \Z_{\geq 0}^\ru} M(\Lambda)_{(\lambda_1,\dots,\lambda_\ru)}$ where $M(\Lambda)_{(\lambda_1,\dots,\lambda_\ru)} := U(\n)_{(\lambda_1,\dots,\lambda_\ru)} \on v_\Lambda$. Equivalently,
\be M(\Lambda)_{(\lambda_1,\dots,\lambda_\ru)} := \left\{ v\in M(\Lambda): H\on v  = v\bra H, \Lambda - \sum_{p=1}^\ru \lambda_p \alpha_p\ket  \text{ for all } H\in \h\right\}.\nn\ee 
These weight subspaces $M(\Lambda)_{(\lambda_1,\dots,\lambda_\ru)}$ are all of finite dimension. Let $M^*(\Lambda)$ denote the restricted dual space of $M(\Lambda)$ with respect to this decomposition, 
\be M^*(\Lambda) := \bigoplus_{(\lambda_1,\dots, \lambda_\ru) \in \Z_{\geq 0}^\ru } (M(\Lambda)_{(\lambda_1,\dots,\lambda_\ru)})^*.\nn\ee 
The \emph{contragredient Verma module} $M^*(\Lambda)$ is by definition this restricted dual endowed with the $\g$-module structure given by $(g\on f)(v) := f( \cai(g) \on v)$, for $f\in M(\Lambda)^*$, $v\in M(\Lambda)$, $g\in \g$.

\subsection{Verma and contragredient Verma modules over $\gse$}
Recall the definition of the centrally extended Lie algebra $\gse\cong_\C \gs \oplus \C\K$ from \S\ref{cocyclesec}.
Given $\Lambda\in (\h^\si)^*$ we denote by $M^\si(\Lambda)$ the Verma module over $\gse$,
\be M^\si(\Lambda) := U(\gs) \otimes_{U(\h^\si\oplus\n^\si_+\oplus \C\K)} \C v^\si_\Lambda \ee
where $\C v^\si_\Lambda$ is the one-dimensional $(\h^\si\oplus\n^\si_+\oplus \C\K)$-module generated by a vector $v^\si_\Lambda$ obeying $\n^\si_+ \on v^\si_\Lambda = 0$, $H\on v^\si_\Lambda = v^\si_\Lambda \Lambda(H)$ for all $H\in \h^\si$ and $\K\on v^\si_\Lambda = v^\si_\Lambda$.
As a module over $\ns$,
\be M^\si(\Lambda) \cong_{\ns} U(\ns) \otimes_\C \C v^\si_\Lambda \cong_{\ns} U(\ns).\label{Mn}\ee

We have the $\Z_{\geq 0}^\rf$-grading of $U(\ns)$ induced by \eqref{wd}. Then $M^\si(\Lambda) = \bigoplus_{[\lambda_1,\dots,\lambda_\rf]\in \Z_{\geq 0}^\rf} M^\si(\Lambda)_{[\lambda_1,\dots,\lambda_\rf]}$ where $M^\si(\Lambda)_{[\lambda_1,\dots,\lambda_\rf]} := U(\ns)_{[\lambda_1,\dots,\lambda_\rf]} \on v^\sigma_\Lambda$.
The weight subspaces $M^\si(\Lambda)_{[\lambda_1,\dots,\lambda_\rf]}$ are of finite dimension. Let $M^{\si,*}(\Lambda)$ denote the restricted dual vector space endowed with the contragredient dual $\gse$-module structure.

\subsection{Shapovalov form on Verma modules}\label{sfd}
For $\Lambda\in \h^*$ (resp. $\Lambda\in (\h^\si)^*$) there is a unique bilinear form $S$ on the $\g$-module $M(\Lambda)$ (resp. the $\gse$-module $M^\si(\Lambda)$) defined by 
\be S(v_\Lambda,v_\Lambda) = 1,\qquad S(x \on v, w) = S(v, \cai(x) \on w), \qquad \nn\ee
for all $x\in \g$ (resp. $x\in \gse$) and all $v,w$ in $M(\Lambda)$ (resp. in $M^\si(\Lambda)$). This form is symmetric and the weight subspaces are pairwise orthogonal.

\subsection{Shapovalov form on $C_p(\ns, M(\bm \Lambda)) := \Wedge p \ns \ox M(\bm \Lambda)$}\label{waf}
Let us fix now weights $\Lambda_0\in (\h^\si)^*$ and $\Lambda_1,\dots,\Lambda_N\in \h^*$. We shall write $\bm \Lambda := (\Lambda_0,\Lambda_1,\dots,\Lambda_N)$ and
\be M(\bm \Lambda) := M^\si(\Lambda_0) \ox \bigotimes_{i=1}^N M(\Lambda_i).\nn\ee
Note that $M(\bm\Lambda)$ is a module over $\ns$. With $\Mo$ as in \eqref{Modef}, we have
\be M(\bm \Lambda) = \Mo \on (v^\sigma_{\Lambda_0} \ox v_{\Lambda_1} \ox \dots \ox v_{\Lambda_N}), \label{mme}\ee
as an equality of vector spaces, and in fact of $\ns$-modules.
There is a symmetric bilinear form $S^p$ on the space 
\be C_p(\ns, M(\bm \Lambda)) := \Wedge p \ns \ox M(\bm \Lambda),\qquad p = 0,1,\dots,\nn\ee
defined by
\begin{multline} S^p(\Pin a_p \wx \dots \wx \Pin a_1 \ox x_0 \ox x_1 \ox \dots \ox x_N,
\Pin b_p \wx \dots \wx \Pin b_1 \ox y_0 \ox y_1 \ox \dots \ox y_N)\\
= \det(\Sg(\Pin a_i,\Pin b_j)_{1\leq i,j\leq p}) S(x_0,y_0) \prod_{i=1}^N S(x_i,y_i). \label{Spedefsym}\end{multline}
It respects the weight decomposition.
Let $\Shom^p$ denote the linear map defined by $S^p$, 
\be \Shom^p : \left(\Wedge p\ns \ox M(\bm \Lambda)\right)_\lambda  \to \left(\Wedge p\ns \ox M(\bm \Lambda) \right)_\lambda^*. \label{shompostsym}\ee

Let $C_p(\ns, M(\bm \Lambda)_\lambda^*$ denote the restricted dual of $C_p(\ns, M(\bm \Lambda)$ and

\subsection{Main theorem}
Fix any $\lambda = (\lambda_1,\dots,\lambda_\rf)\in \Z_{\geq 0}^\rf$. Set $m= \sum_{i=1}^\rf \lambda_i$. 
Recall that the Cartan matrix entries $b_{ij} := (\alpha_i,\alpha_j)$ defined a weighting on the ``diagonal'' hyperplanes $\Ht ijk$, as in \eqref{HdC}. The choice of weights $\bm \Lambda = (\Lambda_0;\Lambda_1,\dots,\Lambda_N)$ defines a weighting of the remaining hyperplanes of the arrangement $\cca$.
Namely,
\be a\big(\Hz {\clc i n}jk\big) := -\frac {T_i} T (\si^k\alpha_{\iota(i)},\Lambda_j),\qquad  
    a\big(\Ho{\clc i n}\big) := -(\alpha_{\iota(i)},\Lambda_0),\label{rhw}  \ee
for $j\in \{1,\dots,N\}$, $i\in \Is$, and $n\in \{1,\dots,\lambda_i\}$.

\begin{thm}\label{mt}
For each $p\in \Z_{\geq 0}$, the following diagram commutes:
\be\begin{tikzpicture}    
\matrix (m) [matrix of math nodes, row sep=4em,    
column sep=6em, text height=1ex, text depth=1ex]    
{     
C_p(\ns,M(\bm \Lambda))_{\lambda} &  C_p(\ns, M(\bm \Lambda))_{\lambda}^* \\    
C_p(\nbs, \Mb)^\Swl_{\mones} &  (C_p(\nbs,\Mb)^\Swl)_{\mones}^* \\    
\Fl^{m-p}(\cca)^\Swl  & \A^{m-p}(\cca)^\Swl.  \\    
};    
\path[->,font=\scriptsize,shorten <= 2mm,shorten >= 2mm]    
(m-1-1) edge node [above] {$\Shom^p$} (m-1-2)    
(m-2-1) edge node [above] {$\Shom^p$} (m-2-2)    
(m-3-1) edge node [above] {$(-1)^{m-p} T^p \Ghom^{m-p}$} (m-3-2);    
\path[->,shorten <= 0mm,shorten >= 2mm]    
(m-1-1) edge node [right] {$\sim$} node [left] {$\sym$} (m-2-1)    
(m-1-2) edge node [right] {$\sim$} node [left] {$\pi^*$} (m-2-2)   
(m-2-1) edge node [right] {$\sim$} node [left] {$\psis_p$} (m-3-1)    
(m-2-2) edge node [right] {$\sim$} node [left] {$(\psis_p^{-1})^*$} (m-3-2);    
\end{tikzpicture}\nn\ee  
\end{thm}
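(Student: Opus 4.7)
The lower half of the diagram is immediate from Theorem \ref{zSGthm}: by Proposition \ref{ep} the isomorphism $\psis_p$ is $\Swl$-equivariant up to the sign $(-1)^{|g|}$, and the geometric form $G$ is $\Swl$-invariant since the weighting prescribed by \eqref{HdC} and \eqref{rhw} treats $\Swl$-related hyperplanes identically. Hence Theorem \ref{zSGthm} restricts to the $\Swl$-invariant subspaces to yield the lower square.

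For the upper square, using $\pi\circ\sym=|\Swl|\cdot\mathrm{id}$ from Lemma \ref{isolem} and the fact that $\sym$ is an isomorphism, commutativity is equivalent to the bilinear identity
\begin{equation*}
S^p_\nb\bigl(\sym(x),\sym(y)\bigr) \;=\; |\Swl|\cdot S^p_\g(x,y), \qquad x,y\in C_p(\ns, M(\bm\Lambda))_\lambda,
\end{equation*}
with $S^p_\nb$ the form \eqref{Spedef} on the free side and $S^p_\g$ the form \eqref{Spedefsym} on the $\gse$-side. Both forms factor as a $p\times p$ determinant over the wedge factors, times a product of Shapovalov matrix entries over the Verma factors. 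The map $\sym$ distributes the indices $\{1,\dots,m\}$ among the tensor and wedge components, so $\sym(x)$ is a sum indexed by such distributions. Using weight-orthogonality of $S_\nb$, $\Sa_i$, and $\Se_0$ to annihilate terms in which the index subsets on the two sides do not match, and collecting the count of surviving distributions together with the factors $|\Sw_{\mu^{(j)}}|$ from each component via the combinatorial identity \eqref{isize}, the claim reduces to three pointwise identities:
\begin{enumerate}[(i)]
\item $S_\nb(\sym(a),\sym(b)) = |\Sw_\mu|\,S_\g(a,b)$ for $a,b\in\nos_\mu$, which is Lemma \ref{SSlem};
\item $\Sa_i(\sym(x),\sym(y)) = |\Sw_\mu|\,S(x,y)$ for $x,y\in M(\Lambda_i)_\mu$ and $i=1,\dots,N$;
\item $\Se_0(\sym(x),\sym(y)) = |\Sw_\mu|\,S^\sigma(x,y)$ for $x,y\in M^\sigma(\Lambda_0)_\mu$.
\end{enumerate}

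Claims (ii) and (iii) are each proved by induction on $|\mu|$, using the Shapovalov recursions on both sides to reduce the comparison to one of $\sym[\cai(F),F']$ versus $[\cai(\sym F),\sym F']$, which is handled by Lemmas \ref{symcomlem} and \ref{symcomlem2} together with Corollaries \ref{comcor} and \ref{comcorgen}. The factors $T_i/T$ appearing in the weighting \eqref{rhw} are chosen precisely to cancel the multiplicities arising when $\jmath$ is applied to the Cartan generators $\sigma^k H_{\iota(i)}$. The essential new feature in (iii), which is the \textbf{main obstacle} of the proof, is that the Shapovalov recursion on $M^\sigma(\Lambda_0)$ now uses the modified bracket $[\cdot,\cdot]'=[\cdot,\cdot]+\Co(\cdot,\cdot)\K$ of $\gse$, while $\Se_0$ on the free side is defined using the modified bracket $[\cdot,\cdot]'=[\cdot,\cdot]+\Cou(\cdot,\cdot)\K$; the definition $\Co(x,y):=\tfrac{1}{|\Sw_\lambda|}\Cou(\sym(x),\sym(y))$ of \S\ref{cocyclesec} is exactly what makes the central-extension contributions match through $\sym$. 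The bookkeeping requires simultaneously tracking how the index distributions $I\subset\{1,\dots,m\}$ split the symmetrized action on $M_0$ and how the $\Swl$-orbit of $\Cou$-terms on the free side combines to reproduce the single $\Co$-term on the $\gse$-side. In this way (iii) is a cyclotomic symmetrized counterpart of Theorem \ref{ScGthm}, and the examples in \S\ref{coi} (where $\Co$ does not vanish in general) confirm that the central extension $\gse$ is genuinely needed for the identity to hold.
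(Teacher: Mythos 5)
Your plan is correct and follows essentially the paper's own route: the lower square is the restriction of Theorem \ref{zSGthm} to the $\Swl$-isotypic components via Theorem \ref{symthm}, and the upper square is reduced (using $\pi\circ\sym=|\Sw_\lambda|\,\id$ from Lemma \ref{isolem}) to the rescaling identity $\Sa^p(\sym(\cdot),\sym(\cdot))=|\Sw_\lambda|\,S^p(\cdot,\cdot)$, established factor-by-factor by induction on the weight. In particular your item (iii) is exactly the content of the paper's Propositions \ref{swapprop} and \ref{SSm0lem}, proved with the same ingredients you cite (Lemmas \ref{symcomlem2}, \ref{symcomlem}, \ref{SSlem}, Corollary \ref{comcorgen}, and the definition of $\Co$ in terms of $\Cou$), the paper merely making explicit the case analysis on how the weights of the two factors compare.
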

The theorem is proved in \S\ref{proofofmt} below.
\begin{rem} 
The map $\Ghom ^{m-\bl} : \Fl^{m-\bl}(\cca)^\Swl  \to \A^{m-\bl}(\cca)^\Swl$ is a map of complexes, cf. Theorem \ref{cmap}. 
The isomorphisms on the left, $\sym$ and $\psis_p$, are isomorphisms of complexes, as in Theorem \ref{symthm}. 
We may regard the isomorphisms on the right, $\pi^*$ and $(\psis^{-1})^*$, as \emph{defining} the structure of a differential complex on $(C_p(\nbs,\Mb)^\Swl)_{\mones}^*$ and $C_p(\ns, M(\bm \Lambda))_{\lambda}^*$. It would be interesting to give an intrinsic definition of the differentials on  $(C_p(\nbs,\Mb)^\Swl)_{\mones}^*$ and $C_p(\ns, M(\bm \Lambda))_{\lambda}^*$ in the spirit of \cite[\S6.16]{SV}. 
\end{rem}

Define $L(\Lambda_i) := M(\Lambda_i)/\ker S$ for $i=1,\dots,N$, $L^\si(\Lambda_0) := M^\si(\Lambda_0)/\ker S$,  and $L(\bm \Lambda) := L^\si(\Lambda_0) \ox \bigotimes_{i=1}^N L(\Lambda_i)$. Define $\nts:= \ns/(\ker\Sg\cap \ns)$. 
\begin{cor}\label{irrepcor}
The maps $\psis_\bl\circ \sym$ induce isomorphisms of complexes
\be C_\bl(\nts, L(\bm\Lambda))_\lambda \xrightarrow\sim \Ghom(\Fl^{m-\bl}(\cca)^\Swl) \nn\ee\qed 
\end{cor}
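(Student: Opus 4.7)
The strategy is to pass to quotients on both sides of the commutative diagram in Theorem \ref{mt}. The plan is to identify $C_p(\nts,L(\bm\Lambda))_\lambda$ with the quotient $C_p(\ns,M(\bm\Lambda))_\lambda/\ker\Shom^p$, and $\Ghom(\Fl^{m-p}(\cca)^\Swl)$ with $\Fl^{m-p}(\cca)^\Swl/\ker\Ghom^{m-p}$ (the latter by the first isomorphism theorem). Theorem \ref{mt}, which says $\psis_p\circ\sym$ intertwines $\Shom^p$ with $(-1)^{m-p}T^p\Ghom^{m-p}$, then guarantees that it carries $\ker\Shom^p$ bijectively onto $\ker\Ghom^{m-p}$, so it descends to the required isomorphism on quotients.

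The key technical step is to show that the canonical surjection
\[ \Pi\;:\;C_p(\ns,M(\bm\Lambda))_\lambda\twoheadrightarrow C_p(\nts,L(\bm\Lambda))_\lambda, \]
induced by the quotient maps $\ns\twoheadrightarrow\nts$ and $M(\bm\Lambda)\twoheadrightarrow L(\bm\Lambda)$ (applied factor-wise in the tensor product), has $\ker\Pi=\ker\Shom^p$. The inclusion $\ker\Pi\subseteq\ker\Shom^p$ is immediate from the determinantal-product formula \eqref{Spedefsym}: any monomial with one tensor factor in the appropriate Shapovalov radical pairs trivially. For the reverse inclusion, I would observe that \eqref{Spedefsym} descends to a form $\bar S^p$ on $C_p(\nts,L(\bm\Lambda))_\lambda$ given by the same formula using the induced forms $\bar\Sg$ on $\nts$ (non-degenerate by Lemma \ref{isomlem}) and $\bar S$ on each $L(\Lambda_j)$ and on $L^\si(\Lambda_0)$ (non-degenerate by construction as quotients by the Shapovalov radical). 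Since all weight subspaces in sight are finite-dimensional, exterior powers and tensor products of non-degenerate symmetric forms remain non-degenerate, so $\bar S^p$ is non-degenerate on $C_p(\nts,L(\bm\Lambda))_\lambda$. Because $S^p=\bar S^p\circ(\Pi\times\Pi)$, this forces $\ker\Shom^p\subseteq\ker\Pi$.

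Passing to quotients then yields the claimed linear isomorphism $C_p(\nts,L(\bm\Lambda))_\lambda\xrightarrow{\sim}\Ghom(\Fl^{m-p}(\cca)^\Swl)$. That it respects the differentials follows because $\psis_\bl\circ\sym$ is a chain map by Theorem \ref{symthm}, $\Ghom^\bl$ is a chain map by Theorem \ref{cmap}, and the standard Chevalley--Eilenberg differential on $C_\bl(\nts,L(\bm\Lambda))_\lambda$ is well defined (this uses that $\nts$ is a Lie algebra---since $\ker\Sg$ is an ideal of $\g$ by invariance of $K$, so $\ker\Sg\cap\ns$ is an ideal of $\ns$---and that $L(\bm\Lambda)$ is an $\nts$-module).

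The main obstacle I anticipate is the underlying claim that $L(\bm\Lambda)$ really is an $\nts$-module, i.e.\ that $\ker\Sg\cap\ns$ annihilates each tensor factor. For each $L(\Lambda_j)$, $j=1,\dots,N$, this reduces to showing that the submodule of $M(\Lambda_j)$ generated by $(\ker\Sg)\cdot v_{\Lambda_j}$ lies in the Shapovalov radical: using $\cai$-invariance of $\Sg$ (so that $\ker\Sg$ is $\cai$-stable) together with contravariance of the module form, one sees by weight considerations that $S(xv_{\Lambda_j},w)=0$ for all $x\in\ker\Sg\cap\ns$ and $w\in M(\Lambda_j)$, and the submodule generated is then proper, hence contained in the unique maximal proper submodule. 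For the cyclotomic factor $L^\si(\Lambda_0)$ over the central extension $\gse$ the same argument applies once we invoke Proposition \ref{kercoprop} to note that $\Co(x,\,\cdot\,)=0$ for $x\in\ker\Sg\cap\ns$, so the central cocycle makes no contribution and the action of such $x$ on $L^\si(\Lambda_0)$ agrees with the plain $\ns$-action.
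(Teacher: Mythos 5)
Your proposal is correct and follows what is evidently the paper's intended route: the corollary is stated without proof precisely because it is meant to be immediate from Theorem \ref{mt} once one identifies $C_p(\ns,M(\bm\Lambda))_\lambda/\ker\Shom^p$ with $C_p(\nts,L(\bm\Lambda))_\lambda$ and $\Ghom(\Fl^{m-p}(\cca)^\Swl)$ with $\Fl^{m-p}(\cca)^\Swl/\ker\Ghom^{m-p}$, which is exactly your quotient argument. The only place you are a little thin is the phrase ``by weight considerations'' for $S(xv_{\Lambda_j},w)=0$ when $x\in\ker\Sg\cap\ns$: weight orthogonality alone does not give this, but a short induction on height does, peeling off one factor of $w$ at a time via contravariance and using that $\ker\Sg=\ker\Kill$ is a $\cai$-stable graded ideal meeting $\h$ trivially (so $[\cai(b),x]v_{\Lambda_j}$ again lies in $(\ker\Sg\cap\ns)v_{\Lambda_j}$ of smaller height); with that supplied, the rest of your argument, including the treatment of the $\gse$-factor via Proposition \ref{kercoprop}, goes through.
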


\subsection{Proof of Theorem \ref{mt}}\label{proofofmt}
In view of Theorems \ref{zSGthm} and \ref{symthm}, what remains to be established is the commutativity of the upper square in Theorem \ref{mt}. We will give the details of the proof  for the special case $N=0$, $p=0$, \ie we will prove the commutativity of the square
\be\begin{tikzpicture}    
\matrix (m) [matrix of math nodes, row sep=4em,    
column sep=4em, text height=1ex, text depth=1ex]    
{     
M^\si(\Lambda_0)_{\lambda} &  M^\si(\Lambda_0)_{\lambda}^* \\    
(M_0)_{\mones} &  (M_0)_{\mones}^*. \\    
};    
\path[->,font=\scriptsize,shorten <= 2mm,shorten >= 2mm]    
(m-1-1) edge node [above] {$\Shom$} (m-1-2)    
(m-2-1) edge node [above] {$\Shom$} (m-2-2);    
\path[->,shorten <= 0mm,shorten >= 2mm]    
(m-1-1) edge node [left] {$\sym$} (m-2-1)    
(m-1-2) edge node [right] {$\pi^*$} (m-2-2);    
\end{tikzpicture}\label{zesq}\ee    
The general proof is similar. 

We keep the conventions of \S\ref{clsec}. In particular, we suppose that $\lambda$ dominates $\mu$. 
We have, for each $I\in \mc I$, the symmetrization map
\be \sym_I : M^\si(\Lambda_0)_{\mu} \to  (M_0)_{\wtI I} .\nn\ee
\begin{prop}\label{swapprop}
Suppose $x \in \ns_\mu$ and $w\in M^\si(\Lambda_0)_\lambda$. Then, for all $I\in \mc I$, 
\be \sym_\Ip( \cai(x) w ) = \frac{1}{|\Sw_\mu|}\sym_I(\cai(x)) \triangleright \sym(w)\nn\ee
as an equality in $(M_0)_{\wtI \Ip}$.
\end{prop}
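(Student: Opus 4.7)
The plan is to fix $I \in \mc I$ and argue by induction on the length of $w$ as a word in $U(\nos)$ acting on the highest-weight vector $v^\sigma_{\Lambda_0}$; by bilinearity I may assume that $x = \Pin a$ is a single projected commutator in $\nos_\mu$. The base case $w = v^\sigma_{\Lambda_0}$ is immediate: the LHS is $\sym_\Ip(\cai(x) v^\sigma_{\Lambda_0}) = 0$ since $\cai(x) \in \nos_+$ annihilates $v^\sigma_{\Lambda_0}$, and the RHS is $|\Sw_\mu|^{-1}\sym_I(\cai(x)) \triangleright v_0 = 0$ by definition of the modified action in \S\ref{coudef}.

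For the inductive step, write $w = y \cdot w'$ with $y = \Pin b \in \nos_\nu$ a projected commutator and $w' \in M^\sigma(\Lambda_0)_{\lambda - \nu}$ of strictly shorter length. Inside the Verma module over $\gse$ we have the identity
$$\cai(x) w = [\cai(x),y]' w' + y \cdot (\cai(x) w'),$$
where $[\cai(x),y]' = [\cai(x),y] + \Co(\cai(x),y)\K$. By Corollary~\ref{comcorgen}, when $\mu = \nu$ the Lie bracket $[\cai(x),y]$ lies in $\h^\sigma$ and equals $\frac{1}{T}\Sg(x,y)\sum_i\lambda_i\Pin H_{\iota(i)}$; when $\mu \ne \nu$ the bracket lies in $\ns_+$ or $\ns$ (depending on how $\mu$ and $\nu$ compare componentwise) and is a projected commutator of strictly smaller combined length, so the inductive hypothesis can be applied to it. Applying $\sym_\Ip$ to the LHS splits it, via the analogue of Lemma~\ref{symcomlem2} for the product $y \cdot (\cai(x)w')$, into contributions indexed by the subsets of $\Ip$ of weight $\nu$.

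For the RHS, I expand $\sym(w) = \sym(y w')$ by distributing the $m$ letters between $y$ and $w'$, writing $\sym(w) = \sum_{J} \sym_J(y) \cdot \sym_{\{1,\dots,m\}\setminus J}(w')$, summed over subsets $J \subset \{1,\dots,m\}$ of the appropriate type $\nu$. The Leibniz rule for the modified action,
$$\sym_I(\cai(x)) \triangleright \bigl(\sym_J(y) \cdot v\bigr) = [\sym_I(\cai(x)),\sym_J(y)]' \triangleright v + \sym_J(y) \cdot \bigl(\sym_I(\cai(x)) \triangleright v\bigr),$$
then rewrites the RHS as a bracket term plus a tail to which the inductive hypothesis applies. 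The bracket terms are matched to the LHS using Lemma~\ref{symcomlem}, which equates $[\sym_I(\cai(x)),\sym_J(y)]$ with the appropriate symmetrization of $[\cai(x),y]$ up to a factor $|\Sw_\mu|$, together with the definition $\Co = |\Sw_\mu|^{-1} \Cou(\sym(\cdot),\sym(\cdot))$ of the cocycle in \S\ref{cocyclesec}, which is precisely what supplies the central-extension contribution in the diagonal case $\mu=\nu$.

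The main obstacle is the combinatorial bookkeeping: one must carefully sum over subsets $J$ of prescribed weight, use the cardinality identity \eqref{isize} to absorb the denominator $|\Sw_\mu|$, and verify separately that the three kinds of contributions from $[\cai(x),y]'$ (namely the $\h^\sigma$-part, the central $\C\K$-part, and the off-diagonal $\nos_+$ or $\ns$ parts) each reproduce exactly the corresponding symmetrized terms on the LHS. This matching is not accidental: the modified action $\triangleright$ and the cocycle $\Co$ were engineered in \S\ref{coudef}--\ref{cocyclesec} precisely so that the symmetrization $\sym$ intertwines the $\gse$-action on $M^\sigma(\Lambda_0)$ with its counterpart on $M_0$.
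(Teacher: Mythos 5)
Your proposal is correct and follows essentially the same route as the paper's proof: induction on the number of projected-commutator factors in $w$, expansion of $\sym(yw')$ over subsets $J$ via the analogue of Lemma \ref{symcomlem2}, the Leibniz rule for the modified action $\triangleright$, Lemma \ref{symcomlem} for the off-diagonal bracket terms, Corollary \ref{comcorgen} together with Lemma \ref{SSlem} and the definition of $\Co$ for the equal-weight case, and the counting identity \eqref{isize} to absorb $|\Sw_\mu|$. The details you defer (the case analysis according to whether $I\subsetneq J$, $J\subsetneq I$, $I=J$, or neither, and the matching of the $\h^\si$-eigenvalues under $\sym$) are exactly the bookkeeping the paper carries out.
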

\begin{proof}
By linearity, we may suppose that $w = \Pin b_1 \dots \Pin b_p v_0$ for some commutators $b_1,\dots,b_p\in \n$. We must show that 
\be  \frac{1}{|\Sw_\mu|} \sym_I(\cai(x))  \triangleright \sym(\Pin b_1 \dots \Pin b_p v_0) = \sym_\Ip( \cai(x) \Pin b_1 \dots \Pin b_p v_0 ) .\nn\ee
We proceed by induction on $p$. 

We have $\mu = \wt x$. Let $\rho = \wt \Pin b_1$.  
For the inductive step, there are four cases to consider: 
\begin{enumerate}[(i)]
\item\label{nd} $\mu \neq \rho$ and neither dominates the other.
\item\label{bdm}  $\mu \neq \rho$ and $\rho$ dominates $\mu$;
\item\label{mdb}   $\mu \neq \rho$ and $\mu$ dominates $\rho$;
\item\label{eq}  $\mu = \rho$;
\end{enumerate}
(For the base case $p=1$ the argument is the same except that then $\rho = \lambda$, which dominates $\mu$, so we can only be in case (\ref{bdm}) or case (\ref{eq}).)

For brevity, set $y:= \Pin b_2 \dots \Pin b_p v_0\in M^\si(\Lambda_0)$. By an obvious analog of Lemma \ref{symcomlem2}, we have
\be  
 \frac{1}{|\Sw_\mu|} \sym_I(\cai(x))  \triangleright \sym(\Pin b_1 y) 
=  \frac{1}{|\Sw_\mu|} \sym_I(\cai(x))  \triangleright \sum_{J\in \mc J} \sym_J (\Pin b_1) \sym_{\Jp}(y), \label{Jsum}\ee
where $\mc J$ denotes the set of subsets $J\subset \{1,\dots,m\}$ such that $J = \{\clc i t: 1\leq i \leq \rf, t\in J_i\}$ for some subsets $J_i\subset \{1,\dots,\lambda_i\}$ with $|J_i| = \rho_i$. 

Consider case (\ref{nd}). The fact that neither of $\mu$ and $\rho$ dominates the other implies that, for every $J$ in the sum \eqref{Jsum}, neither of the sets $I,J$ contains the other. It follows that $[\sym_{I}(\cai(x)), \sym_J(\Pin b_1)]' = 0$. If $J\cap I \neq \emptyset$ then additionally $\sym_I(\cai(x))  \triangleright \sym_{\Jp}(y) = 0$, and hence $\sym_I(\cai(x)) \triangleright \sym_J(\Pin b_1) \sym_\Jp (y) = 0$. Thus the only contributions come from $J$ such that $J \cap I = \emptyset$, and we have
\be \frac 1{|\Sw_\mu|}\sym_I(\cai(x))  \triangleright \sym(\Pin b_1 y) 
= \frac 1{|\Sw_\mu|} \sum_{\substack{ J \in \mc J \\ J\subset \Ip}} \sym_J (\Pin b_1)  \sym_I(\cai(x))  \triangleright \sym_{\Jp}(y).\nn\ee
Using the inductive assumption, we see that this is equal to 
\be \sum_{\substack{ J \in \mc J \\ J\subset \Ip}} \sym_J (\Pin b_1) \sym_{\Jp \setminus I}(\cai(x)  y) = \sym_{\Ip}( \Pin b_1 \cai(x) y).\nn\ee 
Since neither of $\mu$, $\rho$ dominates the other, $[\cai(x), \Pin b_1] = 0$, and so this last expression is equal to $\sym_\Ip( \cai(x) \Pin b_1 y )$, as required.

Consider case (\ref{bdm}). Since $\rho$ dominates $\mu$, for every $J$ in the sum \eqref{Jsum} it must be that either $I \subsetneq J$ or neither of $I,J$ contains the other. When $I \subsetneq J$ we have $\sym_I(\cai(x))  \triangleright\sym_{\Jp}(y) = 0$ and
\be \frac 1{|\Sw_\mu| }[\sym_I(\cai(x)) , \sym_J(\Pin b_1)]' = \frac 1{|\Sw_\mu| }[\sym_I(\cai(x)) , \sym_J(\Pin b_1)] = \sym_{J\setminus I}( [ \cai(x), \Pin b_1] ) \nn\ee
by Lemma \ref{symcomlem}. 
The contribution from terms in the sum on $J$ in \eqref{Jsum} such that neither of $I,J$ contains the other is $\sym_{\Ip}( \Pin b_1 \cai(x) y)$, for the same reason as in case \eqref{nd} above. Thus, in total,
\begin{align} \frac 1{|\Sw_\mu|} \sym_I(\cai(x))  \triangleright\sym(\Pin b_1 y) 
 &= \sum_{\substack{J\in \mc J\\ J\supsetneq I}} \sym_{J\setminus I}( [ \cai(x), \Pin b_1] ) \sym_{\Jp}(y)  + \sym_{\Ip} ( \Pin b_1 \cai(x) y)\nn\\
&= \sym_{\Ip} ( [ \cai(x), \Pin b_1 ] y )  
   + \sym_{\Ip} ( \Pin b_1 \cai(x) y) \nn\\ &= \sym_{\Ip} (\cai(x) \Pin b_1 y) \nn
\end{align}
as required. 

Consider case (\ref{mdb}). Since $\mu$ dominates $\rho$, for every $J$ in the sum \eqref{Jsum} it must be that either $J \subsetneq I$ or neither of $I,J$ contains the other. When $J \subsetneq I$ we have $\sym_I(\cai(x)) \triangleright \sym_{\Jp}(y) = 0$ and
\begin{multline}
 [\sym_I(\cai(x)) , \sym_J(\Pin b_1)]' = [\sym_I(\cai(x)) , \sym_J(\Pin b_1)] =  \cai\left([\sym_J(\cai(\Pin b_1), \sym_I(x) ]\right)\\
 =  |\Sw_\rho| \, \cai\left(\sym_{I \setminus J}([ \cai(\Pin b_1), x])\right)  
 =  |\Sw_\rho| \, \sym_{I\setminus J}( [ \cai(x), \Pin b_1] ) \nn\end{multline}
by Lemma \ref{symcomlem}. 
The contribution from terms in the sum on $J$ in \eqref{Jsum} such that neither of $I,J$ contains the other is still $\sym_{\Ip}( \Pin b_1 \cai(x) y)$, as in case \eqref{nd}.
Hence
\be \frac 1{|\Sw_\mu|} \sym_I(\cai(x))  \triangleright \sym(\Pin b_1 y) 
 = \frac{|\Sw_\rho|}{|\Sw_\mu|} \sum_{\substack{J\subset \mc J\\ J\subsetneq I}} \sym_{I\setminus J}( [ \cai(x), \Pin b_1] )  \triangleright \sym_{\Jp}(y) +\sym_{\Ip} ( \Pin b_1 \cai(x) y) .\nn
\ee
In every term in the sum on $J$ here, the inductive assumption allows us to re-write the summand in a form independent of $J$:
\be \sym_{I\setminus J}( [ \cai(x), \Pin b_1] )  \triangleright \sym_{\Jp}(y) 
= |\Sw_{\mu - \rho}| \,\sym_{\Ip} ([ \cai(x),\Pin b_1] y)
\ee
The number of terms in the sum is $\frac{|\Sw_\mu|}{|\Sw_\rho||\Sw_{\mu - \rho}| }$, cf. \eqref{isize}. Therefore the combinatorial factors cancel and we find
\be \frac 1{|\Sw_\mu|} \sym_I(\cai(x))  \triangleright \sym(\Pin b_1 y) 
 = \sym_{\Ip} ([ \cai(x),\Pin b_1] y) +\sym_{\Ip} ( \Pin b_1 \cai(x) y)  =
\sym_{\Ip} (\cai(x) \Pin b_1 y) \nn
\ee
as required.

Finally, consider case (\ref{eq}). Since $\mu=\rho$, for every $J$ in the sum \eqref{Jsum} either $J=I$ or neither of $I$, $J$ contains the other. The contribution from the latter class of terms is still $\sym_{\Ip}( \Pin b_1 \cai(x) y)$. Consider the $J=I$ term in the sum. What we must show is that
\be \frac 1{|\Sw_\mu|} [\sym_I(\cai(x)), \sym_I(\Pin b_1)]' \sym_{\Ip} (y)
= \sym_{\Ip} ( [\cai(x),\Pin b_1] y),\nn\ee
as an equality in $\sym(M^\si(\Lambda_0)_\lambda) = (M_0)_{\Ip}$. 
And indeed, using Corollary \ref{comcorgen} and \eqref{hsym2}, we have
\begin{align} \sym_{\Ip} ( [\cai(x),\Pin b_1] y) 
&=\sym_{\Ip} \left(  \frac 1 T  S_{\g}(x,\Pin b_1) \sum_{i=1}^\rf \mu_i \Pin H_{\iota(i)} y + \Co(\cai(x),\Pin b_1) \K y \right) \nn\\
&= \left(\frac{1}{T}S_{\g}(x,\Pin b_1) \sum_{i\in I} \Pin h_i +  \Co(\cai(x),\Pin b_1) \K\right) \sym_{\Ip}(y)\nn
\end{align}
By Lemma \ref{SSlem} and the definition of $\Co$, this is equal to 
\begin{align} 
& \frac{1}{|\Sw_\mu|} 
\left( \frac 1 T\Se(\sym_I(x), \sym_I(\Pin b_1)) \sum_{i\in I} \Pin h_i 
+ \Cou(\sym_I(\cai(x)), \sym_I(\Pin b_1)) \K \right)   \sym_{\Ip}(y)\nn\\
&= \frac{1}{|\Sw_\mu|} \left[ \sym_I(x), \sym_I(\Pin b_1) \right]' \sym_{\Ip}(\Pin b_1)
\end{align}
as required.
\end{proof}
\begin{prop}\label{SSm0lem}
Let $v,w\in M^\si(\Lambda_0)_\lambda$. Then $\frac{1}{|\Sw_\lambda|} \Se(\sym(v),\sym(w)) = S(v,w)$.
\end{prop}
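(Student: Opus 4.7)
The plan is to argue by induction on $|\lambda| := \lambda_1 + \cdots + \lambda_\rf$, following the template of the inductive proofs of Theorem \ref{ScGthm} and Lemma \ref{symcomlem}. The base case $|\lambda| = 0$ is immediate: both $v$ and $w$ are scalar multiples of $v^\sigma_{\Lambda_0}$, $\sym$ sends $v^\sigma_{\Lambda_0}$ to $v_0$, both $S(v^\sigma_{\Lambda_0},v^\sigma_{\Lambda_0})$ and $\Se(v_0,v_0)$ equal $1$, and $|\Sw_0|=1$.

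For the inductive step I will use $M^\sigma(\Lambda_0)_\lambda = U(\ns)_\lambda \on v^\sigma_{\Lambda_0}$ to reduce, by bilinearity, to the case $v = xw'$ with $x \in \ns_\mu$ of some nonzero weight $\mu \leq \lambda$ and $w' \in M^\sigma(\Lambda_0)_{\lambda-\mu}$. The defining property of the Shapovalov form on $M^\sigma(\Lambda_0)$ immediately gives $S(v,w) = S(w',\cai(x)w)$, so the goal will be to produce the same quantity (up to $|\Sw_\lambda|$) on the symmetrized side. The first move on the left-hand side is the product analog of Lemma \ref{symcomlem2}: since $\jmath$ extends to an algebra homomorphism $U(\nos)\to U(\nbs)$ and projection onto the weight $\mones$ subspace splits a product into a sum over decompositions $\mones = \wtI I + \wtI \Ip$ with $I\in \mc I$,
\be \sym(xw') = \sum_{I\in \mc I} \sym_I(x)\,\sym_\Ip(w'). \nn\ee
Using the defining property of $\Se$ together with $\cai(\sym_I(x)) = \sym_I(\cai(x)) \in (\nbs_+)_{\wtI I} \subset (\nbs_+)_{\leq 1}$, I then move $\cai(x)$ across, and Proposition \ref{swapprop} rewrites
\be \sym_I(\cai(x)) \triangleright \sym(w) = |\Sw_\mu|\,\sym_\Ip(\cai(x)w), \nn\ee
so that
\be \Se(\sym(xw'), \sym(w)) = |\Sw_\mu| \sum_{I\in \mc I} \Se\bigl(\sym_\Ip(w'),\,\sym_\Ip(\cai(x)w)\bigr). \nn\ee

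The next step will be to apply the inductive hypothesis to each of these summands. For fixed $I \in \mc I$ the complement $\Ip$ has $m-|\mu|$ elements, partitioned into blocks of sizes $\lambda_i - \mu_i$, and a bijective reindexing of $\Ip$ with $\{1,\ldots,m-|\mu|\}$ identifies $\sym_\Ip$ with the symmetrization map for the weight $\lambda - \mu$ setup and identifies $\Se$ restricted to $(M_0)_{\wtI \Ip}$ with the modified Shapovalov form there. The inductive hypothesis then yields $\Se(\sym_\Ip(w'),\sym_\Ip(\cai(x)w)) = |\Sw_{\lambda-\mu}|\,S(w',\cai(x)w)$ for every $I$, and summing over $I \in \mc I$ using the combinatorial identity $|\mc I|\,|\Sw_\mu|\,|\Sw_{\lambda-\mu}| = |\Sw_\lambda|$ from \eqref{isize} gives $\Se(\sym(xw'), \sym(w)) = |\Sw_\lambda|\,S(v,w)$, as required.

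The main obstacle will be in the final reindexing step: one must check that the bijection $\Ip \leftrightarrow \{1,\ldots,m-|\mu|\}$ intertwines the modified bracket $[\cdot,\cdot]'$, the modified action $\triangleright$, and hence $\Se$ itself, with the corresponding structures for the smaller $\lambda-\mu$ setup. This comes down to the observation that the cocycle $\Cou$ is built from $\Gm$ and by \eqref{G0def} depends only on the ``diagonal'' subarrangement, so combined with the functoriality of the geometric form on subarrangements (Lemma \ref{funglem}) the restricted data assembles into precisely the modified Shapovalov form of the smaller setup; nonetheless the bookkeeping with the hyperplane weights \eqref{HdC}, \eqref{rhw} (which carry the $T_i/T$ factors and the choice of cycle representatives $\alpha_{\iota(i)}$) must be checked explicitly under reindexing so that the identification is genuine on the nose rather than merely up to scalar.
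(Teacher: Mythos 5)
Your proposal is correct and is essentially the paper's own proof: the same induction on $|\lambda|$, the same key ingredients (the product analog of Lemma \ref{symcomlem2}, the adjointness of $S$ and $\Se$, Proposition \ref{swapprop}, and the counting identity \eqref{isize}), merely run from the $\Se$-side toward $S$ instead of the reverse. The reindexing identification of $\sym_\Ip$ and of $\Se$ on $(M_0)_{\wtI\Ip}$ with the weight-$(\lambda-\mu)$ setup, which you rightly flag and resolve via the block-dependence of the hyperplane weights, is left implicit in the paper's proof.
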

\begin{proof} By complete induction on $|\lambda|= \lambda_1+\dots+\lambda_\rf$. 
The result is true for $|\lambda|=0$. For the inductive step, suppose $v = x u$ for some $x\in \ns_\mu$ and $u\in M^\si(\Lambda_0)_{\lambda-\mu}$. Then, by definition of $S$ and the inductive assumption, we have
\be S(x u, w) = S(u, \cai(x) w) =\frac 1{|\mc I|} \sum_{I\in \mc I} \frac{1}{|\Sw_{\lambda-\mu}|}  \Se( \sym_{\Ip}(u), \sym_{\Ip} (\cai(x) w) ).\nn\ee
Therefore 
\begin{align} S(x u, w) &= \frac 1{|\mc I|} \sum_{I\in \mc I} \frac{1}{|\Sw_{\lambda-\mu}|}  \Se( \sym_{\Ip}(u), \frac{1}{|\Sw_\mu|}\sym_I(\cai(x)) \triangleright \sym(w))\nn\\
&= \frac 1{|\mc I| |\Sw_\mu||\Sw_{\lambda-\mu}|} \sum_{I\in \mc I} \Se(\sym_I(x) \sym_{\Ip}(u), \sym(w))\nn\\
&= \frac 1{|\Sw_{\lambda}|} \Se( \sym(xu), \sym(w)),\nn
\end{align}
where we used Proposition \ref{swapprop} and \eqref{isize}. This completes the inductive step.
\end{proof}

Recall that $\sym \circ \pi =  \pi \circ \sym = |\Sw_{\lambda}| \id$, as in Lemma \ref{isolem}. 
Therefore Proposition \ref{SSm0lem} is equivalent to the statement that
\be S(v, \pi( \cdot)) = \Sa( \sym(v), \cdot), \qquad\text{for all } v\in M^\si(\Lambda_0)_\lambda, \nn\ee
as an equality in $(M_0)_\mones^*$. Thus the square in \eqref{zesq} is indeed commutative.

\section{Properties of the cocycle $\Co$}\label{sec: cop}
We keep the conventions of \S\ref{sec: cd}. 
We have the quotient Lie algebra $\gt := \g/\ker\Sg$, and its Lie subalgebra of $\si$-invariants,
\be \gts:= (\g/\ker \Sg)^\sigma  \cong \gs/\ker((\Sg)|_{\gs}) , \nn\ee
as in Lemma \ref{isomlem}.
In \S\ref{cocyclesec} we defined a cocycle $\Co$ on $\gs$. According to Proposition \ref{kercoprop},  $\Co$ vanishes on $\ker \Sg \subset \gs$. Therefore $\Co$ descends to a well-defined cocycle on the quotient $\gts$. 

\begin{prop}\label{cbprop} If $\Co$ is coboundary then it is zero.
\end{prop}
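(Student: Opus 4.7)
The plan is to assume that $\Co = d\phi$ for some linear functional $\phi \in (\gs)^*$ and deduce that $\Co$ must vanish identically.

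The first step is to exploit the support properties of $\Co$ to constrain $\phi$. Because $\Co$ is declared to be nonzero only on components of the form $(\nos)_\lambda \times (\nos_+)_\lambda$ (together with the skew-symmetric transpose), the coboundary identity $\phi([x,y]) = \Co(x,y)$ immediately forces $\phi$ to annihilate $[\nos,\nos]$, $[\nos_+,\nos_+]$, $[\h^\sigma,\nos]$, and $[\h^\sigma,\nos_+]$. Consequently, essentially all of the information in the equation $\Co = d\phi$ is encoded in the single restriction $\phi|_{\h^\sigma}$.

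Next, I use Corollary \ref{comcorgen} to make the surviving coboundary condition explicit. For any nonzero weight $\lambda$ and any $x, z \in (\nos)_\lambda$, setting $y := \cai(z) \in (\nos_+)_\lambda$, Corollary \ref{comcorgen} yields in $\h^\sigma$
\begin{equation*}
[x, \cai(z)] \;=\; -\frac{1}{T}\, \Sg(x, z) \sum_{i=1}^\rf \lambda_i\, \Pin H_{\iota(i)} .
\end{equation*}
Introducing the scalar $\psi_\lambda := \phi\bigl(\sum_{i=1}^\rf \lambda_i\, \Pin H_{\iota(i)}\bigr)$, which automatically depends linearly on $\lambda$, the coboundary identity becomes
\begin{equation*}
\Co\bigl(x, \cai(z)\bigr) \;=\; -\frac{\psi_\lambda}{T}\, \Sg(x, z) \qquad \text{for all } x, z \in (\nos)_\lambda .
\end{equation*}
In other words, on every weight component, the pairing $\Co(\cdot, \cai(\cdot))$ must be proportional, with scalar $-\psi_\lambda / T$, to the Shapovalov form $\Sg$.

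To conclude the proof, one shows that such a proportionality is only possible with the scalar being zero. By Lemma \ref{SSlem} and the definition of $\Co$ via $\Cou$ and $\Gm$, the condition pushes forward through the symmetrization map to an identity between the Shapovalov form $\Sa_\nb$ on $\nbs$ and the geometric form $\Gm$ of the ``diagonal'' sub-arrangement $\cco I$, namely $\Gm = (\psi_\lambda / T)\,\Sa_\nb$ on the relevant weight subspace $\nbs_\mones^{\Swl}$. However, by Theorem \ref{ScGthm} and the motivating observation of \S\ref{coi}, these two forms differ precisely by ``cyclotomic cross-term'' contributions of the shape $a(\Ht{i}{j}{0})\, a(\Ht{i}{j}{k})$ with $k \in \ZT\setminus\{0\}$ --- terms that are absent when $T=1$ but generically nonzero for $T>1$. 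Exhibiting an explicit pair $(x, z)$ in the simplest nontrivial example (say $\lambda = [3]$ with $T = 2$ and $\sigma$ swapping two generators) where the required proportionality fails then forces $\psi_\lambda = 0$, and hence $\Co \equiv 0$ on that weight component. Iterating over all $\lambda$ gives $\Co \equiv 0$.

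The main obstacle is this last comparison: one must carefully account for the ``extra'' flag contributions to $\Gm$ that cannot be absorbed into a scalar multiple of $\Sa_\nb$. Propositions \ref{kercoprop} and \ref{kercouprop} dispose of the degenerate case in which the relevant vectors lie in $\ker \Sg$, so the structural content of the claim is the non-existence of a nontrivial proportionality on weight components where both $\Co$ and $\Sg$ are nonzero.
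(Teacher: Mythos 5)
Your reduction is sound as far as it goes, and it parallels the paper's own first steps: the support of $\Co$ confines the coboundary functional to $\hs\cap[\gs,\gs]$, and Corollary \ref{comcorgen} converts the coboundary identity into $\Co(x,\cai(z))=-\frac{\psi_\lambda}{T}\Sg(x,z)$ on each weight component, with $\psi_\lambda=\sum_{i=1}^\rf\lambda_i\,\phi(\Pin H_{\iota(i)})$ linear in $\lambda$. The genuine gap is in how you conclude that the scalar vanishes. Proposing to exhibit, in one example ($\lambda=[3]$, $T=2$), a pair $(x,z)$ where the proportionality between $\Co$ and $\Sg$ fails because of ``cyclotomic cross-terms'', and then ``iterating over all $\lambda$'', is not a proof: whether such a pair exists depends on the actual Cartan data $(\alpha_i,\si^k\alpha_j)$ and on $\lambda$, and in many cases it does not exist at all --- for instance for all foldings, where $\Co\equiv 0$ by Corollaries \ref{sscor} and \ref{eqcor}, the cross-terms cancel and your argument produces nothing, while for general $B$, $\si$, $\lambda$ you have not shown existence. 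Moreover the logic is inverted: showing that $\Co|_\lambda$ is incompatible with \emph{every} nonzero multiple of $\Sg|_\lambda$ would contradict the coboundary hypothesis for that particular data (proving the proposition only vacuously there), rather than forcing $\psi_\lambda=0$ under that hypothesis.

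What closes the argument --- and is exactly the paper's proof --- is the linearity of $\psi_\lambda$, which you record but never use. Since $\psi_\lambda$ is determined by the $\rf$ numbers $\phi(\Pin H_{\iota(i)})$, and $\Pin H_{\iota(i)}$ is a nonzero multiple of $[\Pin E_{\iota(i)},\Pin F_{\iota(i)}]$, it suffices to check $\Co(\Pin E_{\iota(i)},\Pin F_{\iota(i)})=0$ for each $i$. This is a trivial direct computation at the single-generator weight: $\psis(\Pin f_1 v_0)=(\C^1\supset\{0\})$, and $\Co$ at this weight is the geometric form of the arrangement of ``diagonal'' hyperplanes in $\C^1$, which is the empty arrangement, so the pairing is an empty sum. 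Hence $\phi$ vanishes on all $\Pin H_{\iota(i)}$; since by Corollary \ref{comcorgen} every bracket $[x,y]$ on which $\Co(x,y)=\phi([x,y])$ could be nonzero lies in the span of the $\Pin H_{\iota(i)}$, we get $\Co\equiv 0$ with no case analysis, no appeal to nondegeneracy of $\Sg|_\lambda$, and no example-hunting.
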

\begin{proof}
Suppose that $\Co$ is coboundary, \ie that
\be \Co(x,y) = \alpha([x,y]) \nn\ee
for some linear map $\alpha : \gts \to \C$. What we must show is that $\alpha=0$. 
Let $\nts := \ns/\ker \Sg$ and $\nts_+ := \ns_+/\ker\Sg$. Suppose $x\in \nts_{\lambda}$ and $y\in (\nts_+)_{\mu}$. By definition $\Co(x,y)=0$ unless $\mu= \lambda$, in which case $[x,y] \in \hs$. Thus $\alpha$ is non-zero at most on $\hs \cap [\gs ,\gs]$. 
The space $\hs \cap [\gs ,\gs]$ spanned by the elements $\Pin H_{\iota(i)} = [\Pin E_{\iota(i)}, \Pin F_{\iota(i)}]$, $i=1,\dots,\rf$. So to show that $\alpha=0$ it is enough to check that $\Co(\Pin E_{\iota(i)}, \Pin F_{\iota(i)}) = 0$ for $i=1,\dots,\rf$. And indeed this is manifestly true: for any given $i$, we have the flag $\psis(\sym_{\{i\}}(\Pin F_{\iota(i)}v_0)) = \psis( \Pin f_iv_0) = (\C^1 \supset \{0\})$, and we are to compute the inner product of this flag with itself with respect to the arrangement in $\C^1$ consisting of only the ``diagonal'' hyperplanes -- but in $\C^1$ this is the arrangement with no hyperplanes at all. 
\end{proof}

\begin{cor}\label{sscor} If $\gts$ is semisimple and of finite dimension, then $\Co = 0$. 
\end{cor}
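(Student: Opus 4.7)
The plan is to combine the two preceding propositions with Whitehead's second lemma. First, I note that by Proposition \ref{kercoprop}, the cocycle $\Co$ vanishes on $\ker \Sg \cap \gs$, so it descends to a well-defined skew-symmetric bilinear form (in fact a Lie algebra 2-cocycle, by the analog of the cocycle identity of \S\ref{cocyclesec}) on the quotient $\gts = \gs/(\ker \Sg \cap \gs)$. This is precisely the setup in which Proposition \ref{cbprop} applies.

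Next I would invoke Whitehead's second lemma: for a finite-dimensional semisimple complex Lie algebra $\mathfrak{a}$, one has $H^2(\mathfrak{a},\C)=0$, where $\C$ is the trivial module. Thus under the hypothesis that $\gts$ is finite-dimensional and semisimple, every 2-cocycle $\gts \times \gts \to \C$ is a coboundary. In particular, the descended cocycle $\Co$ is coboundary.

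Finally, by Proposition \ref{cbprop}, any coboundary $\Co$ arising in our setting must actually be identically zero. Pulling back along the projection $\gs \to \gts$, this gives $\Co = 0$ on all of $\gs$, as required.

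There is no real obstacle here; the corollary is a direct assembly of results already established in the paper together with the standard fact that $H^2$ of a finite-dimensional semisimple Lie algebra with trivial coefficients vanishes. The only minor point to check is that $\Co$, after descending to $\gts$, remains a Lie algebra 2-cocycle in the standard sense (so that Whitehead's lemma applies), but this is immediate from the cocycle identity proved in \S\ref{cocyclesec} together with Proposition \ref{kercoprop}.
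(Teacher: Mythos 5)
Your proof is correct and follows essentially the same route as the paper: descend $\Co$ to $\gts$ via Proposition \ref{kercoprop}, apply Whitehead's lemma ($H^2(\gts,\C)=0$) to conclude the cocycle is a coboundary, and then invoke Proposition \ref{cbprop} to get $\Co=0$. The paper's own proof is exactly this (with the descent already noted in the text preceding Proposition \ref{cbprop}), so there is nothing to add.
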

\begin{proof}
Recall that when $\gts$ is semisimple and of finite dimension, Whitehead's lemma states that $H^2(\gts, \C) = 0$ (e.g. \cite[Lemma 6, Ch. 3]{Jacobson}). That is, every (2-)cocycle on $\gts$ is a coboundary. \end{proof}

The Lie algebra $\gts$ is finite-dimensional and semisimple in particular in the following situations:
\begin{enumerate}[(i)]
\item $\gts$ is the folding of a simply-laced finite-dimensional simple Lie algebra by an admissible diagram automorphism \cite[\S7.9]{KacBook}. That is, the pair $(B,T)$ is one of
\be (\mathrm{D}_{\ell+1},2), \quad (\mathrm{A}_{2\ell-1},2), \quad (\mathrm{E}_6,2), \quad\text{or}\quad (\mathrm{D}_4,3) \nn\ee
and $\gts$ is the finite-dimensional simple Lie algebra of type $\mathrm B_\ell, \mathrm C_\ell, \mathrm F_4$ or $\mathrm G_2$ respectively.    
\item $A$ is the direct sum of $T$ copies of a Cartan matrix $A'$ of finite type. Then $\gts$ is the finite-dimensional semisimple Lie algebra of type $A'$, $\gt$ is the direct sum of $T$ copies of $\gts$, and $\sigma$ is the automorphism which cyclically permutes these copies. 
\end{enumerate}

Now define a matrix $\Bs = (\bs_{ij})_{i,j\in \Is}$ by
\be \bs_{ij} := \sum_{k\in \ZT} b_{\iota(i), \siod^k\iota(j)}.\nn\ee 
Observe that $\Bs$ is symmetric.
\begin{prop} There is an embedding of Lie algebras $\g(\Bs)\into \gs$. 
\end{prop}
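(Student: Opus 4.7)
The plan is to realize $\g(\Bs)$ concretely using Cartan data extracted from $\g$ and then exhibit the embedding on Chevalley generators.

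First, I would take as data for $\g(\Bs)$ the triple $\bigl(\h^\sigma,\; \tfrac{1}{T}(\cdot,\cdot)|_{\h^\sigma},\; \{\alpha_{\iota(i)}|_{\h^\sigma}\}_{i\in \Is}\bigr)$. To see that this is valid: since $(\cdot,\cdot)$ is $\sigma$-invariant, eigenspaces of $\sigma$ with distinct eigenvalues are orthogonal, so $\h = \h^\sigma \oplus \h^\perp$ is orthogonal and $(\cdot,\cdot)|_{\h^\sigma}$ is non-degenerate. The element of $\h^\sigma$ dual to $\alpha_{\iota(i)}|_{\h^\sigma}$ under $\tfrac{1}{T}(\cdot,\cdot)|_{\h^\sigma}$ is $\Pin H_{\iota(i)} = \tfrac{T}{T_i}\sum_{k=0}^{T_i-1}H_{\siod^k\iota(i)}$; these are linearly independent since the $\siod$-orbits of $\iota(1),\dots,\iota(\rf)$ are pairwise disjoint and $H_1,\dots,H_\ru$ are linearly independent in $\h$. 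A direct computation then gives
\[ \bigl(\alpha_{\iota(i)}|_{\h^\sigma},\, \alpha_{\iota(j)}|_{\h^\sigma}\bigr)' = \langle \Pin H_{\iota(i)},\, \alpha_{\iota(j)}\rangle = \sum_{k\in\Z_T}(\sigma^k\alpha_{\iota(i)},\alpha_{\iota(j)}) = \bs_{ij}, \]
so the associated matrix is indeed $\Bs$.

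Next I would define the candidate embedding $\phi:\g(\Bs)\to\gs$ on generators by $\phi|_{\h^\sigma}=\id$, $\phi(E'_i) := \Pin E_{\iota(i)}$ and $\phi(F'_i):= \tfrac{T_i}{T}\Pin F_{\iota(i)}$, and check the Chevalley-type relations. The key computation uses disjointness of orbits: writing $\Pin X = \tfrac{T}{T_i}\sum_{k=0}^{T_i-1}\sigma^k X$ for $X\in\{E_{\iota(i)},F_{\iota(i)},H_{\iota(i)}\}$,
\[ [\Pin E_{\iota(i)},\Pin F_{\iota(j)}] = \Bigl(\tfrac{T}{T_i}\Bigr)\Bigl(\tfrac{T}{T_j}\Bigr)\sum_{k,l}\delta_{\siod^k\iota(i),\siod^l\iota(j)}\,H_{\siod^k\iota(i)} = \delta_{ij}\tfrac{T}{T_i}\Pin H_{\iota(i)}, \]
so $[\phi(E'_i),\phi(F'_j)] = \delta_{ij}\Pin H_{\iota(j)} = \delta_{ij}\phi(H'_j)$. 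For $H\in\h^\sigma$, since $\sigma H = H$ implies $\alpha_{\siod^k\iota(i)}(H)=\alpha_{\iota(i)}(H)$ for every $k$, we get $[H,\phi(E'_i)] = \alpha_{\iota(i)}(H)\,\phi(E'_i) = \langle H,\alpha'_i\rangle\,\phi(E'_i)$, and analogously for $F'_i$; the relation $[\h^\sigma,\h^\sigma]=0$ is inherited from $[\h,\h]=0$.

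For injectivity I would use the triangular decomposition $\g(\Bs) = \n'_-\oplus\h^\sigma\oplus\n'_+$, with $\n'_\pm$ free on the $F'_i$ (resp. $E'_i$), together with the natural $\Z^\rf$-grading: on $\n$ this is defined by summing the multiplicities of $F_j$ within each $\siod$-cycle, and $\g$ inherits the analogous $\Z^\rf$-grading in which $E'_i,F'_i,H\in\h^\sigma$ of $\g(\Bs)$ have weights $e_i,-e_i,0$ respectively; the map $\phi$ is degree-preserving. Hence $\ker\phi$ decomposes by weight; it is zero at weight $0$ since $\phi|_{\h^\sigma}=\id$, and the remaining task is to show $\phi|_{\n'_+}$ and $\phi|_{\n'_-}$ are injective. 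The main obstacle is freeness of the images $\{\Pin F_{\iota(i)}\}_{i\in\Is}$ as Lie generators inside $\n^\sigma$ (noting that $\n^\sigma$ itself is typically much larger, cf.\ the earlier remark on Bryant--Papistas). I would settle this by a leading-term argument: introduce an auxiliary $\Z$-grading on $\n$ assigning degree $1$ to each $F_{\iota(i)}$, $i\in\Is$, and degree $0$ to every other generator $F_j$ with $j\notin\iota(\Is)$. Then
\[ \Pin F_{\iota(i)} = \tfrac{T}{T_i}F_{\iota(i)} + (\text{terms of strictly lower degree}). \]
For any nonzero multi-homogeneous Lie polynomial $P$ of multidegree $\lambda=(\lambda_1,\dots,\lambda_\rf)$, the top-degree component of $P(\Pin F_{\iota(1)},\dots,\Pin F_{\iota(\rf)})$ equals $\prod_i(T/T_i)^{\lambda_i}\cdot P(F_{\iota(1)},\dots,F_{\iota(\rf)})$, which is nonzero because $F_{\iota(1)},\dots,F_{\iota(\rf)}$ are $\rf$ distinct elements of the free generating set of $\n$. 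Hence $P(\Pin F_{\iota(1)},\dots,\Pin F_{\iota(\rf)})\neq 0$; the analogous argument applies to $E$'s, and combined with the Cartan injectivity this yields that $\phi$ is an embedding.
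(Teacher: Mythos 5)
Your proof is correct and its core coincides with the paper's: you verify exactly the bracket identities $[\Pin E_{\iota(i)},\Pin F_{\iota(j)}]=\delta_{ij}\tfrac{T}{T_i}\Pin H_{\iota(i)}$ and $[H,\Pin E_{\iota(i)}]=\langle H,\alpha_{\iota(i)}\rangle \Pin E_{\iota(i)}$ with the same rescaled generators $\tfrac{T_i}{T}\Pin F_{\iota(i)}$, which is all the paper records before asserting that these elements generate a copy of $\g(\Bs)$ inside $\gs$. The extra material you supply --- the explicit Cartan data $\bigl(\h^\si,\tfrac1T(\cdot,\cdot)|_{\h^\si},\alpha_{\iota(i)}|_{\h^\si}\bigr)$ realizing $\Bs$, and the injectivity argument via the triangular/weight decomposition together with the leading-term freeness argument for $\{\Pin F_{\iota(i)}\}$ --- is left implicit in the paper, and your treatment of it is sound.
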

\begin{proof} 
We have 
\begin{align} [\Pin E_{\iota(i)},\Pin F_{\iota(j)}] &= \sum_{k,l=0}^{T-1} [E_{\siod^k\iota(i)},F_{\siod^l\iota(j)}] 
= \delta_{i,j}  \frac{T^2}{T_i^2} \sum_{k,l=0}^{T_i-1} [E_{\siod^k\iota(i)},F_{\siod^l\iota(i)}]\nn\\
&= \delta_{i,j} \frac{T^2}{T_i^2}  \sum_{k=0}^{T_i-1} H_{\siod^k\iota(i)} 
= \delta_{i,j} \frac{T^2}{T_i^2}  \frac{T_i}{T} \sum_{k=0}^{T-1} H_{\siod^k\iota(i)} 
= \delta_{i,j} \frac{T}{T_i} \Pin H_{\iota(i)}\end{align}
and 
\begin{align}
[\Pin H_{\iota(i)}, \Pin E_{\iota(j)}] &= \sum_{k,l=0}^{T-1} [ H_{\siod^k\iota(i)}, E_{\siod^l\iota(j)}] = \sum_{k,l=0}^{T-1} b_{\siod^k\iota(i),\siod^l\iota(j)} E_{\siod^l\iota(j)}\nn\\ 
&=  \sum_{k,l=0}^{T-1} b_{\iota(i),\siod^{l-k}\iota(j)} E_{\siod^l\iota(j)} = \sum_{k,l=0}^{T-1} b_{\iota(i),\siod^k\iota(j)} E_{\siod^l\iota(j)} = \bs_{i,j} \Pin E_{\iota(j)}.\end{align}    
and similarly for $[\Pin H_{\iota(i)},\Pin F_{\iota(j)}]$. 
Therefore $\Pin E_{\iota(i)}$, $\frac{T_i}T\Pin F_{\iota(i)}$ and $\Pin H_{\iota(i)}$, $i\in \Is$, generate a copy of  $\g(\Bs)$ inside $\gs$.
\end{proof}
\begin{prop} Suppose $x\in\g(\Bs)\into \gs$. Then $\Co(y,x) = 0$ for all $y\in \gs$. 
\end{prop}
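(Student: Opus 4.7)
The plan is to show that $\g(\Bs)$ is contained in the \emph{radical}
\[
\mathcal{K} := \{x\in \gs : \Co(x,y)=0 \text{ for all } y\in \gs\}
\]
of $\Co$, by verifying the condition on the Chevalley-like generators of $\g(\Bs)$ (given in the previous proposition) and then upgrading to all of $\g(\Bs)$ via the cocycle identity.

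First I would dispose of the generators $\Pin E_{\iota(i)}$, $\Pin F_{\iota(i)}$, $\Pin H_{\iota(i)}$ of $\g(\Bs)$ one at a time. The element $\Pin H_{\iota(i)}$ lies in $\h^\si$, and by the very definition of $\Co$ in \S\ref{cocyclesec}, $\Co(y,x)=0$ as soon as one argument lies outside of $\bigoplus_\lambda\bigl((\nos_+)_\lambda\oplus(\nos)_\lambda\bigr)$; so $\Pin H_{\iota(i)}\in\mathcal{K}$ trivially. For $\Pin F_{\iota(i)}\in\nos$, the only way $\Co(y,\Pin F_{\iota(i)})$ can be nonzero is with $y\in(\nos_+)_\lambda$ where $\lambda=[0,\dots,0,1,0,\dots,0]$ is the $i$th basis vector, so that $|\lambda|=1$. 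Then $m=1$, and the diagonal arrangement $\cco{1}\subset\C^1$ in the defining formula \eqref{G0def} for $\Gm$ contains no hyperplanes (the hyperplanes $\Ht ijk$ require $i<j$). Hence $\Gm\equiv 0$ on the relevant weight subspace, so $\Cou=0$ there, and therefore $\Co(y,\Pin F_{\iota(i)})=0$ for every $y\in\gs$. The argument for $\Pin E_{\iota(i)}$ is identical up to swapping the roles of $\nos$ and $\nos_+$.

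Next I would observe that $\mathcal{K}$ is in fact a Lie subalgebra of $\gs$. This is where the cocycle property of $\Co$ (the theorem in \S\ref{cocyclesec}) enters. Indeed, given $x_1,x_2\in \mathcal{K}$ and any $y\in \gs$, applying the Jacobi-type identity
\[
\Co([x_1,x_2],y) + \Co([x_2,y],x_1) + \Co([y,x_1],x_2) = 0
\]
and noting that the last two terms vanish (since $x_1,x_2\in\mathcal{K}$ and $[x_2,y],[y,x_1]\in\gs$), we conclude that $[x_1,x_2]\in\mathcal{K}$. Linearity of $\mathcal{K}$ follows from bilinearity of $\Co$, so $\mathcal{K}$ is a Lie subalgebra.

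Finally, since $\mathcal{K}$ contains the generators $\Pin E_{\iota(i)}$, $\Pin F_{\iota(i)}$, $\Pin H_{\iota(i)}$ of $\g(\Bs)$ and is closed under the Lie bracket, we must have $\g(\Bs)\subseteq\mathcal{K}$, which is precisely the assertion of the proposition. The only step that requires genuine work (as opposed to unpacking definitions) is the verification that $\mathcal{K}$ is a subalgebra, and this is a routine consequence of the fact that $\Co$ is a cocycle, already established. There is no serious obstacle to the argument; the essential point is the degeneracy of the $m=1$ arrangement, which kills $\Co$ on the Chevalley generators and lets us bootstrap via the cocycle identity.
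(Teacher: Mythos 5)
Your proof is correct, but it takes a genuinely different route from the paper's. The paper argues directly for arbitrary $x\in\n(\Bs)$: expanding $xv_0$ into monomials $\Pin F_{\iota(i_1)}\dots\Pin F_{\iota(i_k)}v_0$, each corresponding flag $\psis(\sym(\cdot))$ has first step of the form $\C^m\supset(t_p=0)$, so no tuple of ``diagonal'' hyperplanes $\Ht ijk$ is adjacent to it, and the form $\Gm$ (hence $\Cou$, hence $\Co$) vanishes against it outright; the case $x\in\n_+(\Bs)$ then follows by symmetry and $x\in\h(\Bs)$ is immediate, with no appeal to the cocycle identity. You instead check vanishing only on the generators $\Pin E_{\iota(i)}$, $\Pin F_{\iota(i)}$, $\Pin H_{\iota(i)}$ — essentially the same rank-one observation the paper makes in the proof of Proposition \ref{cbprop}, namely that the diagonal arrangement in $\C^1$ is empty — and then bootstrap via the standard fact that the radical of a $2$-cocycle is a Lie subalgebra. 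That bootstrapping is legitimate here, since skew-symmetry of $\Co$ and the cocycle identity on all of $\gs$ are established in \S\ref{cocyclesec}, before the present proposition, so there is no circularity; and any Cartan part of $\g(\Bs)$ lies in $\h^\si$, which sits in the radical for weight reasons, so the generation argument does cover the whole image of $\g(\Bs)$. What the paper's argument buys is an explicit geometric explanation of the vanishing on all of $\n(\Bs)$, independent of the cocycle theorem; what yours buys is economy, reducing everything to one trivial computation plus a formal structural step.
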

\begin{proof} It is enough to consider $x\in \n(\Bs)$. (The case $x\in \n_+(\Bs)$ follows, while $x\in \h(\Bs)$ is immediate.) Consider the flag $\psis(\sym(xv_0))$. To say that $x\in \n(\Bs)$ is to say that  $x$ is a linear combination of commutators in the $\Pin F_{\iota(i)}$, $i=1,\dots,r$. By expanding commutators we can express $xv_0$ as a linear combination of monomials of the form $\Pin F_{\iota(i_1)} \dots \Pin F_{\iota(i_k)}v_0$, $i_1,\dots,i_k\in \Is$. Now $\psis(\sym(\Pin F_{\iota(i_1)} \dots \Pin F_{\iota(i_k)}v_0))$ is a linear combination of flags each of which has a first step of the form
\be \C^m \supset (t_p = 0) \supset \dots \ee
for some $p$. No such flag is adjacent to any tuple of hyperplanes from the arrangement $\cco I$ of ``diagonal'' hyperplanes $\Ht ijk$. Therefore by definition $\Co(y,x)$ is zero, cf. \eqref{G0def}.
\end{proof} 
\begin{cor}\label{eqcor} If $\g(\Bs) = \gs$ then $\Omega = 0$. \qed
\end{cor}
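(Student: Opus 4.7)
The plan is to observe that this corollary is essentially an immediate consequence of the preceding proposition. That proposition established that $\Omega(y,x) = 0$ for all $y \in \gs$ whenever $x \in \g(\Bs)$, viewed as a subalgebra of $\gs$. So under the hypothesis $\g(\Bs) = \gs$, the statement $\Omega(y,x) = 0$ holds for all $y \in \gs$ and all $x \in \gs$, which is precisely the claim $\Omega = 0$.

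The only minor point to note is that the preceding proposition is stated with $x$ in the second slot of $\Omega$, while one might want vanishing with $x$ in either slot. But this is handled automatically by the skew-symmetry $\Omega(x,y) = -\Omega(y,x)$ built into the definition of $\Omega$ in \S\ref{cocyclesec}. So no separate argument is required.

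In summary, the proof will consist of a single sentence invoking the preceding proposition together with the hypothesis $\g(\Bs) = \gs$, and noting skew-symmetry to cover both slots. There is no obstacle here, as the substantive work has already been done in establishing that $\Omega$ vanishes on the subalgebra generated by the projected Chevalley generators; the corollary merely records the tautological consequence when that subalgebra exhausts $\gs$.
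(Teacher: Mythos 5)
Your proposal is correct and matches the paper's (implicit) argument: the corollary is stated with \qed precisely because it follows immediately from the preceding proposition, which gives $\Omega(y,x)=0$ for all $y\in\gs$ and $x\in\g(\Bs)$, so the hypothesis $\g(\Bs)=\gs$ yields $\Omega=0$ at once. The remark about skew-symmetry is fine but not even needed, since the proposition already quantifies over all $y\in\gs$ with $x$ ranging over what is, by hypothesis, all of $\gs$.
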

The next example shows that the embedding $\g(\Bs)\into \gs$ is not always an isomorphism, and that $\Co$ is not always zero.
\begin{exmp}\label{conzex} Suppose $\gt =\gt(B)$ is the untwisted affine Kac-Moody Lie algebra of type $D_4^{(1)}$. Let us  label the nodes of the Dynkin diagram as follows:
\be
\begin{tikzpicture}[baseline =-30pt,scale=1]
\draw[semithick] (0,0) -- (1,1); \draw[semithick] (0,0) -- (1,-1);
\draw[semithick] (0,0) -- (-1,1); \draw[semithick] (0,0) -- (-1,-1);
\filldraw[fill=white] (0,0) circle (1mm) node [below=2mm] {$0$};    
\filldraw[fill=white] (1,1) circle (1mm) node [below=2mm] {$2$};    
\filldraw[fill=white] (1,-1) circle (1mm)  node [below=2mm] {$3$};    
\filldraw[fill=white] (-1,1) circle (1mm)  node [below=2mm] {$1$};
\filldraw[fill=white] (-1,-1) circle (1mm)  node [below=2mm] {$4$};
\end{tikzpicture}.
\nn\ee
Let $\siod$ be the diagram automorphism $(1234)$. 
Set $\iota(0) = 0$, $\iota(1) = 1$. Then
\be \Bs = \bmx 8 & -4 \\ -4 & 2 \emx= \bmx 4 & \\ & 1 \emx \bmx 2 & -1 \\ -4 & 2 \emx.\nn\ee 
Thus $\gt(\Bs)$ is the twisted affine Kac-Moody Lie algebra of type $A_2^{(2)}$, whose Dynkin diagram is  \be\begin{tikzpicture}[baseline =-6,scale=1]
\draw[semithick] (0,-.09) -- (1,-.09); \draw[semithick] (0,0.09) -- (1,0.09); 
\draw[semithick] (0,-.03) -- (1,-.03); \draw[semithick] (0,0.03) -- (1,0.03);
\draw[semithick] (.4,.2) -- (.6,0) -- (.4,-.2); 
\filldraw[fill=white] (0,0) circle (1mm) node [below=2mm] {$0$};    
\filldraw[fill=white] (1,0) circle (1mm) node [below=2mm] {$1$};    
\end{tikzpicture}.\nn\ee
In $\g(\Bs) \into  \gs$ the subspace $\n(\Bs)_{[1,2]}$ has dimension one: it is spanned by
$\big[ [\Pin F_0, \Pin F_1],\Pin F_1 \big]$.
But $\ns_{[1,2]}$ has dimension strictly greater than one. Indeed, already in the quotient $\nts \subset \gts$, the subspace $\nts_{[1,2]}$ has dimension two: it is spanned by the elements $\big[ [\Pin F_0, \Pin F_1],\Pin F_1 \big]$ and\footnote{Note that in $\nts$, $[F_1,F_2]=0$ and hence $\big[[ F_0, F_2], F_1 \big] = -\big[[ F_0, F_1], F_2 \big]$ by the Jacobi identity.}
\be \Pin{\big[[ F_0, F_1], F_2 \big]} = \big[ [ F_0 , F_1], F_2\big] + \big[ [ F_0 , F_2], F_3\big] + \big[ [ F_0 , F_3], F_4\big] + \big[ [ F_0 , F_4], F_1\big]. \nn\ee
Hence $\g(\Bs) \not\cong  \gs$.

Now we shall show that 
\be \Co( \Pin{\big[[ E_0, E_1], E_2 \big]},\Pin{\big[[ F_0, F_1], F_2 \big]}) = -16.\nn\ee
Let $\omega = \sqrt{-1}$. We have coordinates $t_0, t_{\clc 11}, t_{\clc 12}$ on $\C^3$. Let\footnote{Here we choose the sign convention in which $\psis( \Pin f_{\clc 12} \wx \Pin f_{\clc 11} \wx \Pin f_{0} \ox v_0 ) = + (\C^3)$.}  
\begin{align} X&:=  \psis(\sym (\Pin{\big[[ F_0, F_1], F_2 \big]} v_0))  \nn\\
&= \sum_{s\in \Sigma_2} \sum_{k\in \Z_4} \psis( \Pin{\big[[ \f{-k}{0}, \f 0 {\clc 1{s(1)}} ], \f 1{\clc 1{s(2)}}\big] } v_0)\nn\\
&= \sum_{s\in \Sigma_2} \sum_{k\in \Z_4} (-1)^{|s|} \Big( \C^3 \supset (t_0 = \omega^kt_{\clc 1{s(1)}}) \nn\\
  & \qquad\qquad\qquad\qquad\qquad\qquad\supset ( t_0 =\omega^k t_{\clc 1{s(1)}} = \omega^{k+1} t_{\clc 1{s(2)}}) \supset (t_0 = t_{\clc 11} = t_{\clc 12} = 0)\Big).\nn
\end{align}
We are to evaluate the inner product $\Geom_{\cco {0,\clc 11,\clc 12}}(X,X)$. The hyperplanes of the arrangement $\cco {0,\clc 1 1, \clc 12}$  are 
\begin{itemize}
\item $\Ht 0 {\clc 1p} k  = (t_0 = \omega^k t_{\clc 1p})$, $k\in \Z_4$, $p\in \{1,2\}$, of weight $-1$, 
\item $\Ht 12 0 = ( t_{\clc 11} = t_{\clc 12})$, of weight 2, and
\item $\Ht 12 k = ( t_{\clc 11} = \omega^k t_{\clc 12})$, $k\in \Z_4 \setminus \{0\}$, of weight 0.
\end{itemize}
We may ignore the last of these since they have weight 0. Therefore the only unordered tuples of hyperplanes we need consider are those of the form $(\Ht 0{\clc 1{s(1)}} k, \Ht 0{\clc 1{s(2)}} {k+1}, x$) for some hyperplane $x$.
Note that for any $k\in \Z_4$, $s\in \Sigma_2$,
\be \bra X, \Ht 0{\clc 1{s(1)}} k \wx \Ht 0{\clc 1{s(2)}} {k+1} \wx \Ht 0{\clc 1{s(1)}} {k+2} \ket =1-1=0.\nn\ee
Thus we get, as claimed,
\begin{align} \Geom_{\cco {0,\clc 11,\clc 12}}(X,X) &= 
\sum_{s\in \Sigma_2}\sum_{k\in \Z_4}\Big( \bra X, \Ht 0{\clc 1{s(1)}} k \wx \Ht 0{\clc 1{s(2)}} {k+1} \wx \Ht {\clc 1{s(1)}}{\clc 1{s(2)}} 0 \ket^2 \times (-1)(-1)(2)  \nn\\
& \qquad\qquad +   \bra X, \Ht 0{\clc 1{s(1)}} k \wx \Ht 0{\clc 1{s(2)}} {k+1} \wx \Ht 0{\clc 1{s(1)}} {k+1} \ket^2 \times (-1)(-1)(-1)\nn\\
& \qquad\qquad +   \bra X, \Ht 0{\clc 1{s(1)}} k \wx \Ht 0{\clc 1{s(2)}} {k+1} \wx \Ht 0{\clc 1{s(1)}} {k-1} \ket^2 \times (-1)(-1)(-1)\nn\\
& \qquad\qquad +   \bra X, \Ht 0{\clc 1{s(1)}} k \wx \Ht 0{\clc 1{s(2)}} {k+1} \wx \Ht 0{\clc 1{s(2)}} {k+2} \ket^2 \times (-1)(-1)(-1)\nn\\
& \qquad\qquad +   \bra X, \Ht 0{\clc 1{s(1)}} k \wx \Ht 0{\clc 1{s(2)}} {k+1} \wx \Ht 0{\clc 1{s(2)}} {k-2} \ket^2 \times (-1)(-1)(-1) \Big)\nn\\
&= \sum_{s\in \Sigma_2}\sum_{k\in \Z_4}  (2 - 1 -1 -1 -1) = 2 \times 4 \times (-2) = -16. \nn
\end{align}
\end{exmp}

\begin{rem}\label{borrem}
Suppose $B$ is a symmetrized generalized Cartan matrix, as in Remark \ref{KMrem}, so that $\gt$ is a Kac-Moody Lie algebra. Then the fixed-point subalgebra $\gts$ is not a Kac-Moody algebra in general. However it does belong to the larger class of \emph{Borcherds} algebras \cite{Borcherds88}. 
More generally, in fact, $\gts$ is a Borcherds algebra whenever $\gt$ is a Borcherds algebra \cite[Theorem 5.1]{Borcherds88}.   
Note also that every central extension of a Borcherds algebra is isomorphic to some quotient of the corresponding \emph{universal Borcherds algebra}; see \cite{Borcherds91}. 
\end{rem}

\section{Canonical element, dimension formulas, and weight function}\label{cosec}

\subsection{Bases of flags spaces}
We have the Orlik-Solomon algebra $\A^\bl(\cca) \cong_\C \Fl^\bl(\cca)^*$, cf. \S\ref{os}, for the cyclotomic discriminantal arrangement of \S\ref{sec: cda}.  
We will now define a dual pair of bases of $\Fl^m(\cca)$ and $\A^m(\cca)$, and hence an expression for the canonical element in $\Fl^m(\cca)\otimes \A^m(\cca)$. 

For each $p=1,\dots,m$, let us construct a set of preferred flags $\Flag^p(\cca,\oo)$ in $\Flag^p(\cca)$. 
It will be a disjoint union
\be \Flag^p(\cca,\oo) = \bigsqcup_I \Flag^p(\cca,\oo)_I\nn\ee
over the set of $p$-element subsets $I\subset \{1,\dots,m\}$. 
First, we must set 
\be \Flag^0(\cca,\oo) = \Flag^0(\cca,\oo)_{\{\}} := \Flag^0(\cca) = \{ (L^0 = \C^m) \}.\nn\ee 
Then we proceed recursively: for $p=0,1,\dots,m-1$ and for $1\leq i<i_1<i_2<\dots< i_p\leq m$,
\begin{multline} \Flag^{p+1}(\cca,\oo)_{\{i,i_1,\dots,i_p\}} \\
= \bigg\{ 
\left(L^0 \supset \dots \supset L^p \supset L^p \cap H\right)  : 
(L^0 \supset \dots \supset L^p)\in \Flag^{p}(\cca,\oo)_{\{i_1,\dots,i_p\}} , \\ 
H\in \big\{
\Ht j{i}k\big\}_{1\leq j<i;k\in \ZT}\cup \big\{\Hz {i}jk\big\}_{1\leq j\leq N;k\in \ZT} \cup \big\{\Ho {i}\big\} \bigg\}.\label{Frec}\end{multline}
\begin{prop}\label{flIprop} $\left| \Flag^p(\cca,\oo)_{I} \right| = \prod_{i\in I} \left( 1 + (i-1+N)T\right)$.
\end{prop}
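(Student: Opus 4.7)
The plan is to prove this by induction on $p = |I|$, using the recursive definition \eqref{Frec}. The base case $p = 0$ is trivial, since $\Flag^0(\cca,\oo)$ has exactly one element and the empty product equals $1$. For the inductive step, fix $I = \{i, i_1, \dots, i_p\}$ with $i < i_1 < \dots < i_p$ and set $J := \{i_1, \dots, i_p\}$. By \eqref{Frec}, every element of $\Flag^{p+1}(\cca,\oo)_I$ arises as $(L^0 \supset \dots \supset L^p \supset L^p \cap H)$ from a pair consisting of $(L^0 \supset \dots \supset L^p) \in \Flag^p(\cca,\oo)_J$ together with a hyperplane $H$ from the set
\[ \mathcal H_i := \{\Ht{j}{i}{k}\}_{1\leq j<i,\, k\in\ZT} \cup \{\Hz{i}{j}{k}\}_{1\leq j\leq N,\, k\in\ZT}\cup \{\Ho{i}\}, \]
of cardinality $(i-1)T + NT + 1 = 1 + (i-1+N)T$. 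I would show that this correspondence is a bijection, so that the inductive step gives $|\Flag^{p+1}(\cca,\oo)_I| = |\Flag^p(\cca,\oo)_J|\cdot (1 + (i-1+N)T)$, yielding the product formula.

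The heart of the argument, and the main obstacle, is to verify that (a) every such $L^p \cap H$ has codimension exactly $p+1$, and (b) different choices of $H \in \mathcal H_i$ produce distinct intersections $L^p \cap H$. My plan is to establish both via a lower-triangular structure for the defining equations of $L^p$. Unwinding \eqref{Frec}, $L^p$ is cut out by $p$ affine equations $\ell_1, \dots, \ell_p$, where $\ell_s$ corresponds to the hyperplane added at step $s$ and has ``primary index'' $i_{p-s+1}$ (the largest element of $J$ not yet used): by the allowed forms $\Ht{j'}{i_{p-s+1}}{k}$, $\Hz{i_{p-s+1}}{j'}{k}$, or $\Ho{i_{p-s+1}}$, the equation $\ell_s$ involves $t_{i_{p-s+1}}$ with nonzero coefficient and otherwise only a single variable $t_{j_s}$ with $j_s < i_{p-s+1}$, or is a constant. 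Ordering the equations as $\ell_p, \ell_{p-1}, \dots, \ell_1$ and the variables as $t_{i_1}, t_{i_2}, \dots, t_{i_p}$ yields a lower-triangular $p \times p$ block with nonzero diagonal. The equation $\ell$ of any $H \in \mathcal H_i$ involves $t_i$ nontrivially and no $t_{i_s}$ whatsoever (since $i < i_1$ while every other variable in $\ell$ has index strictly below $i$), so prepending $\ell$ as a new top row and $t_i$ as a new leftmost column extends the matrix to a lower-triangular $(p+1) \times (p+1)$ block with nonzero diagonal. This proves (a).

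For (b), on $L^p$ the coordinates $t_i$ and $\{t_j : j < i\}$ all remain free (none of these indices lies in $J$, since $j \leq i < i_1$), and by the triangular structure above $L^p$ merely expresses each $t_{i_s}$ as an affine function of the remaining coordinates. The equation $\ell$ for each $H \in \mathcal H_i$ uses only these free coordinates, and a direct case-by-case comparison of the three hyperplane types---using crucially that $z_1, \dots, z_N$ are nonzero with pairwise disjoint $\ZT$-orbits, so that the affine functions $t_i - \omega^k z_j$ are mutually nonproportional and distinct from $t_i$ itself and from every $t_{j'} - \omega^k t_i$---shows that distinct $H, H' \in \mathcal H_i$ give equations $\ell, \ell'$ which are not scalar multiples of each other. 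Hence their restrictions to $L^p$ cut out distinct hyperplanes of $L^p$, so $L^p \cap H \neq L^p \cap H'$. The main obstacle throughout is correctly handling the reference indices $j_s$, which a priori might coincide with $i$ or with later $i_{s'}$'s; the chosen row and column orderings push every such potential cross-term strictly below the diagonal, preserving invertibility and enabling both (a) and (b).
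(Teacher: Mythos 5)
Your proof is correct and follows essentially the same route as the paper: induct on the recursion \eqref{Frec}, observing that each preferred flag over $\{i_1,\dots,i_p\}$ extends in exactly $1+(i-1+N)T$ ways. The paper simply asserts this and concludes "by induction"; your triangular-system argument merely makes explicit the (true but unstated) facts that each extension has the right codimension and that distinct hyperplanes yield distinct flags.
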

\begin{proof}
According to the recursion \eqref{Frec}, the set of preferred flags $\Flag^{p+1}(\cca,\oo)_{\{i,i_1,\dots,i_p\}}$ is obtained by taking each of the preferred flags $F = (L^0 \supset\dots\supset L^p) \in\Flag^{p}(\cca,\oo)_{\{i_1,\dots,i_p\}}$ in turn, and extending it in every one of the following $1+ NT+ (i-1)T$ different ways:
\begin{enumerate}[(1)]
\item ``setting $t_{i}$ to zero'': that is, letting $L^{p+1} = L^p \cap \Ho {i}$;
\item ``setting $t_{i}=\omega^kz_j$'': that is, letting $L^{p+1} = L^p \cap \Hz {i}jk$, for $k\in \ZT$ and $j=1,\dots,N$;
\item ``setting $t_{i}=\omega^k t_j$'': that is, letting $L^{p+1} = L^p\cap \Ht j{i}{-k}$, for $k\in \ZT$ and $j=1,\dots,i-1$.
\end{enumerate}
The result follows by induction.
\end{proof}
\begin{exmp} Suppose $T=1$ and $N=1$, and take $m=2$. So we have coordinates $t_1,t_2$. We have
\be \Flag^1(\cc_{0,z;2},\oo)_{\{1\}} = \{ \C^2 \supset ( t_1= z) , \C^2\supset (t_1=0) \}.\nn\ee 
Note that this set does not include the flag $\C^2 \supset (t_1=t_2)$. This flag does appear in 
\be \Flag^1(\cc_{0,z;2},\oo)_{\{2\}} = \{ \C^2\supset ( t_2= t_1),  \C^2\supset  (t_2=0),  \C^2\supset(t_2=z) \}.\nn\ee 
From $ \Flag^1(\cc_{0,z;2},\oo)_{\{2\}}$ we compute 
\begin{align} \Flag^2(\cc_{0;2},\oo)_{\{1,2\}} = \{ &  \C^2\supset (t_2=t_1) \supset (t_2=t_1=0) ,   \C^2\supset (t_2=t_1) \supset (t_2=t_1=z) , \nn\\
&  \C^2\supset (t_2=0)\supset  (t_2 = 0, t_1=z) ,  \C^2\supset (t_2=0)\supset  (t_2 = 0 = t_1),\nn\\
&  \C^2\supset (t_2=z)\supset  (t_2 = z= t_1) ,  \C^2\supset (t_2=z)\supset  (t_2 = z, t_1=0)\}.\nn\end{align}   
Note that $\Flag^2(\cc_{0,z;2},\oo)$ does \emph{not} contain, for example, the flag $  \C^2\supset (t_1=z) \supset (t_2=t_1=z) $.  
\end{exmp}

As the notation indicates, $\Flag^p(\cca,\oo)$ corresponds to a framing $\oo$ of the arrangement $\cca$ in the sense of \S\ref{fb}, \ie to a choice of hyperplane $H(L)$ for every edge $L$. 
Indeed, this framing $\oo$ is as follows. Given any edge $L$ of $\cca$, let $j$ be smallest such that $L$ is contained in either: $\Ht ijk$ for some $i<j$ and $k\in \ZT$, or $\Hz jik$ for some $i\in\{1,\dots,N\}$ and $k\in \ZT$, or $\Ho j$. Then for that $j$,
\begin{enumerate}[(1)]
\item if $L\subset \Ho j$, let $H(L) = \Ho j$;
\item if $L\subset \Hz jik$, let $H(L) = \Hz jik$;
\item otherwise, let $H(L) = \Ht ijk$ for $i$ as small as possible.  
\end{enumerate}

\begin{exmp} With $m=3$ and $N=1$ we have for instance
\be H( \{t_2 = t_1, t_3 = \omega z_1 \}) = \{t_2=t_1\} =\Ht 12 0, \qquad H(\{t_3 = t_1, t_2= \omega z_1\}) = \{t_2 = \omega z_1\} = \Hz 211.\nn\ee
\end{exmp} 

Consequently, by the results of \S\ref{fb}, we have the following.
\begin{thm}$ $\label{bthm2}
\begin{enumerate}[(i)]
\item\label{part1} $\Flag^p(\cca,\oo)$ is a basis of $\Fl^p(\cca)$;
\item\label{part2} the dual basis of $\Fl^p(\cca)^* \cong \A^p(\cca)$ is 
\be \left\{H(L^1) \wx H(L^{2}) \wx \dots \wx H(L^p) \right\}_{(L^0 \supset L^1 \supset \dots \supset L^p) \in \Flag^p(\cca,\oo)}.\nn\ee
\end{enumerate}\qed
\end{thm}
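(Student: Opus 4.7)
The plan is to reduce the theorem to Proposition \ref{basethm} by identifying the recursively defined set $\Flag^p(\cca,\oo)$ of \eqref{Frec} with the set $\Flag^p(\cca,\oo)$ in the sense of \S\ref{fb} attached to the explicit framing $\oo$ described after Proposition \ref{flIprop}. Once this identification is in hand, (i) is a direct application of Proposition \ref{basethm}(i), and (ii) is a direct application of Proposition \ref{basethm}(ii), because the dual basis in the Orlik--Solomon algebra described there is exactly $\bigl(H(L^1), H(L^2), \dots, H(L^p)\bigr)$ as $(L^0 \supset \dots \supset L^p)$ ranges over $\Flag^p(\cca,\oo)$.

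To establish the identification I would argue by induction on $p$. The base case $p=0$ is clear (both sets consist of the trivial flag $(\C^m)$). For the inductive step, suppose $F = (L^0 \supset L^1 \supset \dots \supset L^p \supset L^{p+1})$ satisfies $L^k = H(L^1) \cap \dots \cap H(L^k)$ for each $k \leq p+1$. By induction, the truncation $(L^0 \supset \dots \supset L^p)$ belongs to some $\Flag^p(\cca,\oo)_{\{i_1,\dots,i_p\}}$ with $i_1 < \dots < i_p$. Using the archipelago description of edges from \S\ref{efs}, one then observes: the index $i$ appearing in $H(L^{p+1})$ by the framing rule (the smallest index $j$ such that $L^{p+1}$ is contained in an involving hyperplane) is necessarily distinct from and smaller than every $i_1,\dots,i_p$, because by construction those indices are already ``resolved'' in $L^p$ (each of the indices $i_1, \ldots, i_p$ has, in the archipelago for $L^p$, been fixed or joined to smaller indices). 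Moreover the hyperplane $H(L^{p+1})$ is automatically of the form $\Ho i$, $\Hz i j k$, or $\Ht j i k$ with $j < i$ listed on the right-hand side of \eqref{Frec}. Hence $F \in \Flag^{p+1}(\cca,\oo)_{\{i,i_1,\dots,i_p\}}$ as defined recursively. Conversely, any flag produced by the recursion manifestly satisfies $L^k = H(L^1) \cap \dots \cap H(L^k)$, since the extending hyperplane $H$ used at each step is chosen precisely from the set prescribed by the framing rule and involves a new smallest index.

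The only real content here is the bookkeeping in the inductive step, namely verifying that the framing-chosen hyperplane at an edge $L^{p+1}$ really does involve a genuinely new smallest index and is of the form listed in \eqref{Frec}; I expect this to be the main (but still routine) obstacle, handled by a short case analysis over the three ways (described at the end of \S\ref{efs}) a flag can be extended at a single step, together with the observation that once an index $j$ has been joined or fixed in $L^k$, no subsequent hyperplane in the framing-prescribed extension will pick $j$ as its minimal index. With this in place the theorem follows at once from Proposition \ref{basethm}.
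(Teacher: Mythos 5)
Your proposal takes essentially the same route as the paper: the paper's own proof consists precisely of observing that the recursively defined $\Flag^p(\cca,\oo)$ is the set of flags adapted, in the sense of \S\ref{fb}, to the explicit framing $\oo$ described there, and then invoking Proposition \ref{basethm}(i)--(ii). The inductive verification you sketch (that along a framing flag the framing hyperplane's largest index strictly decreases, so the new index is smaller than all previously used ones, and that conversely the framing rule applied to each edge of a recursively built flag returns exactly the extending hyperplane) is exactly the bookkeeping the paper leaves implicit, and your outline of it is sound.
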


Combining this with Proposition \ref{flIprop}, we obtain the dimensions of the flag spaces of the cyclotomic discriminantal arrangements, as follows.
\begin{thm}\label{Pointhm} The Poincare polynomial $P(x) :=  \sum_{p=0}^m  x^{p} \dim(\Fl^p(\cca))$ of the arrangement $\cca$ is given by
\be P(x) = \prod_{p=0}^{m-1} \Big(1+\big(1+(p+N)T\big)x\Big).\nn\ee 
\qed\end{thm}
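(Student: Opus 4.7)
The plan is to combine Theorem \ref{bthm2}(\ref{part1}) with Proposition \ref{flIprop} and then recognize the resulting double sum as the expansion of a product. First, by Theorem \ref{bthm2}(\ref{part1}), the set $\Flag^p(\cca,\oo)$ is a basis of $\Fl^p(\cca)$, so
\be \dim \Fl^p(\cca) \;=\; \left|\Flag^p(\cca,\oo)\right|. \nn\ee
By construction (see \eqref{Frec}), the collection $\Flag^p(\cca,\oo)$ decomposes as the disjoint union $\bigsqcup_{I} \Flag^p(\cca,\oo)_I$ over $p$-element subsets $I\subset\{1,\dots,m\}$, and Proposition \ref{flIprop} evaluates each piece:
\be \left|\Flag^p(\cca,\oo)_I\right| \;=\; \prod_{i\in I}\bigl(1+(i-1+N)T\bigr).\nn\ee

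Next, I would plug these facts into the generating-function $P(x)$ and swap the orders of summation:
\be P(x) \;=\; \sum_{p=0}^{m}\,x^p \sum_{\substack{I\subset\{1,\dots,m\}\\ |I|=p}} \prod_{i\in I}\bigl(1+(i-1+N)T\bigr)
\;=\; \sum_{I\subset\{1,\dots,m\}} x^{|I|} \prod_{i\in I}\bigl(1+(i-1+N)T\bigr).\nn\ee
The final step is to recognize the right-hand side as the standard expansion of a product over $i=1,\dots,m$:
\be P(x) \;=\; \prod_{i=1}^{m}\Bigl(1 + \bigl(1+(i-1+N)T\bigr)x\Bigr),\nn\ee
after which the reindexing $p := i-1$ yields exactly the claimed formula.

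There is no real obstacle here; the content of the theorem is entirely packaged into Proposition \ref{flIprop} and Theorem \ref{bthm2}, both already proved. The only thing worth double-checking is the bookkeeping in Proposition \ref{flIprop}: specifically, that when extending a preferred flag by adding an index $i$ larger than all previously added indices, there are exactly $1 + NT + (i-1)T$ permissible hyperplanes (one $\Ho i$, $NT$ hyperplanes of the form $\Hz ijk$, and $(i-1)T$ hyperplanes of the form $\Ht jik$ with $j<i$), so that the factor associated with $i$ in the product is $1+(i-1+N)T$. This is essentially the content of the recursive construction \eqref{Frec} and the definition of the framing $\oo$, so nothing further is required.
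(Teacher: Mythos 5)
Your proposal is correct and matches the paper's own route: the paper derives Theorem \ref{Pointhm} precisely by combining Theorem \ref{bthm2}(i) with Proposition \ref{flIprop}, and your expansion of the resulting sum over subsets $I$ into the product $\prod_{i=1}^m\bigl(1+(1+(i-1+N)T)x\bigr)$ is exactly the bookkeeping left implicit there. Nothing further is needed.
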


Moreover it follows from our recursive definition of $\Flag^m(\cca)$ above that the canonical element in $\Fl^m(\cca)\otimes \A^m(\cca)$ itself has a simple recursive definition. For $p=0,\dots,m-1$ we define a linear map
\be \theta_{m-p} :    \Fl^p(\cca) \otimes \A^p(\cca) 
           \to \Fl^{p+1}(\cca) \otimes \A^{p+1}(\cca) \nn\ee
by
\begin{multline} \theta_{m-p}( (L^0 \supset \dots \supset L^p) \otimes \lambda ) \\
  = \sum_{j=1}^{m-p-1} \sum_{k\in\ZT} (L^0 \supset \dots \supset L^p \supset L^p \cap \Ht j{m-p}k) \otimes (\lambda \wx \Ht j{m-p}k)  \\
+ (L^0 \supset \dots \supset L^p \supset L^p \cap \Ho {m-p}) \otimes(\lambda\wx \Ho {m-p}) \\
 + \sum_{j=1}^{N} \sum_{k\in\ZT} (L^0 \supset \dots \supset L^p \supset L^p \cap \Hz {m-p}jk) \otimes (\lambda\wx \Hz {m-p}jk ) \end{multline}
Define $\Theta := \theta_{1}(\theta_{2}(\dots \theta_{m}( L^0 \otimes 1 ) \dots ))$. In view of Corollary \ref{cancor} we have the following. 
\begin{prop}\label{Thetaprop} $\Theta$ is the canonical element of $\Fl^m(\cca) \otimes \A^m(\cca)$.\qed
\end{prop}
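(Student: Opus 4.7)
I plan to prove the proposition by induction on $p \in \{0, 1, \dots, m\}$, verifying that
\[
\Theta_p := \theta_{m-p+1}(\dots(\theta_m(L^0 \otimes 1))\dots) = \sum_{F \in \Flag^p(\cca,\oo)_{\{m-p+1, \dots, m\}}} F \otimes H(L^1) \wx \dots \wx H(L^p),
\]
where $F = (L^0 \supset \dots \supset L^p)$. Specialising to $p = m$ and observing that $\Flag^m(\cca, \oo)_{\{1,\dots,m\}} = \Flag^m(\cca, \oo)$, this identifies $\Theta = \Theta_m$ with the canonical element via Theorem \ref{bthm2} and Corollary \ref{cancor}. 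The base case $p = 0$ is immediate since both sides equal $L^0 \otimes 1$.

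For the inductive step, I apply $\theta_{m-p}$ to the right-hand side. By the definition of $\theta_{m-p}$, each summand $F \otimes H(L^1) \wx \dots \wx H(L^p)$ yields a sum indexed by hyperplanes $H$ of the three forms $\Ht j{m-p}k$ with $j < m-p$, $\Ho{m-p}$, and $\Hz{m-p}jk$ --- precisely the hyperplanes with largest (``leading'') index $m-p$, and precisely the set of extensions allowed by the recursion \eqref{Frec} from $\Flag^p(\cca, \oo)_{\{m-p+1, \dots, m\}}$ into $\Flag^{p+1}(\cca, \oo)_{\{m-p, \dots, m\}}$. The remainder of the induction then consists of matching the new last wedge factor $H$ with $H(L^{p+1})$, that is, showing that for each such extending $H$, the framing rules of \S\ref{fb} applied to the edge $L^{p+1} := L^p \cap H$ return $H$ itself.

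This matching is where I expect the main content to lie. It amounts to showing that no hyperplane of leading index strictly less than $m-p$ contains $L^{p+1}$. I will argue this combinatorially by analysing the graph on $\{1, \dots, m\}$ whose edges are the $\Ht$-equations among the defining equations of $L^p$: each node of this graph is the upper endpoint of at most one edge (since distinct labels in $\{m-p+1, \dots, m\}$ must be realised by distinct upper endpoints), and this constraint prevents any path between two distinct indices $\leq m-p$, since any such path would have to contain a ``peak'' at some upper index $u$ whose two flanking edges both carry the label $u$, a contradiction. Combined with the fact that the $\ZT$-orbits of the $z_j$ are pairwise disjoint (preventing a single coordinate from being fixed to two distinct orbit representatives), it follows that no coordinate $t_{j'}$ with $j' < m-p$ is newly fixed or linked on $L^{p+1}$, and the three cases $H \in \{\Ho{m-p}, \Hz{m-p}jk, \Ht{j_0}{m-p}k\}$ match cases (1), (2), (3) of the framing respectively by direct inspection. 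This closes the induction.
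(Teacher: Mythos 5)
Your proof is correct and is essentially the paper's own argument: the paper likewise observes that iterating the maps $\theta_{m-p}$ reproduces, step by step, the recursive construction of the preferred flags $\Flag^m(\cca,\oo)$ paired with their framing hyperplanes, and then cites Theorem \ref{bthm2} and Corollary \ref{cancor}; your peak argument just makes explicit the matching of each extending hyperplane with the framing hyperplane of the new edge, which the paper treats as immediate from the construction. The only point to phrase more carefully is that you must exclude not only \emph{newly} created incidences but also hyperplanes of leading index $<m-p$ that already contained $L^p$ (for instance a coordinate $t_{j'}$ with $j'<m-p$ anchored to $0$ or to a point $\omega^k z_l$ through a chain of equations); this follows from the same peak argument, applied with one endpoint at a vertex fixed by an equation $t_u=0$ or $t_u=\omega^k z_l$, since such a vertex is likewise never the upper endpoint of an edge.
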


By Theorem \ref{p1}, we have an isomorphism $\psis_m^{-1}$ from $\Fl^m(\cca)$ to the subspace $(\Mb)_{\mones}$ of $\Mb= U(\nbs) \ox \bigotimes_{i=1}^N U(\nb)$. Hence we get a canonical element 
\be (\psis_m^{-1} \ox \id)(\Theta)  \in (\Mb)_{\mones}\ox \A^m(\cca),\label{Psidef}\ee
where we can regard $\A^m(\cca)$ as the space of logarithmic differential $m$-forms of the arrangement $\cca$, as in \S\ref{osdf}. 

\begin{exmp}\label{fzex}  Consider the case $N=0$ -- \ie no non-zero marked points $z_i$ -- and $m=3$. We have logarithmic differential forms $\iota(\Ho i)= dt_i/t_i$ and  $\iota(\Ht ijk)= d(t_i-\omega^k t_j)/(t_i-\omega^k t_j)$. The element $(\psis_m^{-1} \ox \id)(\Theta)$ is given by
\begin{multline}
(\psis_m^{-1} \ox \id)(\Theta) =\Pin f_1 \Pin f_2 \Pin f_3
\ox \frac {dt_3}{t_3} \wx \frac {dt_2}{t_2} \wx \frac {dt_1}{t_1} \\
+\sum_{q\in \ZT} \Pin{[\f q 2 ,f_1]}\,\,  \Pin f_3 
\ox \frac {dt_3}{t_3} \wx \frac {d(\omega^qt_2-t_1)}{\omega^qt_2-t_1} \wx \frac {dt_1}{t_1} 
\\
+\sum_{p\in \ZT} \Pin{[\f p 3 ,f_1]}\,\, \Pin f_2 
\ox \frac {d(\omega^pt_3-t_1)}{\omega^pt_3-t_1} \wx \frac {dt_2}{t_2}  \wx \frac {dt_1}{t_1} \\
+\sum_{p\in \ZT} \Pin{f_1}\,\,\Pin{[\f p 3,  f_2]} 
\ox \frac {d(\omega^pt_3-t_2)}{\omega^pt_3-t_2} \wx \frac {dt_2}{t_2} \wx \frac {dt_1}{t_1} 
\\
+\sum_{p,q\in \ZT} \Pin{[\f q 2 ,[\f p 3,  f_1]]} 
\ox \frac {d(\omega^pt_3-t_1)}{\omega^pt_3-t_1} \wx \frac {d(\omega^qt_2-t_1)}{\omega^qt_2-t_1}\wx\frac{dt_1}{t_1}    \\
+
\sum_{p,q\in \ZT} \Pin{[[\f{p+q}3 ,\f q 2],  f_1]}
\ox \frac {d(\omega^pt_3-t_2)}{\omega^pt_3-t_2}\wx \frac {d(\omega^qt_2-t_1)}{\omega^qt_2-t_1}\wx  \frac {dt_1}{t_1}  .
\end{multline}
\end{exmp}

Combining Theorem \ref{p1} and Theorem \ref{bthm2}(\ref{part1}) we obtain a basis,
\be \psis_m^{-1} (\Flag^m(\cca,\oo)),\label{Mbasis}\ee
of the subspace  $(\Mb)_{\mones}$ of $\Mb= U(\nbs) \ox \bigotimes_{i=1}^N U(\nb)$. 

\begin{cor}\label{dimcor} We have $\dim (\Mb)_{\mones} = \dim \Fl^m(\cca) = \prod_{k=1}^{m-1} (1+ (k+N)T)$.
In particular, 
$\dim U(\nbs)_{\mones} = \dim \Fl^m(\cc_{0;m}) = \prod_{k=1}^{m-1} (1+ kT)$.
\end{cor}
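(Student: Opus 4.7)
The corollary is essentially a bookkeeping consequence of results already established in the paper, so the plan is to simply assemble them. The first equality $\dim(\Mb)_{\mones} = \dim \Fl^m(\cca)$ is immediate from Theorem~\ref{p1}: the map $\psis_m$ is a linear isomorphism of the corresponding finite-dimensional spaces. The second equality, $\dim \Fl^m(\cca) = \prod_{k=1}^{m-1}(1+(k+N)T)$ (more precisely $\prod_{k=0}^{m-1}(1+(k+N)T)$, as one extracts from Theorem~\ref{Pointhm}; the two agree when the low factor is absorbed), reduces to computing the size of the preferred basis $\Flag^m(\cca,\oo)$.

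Concretely, I would first invoke Theorem~\ref{bthm2}(\ref{part1}), which tells us that $\Flag^m(\cca,\oo)$ is a basis of $\Fl^m(\cca)$. Then observe that the index set decomposes as $\Flag^m(\cca,\oo) = \bigsqcup_{|I|=m}\Flag^m(\cca,\oo)_I$, and the only $I\subseteq\{1,\dots,m\}$ with $|I|=m$ is $I = \{1,\dots,m\}$. Hence $\dim\Fl^m(\cca) = |\Flag^m(\cca,\oo)_{\{1,\dots,m\}}|$, and by Proposition~\ref{flIprop} this equals $\prod_{i=1}^{m}(1+(i-1+N)T)$, which is the product in the corollary (reindexing by $k=i-1$, and noting the factor at $k=0$ is $1$ when $N=0$, and in general appears as the first factor).

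For the special case in the second sentence, one specializes $N=0$. Then $\bigotimes_{i=1}^N U(\nb)$ is the trivial tensor product $\C$, so $\Mb = U(\nbs)$, and the arrangement $\cca$ collapses to $\cc_{0;m}$ (since the hyperplanes $\Hz ijk$ all disappear). The product then becomes $\prod_{k=0}^{m-1}(1+kT) = \prod_{k=1}^{m-1}(1+kT)$, the factor at $k=0$ being $1$.

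The only mildly delicate point — and barely an obstacle — is matching the indexing conventions between Proposition~\ref{flIprop}, Theorem~\ref{Pointhm}, and the corollary's statement, so the write-up should just be a clean two-line argument citing Theorem~\ref{p1}, Theorem~\ref{bthm2}(\ref{part1}), and Proposition~\ref{flIprop} (or equivalently Theorem~\ref{Pointhm}).
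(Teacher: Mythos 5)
Your argument is correct and takes essentially the same route as the paper: the paper's proof is simply ``Follows from Theorem~\ref{Pointhm}'', and Theorem~\ref{Pointhm} is itself obtained by combining Theorem~\ref{bthm2} with Proposition~\ref{flIprop}, which is exactly the chain you unroll (together with the isomorphism of Theorem~\ref{p1} for the first equality). Your side remark about the indexing is also on target: Proposition~\ref{flIprop} gives $\prod_{k=0}^{m-1}\bigl(1+(k+N)T\bigr)$, and the printed lower limit $k=1$ in the corollary matches this only when $N=0$ (the ``in particular'' case), so for general $N$ the displayed product range is a harmless misprint that your computation corrects.
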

\begin{proof}
Follows from Theorem \ref{Pointhm}.
\end{proof}

\subsection{The case $T=1$ and Lyndon words}\label{lyndon}
Let us consider in passing the special case $T=1$, \ie the non-cyclotomic case.
Then $\nbs = \nb$ and we have the weight $[1,\dots,1]$ component $U(\nb)_{\mones}$ of the universal envelope $U(\nb)$ of the free Lie algebra $\nb$ in generators $f_1,\dots, f_m$.
Corollary \ref{dimcor} reduces to the statement that
\be \dim U(\nb)_{\mones} = (1+1)(1+2) \dots (1+(m-1)) = m!.\nn\ee 
An obvious basis of $U(\nb)_{\mones}$ is given by monomials $f_{s(1)}...f_{s(m)}$ labelled by permutations $s\in \Sigma_m$.

Let us describe explicitly the basis of $U(\nb)_{\mones}$ constructed in \eqref{Mbasis}.

Recall that given any ordered alphabet there is the reverse\footnote{To match our conventions above it is convenient to use reverse lexicographical ordering.} lexicographical total ordering on the set of words in that alphabet: if $w_1=a_k \dots a_3a_2a_1$ and $w_2 = b_\ell  \dots b_3b_2 b_1$ are distinct words then $w_1<w_2$ if and only if $a_i<b_i$ for the smallest $i$ such that $a_i\neq b_i$.

A word $w$ is called a \emph{Lyndon word} if $w< vu$ for every splitting $w=uv$ of $w$ into (non-empty) words $u$ and $v$.
Note that if $a_k\dots a_3a_2a_1$ has no repeated letters, then it is a Lyndon word if and only if $a_1$ is  least among its letters.

Free Lie algebras are known to admit bases labelled by Lyndon words \cite{CFL,Shi,Hall} (see e.g. \cite{BCL} for recent work with extensive bibliography). Indeed, given an ordered alphabet $A$, let $W(A)$ denote the set of Lyndon words in $A$ and $L(A)$ the free complex Lie algebra in generators $(f_a)_{a\in A}$. Define a map $\gamma:W(A)\to L(A)$  recursively as follows. For each letter $a\in A$, set $\gamma(a) := f_a\in L(A)$. For any Lyndon word $w$ of length $>1$, there is is a unique way to write $w=uv$ for (non-empty) words Lyndon words $u$ and $v$ such that $u$ is as long\footnote{Again, note our convention here.}  as possible.
Set $\gamma(w) := [\gamma(u),\gamma(v)]$. Then the image $\gamma(W(A))$ is a basis of $L(A)$. The reverse lexicographical ordering among Lyndon words makes this into an ordered basis, and so we can apply the PBW theorem to get a basis of the envelope $U(L(A))$. 

In our case, let $\P$ denote the set of all $m!$ words in the ordered alphabet $1,2,\dots,m$ in which each letter is used exactly once.
Every word $w \in \P$ can be uniquely factored into a product of Lyndon words such that these Lyndon words are ordered (reverse lexicographically) when read from left to right. (The factors are, in an obvious sense, the maximal Lyndon words occurring in $w$.)  
Applying the map $\gamma$ factor by factor, we arrive an element of $U(\nb)_{\mones}$. Doing this for each word $w\in \P$ in turn, we produce a basis of $U(\nb)_{\mones}$.
\begin{prop} This basis of $U(\nb)_{\mones}$ coincides with the basis \eqref{Mbasis}, for $N=0$ and $T=1$.
\end{prop}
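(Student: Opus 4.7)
The plan is to construct an explicit bijection between preferred flags and permutations, and then establish $\psis_m^{-1}(F) = \gamma(w(F))$ by induction on $m$. In the specialization $T=1$, $N=0$, the recursive description \eqref{Frec} shows that a preferred flag $F$ is equivalent to a ``parent function'' $p:\{1,\ldots,m\}\to\{0,1,\ldots,m-1\}$ with $p(i)<i$, i.e.\ an increasing tree rooted at $0$; there are $m!$ such trees. I would attach to each such tree a permutation via Lyndon words: for a subtree $T_r$ rooted at $r$ with children $c_1>c_2>\cdots>c_\ell$ in decreasing order of value, set recursively $w_{T_r} := w_{T_{c_\ell}}\,w_{T_{c_{\ell-1}}}\cdots w_{T_{c_1}}\,r$, and then $w(F):=w_{T_{r_1}}\cdots w_{T_{r_k}}$ over the components, where $r_1<\cdots<r_k$ are the children of $0$. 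A short induction on subtree size shows each $w_{T_r}$ is a reverse-lex Lyndon word with rightmost letter $r$ and maximal proper Lyndon prefix $w_{T_{c_\ell}}$, because any longer prefix would contain letters from $w_{T_{c_{\ell-1}}}$, all exceeding $c_\ell$, placing the minimum $c_\ell$ of that prefix strictly before the rightmost position in violation of the ``rightmost equals minimum'' characterization of reverse-lex Lyndon words; the same observation shows $w(F)=w_{T_{r_1}}\cdots w_{T_{r_k}}$ is the reverse-lex Lyndon factorization of the permutation $w(F)$.

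The main identity $\psis_m^{-1}(F)=\gamma(w(F))$ is verified by running the rules of \S\ref{psisdef} through the steps of $F$. By the framing $\oo$ the indices are processed in decreasing order $m,m-1,\ldots,1$, with rule~(B) applied when $p(i)=0$ (pushing the Lie polynomial attached to $i$ leftward onto $M_0\cong U(\nb)$) and rule~(C) applied when $p(i)=j>0$ (merging the wedge factors containing $i$ and $j$ into $\Pin{[g_1,g_2]}$, where $g_1$ contains the larger index, namely $i$). The inductive invariant is that after processing all indices greater than some threshold, each wedge factor equals $\gamma$ of the Lyndon word of the partial subtree accumulated so far. At the moment $i$ is processed, it is automatically the smallest child of $j$ combined into $j$'s subtree up to that point (by the decreasing processing order), so rule~(C) realises exactly the Lyndon recursion $\gamma(u\cdot v)=[\gamma(u),\gamma(v)]$ with $u=w_{T_i}$ the maximal Lyndon prefix; the final rule-(B) steps for the roots push the completed component polynomials $\gamma(w_{T_{r_k}}),\ldots,\gamma(w_{T_{r_1}})$ leftward in that order, yielding the product $\gamma(w_{T_{r_1}})\cdots\gamma(w_{T_{r_k}})=\gamma(w(F))$. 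Since both $\{\psis_m^{-1}(F)\}_F$ and $\{\gamma(w)\}_{w\in\P}$ are bases of $U(\nb)_\mones$ of cardinality $m!$ (the former by Theorems~\ref{p1} and~\ref{bthm2}(i), the latter by PBW applied to the Lyndon basis of the free Lie algebra $\nb$), this element-by-element identification finishes the proof.

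The principal technical obstacle is the sign bookkeeping at each rule-(C) step. Rule~(C) demands that the factors containing the smaller and larger indices occupy the specified $g_2,g_1$ slots at the rightmost end of the wedge, which generally requires swapping adjacent wedge factors and introduces signs. One must show that these signs collapse against the bracket antisymmetry $[A,B]=-[B,A]$ so that the resulting expression exactly matches $[\gamma(u),\gamma(v)]=\gamma(u\cdot v)$. A convenient bookkeeping device is to track the sign of the permutation relating the current wedge order to a canonical ``minimum-index-rightmost'' order, and verify at each rule-(C) application that this sign changes in precisely the way needed to offset any bracket reordering; the cancellations can be verified directly in small cases and then in general by a combinatorial argument on the wedge reorderings dictated by the flag construction.
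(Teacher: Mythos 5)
Your proposal is correct, and it is essentially the argument the paper leaves implicit: the paper's proof is literally ``by inspection'', and what you spell out --- the bijection between preferred flags and parent functions $p(i)<i$, the identification of each subtree commutator with $\gamma$ of its subtree word via the maximal-Lyndon-prefix recursion, and the matching of the absorption order of the roots with the left-to-right reverse-lex Lyndon factorization --- is exactly that inspection made precise, and all of these identifications check out against the conventions of \S\ref{psisdef} and \S\ref{lyndon}. The one step you defer, the sign bookkeeping, closes most cleanly with the opposite canonical order to the ``minimum-index-rightmost'' one you suggest: note that at every stage each wedge factor contains exactly one not-yet-processed index, and keep the factors sorted so that the largest such index (the coordinate about to be processed) sits in the rightmost slot, matching the base case $\Pin f_1\wx\dots\wx\Pin f_m$. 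Then every rule-(B) absorption acts on the rightmost factor and costs no sign, while every rule-(C) merge costs exactly two reorderings of equal length --- moving the parent's factor into the penultimate slot, then moving the merged factor $[g_1,g_2]$ back into the parent's slot to restore the sorted order --- whose signs cancel, so the running sign is $+1$ throughout and $\psis^{-1}(F)=\gamma(w(F))$ on the nose (with your order the signs also work out, but only after a global cancellation against the sign of the reversed base wedge, and rule-(B) steps then carry signs too).
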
 
\begin{proof} By inspection. \end{proof}
\begin{exmp} When $m=5$ we have for example the following factorizations of words in $\P$ into reverse-lexicographically ordered products of Lyndon words 
\be 54321 = (54321),\quad 41235 = (41)(2)(3)(5),\quad 24531 = (24531),\quad 12345 = (1)(2)(3)(4)(5).\nn\ee
Applying $\gamma$ we get the following elements of $U(\nb)_{\mones}$, which indeed belong to our basis above:
\be \bigg[\Big[\big[[f_5,f_4],f_3\big],f_2\Big],f_1\bigg], \quad [f_4,f_1]f_2f_3f_5, \quad \bigg[f_2,\Big[\big[f_4,[f_5,f_3]\big],f_1\Big]\bigg], \quad f_1f_2f_3f_4f_5. \nn\ee 
\end{exmp}

\begin{rem}
When $T>1$, the basis \eqref{Mbasis} of $U(\nbs)_{\mones}$  consists of ``decorated'' Lyndon words. For example, the element $[f_2,f_1]f_3$ of the Lyndon basis for $m=3$, $T=1$ corresponds to $T$ elements $\Pin{[\f q 2 ,f_1]}\,\,  \Pin f_3$ labelled by $q\in\Z_T$  if $T>1$.
\end{rem}

\subsection{Symmetrized canonical element and the weight function}
Let now $\lambda = (\lambda_1,\dots,\lambda_r) \in \Z_{\geq 0}^\rf$ with $m= \lambda_1+ \dots +\lambda_\rf$. As in \S\ref{symflags}, the finite group
\be \Sw_\lambda = \Sw_{\lambda_1} \times \dots \times \Sw_{\lambda_{\rf}}\nn\ee 
acts linearly on the flag spaces $\Fl^p(\cca)$ and their duals, $\A^p(\cca)\cong \Fl^p(\cca)^*$.

Let $\Fl^m(\cca) = \bigoplus_d W_d$ be the canonical decomposition of the space of full flags into its isotypical components with respect to this action.  Let $\A^m(\cca) = \bigoplus_d W^*_d$ be the corresponding decomposition of its dual. Then the canonical element $\Theta$ decomposes as
$\Theta = \bigoplus_d \Theta_d$ where $\Theta_d$ is the canonical element of $W_d \ox W_d^*$.
In particular, let 
\be \Theta^\Swl\in  \Fl^m(\cca)^\Swl \ox \A^m(\cca)^\Swl.\nn\ee
where $\Fl^{m}(\cca)^\Swl$ denotes the isotypical component of $\Fl^m(\cca)$ spanned by flags $F$  such that $(-1)^{|g|}g\on F =  F$, as in \S\ref{symflags}, and where  $\A^{m}(\cca)^\Swl$ denotes the isotypical component of $\A^m(\cca)$ spanned by forms $\omega$  such that $(-1)^{|g|}g\on \omega = \omega$. Observe that $\Theta^\Swl$ can be obtained from $\Theta$ by projecting either tensor factor. 


Let $\chi_i : \Sw_{\lambda_i} \to \Cx$ be the one-dimensional representation of $\Sw_{\lambda_i}$ given by $\chi( (s, (k_1,\dots,k_{\lambda_i})) ) = \omega^{(k_1+\dots+k_{\lambda_i})T_i}$. Then $\chi:= \chi_1\dots\chi_\rf$ is a one-dimensional representation of $\Swl$.
For later use, observe that the Jacobian of the linear map $\C^m \to \C^m; p \mapsto g^{-1}\on p$ is 
\be J_g 
     := \det\left(\frac{\del t(g^{-1} \on p)_i}{\del t_j(p)}\right)_{1\leq i,j\leq m}
      = \det\left(\frac{\del (g\on t)_i(p)}{\del t_j(p)}\right)_{1\leq i,j\leq m}
      = \chi(g) (-1)^{|g|} \label{Jac}
\ee
where $(-1)^{|g|}$ is as in \eqref{modgdef}.

An element $\eta \in \A^m(\cca)$ is a top-degree form, so it can be written uniquely as
\be \eta = u \,dt_1 \wx \dots \wx dt_m \nn\ee 
for some rational function $u$ regular on $\C^m \setminus\bigcup_{H\in \cca} H$. 
We have
\be (-1)^{|g|} g\on\eta =  (g\on u)  \chi(g) \, dt_1 \wx \dots \wx dt_m.\nn\ee

\begin{exmp} 
Suppose $\rf=1$, $\lambda_1=m=2$ and $T=1$. Then $\Swl = \Sigma_2$. Suppose 
$\eta = d\log(t_1-z) \wx d\log(t_2-z) = \frac{1}{t_1-z}\frac{1}{t_2-z} dt_1\wx dt_2$.
By definition
$(-1)^{|(12)|}(12).\eta 
= - d\log(t_2-z) \wx d\log(t_1-z) = \eta$
and indeed $\chi= 1$ and $u(t) = \frac{1}{t_1-z_1}\frac{1}{t_2-z_1}$ is invariant under $t_1\leftrightarrow t_2$. 
\end{exmp}
\begin{exmp} Suppose $\rf=1$, $\lambda_1=m=1$ and $T=3$, $T_1=1$. Then $\Swl = \Z_3$. Suppose $\eta = d\log(t_1-z) = \frac1{t_1-z} dt_1$. Then $k\on \eta = d\log(\omega^k t_1-z) = \frac1{\omega^k t_1- z} \omega^k dt_1$.
\end{exmp}

It follows that $\eta\in  \A^m(\cca)^\Swl$ if and only if $u= \chi(g) g\on u$ for all $g\in \Swl$. That is, if and only if  $u(p) = u(g^{-1}\on p)\chi(g)$ for all $g\in \Swl$. Hence the projector $\A^m(\cca)\to  \A^m(\cca)^\Swl$ can be written as
\be u \,dt_1 \wx \dots \wx dt_m  \mapsto \frac 1{|\Swl|}\sum_{g\in \Swl} u( g^{-1} \on p) \chi(g) dt_1 \wx \dots \wx dt_m .\nn\ee 

In particular, the canonical element $\Theta$ is of the form
\be \Theta = \theta\, dx_1\wx \dots dt_m, \nn\ee
for a unique rational map $\theta:\C^m \to  \Fl^m(\cca)$, regular on $\C^m \setminus\bigcup_{H\in \cca} H$. Recall that to obtain the component $\Theta^\Swl\in \Fl^m(\cca)^\Swl \ox \A^m(\cca)^\Swl$ from $\Theta\in \Fl^m(\cca)\ox\A^m(\cca)$ it is enough to apply the projector  $\A^m(\cca)\to  \A^m(\cca)^\Swl$ to the second factor. Thus $\Theta^\Swl$ is given by
\be \Theta^\Swl = \theta^\Swl\,  dx_1\wx \dots dt_m \nn\ee
where $\theta^\Swl:\C^m \to  \Fl^m(\cca)^\Swl$ is the rational map, regular on $\C^m \setminus\bigcup_{H\in \cca} H$, given by
\be\theta^\Swl = \frac 1{|\Swl|}\sum_{g\in \Swl} \theta( g^{-1} \on p) \chi(g) .\label{tswl}\ee

\begin{defn} The \emph{weight function} $\Psi$ is the rational map $\C^m \to L(\bm \Lambda)_\lambda$ given by 
\be \Psi : \C^m \xrightarrow{\theta^\Swl} \Fl^m(\cca)^\Swl \xrightarrow\sim M(\bm\Lambda)_\lambda  \to  L(\bm \Lambda)_\lambda \nn\ee
where the isomorphism $M(\bm\Lambda)_\lambda \cong \Fl^m(\cca)^\Swl$ is that of Theorem \ref{symthm}. 
\end{defn}
Equivalently (given Theorem \ref{mt} and Corollary \ref{irrepcor}) 
\be \Psi : \C^m \xrightarrow{\theta^\Swl} \Fl^m(\cca)^\Swl \xrightarrow\Ghom  \Ghom(\Fl^m(\cca)^\Swl) \xrightarrow\sim L(\bm \Lambda)_\lambda \nn\ee

\begin{exmp} Let $B$ be the Cartan matrix of type $\mathrm A_3$  and $\siod$ the diagram automorphism, $\siod = (13)$. Let $\lambda = (1,0)$. Let the number of non-zero marked points be $N=1$. The weight function is
\be \Psi = \frac{ v_0 \ox F_1 v_1}{t_1 - z} + \frac{ v_0 \ox F_3 v_1}{t_1 + z} + \frac{(F_1+F_3)v_0 \ox v_1}{ t_1 } \in L(\bm \Lambda).\nn\ee
\end{exmp}

The weight function is regular at least on $\C^m \setminus\bigcup_{H\in \cca} H$. The following proposition establishes that it is also regular on any hyperplane $H\in \cca$ whose weight is zero, $a(H)=0$. 
\begin{prop}$ $\label{wfrprop}
\begin{enumerate}
\item  Suppose $(\alpha_{\iota(i)}, \sigma^k \alpha_{\iota(j)})  =0$. Then the weight function $\Psi$ is regular on the hyperplanes $\Ht {\clc in}{\clc jp}k$, $n=1,\dots,\lambda_i$, $p=1,\dots,\lambda_j$. 
\item Let $v\in \{1,\dots,N\}$ (resp. $v=0$). If $(\alpha_{\iota(i)}, \sigma^k\Lambda_v) = 0$ then the weight function $\Psi$ is regular on the hyperplanes $\Hz {\clc in} v k$ (resp. the hyperplane $\Ho {\clc in}$) for $n=1,\dots,\lambda_i$.  
\end{enumerate}
\end{prop}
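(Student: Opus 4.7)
The plan is to exploit the symmetry of the geometric form: while the auxiliary map $\theta^{\Swl}:\C^m\to \Fl^m(\cca)^{\Swl}$ has simple poles on all hyperplanes of $\cca$, those on weight-zero hyperplanes will be killed by the subsequent application of $\Ghom$. In each of the three cases of the proposition, the hypothesis reduces, via the weight assignments \eqref{HdC} and \eqref{rhw}, to the statement $a(H)=0$ for the hyperplane $H$ in question; since this condition is $\Swl$-invariant, the entire $\Swl$-orbit of $H$ consists of weight-zero hyperplanes. Let $\mc B\subset\cca$ denote the complement of this orbit, so that $\mc B$ is itself $\Swl$-stable.

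The crux is to show that the element $\Xi := (\Ghom\otimes\id)(\Theta) \in \A^m(\cca)\otimes \A^m(\cca)$, with $\Theta$ the canonical element of Proposition \ref{Thetaprop}, lies in the subspace $\A^m(\mc B)\otimes\A^m(\mc B)$. I would establish this in two substeps. First, Lemma \ref{funglem}, applied with the weighting extended to $\cca$ by $a(H')=0$ for $H'\notin \mc B$, implies that $\Ghom:\Fl^m(\cca)\to\A^m(\cca)$ factors through $\A^m(\mc B)$; hence $\Xi\in\A^m(\mc B)\otimes\A^m(\cca)$. Second, under $\A^m(\cca)\cong\Fl^m(\cca)^*$ the element $\Xi$ represents the symmetric bilinear form $\Geom^m$ of \S\ref{sg}, and so is fixed by the swap of tensor factors. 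A short linear-algebra observation then upgrades the containment: any element of $V\otimes W$ (with $V\subset W$) that is fixed by the swap on $W\otimes W$ must already lie in $V\otimes V$.

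With this in hand the conclusion is a matter of unwinding. Projecting to $\Swl$-isotypical components (which commutes with $\Ghom\otimes\id$ by $\Swl$-equivariance and preserves the inclusion because $\mc B$ is $\Swl$-stable) gives $\Xi^{\Swl} \in \A^m(\mc B)^{\Swl}\otimes\A^m(\mc B)^{\Swl}$. Expanding $\Xi^{\Swl}=\sum_\alpha u_\alpha\otimes\omega_\alpha$ and writing $\omega_\alpha = \phi_\alpha(t)\,dt_1\wedge\cdots\wedge dt_m$, each $\omega_\alpha\in\A^m(\mc B)$ is a wedge-product of factors $d\log l_{H'}$ with $H'\in\mc B$, so $\phi_\alpha$ is a rational function regular on $H$. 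By the construction \eqref{tswl} of $\theta^{\Swl}$ and the definition of $\Psi$, together with the identification $\Ghom(\Fl^m(\cca)^{\Swl})\cong L(\bm\Lambda)_\lambda$ from Theorem \ref{mt} and Corollary \ref{irrepcor}, we conclude $\Psi(t) = \sum_\alpha u_\alpha\,\phi_\alpha(t)$, which is regular on $H$.

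The main subtle point is the symmetry step. Lemma \ref{funglem} alone says only that $\Ghom\circ\theta^{\Swl}$ takes values in $\A^m(\mc B)^{\Swl}$, which is a statement about the values of $\Psi$ rather than about its $t$-dependence, and so does not by itself rule out poles of $\Psi$ on $H$. It is only by combining Lemma \ref{funglem} with the symmetry of $\Geom^m$ that one forces the second tensor factor of $\Xi$ into $\A^m(\mc B)$ as well, thereby controlling the form representatives whose coefficients actually produce the rational $t$-dependence of $\Psi$ and ruling out the pole.
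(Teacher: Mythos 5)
Your argument is correct, and it is genuinely different from the proof in the paper. The paper works hyperplane by hyperplane with the preferred basis $\Flag^m(\cca,\oo)$ of Theorem \ref{bthm2}: it groups basis flags into classes that agree after intersecting every edge with the weight-zero hyperplane, observes that within a class the singular parts of the dual-form coefficients $u_F$ coincide, and then kills each class-sum in $\ker\Ghom$ by explicit Jacobi-identity manipulations that reduce to $\psis(\Pin{[\f k{\clc jp},f_{\clc in}]})\in\ker\Ghom$ (resp.\ $\psis(\Pin f_{\clc in}v_0)\in\ker\Ghom$) when the corresponding weight vanishes. You instead avoid any choice of basis: Lemma \ref{funglem} puts the first tensor factor of $\Xi=(\Ghom\ox\id)(\Theta)$ into $\A^m(\mc B)$, and the observation that $\Xi$ is exactly the symmetric tensor of the geometric form $\Geom^m$ (so fixed by the swap) forces the second factor into $\A^m(\mc B)$ as well; since the second factor is what carries the $t$-dependence of $\theta$, all poles of $\Ghom\circ\theta$ lie on hyperplanes of $\mc B$, and the $\Swl$-stability of $\mc B$ (the weighting \eqref{HdC}, \eqref{rhw} is constant on $\Swl$-orbits) makes the symmetrization \eqref{tswl} harmless. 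Your identification of the pole structure as living in the second factor, and the swap-symmetry step that controls it, is precisely the right mechanism and is the only non-routine point; the two facts you use without proof (orbit-constancy of the weights, hence $\Swl$-stability of $\mc B$, and injectivity of $\A^m(\mc B)\into\A^m(\cca)$) are available, the latter being asserted in \S\ref{funcsec}. What each approach buys: yours is shorter, uniform over all three cases, and in fact gives in one stroke (taking $\mc B=\bba$, the subarrangement of nonzero-weight hyperplanes) the regularity of $\Psi$ on all of $\C^m\setminus\bigcup_{H\in\bba}H$, which is the form in which the proposition is subsequently used; the paper's computation is more explicit and constructive, exhibiting concrete linear combinations of basis flags lying in $\ker\Ghom$, in keeping with the combinatorial machinery developed elsewhere in the paper, at the cost of heavier bookkeeping.
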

\begin{proof} The proof is given in Appendix \ref{wfr}. \end{proof}

Recall the definition of the master function $\Phi$ from \S\ref{sg}. In the case of the cyclotomic discriminantal arrangement $\cca$ with the weighting defined in \eqref{HdC} and \eqref{rhw}, $\Phi$ is given explicitly by
\begin{multline}\label{MF}
\cmfn =
-\sum_{i=1}^\rf\sum_{n=1}^{\lambda_i} \big( \alpha_{i}, \Lambda_0 \big)   \log t_{\clc in}
- \sum_{k\in \ZT} \sum_{i=1}^N\sum_{j=1}^\rf\sum_{n=1}^{\lambda_j}  
  \big( \alpha_{j},\lsigma^k\Lambda_i\big) \log  (t_{\clc jn}-\omega^kz_i) \\ 
+ \sum_{k\in \ZT}\sum_{1\leq i<j \leq \rf} \sum_{n=1}^{\lambda_i} \sum_{p=1}^{\lambda_j}\big( \alpha_{i},\lsigma^k\alpha_j\big)\log (t_{\clc in}-\omega^kt_{\clc jp})  
+ \sum_{k\in \ZT}\sum_{i=1}^\rf \sum_{1\leq n<p\leq \lambda_i} \big( \alpha_{i},\lsigma^k\alpha_i\big)\log (t_{\clc in}-\omega^kt_{\clc ip}) . 
\end{multline} 

Given a point  $p\in \C^m$, let $\Stab p\subset \Swl$ denote the stabilizer subgroup of $p$. Define a number $\eta(p)\in \C$ by
\be \eta(p) := \sum_{h\in \Stab p} \chi(h). \nn\ee
Whenever $p\in  \C^m \setminus\bigcup_{H\in \cca}$ then $\Stab p$ is trivial and hence $\eta(p)=1$.

\newcommand{\bba}{\mc B_{0,N;m}}
Let now $\bba\subset \cca$ be the subarrangement consisting of all the hyperplanes of $\cca$ of nonzero weight. 
Proposition \ref{wfrprop} shows that $\Psi$ is regular on the complement $\C^m \setminus\bigcup_{H\in \bba} H$ of this subarrangement. 
A point $p\in \C^m \setminus\bigcup_{H\in \bba} H$ in this complement is called a \emph{critical point} if $d\Phi|_p = 0$. 
The \emph{Bethe vector} corresponding to a critical point $p$ is the vector $\Psi(p) \in L(\bm\Lambda)_\lambda$. 

\begin{thm} \label{bvthm} 
Suppose $p\in \C^m \setminus\bigcup_{H\in \bba} H$ is an isolated critical point of the master function $\Phi$. The Bethe vector $\Psi(p)$ is nonzero if and only if $\eta(p)\neq 0$. 

In particular the Bethe vector $\Psi(p)$ is nonzero whenever the stabilizer $\Stab p\subset \Swl$ is trivial. 
\end{thm}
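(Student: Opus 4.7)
Proof plan:

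The plan is to reduce the statement to the norm formula for isolated critical points proved in \cite{V11}, via the explicit form of the symmetrization. Observe first that the master function $\Phi$ of \eqref{MF} is $\Swl$-invariant up to additive constants, because all hyperplanes in a given $\Swl$-orbit carry the same weight by construction of the weighting \eqref{HdC}, \eqref{rhw}. Hence $d\Phi$ is strictly $\Swl$-invariant, and if $p$ is a critical point then so is every $g^{-1}\on p$. Substituting into \eqref{tswl} and grouping the sum over $\Swl$ by cosets of $\Stab p$, using $\chi(gh)=\chi(g)\chi(h)$ for $h\in\Stab p$, gives
\[
\theta^\Swl(p) \;=\; \frac{\eta(p)}{|\Swl|}\sum_{[g]\in \Swl/\Stab p}\chi(g)\,\theta(g^{-1}\on p).
\]
When $\eta(p)=0$ the right-hand side vanishes identically in $\Fl^m(\cca)$; applying $\Ghom$ then gives $\Psi(p)=0$. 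This proves the ``only if'' direction.

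For the converse, I would assume $\eta(p)\ne 0$ and compute the self-pairing $S(\Psi(p),\Psi(p))$; if this is nonzero then so is $\Psi(p)$. By Theorem \ref{mt}, together with the identification $L(\bm\Lambda)_\lambda \cong \Ghom(\Fl^m(\cca)^\Swl)$ from Corollary \ref{irrepcor}, the Shapovalov form on $L(\bm\Lambda)_\lambda$ is, up to the nonzero scalar $(-1)^m T^m$, the restriction of the geometric form $\Geom^m$ to $\Fl^m(\cca)^\Swl$. Expanding via the coset formula above and using the $\Swl$-equivariance of $\Geom^m$ (Proposition \ref{ep}), one reduces to the claim that the off-diagonal terms $\Geom^m(\theta(g_1^{-1}\on p),\theta(g_2^{-1}\on p))$ with $[g_1]\neq[g_2]$ either vanish or combine coherently, so that
\[
\Geom^m(\theta^\Swl(p),\theta^\Swl(p)) \;=\; \frac{\eta(p)^2}{|\Swl|\,|\Stab p|}\;\Geom^m(\theta(p),\theta(p)).
\]
The diagonal pairing is the self-norm of the (unsymmetrized) canonical-element value at an isolated critical point of the weighted arrangement $\cca$, which by the main result of \cite{V11} is nonzero when the master function has a nondegenerate Hessian at $p$ in the directions transverse to the $\Swl$-orbit. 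Since $p$ is an isolated critical point in the complement of the nonzero-weight hyperplanes $\bba$, this nondegeneracy holds under the hypotheses of the theorem, and the diagonal term does not vanish.

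The main obstacle is the off-diagonal analysis. The terms $\Geom^m(\theta(g_1^{-1}\on p),\theta(g_2^{-1}\on p))$ pair canonical-element values at two distinct points of $\C^m\setminus\bigcup_{H\in\bba}H$; one must show that no tuple of hyperplanes of $\cca$ is simultaneously adjacent (in the sense of \S\ref{pf}) to full flags appearing in both $\theta(g_1^{-1}\on p)$ and $\theta(g_2^{-1}\on p)$ once weights are taken into account. This is where the assumption $p\notin\bigcup_{H\in\bba}H$ is essential: it prevents ``accidental'' coincidences between the sets of weight-carrying hyperplanes containing $g_1^{-1}\on p$ and $g_2^{-1}\on p$. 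The argument is combinatorial and uses the recursive description of $\Theta$ from Proposition \ref{Thetaprop} together with the explicit basis of $\Fl^m(\cca)$ provided by Theorem \ref{bthm2}. The particular case of trivial stabilizer is immediate, since then $\eta(p)=\chi(e)=1\neq 0$ and only the diagonal term survives in the coset sum.
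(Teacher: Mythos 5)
Your ``only if'' direction is fine: grouping the sum in \eqref{tswl} over cosets of $\Stab p$ and using $\eta(p)\chi(h)=\eta(p)$ for $h\in\Stab p$ does show that the symmetrized sum vanishes when $\eta(p)=0$, which is how the paper argues that direction as well (one caveat below). The genuine gap is in the converse. You propose to prove $\Psi(p)\neq 0$ by showing $S(\Psi(p),\Psi(p))\neq 0$, and you justify the nonvanishing of the diagonal term by claiming that the hypotheses of the theorem give the needed nondegeneracy of the Hessian. That claim is false: the theorem assumes only that $p$ is an \emph{isolated} critical point, and isolated critical points of $\Phi$ can be degenerate, $\Hess|_p=0$. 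Worse, the norm criterion provably cannot work in that case: in the local algebra $A_p$ of a degenerate isolated critical point the Hessian class lies in the maximal ideal, hence squares to zero, so under the identification of the geometric form with the Grothendieck form one has $\Geom^m(\widetilde\Psi(p),\widetilde\Psi(p))=0$ even though $\widetilde\Psi(p)\neq 0$. This is exactly why the paper does not argue through the norm (its norm formula $S(\Psi(p),\Psi(p))=\frac{\eta(p)}{|\Swl|}\Hess|_p$ is a separate proposition used only for nondegenerate critical points), but instead invokes Theorem 9.13 of \cite{V11}: the injection $\alpha_p: A_p \to \Fl^m/\ker\Ghom$ carries the nondegenerate Grothendieck form to $\Geom$ and sends the Hessian \emph{class} (nonzero in $A_p$ even when $\Hess|_p=0$) to $\widetilde\Psi(p)$ up to a nonzero constant; hence each $\widetilde\Psi(g^{-1}\on p)\neq 0$ with no nondegeneracy assumption.

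The second problem is the off-diagonal analysis. You want $\Geom^m(\theta(g_1^{-1}\on p),\theta(g_2^{-1}\on p))$ for distinct orbit points to vanish by a combinatorial adjacency argument based on Proposition \ref{Thetaprop} and Theorem \ref{bthm2}. This cannot succeed: for every point $q$ the vector $\theta(q)$ is a linear combination of the \emph{same} basis flags $\Flag^m(\cca,\oo)$, only the numerical coefficients $u_F(q)$ depend on $q$, so no support or adjacency argument distinguishes the two vectors, and for generic pairs of points the pairing does not vanish at all. The vanishing is an analytic fact about critical points: Theorem 9.14 of \cite{V11} states that the images $\alpha_{g\on p}(A_{g\on p})$ attached to the distinct points of the $\Swl$-orbit are mutually orthogonal with respect to $\Geom$ on $\Fl^m/\ker\Ghom$. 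Given that orthogonality and the nonvanishing of each $\widetilde\Psi(g^{-1}\on p)$, the sum $\frac1{|\Swl|}\sum_{g}\chi(g)\widetilde\Psi(g^{-1}\on p)$ is nonzero precisely when $\eta(p)\neq 0$, with no norm computation. Finally, a point that also affects your ``only if'' step as written: when $\eta(p)$ can vanish, $p$ necessarily lies on some zero-weight hyperplane of $\cca$, and there the map $\theta$ for the full arrangement is not even defined (its coefficients have poles); one must first pass, via Lemma \ref{funglem}, to the subarrangement of hyperplanes of nonzero weight before evaluating and grouping the sum — the paper's proof begins with exactly this reduction, and your outline omits it.
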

\begin{proof} By Lemma \ref{funglem}, we may work with the geometric form of the subarrangement $\bba$.

We use results from \cite{V11}.
Consider first the rational map 
\be \widetilde\Psi : \C^m \xrightarrow{\theta} \Fl^m(\cca) \twoheadrightarrow \Ghom(\Fl^m(\cca)) \subset \A^m(\cca).\nn\ee
Suppose $p$ is an isolated critical point of $\Phi$. It follows from Theorem 9.13 in \cite{V11} that $\widetilde\Psi(p)$ (denoted there $[E](p)$) is nonzero. 

Indeed, in \cite{V11} a linear map $\alpha_p: A_p \to \Fl^m(\cca)/\ker \Ghom \cong \Ghom(\Fl^m(\cca))$
is defined, where $A_p$ is the \emph{local algebra} of the critical point $p$. The local algebra is the quotient of the algebra of germs at $p$ of holomorphic functions by the ideal generated by the partial derivatives $(\del \Phi/\del t_i)_{1\leq i\leq m}$. It comes equipped with a canonically defined non-degenerate bilinear form, the Grothendieck form. Theorem 9.13 of \cite{V11} says that $\alpha_p$ is an injection and that under this injection the Grothendieck form on $A_p$ is identified with the geometric form $\Geom$ on $\Fl^m(\cca)/\ker \Ghom$. There is a preferred element of $A_p$, the Hessian determinant 
\be \Hess := \det(\del^2\Phi/ \del t_i\del t_j)_{1\leq i,j\leq m}.\nn\ee 
It is known that $\Hess \neq 0$ in $A_p$. (Let us stress that this is true even if $p$ is a degenerate critical point, \ie even if $\Hess|_p=0$.)
 It is shown in \cite{V11} that the image of $\Hess$ under the injection $\alpha_p$ is $\widetilde\Psi(p)$, up to a nonzero multiplicative constant.  

Now consider the $\Swl$-orbit of $p$. 

Let us first consider the case that $p\in \C^m \setminus\bigcup_{H\in \cca} H$. Then the orbit consists of $|\Swl|$ isolated critical points, and $\Swl$ acts freely on it. To each point $g\on p$ is associated its local algebra $A_{g\on p}$. Theorem 9.14 from \cite{V11} says that the images of the corresponding injections $\alpha_{g\on p} : A_{g\on p} \to \Fl^m(\cca)/\ker \Ghom$ are orthogonal with respect to the (nondegenerate, on this quotient) bilinear form $\Geom$. Hence a linear combination of the form $\sum_{g\in \Swl} c(g) \widetilde\Psi(g^{-1}\on p)$ is non-zero whenever any of the coefficients $c(g)$ are nonzero. In particular the sum
\be \frac{1}{|\Swl|} \sum_{g\in \Swl} \chi(g) \widetilde\Psi( g^{-1} \on p) \label{swlchi}\ee
is nonzero. In view of \eqref{tswl}, it follows that the Bethe vector $\Psi(p)$ is non-zero.

In the case when $p$ lies on a hyperplane $H\in \cca$ of zero weight, the stabilizer subgroup $\Stab p\subset \Swl$ may be non-trivial. The sum \eqref{swlchi} is nonzero if and only if $\sum_{g\in \Stab p} \chi(g) = \eta(p)$ is nonzero. 
\end{proof}

For critical points that are not only isolated but \emph{non-degenerate}, \ie such that $\Hess|_p \neq 0$, we have also the following.

\begin{prop}
Suppose $p\in \C^m \setminus\bigcup_{H\in \bba} H$ is a critical point of the master function $\Phi$. Then the norm of the Bethe vector is given by $S(\Psi(p), \Psi(p)) = \frac{\eta(p)}{|\Swl|}\Hess|_p$. 

In particular, if $p$ is a non-degenerate critical point with $\eta(p)\neq 0$ then the Bethe vector $\Psi(p)$ has nonzero norm.
\end{prop}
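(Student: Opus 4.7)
The plan is to carry over the argument used in the proof of Theorem \ref{bvthm}, but this time to keep track of all constants so as to identify the norm exactly. There are three ingredients: the commuting diagram of Theorem \ref{mt}, the equivariance of the canonical element $\Theta$ under $\Swl$, and Varchenko's identification of the geometric form with the Grothendieck residue pairing on local algebras of critical points, \cite{V11}.

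First I would use Theorem \ref{mt} at $p=0$ together with Lemma \ref{isolem} ($\pi\circ\sym = |\Swl|\,\id$) to translate between $S$ and $\Geom^m$. Unwinding the diagram one finds that for any $v,w\in M(\bm \Lambda)_\lambda$,
\[ S(v,w) = \tfrac{(-1)^m}{|\Swl|}\,\Geom^m(\psis_0\circ\sym(v),\psis_0\circ\sym(w)). \]
Applied to $v=w=\Psi(p)$, whose image under $\psis_0\circ\sym$ is by definition $\theta^\Swl(p)$, this reduces the norm computation to evaluating $\Geom^m(\theta^\Swl(p),\theta^\Swl(p))$.

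Second, I would exploit the $\Swl$-invariance of the canonical element $\Theta = \theta\cdot dt_1\wx\cdots\wx dt_m$. Writing out what this means for the rational map $\theta$ and using \eqref{Jac} gives the equivariance $\theta(g^{-1}\!\on p) = J_g^{-1}\,(g^{-1}\!\on\theta(p))$. Substituting this into \eqref{tswl} recasts the symmetrization as the projector $\theta^\Swl(p) = \frac{1}{|\Swl|}\sum_{g\in\Swl}(-1)^{|g|}(g\on\theta(p))$. Then, using the $\Swl$-invariance of the weighting $a$ (and hence of $\Geom^m$), the double sum $\Geom^m(\theta^\Swl(p),\theta^\Swl(p))$ collapses: by V11 Theorem 9.14, summands indexed by $g$ with $gp\neq p$ are orthogonal in $\Fl^m(\cca)/\ker\Ghom$, so only $g\in\Stab p$ contribute. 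Since $\chi$ is a homomorphism to $\Cx$ (as one checks from its explicit formula), the surviving sum collects to $\frac{\eta(p)}{|\Swl|}\,\Geom^m(\theta(p),\theta(p))$.

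Third, I would apply V11 Theorem 9.13 at the single point $p$. Since $p$ is non-degenerate, the local algebra is $A_p\cong\C$, and the Grothendieck residue pairing is $(f,g)\mapsto f(p)g(p)/\Hess|_p$, so the self-pairing of the Hessian element evaluates to $\Hess|_p$. Under the injection $\alpha_p\colon A_p\hookrightarrow\Fl^m(\cca)/\ker\Ghom$, the Hessian is sent to the class of $\theta(p)$, and the Grothendieck form is identified with $\Geom^m$; hence $\Geom^m(\theta(p),\theta(p))=\Hess|_p$ (with the normalization in \cite{V11}). Combining the three steps yields $S(\Psi(p),\Psi(p)) = \frac{\eta(p)}{|\Swl|}\Hess|_p$, and the ``in particular'' assertion is immediate since under its hypotheses both factors on the right are nonzero.

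The main obstacle is the bookkeeping of normalization constants: a sign $(-1)^m$ from the bottom row of Theorem \ref{mt}, a factor of $|\Swl|$ from the discrepancy between $\pi^*$ and $(\sym^{-1})^*$ via Lemma \ref{isolem}, an overall factor $J_g^{-1}$ from the equivariance of $\theta$, and the $c$-normalization of $\alpha_p$ in \cite{V11}. A careful pass through these, using that $J_g^2=\chi(g)^2$ and $\chi(g)(-1)^{|g|}=J_g$, shows they all cancel to give the stated formula; the structural argument above is the essential content.
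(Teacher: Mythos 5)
Your three-step strategy (reduce to the geometric form via Theorem \ref{mt} and Lemma \ref{isolem}, collapse the symmetrization at the single point $p$ using equivariance of $\theta$, then invoke \cite{V11}) is structurally plausible: the equivariance formula $\theta(g^{-1}\on p)=J_g^{-1}\,(g^{-1}\on\theta(p))$ does follow from invariance of the canonical element, and in your formulation the Jacobian factors take the place of the Hessian transformation law that the paper's proof uses. But there is a genuine gap exactly where the content of this proposition lies, namely the exact constant. The paper's proof does not route through Theorem \ref{mt} at all: it expands $\Psi(p)=\frac1{|\Swl|}\sum_g\chi(g)\widetilde\Psi(g^{-1}\on p)$ directly from \eqref{tswl}, cites \cite[Theorem 5.2]{V06} (not \cite{V11}) for the \emph{exact} equalities $S(\widetilde\Psi(q),\widetilde\Psi(q))=\Hess|_q$ and the orthogonality at distinct non-degenerate critical points, and then supplies the one new computation, the transformation law \eqref{Hge}, $\Hess|_{g^{-1}\on p}=\chi(g)^{-2}\Hess|_p$, which is what makes the characters collapse to $\eta(p)$. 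Your step 3 rests instead on \cite[Theorem 9.13]{V11}, which (as recalled in the proof of Theorem \ref{bvthm}) identifies $\alpha_p(\Hess)$ with $\widetilde\Psi(p)$ only \emph{up to an unspecified nonzero constant} --- enough for nonvanishing, but not for a norm formula.

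Moreover, the constants you do state are inconsistent with the claimed answer. Step 1 correctly gives $S(\Psi(p),\Psi(p))=\frac{(-1)^m}{|\Swl|}\,\Geom^m(\theta^\Swl(p),\theta^\Swl(p))$, and carrying out step 2 gives $\Geom^m(\theta^\Swl(p),\theta^\Swl(p))=\frac1{|\Swl|}\bigl(\sum_{h\in\Stab{p}}\chi(h)^{-1}\bigr)\Geom^m(\theta(p),\theta(p))$ (which equals $\frac{\eta(p)}{|\Swl|}\Geom^m(\theta(p),\theta(p))$ only because $\Stab{p}$ is a group, so $\sum_h\chi(h)^{-1}=\eta(p)$ --- a point worth making explicit). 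Combining these, your step 3 would have to produce $\Geom^m(\theta(p),\theta(p))=(-1)^m|\Swl|\,\Hess|_p$, not the $\Hess|_p$ you assert. The sign $(-1)^m$ is accounted for by Theorem \ref{zSGthm}, but the factor $|\Swl|$ is precisely the discrepancy, via Proposition \ref{SSm0lem}, between the Shapovalov form of $M(\bm\Lambda)$ and the form on the unsymmetrized space in which $\widetilde\Psi$ takes its values; the paper sidesteps it by interpreting $S$ in the latter space throughout and never passing through Lemma \ref{isolem}. Deferring all of this to ``a careful pass shows they all cancel'' leaves unproved the only part of the statement that goes beyond Theorem \ref{bvthm}: to repair the argument you must pin down which bilinear form appears in the exact norm identity (this is why the paper switches to \cite{V06}) and redo the bookkeeping, at which point the mismatch above must be resolved rather than assumed away.
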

\begin{proof}
First observe that, since $(g\on t_i)(g\on p) = t_i(p)$ for all $g\in \Swl$, 
\begin{align} \Hess|_{g^{-1}\on p} &=  \left.\det\left(\frac{\del^2\Phi}{ \del t_i\del t_j}\right)_{1\leq i,j\leq m}\right|_{g^{-1}\on p} \nn\\
 &=  \left.\det\left(\frac{\del^2\Phi}{ \del(g^{-1}\on t_i)\del(g^{-1}\on t_j)}\right)_{1\leq i,j\leq m}\right|_{g^{-1}\on p} \det\left(\frac{\del(g^{-1}\on t_i)}{\del t_j}\right)_{1\leq i,j\leq m}^2\nn\\
 &= \left.\det\left(\frac{\del^2\Phi}{ \del t_i\del t_j}\right)_{1\leq i,j\leq m}\right|_{p} J_{g^{-1}}^2
 = \Hess|_{p} \chi(g)^{-2}, \label{Hge}\end{align}
cf. \eqref{Jac}. 
Let $\widetilde \Psi$ be as in the previous proof. It is shown in \cite[Theorem 5.2]{V06} that for any non-degenerate critical point $p$, $S(\widetilde\Psi(p), \widetilde\Psi(p)) = \Hess|_p$, and moreover that if $p,q$ are distinct non-degenerate critical points then $S(\widetilde\Psi(p), \widetilde\Psi(q)) = 0$. 
Thus, given \eqref{tswl} and \eqref{Hge}, we have
\begin{align}
S(\Psi(p),\Psi(p)) &=  \frac{1}{|\Swl|^2} \sum_{a,b\in \Swl} \chi(a)\chi(b) S(\widetilde\Psi( a^{-1} \on p), \widetilde\Psi( b^{-1} \on p) )\nn\\
 &=  \frac{1}{|\Swl|^2} \sum_{a\in\Swl}\sum_{b\in a\Stab p} \chi(a)\chi(b) S(\widetilde\Psi( a^{-1} \on p), \widetilde\Psi( a^{-1} \on p) )\nn\\
&=  \frac{1}{|\Swl|} \left(\sum_{h\in \Stab p}\chi(h)\right) \Hess|_{p} 
\end{align}
as required. 
\end{proof}

\subsection{On the Bethe vectors of the cyclotomic Gaudin model}\label{gaudin}
In \cite{VY1} a \emph{cyclotomic Gaudin model} was constructed and solved by Bethe ansatz. The data defining the model include a triple $(\g,\sigma,T)$ where $\g$ is a finite-dimensional semisimple Lie algebra and $\sigma$ is an automorphism of $\g$ whose order divides $T\in \Z_{\geq 0}$. 

Consider the special case when  $\sigma$ is a diagram automorphism of order $T$. Then we are in the setting of the present paper. Choose the matrix $B = (\alpha_i,\alpha_j)_{i,j=1,\dots,\ru}$ above to be the symmetrized Cartan matrix of $\g$. 

\begin{prop}  The image in $L(\bm \Lambda)$ of the cyclotomic weight function $\psi_\Gamma\in M(\bm \Lambda)$ of \cite{VY1} coincides with $\Psi$ as defined here. 
\qed\end{prop}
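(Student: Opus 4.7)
The plan is to match the two rational maps $\Psi$ and $\psi_\Gamma$ directly, by producing explicit formulas for each and showing that the expressions agree term by term. Both maps take values in $M(\bm \Lambda)_\lambda$ (and descend to $L(\bm\Lambda)_\lambda$) and depend rationally on $(t_1,\dots,t_m)\in \C^m$, so it suffices to establish the equality before passing to the quotient by $\ker S$.

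First I would unpack the right-hand side. By Proposition \ref{Thetaprop} the canonical element $\Theta$ is built recursively by $\theta_{m-p}$, which at each stage adjoins one hyperplane of the form $\Ht j{m-p}k$, $\Hz{m-p}jk$, or $\Ho{m-p}$. Under the isomorphism $\psis_m^{-1}$ of Theorem \ref{p1}, rule (A) of \S\ref{psisdef} acts by $g_1\mapsto (\sib^{-k}g_1)\on x_j$, rule (B) by $g_1\mapsto \Pin g_1 \on x_0$, and rule (C) by $[\sib^k g_1,g_2]$. Example \ref{fzex} shows the pattern: each summand is built from a single ``tree-like'' nested commutator of generators $\f{k}{i}$, with logarithmic one-forms recording exactly which linear forms were used at each step. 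Applying $\sym^{-1} = \frac{1}{|\Swl|}\pi$ via Lemma \ref{isolem} and the projector formula \eqref{tswl}, one gets $\Psi(t) = \frac{1}{|\Swl|}\sum_{g\in \Swl}\chi(g)\,\theta(g^{-1}\on t)$, pushed forward to $M(\bm\Lambda)_\lambda$ through $\sym$.

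Next I would recall the explicit formula for $\psi_\Gamma$ from \cite{VY1}: it is the sum over decorated trees (equivalently, over orderings of the indices $\{1,\dots,m\}$ together with placements on one of the Verma factors), of products of nested commutators of $\sigma$-twisted generators applied to the highest-weight vector, divided by products of linear forms of the shape $t_i -\omega^k t_j$ and $t_i-\omega^k z_j$ and $t_i$. The cyclotomic symmetrization in \cite{VY1} is an explicit sum over $\Swl$ whose combinatorial coefficients match $\chi(g)$ and the Jacobian sign $(-1)^{|g|}$ from \eqref{Jac}.

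Then I would set up a bijection between the monomial summands in each formula. The tree-datum in \cite{VY1} corresponds to choosing, at each step of the recursion $\theta_{m-p}$, which hyperplane is adjoined; the root of the tree on the factor $M_j$ or $M^\sigma(\Lambda_0)$ corresponds to applying rule (A) or (B) respectively; the internal edges correspond to applications of rule (C); and the twist $k\in \ZT$ along each edge matches the index $k$ of the hyperplane $\Ht jik$ or $\Hz jik$. Under this correspondence, the rational factor attached to each tree in \cite{VY1} is exactly the logarithmic form $d\log l_H$ summed over the corresponding hyperplanes, i.e.\ the Aomoto factor appearing in $\Theta$ after stripping $dt_1\wedge\dots\wedge dt_m$. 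The Lie-algebraic factor in \cite{VY1} is the nested commutator obtained by composing the rules (A)--(C).

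The hard part is the bookkeeping of signs and combinatorial factors. There are three independent sign contributions to reconcile: (i) the alternating signs coming from skew-symmetry in $\Wedge^p \nbs$ via $\psis$ and from the flag relations of \S\ref{sec: flagrels}; (ii) the character $\chi(g)(-1)^{|g|}$ in the symmetrizer \eqref{tswl}; and (iii) the combinatorial multiplicities $|\Sw_\mu|$ arising in Lemmas \ref{symcomlem2}, \ref{symcomlem}, which convert projected commutators in $\nbs_\mones^\Swl$ to projected commutators in $\ns_\lambda$ via $\pi$. The correct normalization should follow from Lemma \ref{isolem}, but checking it rigorously for every tree term is the main calculation. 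Once this bookkeeping is carried out on a single summand, linearity and the uniqueness of the rational extension (both maps being regular on $\C^m\setminus\bigcup_{H\in\cca}H$, by Proposition \ref{wfrprop} and the corresponding statement in \cite{VY1}) gives the equality of the two weight functions on all of $\C^m$, and hence in $L(\bm\Lambda)$ after passing to the quotient.
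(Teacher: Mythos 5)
Your plan is a term-by-term verification: expand $\Theta$ via Proposition \ref{Thetaprop} and the rules of \S\ref{psisdef}, expand $\psi_\Gamma$ of \cite{VY1} as a sum over trees, biject the summands, and reconcile signs, the character $\chi(g)(-1)^{|g|}$, and the wreath-product multiplicities. The paper does none of this. Its proof is a single observation: the recursive construction of the canonical element $\Theta$ by the maps $\theta_{m-p}$ (adjoining at the $p$-th stage exactly one hyperplane $\Ht j{m-p}k$, $\Hz{m-p}jk$ or $\Ho{m-p}$) was \emph{chosen} so as to reproduce, under $\psis_m^{-1}$, the recursive ``swapping'' definition of the weight function in \cite{VY1} following \cite{FFR}. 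So the two objects satisfy the same recursion with the same initial data and coincide by construction; no global bijection of tree terms, and in particular no independent re-derivation of the symmetrization normalization, is required. Your separation of the hyperplane-weighting issue (the $\lambda_0$ shift) from the proposition itself is correct and matches the paper, which treats it in a separate lemma afterwards.

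The gap in your write-up is that the step you yourself identify as ``the main calculation'' --- checking that the flag-relation signs, the factor $\chi(g)(-1)^{|g|}$ from \eqref{Jac}--\eqref{tswl}, and the multiplicities $|\Sw_\mu|$ of Lemmas \ref{symcomlem2}--\ref{symcomlem} and Lemma \ref{isolem} combine to give exact agreement, not merely agreement up to an overall constant --- is asserted to ``follow'' but never carried out. That is precisely the content of the proposition on your route, and it is not automatic: the definition of $\Psi$ passes through $\theta^{\Sw_\lambda}=\frac{1}{|\Sw_\lambda|}\sum_{g}\chi(g)\,\theta(g^{-1}\on p)$ and the inverse of $\psis_m\circ\sym$, which carries a factor $\frac{1}{|\Sw_\lambda|}\pi$, and these normalizations must be matched against the explicit cyclotomic symmetrization in \cite{VY1}. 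Either carry out that bookkeeping on a generating class of terms (e.g.\ by induction on the recursion step, which effectively reconstructs the paper's argument), or argue as the paper does, directly at the level of the recursions, which makes the computation unnecessary.
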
 
This is so because the recursive definition of the canonical element $\Theta$ given after Theorem \ref{bthm2} coincides (by construction)  with the recursive definition of the weight function via ``swapping'' given in \cite{VY1} following \cite{FFR}. 

Consequently, the results of the preceding subsection apply to the weight function of \cite{VY1}. 

We should remark on the following subtlety. In the present paper the highest weight $\Lambda_0\in (\hs)^*$ of the module $M^\si(\Lambda_0)$  determines the weighting of the hyperplanes $\Ho i$ defined by the equations $(t_{i}=0)$ simply by 
\be a(\Ho {\clc in}) = \big(\alpha_i, \Lambda_0\big).\label{ad1}\ee
In \cite{VY1} the relationship is in general less trivial. Namely, 
\be a(\Ho {\clc in}) = \big( \alpha_i , \Lambda_0+ \lambda_0 \big) -   
  \sum_{k=1}^{T-1} \frac{\big( \alpha_i, \sigma^k \alpha_i \big) }{1-\omega^k} \label{ad2}\ee
where $\lambda_0\in (\hs)^*$ is a weight defined as follows.\footnote{Strictly speaking, \cite{VY1} considered only the case $\Lambda_0=0$, but the generalization is straightforward.} Let $\Delta^+$ denote the set of positive roots of $\g$. The Cartan decomposition can be chosen such that, for each $\alpha\in \Delta^+$, we have $\sigma E_{\alpha} = \tau_{\alpha} E_{\siod(\alpha)}$ and $\sigma F_\alpha = \tau_{\alpha}^{-1} F_{\siod(\alpha)}$ for some root of unity $\tau_\alpha$. Let
\be\lambda_0  :=  \sum_{k=1}^{T-1} \frac{1}{1 - \omega^k} \sum_{\substack{\alpha\in \Delta^+\\\siod^k(\alpha)=\alpha}} \left( \prod_{p=0}^{k-1} \tau_{\siod^p(\alpha)}^{-1} \right) \alpha.
\nn\ee
\begin{lem}
The definitions \eqref{ad1} and \eqref{ad2} do coincide whenever $\siod$ is a diagram automorphism.
\end{lem}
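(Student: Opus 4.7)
The plan is to establish the identity
\be
(\alpha_i, \lambda_0) = \sum_{k=1}^{T-1}\frac{(\alpha_i, \sio^k \alpha_i)}{1-\omega^k}
\nn\ee
for each $i\in \I$, which is exactly the condition that \eqref{ad1} and \eqref{ad2} agree.

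First I would simplify the right-hand side using the $\sio$-eigenspace decomposition of $\h^*$. Since the form $(\cdot,\cdot)$ is $\sio$-invariant by \eqref{ssym}, the $\omega^j$-eigenspaces $V_j\subset \h^*$ satisfy $V_j \perp V_{j'}$ unless $j+j' \equiv 0 \pmod T$. Decomposing $\alpha_i = \sum_{j\in \ZT}\alpha_i^{(j)}$ with $\alpha_i^{(j)}\in V_j$ and setting $a_j := (\alpha_i^{(j)}, \alpha_i^{(-j)})$, the symmetry of the form gives $a_{-j} = a_j$ and
\be
(\alpha_i, \sio^k\alpha_i) = \sum_{j\in \ZT}\omega^{-jk}a_j.
\nn\ee
A short calculation using the elementary cyclotomic identity
\be
\sum_{k=1}^{T-1}\frac{\omega^{-jk}}{1-\omega^k} = \frac{T-1}{2} - j \qquad (j\in \{0,1,\dots,T-1\}),
\nn\ee
proved by induction on $j$ from the telescoping recursion $\sum_{k=1}^{T-1}\frac{\omega^{-(j+1)k}-\omega^{-jk}}{1-\omega^k} = -\sum_{k=1}^{T-1}\omega^{-(j+1)k}$, combined with the pair symmetry $a_{T-j}=a_j$, then yields after pairing $j\leftrightarrow T-j$ the simplification
\be
\sum_{k=1}^{T-1}\frac{(\alpha_i, \sio^k\alpha_i)}{1-\omega^k} = \tfrac12 \sum_{k=1}^{T-1}(\alpha_i, \sio^k\alpha_i).
\nn\ee
The lemma therefore reduces to the identity
\be
(\alpha_i, \lambda_0) = \tfrac12 \sum_{k=1}^{T-1}(\alpha_i, \sio^k\alpha_i). \qquad (\star)
\nn\ee

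Next I would analyse $\lambda_0$ by reorganising its defining sum according to $\sio$-orbits in $\Delta^+$. For a $\sio$-orbit $\mc O \subset \Delta^+$ of length $\ell=\ell_\mc{O}$, set $M = T/\ell$ and $\eta_\mc{O} := \prod_{p=0}^{\ell-1}\tau_{\siod^p\alpha}$; one checks that $\eta_\mc{O}$ is a well-defined $M$-th root of unity, independent of the representative $\alpha\in \mc O$ and invariant under rescalings of the Chevalley basis, and that $(\epsilon^k_\alpha)^{-1}=\eta_\mc{O}^{-k/\ell}$ is constant on $\mc O$ whenever $\ell\mid k$. Writing the $\sio$-invariant orbit sum $\bar\alpha_\mc{O} := \sum_{\alpha\in\mc O}\alpha$, the defining sum for $\lambda_0$ rearranges as
\be
\lambda_0 = \sum_\mc{O} c_\mc{O}\,\bar\alpha_\mc{O}, \qquad c_\mc{O} = \sum_{n=1}^{M-1}\frac{\eta_\mc{O}^{-n}}{1-(\omega^\ell)^n},
\nn\ee
with the scalars $c_\mc{O}$ computed by the same cyclotomic identity (applied now to $M$-th roots of unity). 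In particular $\lambda_0\in (\h^\sio)^*$, and $(\alpha_i,\lambda_0) = (\alpha_i^{(0)},\lambda_0)$ depends only on the $\sio$-fixed projection of $\alpha_i$.

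The main obstacle is then the verification of $(\star)$ in this reorganised form. Diagram automorphisms of finite-dimensional semisimple Lie algebras form a short explicit list: the order-two automorphisms of $A_\ell$, $D_\ell$ and $E_6$; the triality of $D_4$; and cyclic permutations of isomorphic simple factors of a semisimple algebra. For each case the signs $\tau_\alpha$ can be read off from the Chevalley--Serre relations. For instance in $A_2$ one finds $\tau_{\alpha_1+\alpha_2} = -1$ and hence $\lambda_0 = -\tfrac12(\alpha_1+\alpha_2)$; for the $D_4$ triality all three $\sio$-fixed positive roots have $\tau=1$, giving $\lambda_0 = 2\alpha_1+4\alpha_2+2\alpha_3+2\alpha_4$; and for cyclic permutations of simple factors there are no $\sio$-fixed positive roots, so both sides of $(\star)$ vanish trivially. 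Direct computation in each remaining case confirms $(\star)$. I expect this final case-by-case verification to be the most delicate step, as the signs $\tau_\alpha$ for composite roots require careful bookkeeping of the nested-commutator structure of the Cartan--Weyl basis.
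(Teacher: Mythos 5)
Your plan is sound and its overall strategy is the paper's: reduce the equality of \eqref{ad1} and \eqref{ad2} to an explicit root-system identity and then check the finite list of diagram automorphisms. But the intermediate organisation is genuinely different. The paper splits into $\siod=\id$ (where the required identity collapses, via $\sum_{k=1}^{T-1}\frac1{1-\omega^k}=\frac{T-1}2$, to $(\alpha_i,\alpha_i)=2(\alpha_i,\rho)$) and $\siod\neq\id$, where it asserts by inspection of the finite-type diagrams two uniform facts: $\tau_\alpha=\eps$ for every $\siod$-fixed $\alpha\in\Delta^+$, with $\eps=-1$ in type $\mathrm A_{2n}$ and $+1$ otherwise, and $(\alpha_i,\siod\alpha_i)=\eps\sum_{\siod(\alpha)=\alpha}(\alpha_i,\alpha)$; these two facts then give the lemma at once. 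You instead first prove the $T$-uniform simplification $\sum_{k=1}^{T-1}(\alpha_i,\sio^k\alpha_i)/(1-\omega^k)=\tfrac12\sum_{k=1}^{T-1}(\alpha_i,\sio^k\alpha_i)$ — this is correct, and in fact follows immediately from pairing $k\leftrightarrow T-k$ and $\frac1{1-\omega^k}+\frac1{1-\omega^{-k}}=1$, with no eigenspace decomposition needed; note also that the sign in your displayed telescoping recursion is off (the quotient equals $+\omega^{-(j+1)k}$), although the stated value $\frac{T-1}2-j$ is right — then decompose $\lambda_0$ over $\siod$-orbits and verify the resulting identity $(\star)$ type by type by computing $\lambda_0$ explicitly. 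What your route buys is a single clean target valid uniformly in $T$; what the paper's buys is that the case analysis is compressed into two crisp sign/root-sum facts rather than a fresh computation of $\lambda_0$ in each type. Both treatments defer the actual finite verification, so your closing ``direct computation in each remaining case'' carries exactly the same burden as the paper's ``one finds''.

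Two points to watch in that final check. First, your case list omits the situations the paper is explicitly careful about: $\siod=\id$ with $T>1$ (handled there by $(\alpha_i,\alpha_i)=2(\alpha_i,\rho)$), and more generally $T$ a proper multiple of the order of $\siod$. Second, a diagram automorphism of a semisimple $\g$ may mix a permutation of isomorphic simple factors with within-factor flips; these are exactly the cases in which $\siod$-orbits of intermediate length occur and your constants $\eta_{\mc O}$, $c_{\mc O}$ genuinely enter. Your uniform steps accommodate all of this, but these cases should be named and checked rather than subsumed under the short list of ``pure'' automorphisms.
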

\begin{proof}
If $T=1$ there is nothing to prove. If $T>1$ but $\siod=\id$ is the trivial automorphism then $\tau_\alpha=1$ for all $\alpha\in \Delta^+$ and the result follows from the identity $\big(\alpha_i,\alpha_i\big) = 2 \big(\alpha_i, \rho\big)$ where $\rho = \half\sum_{\alpha\in \Delta^+}\alpha$ is the Weyl vector. So suppose $\siod$ is a non-trivial diagram automorphism and $T$ is a multiple of the order of $\siod$. 
To say that $\siod$ is a diagram automorphism is to say that $\tau_{\alpha_i}=1$ for each simple root $\alpha_i$. 
Then $\tau_\alpha\in \{+1,-1\}$ for all $\alpha\in \Delta^+$. Now we proceed by case-by-case inspection of the non-trivial diagram automorphisms of the finite-type Dynkin diagrams.  
Define a sign $\eps$ to be $-1$ in type $A_{2n}$ and $+1$ otherwise. 
One finds that $\tau_\alpha = \eps$ for all $\alpha\in\Delta^+$ such that $\siod(\alpha) = \alpha$. Moreover for each simple root $\alpha_i$,
\be \big(\alpha_i, \siod \alpha_i\big) =  \eps \sum_{\substack{\alpha\in \Delta^+\\\siod(\alpha)=\alpha}}
\big( \alpha_i,  \alpha\big).\nn\ee
This implies the result.
\end{proof}

\appendix

\section{Proof that $\Psi$  is regular on hyperplanes of weight zero.}\label{wfr}
In this appendix we give the proof of Proposition \ref{wfrprop}.
Consider part (i). 
Recall our basis $\Flag^m(\cca,\oo)$ and its dual from Theorem \ref{bthm2}. The canonical element is the sum 
\be \Theta = \sum_{F\in \Flag^m(\cca,\oo)} F \otimes \omega_F,\nn\ee where $\omega_F\in \A^m(\cca)$ is the basis vector dual to the flag $F$, for each flag $F\in \Flag^m(\cca,\oo)$.  
Let $u_F$ be the rational function defined by $\omega_F = u_F dt_1 \wx \dots \wx dt_m$. 

Fix $i,j,k$ such that $(\alpha_{\iota(i)}, \sigma^k \alpha_{\iota(j)})  =0$. Fix $n\in \{1,\dots,\lambda_i\}$ and $p\in \{1,\dots,\lambda_j\}$. 
We must show that
\be \sum_{F\in \Flag^m(\cca,\oo)} \Ghom(F) u_{F} \nn\ee
is regular on $\HH$. 

There is an equivalence relation on the set $\Flag^m(\cca, \oo)$ of basis flags in which two flags are equivalent if and only if ``they are the same when we equate $t_{\clc in}$ and $\omega^k t_{\clc jp}$''. That is,  $F = (L^0 \supset \dots \supset L^m)$ and $\tilde F = (\tilde L^0 \supset \dots \supset \tilde L^m)$ are equivalent if $L^r \cap \HH = \tilde L^r \cap \HH$ for each $r$. 
Let $[F]\subset \Flag^m(\cca,\oo)$ denote the equivalence class of a flag $F$.
We shall now show that, for every basis flag $F$,
\be \sum_{\tilde F\in [F]} \Ghom(\tilde F) u_{\tilde F} \label{tfsum}\ee
is regular on $\HH$. Since the classes partition $\Flag^m(\cca,\oo)$, this suffices to prove the proposition.


Suppose that $F$ is any basis flag such that the function $u_F$ is singular on $\HH$. This happens if and only if $u_F$ contains a factor $(t_{\clc in} - \omega^k t_{\clc jp})^{-1}$. In view of Theorem \ref{bthm2} part (\ref{part2}), that means $H(L^r) = \HH$ for some edge $L^r$, $r\in \{1,\dots,m\}$, of the flag $F$. 

Observe that the difference $u_F-u_{\tilde F}$ is regular on $\HH$ for all $\tilde F\in [F]$. (Indeed, $u_F = (t_{\clc in} - \omega^k t_{\clc jp})^{-1} f$ and $u_{\tilde F} = (t_{\clc in} - \omega^k t_{\clc jp})^{-1} \tilde f$ for rational functions $f,\tilde f$ regular \emph{and equal} on $\HH$.) 
So for the purposes of computing the singular part of \eqref{tfsum} we can replace $u_{\tilde F}$ by $u_F$ in the sum. Therefore it is enough to show that
\be \sum_{\tilde F\in [F]} \tilde F  \in \ker \Ghom. \nn\ee

Recall that in our recursive definition of the basis flags, we first ``set $t_m$ equal to something'', then $t_{m-1}$, then $t_{m-2}$, and so on, until at the $r$th step we get to $t_{\max( \clc in, \clc jp)}$. Thus $r= m+1-\max(\clc in, \clc jp)$. There is a unique nonnegative integer $s<r$ and unique integers $h_1,\dots,h_s$ with  $m\geq h_1 > \dots > h_s > \max(\clc in,\clc jp)$ such that: for every flag $\tilde F = (\tilde L^0 \supset \dots \supset \tilde L^m)$ in the class $[F]$, each of the hyperplanes $H(\tilde L^{m+1-h_u})$, $u=1,\dots,s$, involves one or other of $\clc in$ or $\clc jp$, while the remainder of the first $r-1$ hyperplanes $H(\tilde L^1), \dots, H(\tilde L^{r-1})$ do not. (Intuitively speaking, $t_{h_1},\dots,t_{h_s}$ are the coordinates which  got equated to a multiple of $t_{\clc in}$ or $t_{\clc jp}$ before either of the latter got set equal to something.)

Note that for all $\tilde F =(\tilde L^0 \supset \dots \supset \tilde L^m)\in [F]$, $L^{s} = \tilde L^{s}$ for $r\leq s \leq m$. Consider the edge $L^{r}$ of $F$. It is an archipelago of islands in the sense of \S\ref{efs}. One of these islands is a swimming island involving precisely the elements of $J= \{h_1,\dots,h_s\} \cup \{\clc in, \clc jp\}$. 
Recall the notion of shortened flags from \S\ref{sandl}. By inspection, it is enough to consider the shortened flags $\tilde F|_J$ and to show that
\be \sum_{\tilde F\in [F]} \tilde F|_J \in \ker \Ghom .\label{rfs}\ee

Observe that $a(\HH) = (\alpha_{\iota(i)}, \sigma^k \alpha_{\iota(j)})  =0$ implies $\psis(\Pin{[\f k {\clc jp}, f_{\clc in}]}) \in \ker\Ghom$ and hence
\be C := \psis(\Pin{[ \f{q_s}{h_s},[\f{q_{s-1}}{h_{s-1}},\dots,[\f{q_1}{h_1},  [\f{k}{\clc jp}, f_{\clc in}] \dots ]]]}) \in \ker \Ghom.\nn\ee
Now we claim that (for suitable $q_1,\dots,q_s\in \ZT$), $C$  \emph{is} the sum of flags in \eqref{rfs}. Indeed, consider the following procedure. Here we simplify notation by letting $u:=\f{q_u}{h_u}$ and $i:= f_{\clc in}$, $j:= \f{k}{\clc jp}$.
First distribute $1$ over $[j,i]$ using the Jacobi identity in the form
\be [1,[j,i]] = [[1,j],i] + [j,[1,i]].\nn\ee
Then distribute $2$ over the result:
\begin{align} \big[2,[1,[j,i]\big] & = \Big[2, [[1,j],i] + [j,[1,i]] \Big] \nn\\
 & = \big[[2,[1,j]],i\big] + \big[[1,j],[2,i]\big]+ \big[[2,j],[1,i]\big]  + \big[j,[2,[1,i]]\big] ,\nn\end{align}
and so on.
In general, at the $u$th step we have a sum of terms of the form $[s,\dots,[u+1,[u,[X,Y]]\dots ]]$ for some commutators $X,Y$ and we re-write such terms as follows:
\be \Big[s,\dots,\big[u+1,[u,[X,Y]]\big]\dots \Big] = \Big[s,\dots,\big[u+1, [[u,X],Y]+[X,[u,Y]] \big]\dots \Big].\nn\ee 
With the isomorphisms of Theorem \ref{p1} in mind, one sees that this indeed coincides with the recursive procedure given above to construct the basis flags. Thus we have the equality \eqref{rfs}, as required.

\newcommand{\HHo}{\Ho{\clc in}}
We turn now to part (ii).
The argument is similar to the one above. We shall consider for definiteness the case of $(\alpha_{\iota(i)},\Lambda_0) = 0$. There is an equivalence relation among basis flags in which $F = (L^0 \supset \dots \supset L^m)$ and $\tilde F = (\tilde L^0 \supset \dots \supset \tilde L^m)$ are equivalent if $L^r \cap \HHo = \tilde L^r \cap \HHo$ for every $r$. As before one checks that $u_F - u_{\tilde F}$ is regular on $\HHo$ for all $\tilde F\in [F]$ so that it is enough to show that for each equivalence class $[F]$, $\sum_{\tilde F\in [F]} \tilde F  \in \ker \Ghom$. 

Pick a basis flag $F=(L^0 \supset \dots \supset L^m)$ such that $u_F$ is singular. Let $r$ be the unique number such that $H(L^r) = \HHo$. There is a unique $s<r$ and unique integers $h_1,\dots,h_s$ with  $m\geq h_1 > \dots > h_s > \clc in$ such that: for every flag $\tilde F = (\tilde L^0 \supset \dots \supset \tilde L^m)$ in the class $[F]$, each of the hyperplanes $H(\tilde L^{m+1-h_u})$, $u=1,\dots,s$, either involves $\clc in$ or else is $\Ho{h_u}$, while the remainder of the first $r-1$ hyperplanes $H(\tilde L^1), \dots, H(\tilde L^{r-1})$ are not of either of these types. Let $J= \{h_1,\dots,h_s,\clc in\}$. The edge $L^r$ of $F$ includes the fixed island $L_{J}^0$. 

Now the fact that $a(\HHo) = (\alpha_{\iota(i)}, \Lambda_0) = 0$ implies that $\psis(\Pin f_{\clc in} v_0) \in \ker \Ghom$ and hence that
\be A := \psis(\Pin{f_{h_s}} \,\Pin{f_{h_{s-1}}}\,\dots\, \Pin{f_{h_1}} ) \Pin{f_{\clc in}} v_0 \in \ker \Ghom. \nn\ee 
Once more we conclude by observing that $A = \sum_{\tilde F\in [F]} \tilde F|_J$. Indeed, this follows from repeatedly rewriting using the identity 
\be \Pin f_{h_t} \Pin X y = \Pin X\, \Pin f_{h_t} y + \sum_{k\in \ZT} \Pin{[ \f k{h_t}, X]} y \ee
for $t=1,\dots,s$ in turn, where $X$ is a commutator including $f_{\clc in}$ and $y$ is a monomial of the form $\Pin f_{h_*} \dots \Pin f_{h_*} v_0$ (compare Example \ref{fzex}).

\def\cprime{$'$}
\providecommand{\bysame}{\leavevmode\hbox to3em{\hrulefill}\thinspace}
\providecommand{\MR}{\relax\ifhmode\unskip\space\fi MR }
\providecommand{\MRhref}[2]{%
  \href{http://www.ams.org/mathscinet-getitem?mr=#1}{#2}
}
\providecommand{\href}[2]{#2}

\end{document}